\documentclass[10pt,twoside, a4paper,reqno]{amsart}

\usepackage{amsmath,amssymb,eucal,graphicx,mathtools}
\usepackage[foot]{amsaddr}
\usepackage{amsthm}
\usepackage{hyperref}
\usepackage{graphicx}
\usepackage{caption}
\usepackage{subcaption}
\usepackage{listings}
\usepackage{tikz}
\usetikzlibrary{shapes.geometric, positioning,calc, patterns,decorations.markings}
\usetikzlibrary{arrows}
\usetikzlibrary{decorations.pathmorphing}
\usepackage{tikz-cd}
\usepackage{stmaryrd}
\usepackage[capitalise]{cleveref}
\usepackage{comment}
\usepackage{verbatim}
\usepackage{tkz-euclide}
\usepackage{adjustbox}
\usepackage{mathdots}
\usepackage{circuitikz}
\usetikzlibrary{decorations.markings}
\usepackage[noadjust]{cite}
\usepackage{todonotes}
\usepackage{mathrsfs}
\usepackage{mathtools}
\usepackage{enumitem}
\usepackage[makeroom]{cancel}
\usepackage[margin=2.5cm]{geometry}

\definecolor{friendly_deepblue}{RGB}{51,34,136}
\definecolor{friendly_blue}{RGB}{136,204,238}
\definecolor{friendly_orange}{RGB}{221,104,19}
\definecolor{friendly_red}{RGB}{136,34,85}
\definecolor{friendly_green}{RGB}{68,170,153}

\mathchardef\mhyphen="2D 
\DeclareMathOperator{\mods}{mod}

\newcommand{\stautiltpair}{\mathrm{s\tau\mhyphen tilt{\,}pair}}
\newcommand{\taurigidpair}{\mathrm{\tau\mhyphen rigid{\,}pair}}

\DeclareMathOperator{\silt}{silt}

\DeclareMathOperator{\presilt}{presilt}

\DeclareMathOperator{\Hom}{Hom}

\DeclareMathOperator{\End}{End}

\DeclareMathOperator{\add}{add}
\DeclareMathOperator{\thick}{thick}
\DeclareMathOperator{\thicksubc}{thk}
\DeclareMathOperator{\fullsubc}{full}

\DeclareMathOperator{\proj}{proj}

\DeclareMathOperator{\wide}{wide}

\DeclareMathOperator{\spann}{span}
\DeclareMathOperator{\ind}{ind}
\DeclareMathOperator{\coind}{coind}

\newcommand{\op}{^{\mathrm{op}}}

\DeclareMathOperator{\coker}{coker}

\DeclareMathOperator{\Mod}{Mod}

\DeclareMathOperator{\starr}{star}

\DeclareMathOperator{\image}{image}

\newcommand{\Kb}{\mathcal{K}^{b}}
\newcommand{\Db}{\mathcal{D}^{b}}
\newcommand{\Ktwo}{\mathcal{K}^{[-1,0]}}
\newcommand{\per}{\mathrm{per}}
\newcommand{\pertwo}{\mathrm{per}^{[-1,0]}}
\newcommand{\sth}{\;\vert\;}
\newcommand{\eqclass}[1]{ \llbracket #1 \rrbracket }
\newcommand{\partfan}[1]{\mathfrak{F}_{\bfg}(#1)}
\newcommand{\catof}{\mathfrak{C}}

\newcommand{\sqbinom}[2]{\left\llbracket \!\begin{smallmatrix} #1 \\ #2 \end{smallmatrix}\!\right\rrbracket}

\newcommand{\tcmc}[1]{\mathfrak{W}(#1)}

\newcommand{\derotimes}[1]{\otimes_{#1}^{\mathbf{L}}}

\makeatletter
\newtheorem*{rep@theorem}{\rep@title}
\newcommand{\newreptheorem}[2]{%
	\newenvironment{rep#1}[1]{%
		\def\rep@title{#2 \ref{##1}}%
		\begin{rep@theorem}}%
		{\end{rep@theorem}}}
\makeatother

\newreptheorem{theorem}{Theorem}
\newreptheorem{proposition}{Proposition}
\newreptheorem{lemma}{Lemma}
\newreptheorem{corollary}{Corollary}
\newreptheorem{setup}{Setting}

\newcommand{\A}{\mathscr{A}}

\newcommand{\calI}{\mathcal{I}}
\newcommand{\calN}{\mathcal{N}}

\newcommand{\Wfrak}{\mathfrak{W}}
\newcommand{\Sfrak}{\mathfrak{S}}
\newcommand{\Pfrak}{\mathfrak{P}}
\newcommand{\R}{\mathbb{R}}

\newcommand{\calB}{\mathcal{B}}
\newcommand{\calC}{\mathcal{C}}
\newcommand{\calD}{\mathcal{D}}
\newcommand{\calE}{\mathcal{E}}
\newcommand{\calF}{\mathcal{F}}
\newcommand{\calT}{\mathcal{T}}
\newcommand{\calW}{\mathcal{W}}

\newcommand{\calZ}{\mathcal{Z}}
\newcommand{\E}{\mathbf{E}}
\renewcommand{\k}{\mathbf{k}}
\newcommand{\boldC}{\mathbf{C}}
\newcommand{\K}{\mathrm{K}}
\newcommand{\Ksp}{\mathrm{K}^{\mathrm{sp}}}
\newcommand{\bfg}{\mathbf{g}}
\newcommand{\Sfan}{\mathfrak{S}}
\newcommand{\Z}{\mathbb{Z}}

\counterwithin*{equation}{section}

\newtheorem{theorem}{Theorem}[section]
\newtheorem{corollary}[theorem]{Corollary}
\newtheorem{lemma}[theorem]{Lemma}
\newtheorem{proposition}[theorem]{Proposition}

\theoremstyle{definition}
\newtheorem{definitiontheorem}[theorem]{Definition-Theorem}
\newtheorem{definitionproposition}[theorem]{Definition-Proposition}
\newtheorem{definition}[theorem]{Definition}
\newtheorem{setup}[theorem]{Setting}
\newtheorem{example}[theorem]{Example}
\newtheorem{remark}[theorem]{Remark}

\title[$\mathbf{g}$-vector fans and picture categories for 0-Auslander extriangulated categories]{$\mathbf{g}$-vector fans and picture categories for 0-Auslander extriangulated categories}
\author[E. D. Børve]{Erlend D. Børve}
\address[E.D.B]{Department of Mathematics, Aarhus University, Ny Munkegade 118, 8000 Aarhus C, Denmark.}
\email{erlend.d.borve@math.au.dk}
\thanks{E.D.B. gratefully acknowledges support from the French ANR grant CHARMS (ANR-19-CE40-0017-02), the Deutsche Forschunggemeinschaft (DFG, German Research Foundation) -- Project ID 281071066 -- TRR 191, from the NAWI Graz Postdoc fellowship, and from the Aarhus University Research Foundation -- Grant number AUFF-E-2024-9-43.}

\author[M. Kaipel]{Maximilian Kaipel}
\address[M.K]{Fakultät für Mathematik, Universität Bielefeld, 33501 Bielefeld, Germany}
\email{mkaipel@math.uni-bielefeld.de}
\thanks{M.K. acknowledges support from the Deutsche Forschungsgemeinschaft (DFG, German Research Foundation) as part of SFB-TRR 191 (Project ID 281071066) and as part of SFB-TRR 358/1 2023 (Project ID 491392403).}

\keywords{0-Auslander extriangulated category, polyhedral fan, $\mathbf{g}$-vector fan, partitioned fan, morphism of partitioned fan, picture category, faithful group functor}
\subjclass[2020]{05E10, 16G20, 52A20}

\begin{document}

\begin{abstract}
    We extend the notion of $\bfg$-vector fan so that it is defined for a Hom-finite Krull--Schmidt 0-Auslander $k$-linear extriangulated category $\calC$ with a {projective} silting object {$T$}. Moreover, we show that the $\bfg$-vector fan admits an admissible partition, in the sense of the second-named author, which is induced by thick subcategories. One can thus define the picture category of $\calC$. We establish a bijection between thick subcategories of $\calC$ generated by presilting objects containing all projective-injective objects and $\tau$-perpendicular subcategories of the endomorphism $k$-algebra of $T$. This shows that our construction unifies all previous constructions of picture categories and $\tau$-cluster morphism categories of finite-dimensional algebras. We introduce morphisms of partitioned fans to {provide a common framework for} the functorial relationships between picture categories of different algebras and categories.
\end{abstract}

\maketitle

\section{Introduction}

Cluster algebras \cite{FZ02i,FZ02ii,BFZ05,FZ07} and cluster categories \cite{BMRRT06} have had a revolutionary impact on tilting theory. It is not always possible to replace an indecomposable direct summand of a tilting module to obtain a new tilting module, but if possible, the new indecomposable direct summand is unique up to isomorphism \cite{BB80,HR82}. Nowadays, this process is called \textit{mutation}, so that the terminology is consistent with the generalisations of tilting theory inspired by cluster algebras. For example, cluster-tilting objects \cite{BMR07} in cluster categories, which are \textit{triangulated categories} in the sense of Verdier \cite{Ver96}, extend tilting objects in such a way that mutation always is possible. Several equivalent completions of tilting theory have since been put forward, such as
\textit{$\tau$-tilting theory} \cite{AIR2014}, which takes place in the abelian category of finitely generated modules, and \textit{two-term silting theory} \cite{DerksenFei2015,AIR2014}, where the setting is the category of morphisms of finitely generated projective modules up to homotopy. This latter category is an extension-closed subcategory of the triangulated bounded homotopy category. This makes it an \textit{extriangulated category} in the sense of Nakaoka--Palu \cite{NP19}. Moreover, it is an example of a \textit{0-Auslander} extriangulated category \cite{GNP23}. This class of extriangulated categories provides a unifying framework for many contexts within representation theory where mutation appears. This paper focuses on a combinatorial object encoding the mutation theories in these categories.

\begin{repsetup}{setting}
    Let $k$ be a field. Throughout $(\calC,\E)$ (or just $\calC$) denotes a $\Hom$-finite Krull--Schmidt 0-Auslander extriangulated $k$-category with a basic projective silting object $T$. 
\end{repsetup}

One particularly successful approach to cluster algebras is via the study of polyhedral fans. These are convex-geometric objects consisting of polyhedral cones in a vector space, which only overlap in their faces. For example, the clusters of a finite type cluster algebra define a complete simplicial fan \cite[{Thm.} 1.10]{FZ2003}, which is the normal fan of the generalised associahedron of the same type \cite[{Thm.} 1.4]{CFZ2002}, see also \cite{HLT2011,HPS2018,ReadingSpeyer2009}. These geometric realisations often reveal connections to other areas of mathematics. For example, one particularly insightful connection is with toric geometry \cite{GHK15,GHKK18}. 

A big breakthrough in the study of cluster algebras was achieved in \cite{GHKK18}, where it was established that the collection of Fock-Goncharov cluster chambers \cite{FG06} forms a simplicial fan \cite[{Thm.} 0.8]{GHKK18}. As a consequence, it is possible to establish the sign-coherence of $\bfg$-vectors using a new approach \cite[{Thm.} 5.11]{GHKK18}. These $\bfg$-vectors are integer vectors encoding the cluster variables with respect to a fixed cluster. In fact, many conjectures about $\bfg$-vectors have been resolved using the representation theory of finite-dimensional algebras and quivers \cite{DWZ2010}. This solidified the consideration of $\bfg$-vectors within representation theory \cite{DK08}, in particular within $\tau$-tilting theory \cite{AIR2014, DerksenFei2015}. Only a small class of finite-dimensional algebras are motivated by cluster algebras and yet every algebra exhibits a simplicial polyhedral fan called the \textit{$\bfg$-vector fan} \cite{DIJ2019}. Instead of compatible cluster variables, in the $\bfg$-vector fan of a finite-dimensional algebra, compatible $\tau$-rigid pairs define a cone. The connection between these two theories is established by cluster-tilting objects in cluster categories \cite{BMRRT06}. We are able to generalise this {connection} as follows.

\begin{reptheorem}{thm:IsFan}
    Let $\calC$ be as in \Cref{setting}. There exists a simplicial polyhedral fan $\partfan{\calC}$ whose maximal dimensional cones correspond to silting objects in $\calC$.
\end{reptheorem}

In particular, if $\Lambda$ is a finite-dimensional $k$-algebra, then the $\bfg$-vector fan of $\Lambda$ is {simply} given by $\partfan{\Ktwo(\proj \Lambda)}$, where the category $\Ktwo(\proj \Lambda)$ is the extension-closed subcategory of the bounded homotopy category $\Kb(\proj \Lambda)$ consisting of objects concentrated in cohomological degrees $-1$ and $0$. Here, silting objects are simply 2-term silting {objects} in $\Kb(\proj \Lambda)$ and thus correspond to support $\tau$-tilting pairs in the module category \cite{AIR2014} and cluster-tilting objects in the cluster category if $\Lambda$ is hereditary and representation-finite \cite{AIR2014}. We additionally extend \cite[{Thm.} 4.7]{Asa21} in \Cref{prop:Asa21.4.7}, showing that $\partfan{\calC}$ is a complete fan precisely when $\calC$ is {silting-}finite and {contains no nonzero projective-injective objects}. We also show that various functorial relationships between different 0-Auslander extriangulated categories induce morphisms between their fans.

An important feature of the aforementioned types of objects are their associated reduction techniques. Loosely speaking, every summand of a silting object, a cluster-tilting object or a $\tau$-tilting module gives rise to a reduction procedure. The output of this process is the part of the ambient category which is compatible with the summand. This often allows global properties of the categories to be studied locally. More precisely, one has 2-Calabi--Yau reduction \cite{IY08} involving summands of cluster-tilting objects, see also \cite{BMR2008,CK2006}, $\tau$-tilting reduction, involving summands of $\tau$-tilting modules \cite{Jas15,DIRRT17} and Iyama--Yang reduction of triangulated categories \cite{IY18,IY20}, involving summands of (not necessarily 2-term) silting objects, which has been generalised to extriangulated categories \cite{Bor24}. We obtain the following duality result on reduction techniques.

\begin{reptheorem}{prop:Monica}
    Let $\calC$ and $T$ be as in \cref{setting}, let $N$ be the maximal basic projective-injective object in $\calC$, and let $\Lambda = \End_{\calC}(T)/[N]$. 
    There are mutually inverse inclusion-reversing bijections
    \[\begin{tikzcd} \left\{ \begin{varwidth}{20em} \begin{center} thick subcategories of $\mathcal{C}$ generated by  presilting objects of the form $X \oplus N$ \end{center} \end{varwidth} \right\} \arrow[r, bend left, "\calW_{\calC}"] & \left\{ \begin{varwidth}{12em} \begin{center} $\tau$-perpendicular subcategories of $\mods(\Lambda)$ \end{center} \end{varwidth} \right\}. \arrow[l, bend left, "\calT_{\calC}"]  \end{tikzcd} \]
\end{reptheorem}

In the above, we call an abelian extension-closed subcategory of the category $\mods(\Lambda)$ of finite-dimensional right $\Lambda$-modules a \textit{$\tau$-perpendicular} subcategory if it is obtained via the process of $\tau$-tilting reduction \cite{Jas15,BH21}. On the other hand, a \textit{thick subcategory} of $\calC$ is an extriangulated subcategory, and in \cref{prop:Monica} above, we consider those which are the smallest thick subcategories containing $X \oplus N$ for some presilting object $X$. \cref{prop:Monica} generalises the bijection of \cite[Thm. 3.15]{Gar23} both in terms of the setting and in terms of the classes of subcategories involved in the bijection. We emphasise that this is a novel result even for $\calC = \Ktwo(\proj \Lambda)$.

For a finite-dimensional algebra $\Lambda$, the $\tau$-cluster morphism category \cite{IT17,BM18w,BH21,Bor21} is a category which encapsulated all possible $\tau$-tilting reductions within $\mods(\Lambda)$. That is, its objects are $\tau$-perpendicular subcategories and its morphisms are indexed by the {direct} summands of $\tau$-tilting pairs in such a way that reduction with respect to these summands yields the relevant subcategory. This category is closely related to exceptional sequences in the hereditary case \cite{IT17} and to $\tau$-exceptional sequences \cite{BM18t} in the general case. More precisely, its morphisms factorise into signed versions of such sequences {\cite[{§}11]{BM18t}}. Moreover, the classifying space of the $\tau$-cluster morphism category is a conjectural $K(\pi,1)$-space for the picture group of $\Lambda$ as defined in \cite{ITW16,HI21,Kai24}, see also \cite[Conjecture A.11]{BorveKaipeltautilt}. This inspires the name \textit{picture category} as introduced in \cite{Bor24}, which we will henceforth use. 

We show in \cref{prop:STTWgeneralisation} that $\partfan{\calC}$ admits an admissible partition, in the sense of \cite{Kai23}, which is induced by the thick subcategories of $\calC$. Therefore \cite[{§}3]{Kai23} yields a construction of a natural candidate for the picture category of $\calC$. We show that this definition encompasses the other established constructions of $\tau$-cluster morphism categories and picture categories. 

\begin{theorem}\label{thm:introthm1}
    Let $\calC$ {and $T$} be as in \Cref{setting} and let $\Lambda$ be any finite-dimensional $k$-algebra. There is a well-defined picture category $\Wfrak(\calC)$ satisfying:
    \begin{enumerate}
        \item The $\tau$-cluster morphism category of $\Lambda$ is equivalent to $\Wfrak(\Ktwo(\proj \Lambda))$;
        \item If $\calC$ does not admit nonzero projective-injective objects, then $\Wfrak(\Ktwo(\proj \End_{\calC}(T)) \simeq \Wfrak(\calC)$;
        \item If $\calC$ admits an exact dg enhancement in the sense of \cite{Che23}, then the picture category of $\calC$ as defined in \cite{Bor24} is equivalent to $\Wfrak(\calC)$. 
    \end{enumerate}
\end{theorem}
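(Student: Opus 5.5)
The proof will be assembled from the structural results announced in the introduction, with one separate argument for each item.

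\emph{Existence of $\Wfrak(\calC)$.} By \cref{thm:IsFan}, $\partfan{\calC}$ is a simplicial polyhedral fan. By \cref{prop:STTWgeneralisation}, the partition of its cones induced by the thick subcategories $\thick(X)$, for $X$ presilting, is admissible in the sense of \cite{Kai23}. Applying the construction of \cite[{§}3]{Kai23} to this partitioned fan then defines $\Wfrak(\calC)$. Concretely, its objects are the equivalence classes of cones, indexed by thick subcategories generated by presilting objects of the form $X\oplus N$. Its morphisms are given by cones together with the translation data of the partition. Well-definedness, meaning that composition is associative and independent of representatives, is exactly what admissibility buys us, so nothing new is needed here.

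\emph{Items (1) and (2).} For (1), take $\calC=\Ktwo(\proj\Lambda)$, so that $N=0$ and $\partfan{\calC}$ is the usual $\bfg$-vector fan. By \cref{prop:Monica}, the objects of $\Wfrak(\calC)$ are in bijection with the $\tau$-perpendicular subcategories of $\mods\Lambda$, and these are the objects of the $\tau$-cluster morphism category. For morphisms, I would compare the two descriptions directly. A morphism in the fan picture is a cone $C(U)$ in a partition class, which corresponds to a presilting $U$ inside the relevant thick subcategory. A morphism in the $\tau$-cluster morphism category is a support $\tau$-rigid object in the $\tau$-perpendicular subcategory. Under \cite{AIR2014} together with the $\tau$-tilting reduction \cite{Jas15}, these match. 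Compatibility with composition should follow from the fact that Iyama--Yang/Jasso reduction is compatible with the $\bfg$-vector maps used in \cite{Kai23} (cf.\ \cite{BH21}). For (2), when $N=0$ the $\bfg$-vector map with respect to $T$ identifies $\partfan{\calC}$ with the $\bfg$-vector fan of $\End_\calC(T)$. I expect this to come from a morphism of partitioned fans that is an isomorphism, built from the equivalence $\calC\to\Ktwo(\proj\End(T))$ modulo projective-injectives, or by \cite{GNP23}. Functoriality of the construction in \cite{Kai23} under isomorphisms of partitioned fans then gives the equivalence $\Wfrak(\Ktwo(\proj\End_\calC(T)))\simeq\Wfrak(\calC)$.

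\emph{Item (3).} The picture category of \cite{Bor24} has as objects the thick subcategories generated by presilting objects, and its morphisms are defined via extriangulated Iyama--Yang reduction using the dg enhancement. I would construct a functor that is the identity on objects and sends a presilting $U$ in a reduction to the corresponding cone. Showing it is bijective on Hom-sets is a matter of matching reductions with links of cones in the partition. The main obstacle is compatibility with composition. In \cite{Bor24}, composition is defined via derived tensor or reduction functors, whereas in \cite{Kai23} it is defined by translating cones inside the ambient vector space. I would first check that the $\bfg$-vectors of the reduced presilting $U'$, computed in the reduced category, coincide with the projection of the $\bfg$-vector of the lift $U\oplus X$ modulo $\spann(\bfg(X))$. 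This is a statement about the Grothendieck group of the reduction, which I would try to derive from the localisation sequence of \cite{Bor24}. Once it holds, both compositions are computed by the same cones, and the functor is an equivalence.
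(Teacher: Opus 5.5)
Your treatment of existence matches the paper, but the argument for (2) has a genuine gap. You want the isomorphism of partitioned fans to come from an equivalence $\calC\to\Ktwo(\proj\End_{\calC}(T))$. No such extriangulated equivalence, nor even a canonical extriangulated functor, exists for reduced $\calC$. For instance, in $\pertwo(A)$ one has $\Hom(\Sigma A,A)\cong H^{-1}A$, which can be nonzero, whereas morphisms from injectives to projectives vanish in $\Ktwo(\proj H^0A)$. The equivalences that do exist, such as $\calC/[\calI]\simeq\mods(\Lambda)$, are not extriangulated and do not transport thick subcategories.

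What you actually have is the bijection $F\colon\presilt(\calC)\to\presilt(\Lambda)$ of \cref{prop:F}, which is compatible with indices by \cref{lem:Findexsquare}. It gives only a \emph{fan} isomorphism (\cref{lem:FanIso}\eqref{thm:modulecatreduction_fans}). That is not enough, because the thick partition is not determined by the fan. In $\partfan{\Pi(A_2)}$ one has $\boldC(P_2)\sim\boldC(\Sigma P_2)$ but $\boldC(S_1)\not\sim\boldC(S_2)$, since $J(S_1,0)=\add P_1\neq\add P_2=J(S_2,0)$. In the coordinates of \cref{fig:fanOfPreproj}, the linear involution $(x,y)\mapsto(x+y,-y)$ is a fan automorphism that exchanges these two pairs.

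The missing step is therefore $\thick_{\calC}(U)=\thick_{\calC}(V)\iff\thick_{\Lambda}(FU)=\thick_{\Lambda}(FV)$. The paper obtains it from \cref{prop:Monica}: one has $\calW_{\calC}(\thick_{\calC}(U))=J(\overline{U},\overline{\Omega I_U})=\calW_{\Lambda}(\thick_{\Lambda}(FU))$. For $N=0$, both $\calW_{\calC}$ and $\calW_{\Lambda}$ are injective on thick subcategories generated by presilting objects by \cref{prop:Monica}\eqref{prop:Monica_maps}.

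For (1) you take a longer route. You match morphisms via Jasso reduction and merely assert compatibility with composition, which is essentially the main theorem of \cite{STTW23}. You already hold the ingredient that makes this unnecessary. \cref{prop:Monica} shows that $\boldC(U)\sim\boldC(U')$ in $\Pfrak_{\thick}$ exactly when $J(\overline{U},\overline{\Omega I_U})=J(\overline{U'},\overline{\Omega I_{U'}})$, i.e.\ $\Pfrak_{\thick}=\Pfrak_{\mathrm{WAC}}$. Then $\catof(\partfan{\Lambda},\Pfrak_{\mathrm{WAC}})$ is identified with the $\tau$-cluster morphism category by \cite[\S 6.3]{Kai23} and \cite{STTW23}; this is the paper's argument.

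Your concrete description of objects is also off when $N\neq0$. The classes correspond to $\thick(U)$ for \emph{all} presilting $U$ (e.g.\ $\eqclass{0}\neq\eqclass{N}$), and only those containing $N$ are covered by \cref{prop:Monica}.

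For (3) the plan is sound, but the obstacle is misplaced. If you represent morphisms as $\sqbinom{W}{V}$ with $V$ a summand of $W$, composition matches immediately from the two definitions. The real input is that $\sqbinom{V_1}{U_1}=\sqbinom{V_2}{U_2}$ if and only if $V_1\cong V_2$ in $\calC/\thick(U_1)$. This is the projection compatibility you mention, already supplied by \cref{lem:ZUfan}\eqref{lem:ZUfan2} and \cref{prop:STTWgeneralisation}. Full faithfulness then follows by viewing both Hom-sets inside $\presilt_U(H^0\A)$, using \cite[Cor.~3.6]{Kai23}.
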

\begin{proof}
    The category $\Wfrak(\calC)$ is defined as the category of the partitioned fan $(\partfan{\calC}, \Pfrak_{\thick})$, where $\Pfrak_{\thick}$ identifies cones whose presilting objects generate the same thick subcategory. Thus \cite[{Prop.} 3.8]{Kai23} establishes this as a well-defined category. Then (1) and (2) are \cref{prop:samePartition} and (3) is \cref{prop:sameasBor}.
\end{proof}

In particular, \cref{prop:Monica}, which essentially implies \cref{thm:introthm1}(1), shows that our construction of the $\tau$-cluster morphism category of a finite-dimensional algebra from the $\bfg$-vector fan extends the main result of \cite{STTW23}. With the goal of establishing the classifying space of the picture category as a $K(\pi,1)$ space, some functorial relationships between the picture categories of different algebras or extriangulated categories have been established \cite{BH21,Bor24,BorveKaipeltautilt,Kai24}. In \cref{def:morpartfan} we introduce the notion of a morphism of partitioned fans as a morphism of fans, that is, a morphism of the ambient vector space mapping cones into cones, which preserves the identification of cones. We show that this combinatorial notion induces functors between picture categories, which unify the aforementioned established relationships.

\begin{theorem}\label{thm:introthm2}
    Let $\calC_1$ and $\calC_2$ be two categories satisfying \Cref{setting}. Assume that there is a morphism of partitioned fans $f: (\calC_1, (\Pfrak_{\thick})_1) \to (\calC_2, (\Pfrak_{\thick})_2)$, then the following hold: 
    \begin{enumerate}
        \item There is a functor $\mathfrak{C} f: \Wfrak(\calC_1) \to \Wfrak(\calC_2)$;
        \item If $f$ is injective on cones, then the functor $\mathfrak{C} f$ is faithful;
        \item If $f$ is surjective on cones, then the functor $\mathfrak{C} f$ is dense.
    \end{enumerate}
    Conversely, let $G: \calC_1 \to \calC_2$ be an extriangulated functor. Assume that $G$ preserves presilting objects and maps the projective silting object in $\calC_1$ to the projective silting object in $\calC_2$, then the following hold:
    \begin{enumerate}
        \item[(4)] The functor $G$ induces a morphism of partitioned fans $g: (\calC_1, (\Pfrak_{\thick})_1) \to (\calC_2, (\Pfrak_{\thick})_2)$;
        \item[(5)] If $G$ is injective on presilting objects, then $g$ is injective on cones;
        \item[(6)] If $G$ is surjective on presilting objects, then $g$ is surjective on cones.
    \end{enumerate}
\end{theorem}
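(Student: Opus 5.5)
We prove the two halves separately: parts (1)–(3) are about picture categories of partitioned fans in general, and parts (4)–(6) are about realising an extriangulated functor $G$ as a linear map of Grothendieck groups.

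For (1)–(3), recall from \cite[§3]{Kai23} how the category of a partitioned fan $(\Sfrak,\Pfrak)$ is built. Its objects are the equivalence classes of cones under $\Pfrak$. A morphism from $\eqclass{C}$ to $\eqclass{D}$ is an equivalence class of pairs (cone $C'$ in $\eqclass{C}$, face $F$ of $C'$) such that the class $\eqclass{F}$ of the face, read in the appropriate relative sense, is $\eqclass{D}$. Composition is defined through the admissibility axioms.

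Given a morphism of partitioned fans $f$, that is, a linear map sending cones into cones and respecting the partitions, we define $\catof f$ as follows.
\begin{itemize}
\item On objects, send $\eqclass{C}$ to $\eqclass{f(C)}$. This is well defined because $f$ preserves the identification of cones.
\item On morphisms, send $(C',F)$ to $(f(C'),f(F))$. Since $f$ is linear and maps cones to cones, faces go to faces. Identities and composition are preserved because composition in \cite{Kai23} is built from taking faces and reassigning cones within the same partition class, and $f$ commutes with both operations.
\end{itemize}
For (2), injectivity of $f$ on cones lets us recover the pair $(C',F)$, up to the partition relation, from its image, so $\catof f$ is faithful. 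For (3), surjectivity on cones means every cone of the target, and hence every object of $\Wfrak(\calC_2)$, is of the form $\eqclass{f(C)}$, so $\catof f$ is dense.

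For (4), an extriangulated functor $G$ induces a group homomorphism $K_0^{\mathrm{sp}}$ of the projectives. More precisely, $G(T_1)=T_2$ sends the indecomposable summands of $T_1$ to those of $T_2$, and we let $g$ be the linear extension $\R^{|T_1|}\to\R^{|T_2|}$ of this assignment. The key compatibility is that $g$ takes the $\bfg$-vector of $X$ to the $\bfg$-vector of $G(X)$. To see this, use the defining conflation $T^1_X\to T^0_X\to X \dashrightarrow$ with $T^i_X\in\add T_1$ (available since $\calC_1$ is 0-Auslander). Applying $G$ gives a conflation in $\calC_2$ whose first two terms lie in $\add T_2$, so the $\bfg$-vectors agree.

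Since $G$ preserves presilting objects and direct sums, the cone of a presilting object $X$ is sent onto the cone of $G(X)$. It remains to check that $g$ respects $\Pfrak_{\thick}$: if $\thick(X)=\thick(Y)$, we need $\thick(G X)=\thick(G Y)$. This holds because $G$ is extriangulated and additive, so it maps $\thick(X)$ into $\thick(GX)$, and the inclusions then go both ways.

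For (5), suppose two cones $C(X)$ and $C(Y)$ have the same image. Then $C(GX)=C(GY)$, so $GX\cong GY$ up to multiplicities, and injectivity of $G$ on presilting objects gives $X\cong Y$. For (6), surjectivity is immediate since every cone of $\partfan{\calC_2}$ is $C(Z)=C(GX)=g(C(X))$.

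The main obstacle I expect is in (1): verifying that $\catof f$ respects composition. This requires unwinding the admissibility axioms of \cite{Kai23}, in particular that faces of cones in one partition class correspond compatibly. I would first check that the bijections between faces of equivalent cones, which are used to define composition, are intertwined by $f$; this should follow from the linearity of $f$.
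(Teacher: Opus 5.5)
Your treatment of (4)--(6) follows the paper's route: the index square of \cref{lem:indexFanIso2}, hence \cref{lem:FanIso}\eqref{lem:FanIso2}, together with $\thick(GP)=\thick(G(\thick P))$. Your construction in (1), however, has a genuine gap. You send $\eqclass{C}$ to $\eqclass{f(C)}$ and $(C',F)$ to $(f(C'),f(F))$, claiming that $f$ maps cones to cones and faces to faces. A morphism of fans only maps each cone \emph{into} some cone of the target, and neither claim holds in general. It already fails for the paper's motivating example $-\derotimes{\Lambda}\Lambda/I$ with $\Lambda=k(1\to 2)$ and $I$ its radical. The non-projective simple $S$, with minimal presentation $P_b\to P_a$, has $\bfg^S=[P_a]-[P_b]$. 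Since the presentation map lies in $I$, we get $S\derotimes{\Lambda}\Lambda/I\cong\overline{P_a}\oplus\Sigma\overline{P_b}$. So $f(\boldC(S))$ is a ray in the interior of the two-dimensional cone $\boldC(\overline{P_a}\oplus\Sigma\overline{P_b})$ of $\partfan{\Lambda/I}$. It is not a cone of that fan, although it is the image of the face $\boldC(S)$ of $\boldC(P_a\oplus S)$. The missing ingredient is \cref{lem:morpartfan}. Replace $f(\sigma)$ by $\sigma^f$, the unique minimal cone containing $f(\sigma)$ (its relative interior contains that of $f(\sigma)$), and note that $\sigma\subseteq\kappa$ implies $\sigma^f\subseteq\kappa^f$. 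Then set $\catof f\eqclass{\sigma}=\eqclass{\sigma^f}$ and $\catof f\sqbinom{\kappa}{\sigma}=\sqbinom{\kappa^f}{\sigma^f}$. This is also how the partition condition and injectivity/surjectivity are defined in \cref{def:morpartfan}, so (2) and (3) must be phrased with $\sigma^f$ too.

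You have also placed the difficulty in the wrong spot. In this paper a morphism $\eqclass{\sigma}\to\eqclass{\rho}$ is a class of pairs $\sigma_i\subseteq\rho_i$ with the \emph{smaller} cone as source, which is the reverse of your description. Two pairs are identified when $\pi_{\sigma_1}(\rho_1)=\pi_{\sigma_2}(\rho_2)$, and composition is simply $\sqbinom{\rho}{\kappa}\circ\sqbinom{\kappa}{\sigma}=\sqbinom{\rho}{\sigma}$, so functoriality is immediate once the assignment is well defined. The real content, which you never address, is showing that $\pi_{\sigma_1}(\kappa_1)=\pi_{\sigma_2}(\kappa_2)$ implies $\pi_{\sigma_1^f}(\kappa_1^f)=\pi_{\sigma_2^f}(\kappa_2^f)$. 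Linearity alone does not give this, because $\kappa_i^f$ can be strictly larger than $f(\kappa_i)$. The paper picks relative-interior points $p_i$ of $f(\kappa_i)$ with equal projections and uses that $p_i$ lies in the relative interior of $\kappa_i^f$. The two projected cones then share a relative-interior point in the common fan $\pi_{\sigma_1^f}(\starr(\sigma_1^f))=\pi_{\sigma_2^f}(\starr(\sigma_2^f))$ (as $\sigma_1^f\sim\sigma_2^f$), so they coincide. Likewise, for (2) you need \cite[Cor.~3.6]{Kai23} to fix a common source $\sigma_1$. You then need the bijection $\starr(\sigma)\to\pi_\sigma(\starr(\sigma))$ of \cref{lem:projmorpart}\eqref{lem:projmorpart1} to get $\kappa_1^f=\kappa_2^f$ before injectivity applies. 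Two smaller points on (4) and (5). In (4), $g(\boldC(X))$ is in general only contained in $\boldC(GX)$, since some $G(X_i)$ may decompose; the correct statement is $\boldC(X)^g=\boldC(GX)$. In (5), knowing $GX$ and $GY$ only up to multiplicities does not let you apply injectivity of $G$ on isomorphism classes directly. You should make explicit that the hypothesis is used in the form $\add(GX)=\add(GY)\Rightarrow\add X=\add Y$.
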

\begin{proof}
    The first half is a special case of \cref{thm:partmorphgivesfunctor}(1)-(3) for the thick partitions $(\Pfrak_{\thick})_1$ and $(\Pfrak_{\thick})_2$. The second half is a reformulation of \cref{cor:partmorphgivesfunctor}(1)-(3).
\end{proof}

We remark that \cref{thm:partmorphgivesfunctor} works for any admissible partition of the fans. Morphisms of partitioned fans provide a unifying framework for all known relationships between $\tau$-cluster morphism categories and picture categories of different algebras and extriangulated categories as we now explain. The following also act as natural examples giving rise to morphisms of partitioned fans:
\begin{itemize}
    \item Let $\Lambda$ be a finite-dimensional $k$-algebra and let $K:k$ be a MacLane separable field extension. In \cref{prop:BK} we provide a new perspective on our earlier work by using our results of \cite{BorveKaipeltautilt} to establish an injective morphism of partitioned fans, which induces the faithful functor $\Wfrak(\Lambda) \to \Wfrak(\Lambda \otimes_k K)$ constructed in \cite[Thm. 0.4]{BorveKaipeltautilt}. 
    \item Let $\Lambda$ be a finite-dimensional $k$-algebra and $I$ an ideal of $\Lambda$. The derived tensor functor 
    \[ - \otimes_\Lambda^{\boldsymbol{L}} \Lambda/I: \Ktwo(\proj \Lambda) \to \Ktwo(\proj \Lambda /I)\]
    induces a morphism of partitioned fans. Via \cref{thm:introthm2}, we recover the functor $\Wfrak(\Lambda) \to \Wfrak({\Lambda}/I)$ constructed as part of \cite[Thm. 1.3]{Kai24}, see \cref{rem:generlises}\eqref{rem:generlises_Kai24}.
    \item Let $\Lambda$ be a finite-dimensional $k$-algebra and let $\calW$ be a $\tau$-perpendicular subcategory of $\mods(\Lambda)$. Then $\calW$ is equivalent to $\mods(\Gamma)$ for some finite-dimensional $k$-algebra $\Gamma$ by \cite[Thm. 1.1]{Jas15}, so it gives rise to a $\tau$-cluster morphism category $\Wfrak(\Gamma)$. In \cref{lem:redadj}\eqref{lem:redadj3} we give a fan-theoretic proof of the fact that $\Wfrak(\Gamma)$ is a full subcategory of $\Wfrak(\Lambda)$, which recovers \cite[Prop. 6.14]{BH21}. A similar result holds for picture categories, see \cite[§6]{Bor24}. 
    \end{itemize}

In order to establish that the classifying space of the picture category $\Wfrak(\calC)$ is a $K(\pi,1)$ space it is beneficial to obtain a faithful functor from  $\Wfrak(\calC)$ to a group $G$ regarded as a groupoid with one object \cite{Igu14}. If this is the case we say that $\Wfrak(\calC)$ admits a faithful group functor. We conclude this article by giving the following characterisation.

\begin{reptheorem}{thm:faithfulgroupfunctor}
    Let $\calC$ and $T$ be as in \cref{setting}, let $N$ be the maximal basic projective-injective object in $\calC$, and let $\Lambda = \End_{\calC}(T)/[N]$. 
    Then $\Wfrak(\calC)$ admits a faithful group functor if and only if $\Wfrak(\Lambda)$ admits a faithful group functor. 
\end{reptheorem}

This allows for the extension of various results which have established such a faithful functor for the $\tau$-cluster morphism category of finite-dimensional algebras to the picture categories of 0-Auslander extriangulated categories. For an overview of the most general results in this direction, see \cite{BorveKaipeltautilt,Kai23,Treffinger26}.

\textbf{Acknowledgements.} 
E.D.B. wishes to thank Nebojsa Pavic for useful discussions about polyhedral fans.
Both authors are grateful to Monica Garcia for helpful discussions concerning \Cref{prop:Monica}, and to Raphael Bennett-Tennenhaus for pointing out the paper \cite{ZLZ2026} to us. 

\section{Preliminaries}

We begin by introducing the central objects studied in this paper. Following \cite{NP19} and \cite{GNP23}, we recall (0-Auslander) extriangulated categories, which are the main object of study of this work. Silting objects and their corresponding silting reductions play important roles throughout and we recall their definitions and collect results needed for later sections. Finally, the convex geometric notion of a polyhedral fan will be discussed at the end of this section. The aim of later sections is to combine these notions and encode the silting theory of 0-Auslander extriangulated categories via polyhedral fans.

\subsection{0-Auslander extriangulated categories}

Throughout, we fix an arbitrary field $k$.
Let $\calC$ be an additive $k$-category. Given a $k$-linear bifunctor 
\begin{equation}\label{eq:Eext_bifun}
    \E\colon \calC\op\times\calC \to \Mod(k),
\end{equation}
we will refer to an element $\xi\in \E(C,A)$ as \textit{$\E$-extension}, where the objects $A,C\in \calC$ are arbitrary. 
A \textit{realisation} {$\mathfrak{s}$} of $\E$ is a set of maps $\mathfrak{s}_{C,A}$, indexed by objects $A,C\in \calC$, from the first to the second of the following sets:
\begin{enumerate}
    \item elements in $\E(C,A)$
    \item\label{threecomplexes} equivalence classes of three-term complexes in $\calC$ under the equivalence relation {which identifies} the top and bottom row {of} any morphism of complexes of the form
\begin{equation*}
\begin{tikzcd}
A \arrow[r,"\iota"]\arrow[d,equal] & B\arrow[r,"\pi"]\arrow[d] & C\arrow[d,equal] \\ 
A \arrow[r,"\iota'"]  & B' \arrow[r,"\pi'"] & C
\end{tikzcd}
\end{equation*}
where the outer morphisms are identity morphisms in $\calC$.
\end{enumerate}
We say that the realisation $\mathfrak{s}$ is \textit{exact} if it  satisfies the following additional properties:
\begin{enumerate}
	\item\label{realiz1} The $\E$-extension $0\in \E(C,A)$ is sent to the \textit{split sequence} $A \to A\oplus C\to C$, i.e. the first morphism is natural when $A\oplus C$ is regarded as a coproduct of $A$ and $C$, and the latter is natural when $A\oplus C$ is regarded as a product.
	\item We have that $\mathfrak{s}_{C\oplus C',A\oplus A'}(\xi\oplus \xi') = \mathfrak{s}_{C,A}(\xi)\oplus \mathfrak{s}_{C',A'}(\xi')$ for $\xi\in \E(C,A)$ and $\xi'\in \E(C',A')$.
	 \end{enumerate}
The three-term {complexes} in the image of {a given} $\mathfrak{s}_{C,A}$ will be called \textit{$\mathfrak{s}$-conflations}, or just \textit{conflations} when $\mathfrak{s}$ can be safely suppressed. 
{An \textit{$\E$-triangle} is defined by a pair} $(\begin{tikzcd}
A \arrow[r,"\iota"]& B\arrow[r,"\pi"]& C
\end{tikzcd},\xi)$ {where the first term is {a conflation} and the second term is an $\E$-extension whose realisation $\mathfrak{s}_{C,A}(\xi)$ is equivalent to the first term.}
An {$\E$-triangle} will be displayed as follows:
\begin{equation}\label{eq:realiz_conf}
\begin{tikzcd}
A \arrow[r,"\iota",tail]& B\arrow[r,"\pi",two heads]& C \arrow[r,dashed,"\xi"] & {}
\end{tikzcd}
\end{equation}
The morphism $\iota$ in \eqref{eq:realiz_conf} is called an \textit{inflation}, whereas $\pi$ is {called} a \textit{deflation}. Moreover, the morphism $\pi$ (or the object $C$) is {called} the \textit{cone} of $\iota$, and the morphism $
\iota$ (or the object $A$) is {called} the \textit{cocone} of $\pi$. {A \textit{morphism} of $\E$-triangles is given by}
\begin{equation*}\label{eq:morph_conf}
\begin{tikzcd}
A_1 \arrow[r,"\iota_1",tail]\arrow[d,"a"]& B_1 \arrow[r,"\pi_1",two heads]\arrow[d,"b"] & C_1 \arrow[r,dashed,"\xi_1"] \arrow[d,"c"] & {} \\
A_2 \arrow[r,"\iota_2",tail]& B_2 \arrow[r,"\pi_2",two heads]& C_2 \arrow[r,dashed,"\xi_2"] & {} 
\end{tikzcd}
\end{equation*}
{where the solid part of the diagram is a morphism of the underlying complexes, and $\E(c,A_2)(\xi_2) = \E(C_1,a)(\xi_1)$ as elements of $\E(C_1,A_2)$. We may thus consider a category of $\E$-triangles, and thus commutative diagrams of $\E$-triangles.}

We say that a full subcategory $\mathcal{D}$ of $\calC$ is \textit{extension-closed} if for {all $\E$-triangle}s of the form shown in \eqref{eq:realiz_conf} with $A,C\in\mathcal{D}$, we have $B\in \mathcal{D}$. Similarly, one can give a definition for a full subcategory of $\calC$ to be \textit{closed under cones} or \textit{closed under cocones}.

\begin{definition}[{\cite[{Def.}~2.12]{NP19}, see also \cite[{Def.} 2.32 and {Prop.} 4.3]{HLN17}}]
\label{def:extricat}
		Let $k$ be a field and let $\calC$ be an additive $k$-category.
        A pair $(\E,\mathfrak{s})$ is called an \textit{extriangulation} on $\calC$ if the axioms (\textbf{ET1})--(\textbf{ET4}$\op$) below are met. The triple~$(\calC,\E,\mathfrak{s})$ (or just $(\calC,\E)$, or indeed $\calC$, if the choice of $\E$ and/or $\mathfrak{s}$ cannot be confused) is called an \textit{extriangulated $k$-category}.
		\begin{enumerate}[leftmargin=1.42cm]
			\item[\textbf{(ET1)}$\;\;\;$]\label{ET1} $\E\colon \calC\op\times\calC\to \Mod({k})$ is a $k$-linear bifunctor.
                \item[\textbf{(ET2)}$\;\;\;$]\label{ET2} $\mathfrak{s}$ is an {exact realisation} of $\E$.
			\item[\textbf{(ET3)}$\;\;\;$]\label{ET3} 
			{Consider the following setup}
			\begin{equation}
				\begin{tikzcd}
				A_1\arrow[r,tail]\arrow[d] & E_1\arrow[r,two heads]\arrow[d] & B_1\arrow[d,dotted] \arrow[r,dashed] & {}\\
					A_2\arrow[r,tail] & E_2\arrow[r,two heads] & B_2 \arrow[r,dashed] & {}
				\end{tikzcd}
			\end{equation}
			{where the rows are $\E$-triangles in $(\calC,\E)$ and the solid part commutes. }  
			{Then one can complete the diagram with the dotted morphism to form a morphism of $\E$-triangles.}
			\item[\textbf{(ET3$\op$})]\label{ET3op} {The dual} of \textbf{(ET3}) above {holds} \cite[(ET3$\op$) in Definition~2.12]{NP19}.
			\item[\textbf{(ET4)}$\;\;\;$]\label{ET4} 
			{Let} 
			$$\begin{tikzcd} A\arrow[r,tail,"\iota"]& E\arrow[r,two heads,"\pi"]& B \arrow[r,dashed,"\xi_1"] & {}  \end{tikzcd} \quad \text{and} \quad \begin{tikzcd} E \arrow[r,tail,"\iota'"]& F \arrow[r,two heads,"\pi'"]& C\arrow[r,dashed,"\xi_2"] & {}\end{tikzcd}$$
			be $\E$-triangles {in $(\calC,\E)$}. 
			{We have a {commutative} diagram of {morphisms of} $\E$-triangles}  
			\begin{center}
				\begin{tikzcd}
					A\arrow[r,"\iota", tail]\arrow[d,equal] & E\arrow[r,"\pi",two heads]\arrow[d,tail,"\iota'"] & B\arrow[d,tail] \arrow[r,dashed, "\xi_1"] & {} \\
					A\arrow[r,tail] & F\arrow[r,two heads] \arrow[d,two heads,"\pi'"] & G\arrow[d,two heads] \arrow[r,dashed, "\xi_3"] & {}  \\
					& C\arrow[r,equal] \arrow[d,dashed, "\xi_2"] & C \arrow[d,dashed, "\xi_4"] & \\
					& {} & {} &
				\end{tikzcd}
			\end{center}
			\item[\textbf{(ET4$\op$)}]\label{ET4op} {The dual} of \textbf{(ET4)} above {holds} \cite[{Rem.}~2.22]{NP19}.
		\end{enumerate}
\end{definition}

When working with categories with additional structure, it is natural to consider functors which preserve this structure.

\begin{definition}
    Let $(\calC,\E,\mathfrak{s})$ and $(\calC',\E',\mathfrak{s}')$ be extriangulated $k$-categories. An \textit{extriangulated functor} \cite[{Def.} 2.32]{B-TS20} from $(\calC,\E,\mathfrak{s})$ to $(\calC',\E',\mathfrak{s}')$ is given by a pair $(F,\psi)$, where $F$ is a $k$-linear functor $F\colon \calC \to \calC'$ and $\psi$ a natural transformation $\psi\colon \E \Longrightarrow \E'$ of bifunctors $\calC\op\times\calC\xrightarrow{}\Mod(k)$, such that the following square commutes for all $A,C\in\calC$:
    \begin{equation*}
        \begin{tikzcd}[column sep=4em]
            \E(C,A) \arrow[r,"\psi_{C,A}"] \arrow[d,"{\mathfrak{s}_{C,A}}"] & \E'(FC,FA) \arrow[d,"{\mathfrak{s}'_{FC,FA}}"] \\
            \mathcal{C}om_{\calC}(C,A)/\sim \arrow[r,"F"] & \mathcal{C}om_{\calC'}(FC,FA)/\sim
        \end{tikzcd}
    \end{equation*}
    where the sets along the bottom row are equivalence classes of three-term complexes in $\calC$.
    An extriangulated functor $(F,\psi)$ is an \textit{equivalence of extriangulated $k$-categories} {if} $F$ and $\psi$ are both invertible \cite[{Prop.} 2.13]{NOS22}, or equivalently if it is inverted by an extriangulated functor from $(\calC',\E',\mathfrak{s}')$ to $(\calC,\E,\mathfrak{s})$ \cite[{Prop.} 4.11]{BTHSS23}.
\end{definition}

{In this article, we require {extriangulated categories} to admit certain {properties}, which provide the foundation for a well-behaved theory {of mutation}.}

\begin{definition}[{\cite[Def. 3.23, Prop. 3.24 and Def. 3.25]{NP19}}]
Let $(\calC,\E)$ be an extriangulated $k$-category.
    \begin{enumerate}
        \item An object $X$ in $(\calC,\E)$ is \textit{$\E$-projective} (resp. \textit{$\E$-injective}) if the functor $\E(X,-)\colon \calC\to \Mod({k})$ (resp. $\E(-,X)\colon \calC\op\to \Mod({k})$) is identically zero. An \textit{$\E$-projective-injective object} in $\calC$ is {an object which is} both $\E$-projective and $\E$-injective. We will simply use the terms \textit{projective}, \textit{injective} and \textit{projective-injective} when there can be no confusion as to the choice of bifunctor $\E$.
        \item We say that $(\calC,\E)$ \textit{has enough projectives} if for every object ${X} \in \calC$ one can find a deflation $\begin{tikzcd} T_X \arrow[r,two heads,"\pi_X"] & X\end{tikzcd}\!\!$, where $T_X$ is projective.
    \end{enumerate}    
\end{definition}

We are now ready to define the main classes of extriangulated categories which we will consider throughout this paper. 

\begin{definitionproposition}[{\cite[Prop. 2.1, Def. 2.3 and Def. 3.7]{GNP23}}]\label{defprop:0Aus}
Let $(\calC,\E)$ be an extriangulated $k$-category.
\begin{enumerate}
    \item We say that $(\calC,\E)$ is \textit{hereditary} if $\E(X,-)\colon \calC\to \Mod(k)$ is right exact for all objects $X \in \calC$. The property of \textit{having enough injectives} is defined dually.
    In the case where $(\calC,\E)$ has enough projectives, it is hereditary if and only if all objects $X\in\calC$ admit an $\E$-triangle 
    \begin{equation*}
        \begin{tikzcd} T_1\arrow[r,tail] &  T_0 \arrow[r,two heads] & X \arrow[r,dashed] & {} \end{tikzcd}
    \end{equation*}
    where $T_0$ and $T_1$ are projective.
        \item We say that $(\calC,\E)$ is \textit{0-Auslander} if it is hereditary, has enough projectives, and any projective object $T\in\calC$ is the domain of an inflation $\begin{tikzcd} T \arrow[r,tail] & Q \end{tikzcd}$ with projective-injective codomain.
    \item We say that $(\calC,\E)$ is \textit{reduced} if every projective-injective object therein is a zero object.
\end{enumerate}
\end{definitionproposition}

In \cite{GNP21}, \textit{the higher extension} bifunctors $\E^i(-,-): \calC\op \times \calC \to \Mod(k)$ are defined for all $i \geq 1$. It is shown in \cite[Thm. 3.5]{GNP21} that these give rise to long exact sequences. However, by \cite[Prop. 2.1(1)(iii)]{GNP23}, the bifunctor $\E^2(-,-)$ vanishes for hereditary extriangulated categories and thus we obtain the following:

\begin{proposition}[{\cite[Cor. 3.12]{NP19}}]
    \label{lem:LES}
    Let $(\calC,\E)$ be a small extriangulated $k$-category, let 
   	\[ \begin{tikzcd}
			X \arrow[r,tail, "f"] & Y \arrow[r,two heads, "g"] & Z\arrow[r,dashed, "\delta"] & {}
	\end{tikzcd} 
	\]
	be an $\E$-triangle in $\calC$, and let $U$ be an object in $\calC$. The covariant Hom-functor $\calC(U,-)$ induces an exact sequence of $k$-vector spaces
    \begin{equation*}
    \begin{tikzcd}
        \calC(U,X) \arrow[r, "f \circ -"] & \calC(U,Y) \arrow[r, "g \circ -"] & \calC(U,Z) \arrow[r, "(\delta_U)_{\#}"]  &  \E(U,X) \arrow[r, "f_*"] & \E(U,Y) \arrow[r, "g_*"] & \E(U,Z),
        \end{tikzcd}
    \end{equation*}
    where we denote $\E(U,f)$ by $f_*$ (resp. $\E(U,g)$ by $g_*$), and $(\delta_U)_{\#}$ sends $h$ to $\E(h,X)(\delta)$.
    Dually, the contravariant Hom-functor $\calC(-,U)$ induces an exact sequence    
    \begin{equation*}
    \begin{tikzcd}
        \calC(Z,U) \arrow[r, "- \circ g"] & \calC(Y,U) \arrow[r, "- \circ f"] &\calC(X,U) \arrow[r, "\delta_U^{\#}"] & \E(Z,U) \arrow[r, "g^*"] & \E(Y,U) \arrow[r, "f^*"] & \E(X,U),
        \end{tikzcd}
    \end{equation*}
    where we denote $\E(g,U)$ by $g^*$ (resp. $\E(f,U)$ by $f^*$), and $\delta_U^{\#}$ sends $h$ to $\E(Z,h)(\delta)$.
\end{proposition}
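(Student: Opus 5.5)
The plan is to verify exactness at each of the four interior spots of the covariant sequence directly from the axioms (\textbf{ET1})--(\textbf{ET4}$\op$). The contravariant sequence then follows by the dual argument, interchanging the roles of (\textbf{ET3}), (\textbf{ET4}) with (\textbf{ET3}$\op$), (\textbf{ET4}$\op$). Two preliminary facts are needed first.
\begin{enumerate}
    \item[(a)] For the $\E$-triangle $(f,g,\delta)$ we have $g^*\delta=0$ and $f_*\delta=0$. This should follow by comparing, via (\textbf{ET3}) and (\textbf{ET3}$\op$), with the split triangle that realises the zero extension, which is guaranteed to exist by exactness of $\mathfrak{s}$.
    \item[(b)] If $c\colon U\to Z$ satisfies $c^*\delta=0$, then $c$ factors through $g$. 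Indeed, $\mathfrak{s}(c^*\delta)$ is then the split sequence $X\to X\oplus U\to U$. On the other hand, (\textbf{ET3}$\op$) applied to the pair $(1_X,c)$ yields a morphism of $\E$-triangles from the $c^*\delta$-triangle into $(f,g,\delta)$. Composing the middle component with the section $U\to X\oplus U$ gives the required lift of $c$.
\end{enumerate}

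With these in hand, the first three spots are routine.
\begin{itemize}
    \item \emph{At $\calC(U,Y)$.} We have $g\circ f=0$ because a conflation is a complex. Conversely, suppose $g\circ h=0$ for some $h\colon U\to Y$. Apply the dual of (b), namely that a morphism into $Y$ killed by $g$ factors through the inflation $f$. This is proved by applying (\textbf{ET3}) to a split triangle on $U$.
    \item \emph{At $\calC(U,Z)$.} If $c=g h$, then $(\delta_U)_\#(c)=h^*g^*\delta=0$ by (a). The converse is exactly (b).
    \item \emph{At $\E(U,X)$.} We have $f_*(c^*\delta)=c^*(f_*\delta)=0$ by bifunctoriality and (a). Conversely, let $\varepsilon\in\E(U,X)$ with $f_*\varepsilon=0$, and realise $\varepsilon$ as $X\overset{i}{\rightarrowtail} E\twoheadrightarrow U$. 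By the dual of (b), $f$ extends along $i$ to some $e\colon E\to Y$. Then (\textbf{ET3}) produces $c\colon U\to Z$ such that $(1_X,e,c)$ is a morphism of $\E$-triangles. The compatibility condition in the definition of a morphism of $\E$-triangles reads precisely $\varepsilon=c^*\delta$.
\end{itemize}

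The main obstacle is exactness at $\E(U,Y)$. The inclusion $\operatorname{im} f_*\subseteq\ker g_*$ is immediate, since $g_*f_*=(gf)_*=0$. For the reverse inclusion, take $\varepsilon\in\E(U,Y)$ with $g_*\varepsilon=0$ and realise it as $Y\rightarrowtail E\twoheadrightarrow U$. The approach is to use (\textbf{ET4}$\op$) on the composite of deflations $E\twoheadrightarrow U$ and of the deflation $Y\twoheadrightarrow Z$ pushed out along $Y\rightarrowtail E$.
\begin{itemize}
    \item The hypothesis $g_*\varepsilon=0$ says that the pushout of $\varepsilon$ along $g$ splits, so its middle term is $Z\oplus U$.
    \item Feeding this into (\textbf{ET4}$\op$), the aim is to produce an $\E$-triangle $X\rightarrowtail E'\twoheadrightarrow U$ with extension $\varepsilon'$, together with a morphism of $\E$-triangles to $\varepsilon$ whose component on the first object is $f$. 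By the compatibility condition, this means $f_*\varepsilon'=\varepsilon$.
\end{itemize}
Keeping track of which extensions coincide in the octahedral diagram is delicate. I would follow the argument of \cite[Prop.~3.11]{NP19} here rather than reconstruct it, and I expect this step to require the most care. Smallness of $\calC$ is used only so that the $\E(-,-)$ are genuine $k$-vector spaces, and the hereditary assumption plays no role in this six-term sequence.
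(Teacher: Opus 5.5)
The paper gives no proof of its own here; it only cites \cite[Cor.~3.12]{NP19}. Your treatment of exactness at $\calC(U,Y)$, $\calC(U,Z)$ and $\E(U,X)$ is the standard argument and is correct, apart from two misattributed axioms noted at the end.

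The genuine gap is exactness at $\E(U,Y)$. This is the one step that actually needs the octahedral axiom, and you leave it unproved. The route you sketch also does not work as written.
\begin{itemize}
\item The deflations $E\twoheadrightarrow U$ and $g\colon Y\twoheadrightarrow Z$ are not composable.
\item ``Pushing out the deflation $g$ along $m\colon Y\rightarrowtail E$'' is not something the axioms provide. You can push out the extension $\varepsilon$ along $g$, and the lifting property of a realisation then gives a map $E\to P$ over $1_U$. But nothing tells you that this map is a deflation with cocone $X$ carrying the right extension.
\item That information is exactly what (\textbf{ET4}) gives for the composable inflations $X\overset{f}{\rightarrowtail}Y\overset{m}{\rightarrowtail}E$. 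Once you use it, no (\textbf{ET4}$\op$) step is needed.
\end{itemize}
Your duality bookkeeping is also reversed here: the covariant sequence uses (\textbf{ET4}), and the contravariant one uses (\textbf{ET4}$\op$).

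Here is the missing step. Apply (\textbf{ET4}) to $(X\overset{f}{\rightarrowtail}Y\overset{g}{\twoheadrightarrow}Z,\delta)$ and $(Y\overset{m}{\rightarrowtail}E\overset{e}{\twoheadrightarrow}U,\varepsilon)$. This gives:
\begin{itemize}
\item an $\E$-triangle $X\overset{mf}{\rightarrowtail}E\twoheadrightarrow G\overset{\delta''}{\dashrightarrow}$;
\item an $\E$-triangle $Z\rightarrowtail G\overset{e'}{\twoheadrightarrow}U$ realising $g_*\varepsilon$;
\item the compatibility $f_*\delta''=(e')^{*}\varepsilon$ in $\E(G,Y)$.
\end{itemize}
Since $g_*\varepsilon=0$, the second conflation is split, so $e'$ has a section $s\colon U\to G$. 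Set $\varepsilon'\coloneqq s^{*}\delta''\in\E(U,X)$. Bifunctoriality then gives $f_*\varepsilon'=s^{*}f_*\delta''=s^{*}(e')^{*}\varepsilon=(e's)^{*}\varepsilon=\varepsilon$.

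The contravariant spot is the mirror image. Take $\theta\in\E(Y,U)$ with $f^{*}\theta=0$, realised by $U\rightarrowtail E\twoheadrightarrow Y$. Apply (\textbf{ET4}$\op$) to the composite of deflations $E\twoheadrightarrow Y\overset{g}{\twoheadrightarrow}Z$, and use a retraction of the split conflation realising $f^{*}\theta$.

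Two smaller attributions need fixing.
\begin{itemize}
\item In (b), (\textbf{ET3}$\op$) takes the middle and right components as input, so it cannot be applied to the pair $(1_X,c)$. The map $X\oplus U\to Y$ compatible with $(1_X,c)$ comes from the defining property of a realisation in \cite{NP19}: every morphism of $\E$-extensions lifts to a morphism of the realising sequences.
\item The factorisation through $f$ of a map $h\colon U\to Y$ with $gh=0$ comes from (\textbf{ET3}$\op$), not (\textbf{ET3}). Apply it to the split triangle $U\overset{1_U}{\rightarrowtail}U\twoheadrightarrow 0$ with components $(h,0)$.
\end{itemize}
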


If an extriangulated $k$-category has enough injectives, we obtain the following useful version for extriangulated categories we use in this paper. If $\mathcal{X}$ is a full subcategory of $\calC$, let $[\mathcal{X}]$ denote the ideal of $\calC$ consisting of morphism factoring through an object in $\mathcal{X}$, and let $\calC/[\mathcal{X}]$ denote the ideal quotient.  If $\mathcal{X}$ is of the form $\add(X)$, for some object $X\in \calC$, we write $[X]$ when we mean $[\add(X)]$. If $h$ is a morphism in $\calC$ we write $[h]$ for its equivalence class in $\calC/[\mathcal{X}]$.

\begin{lemma}\label{lem:LESmodinj}
	Let $(\calC,\E)$ be a small extriangulated $k$-category with enough injectives and let $\mathcal{I}$ be the full subcategory of injective objects in $\calC$. Let 
   	\[ \begin{tikzcd}
			X \arrow[r,tail, "f"] & Y \arrow[r,two heads, "g"] & Z\arrow[r,dashed, "\delta"] & {}
	\end{tikzcd} 
	\]
	be an $\E$-triangle in $\calC$, and let $U$ be an object in $\calC$. There is an exact sequence
	 \begin{equation*}
    \begin{tikzcd}
        \frac{\calC}{[\calI]}(Z,U) \arrow[r, "- \circ {[g]}"] & \frac{\calC}{[\calI]}(Y,U) \arrow[r, "- \circ {[f]}"] &\frac{\calC}{[\calI]}(X,U) \arrow[r, "\overline{\delta_U^{\#}}"] & \E(Z,U) \arrow[r, "g^*"] & \E(Y,U) \arrow[r, "f^*"] & \E(X,U).
        \end{tikzcd}
    \end{equation*}
\end{lemma}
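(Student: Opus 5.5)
Starting from the contravariant long exact sequence of \cref{lem:LES} for the given $\E$-triangle and the object $U$, the plan has two parts. First, show that the first three maps descend to the ideal quotient $\calC/[\calI]$. Second, check that exactness at the three affected terms survives the passage to the quotient. The last three terms $\E(Z,U)\to\E(Y,U)\to\E(X,U)$ are untouched, so exactness at $\E(Y,U)$ is inherited directly from \cref{lem:LES}.

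\textbf{Well-definedness.} The maps $-\circ[g]$ and $-\circ[f]$ are well defined because $[\calI]$ is an ideal. For $\overline{\delta_U^{\#}}$, take $h\colon X\to U$ factoring as $X\xrightarrow{a}I\xrightarrow{b}U$ with $I$ injective. By naturality,
\[\E(Z,h)(\delta)=\E(Z,b)\bigl(\E(Z,a)(\delta)\bigr),\]
and $\E(Z,a)(\delta)\in\E(Z,I)=0$. So $\delta_U^{\#}$ vanishes on $[\calI](X,U)$ and descends to the quotient.

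\textbf{Exactness at $\E(Z,U)$.} The image of $\overline{\delta_U^{\#}}$ equals the image of $\delta_U^{\#}$, so exactness here follows from \cref{lem:LES}.

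\textbf{Exactness at $\frac{\calC}{[\calI]}(X,U)$.} Suppose $\overline{\delta_U^{\#}}([h])=0$. Then $\delta_U^{\#}(h)=0$, so by \cref{lem:LES} we have $h=h'\circ f$ for some $h'\colon Y\to U$, and hence $[h]=[h']\circ[f]$. Conversely, the composite $\overline{\delta_U^{\#}}\circ(-\circ[f])$ vanishes, since it is induced by $\delta_U^{\#}\circ(-\circ f)=0$.

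\textbf{Exactness at $\frac{\calC}{[\calI]}(Y,U)$.} This is the one step needing care, and the only place enough injectives is used. The composite $(-\circ[f])\circ(-\circ[g])$ vanishes because $g\circ f=0$. Now suppose $[y\circ f]=0$ for some $y\colon Y\to U$, so $y\circ f=b\circ a$ with $a\colon X\to I$, $b\colon I\to U$, and $I$ injective.
\begin{itemize}
\item Apply the contravariant sequence with $I$ in place of $U$. Since $\E(Z,I)=0$, the map $-\circ f\colon\calC(Y,I)\to\calC(X,I)$ is surjective, so $a=a'\circ f$ for some $a'\colon Y\to I$.
\item Then $(y-b\circ a')\circ f=0$, so by \cref{lem:LES} we get $y-b\circ a'=z\circ g$ for some $z\colon Z\to U$.
\item Since $b\circ a'$ factors through $I$, we have $[y]=[z]\circ[g]$.
\end{itemize}
This completes exactness at all terms.
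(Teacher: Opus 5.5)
Your proof is correct. At the key step, exactness at $\frac{\calC}{[\calI]}(Y,U)$, it takes a genuinely shorter route than the paper.

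\textbf{The paper's route.} It uses enough injectives to choose an $\E$-triangle $X \xrightarrow{\alpha} J \to A$ with $J$ injective, and observes that $\alpha$ is a left $\calI$-approximation. It then invokes a shifted version of (\textbf{ET4}) to produce a morphism $m\colon Y\to M$, where $J \xrightarrow{i} M \to Z$ is an $\E$-triangle. This triangle is split because $J$ is injective, so $i$ has a retraction $r$, and one gets $r\circ m\circ f=\alpha$. Any $h$ with $h\circ f\in[\calI](X,U)$ can then be written $h\circ f=j\circ\alpha=(j\circ r\circ m)\circ f$.

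\textbf{Your route.} You bypass the approximation and the (\textbf{ET4}) diagram entirely. You apply \cref{lem:LES} directly to the given $\E$-triangle with $I$ in place of $U$: since $\E(Z,I)=0$, every morphism $X\to I$ extends along $f$. This is exactly what the paper's diagram chase establishes for the particular map $\alpha$, and you get it for an arbitrary $a$ in one line.

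\textbf{A correction to your commentary.} You say this step is ``the only place enough injectives is used.'' In fact your argument never uses that $\calC$ has enough injectives; it only uses that the intermediate object $I$ is injective. So your proof shows the lemma holds for any small extriangulated category, with $\calI$ the full subcategory of injectives. This is a small but genuine strengthening. The paper's detour through a fixed approximation is not needed for this statement.
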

\begin{proof}
	\underline{Exactness at $\E(Y,U)$} follows from \Cref{lem:LES}.
	
	\underline{Exactness at $\E(Z,U)$:} We show that the morphism $\delta_U^{\#}$ factors through $\frac{\calC}{[\calI]}(X,U)$. Denote by $[\calI](X,U)$ the ideal of $\calC$ consisting of morphisms $X \to U$ which factor through an injective object. It is sufficient to show that $[\calI](X,U) \subseteq \ker(\delta_U^{\#})$. Thus, let $j: X \xrightarrow{h} J \xrightarrow{l} U$ be a morphism in $\calC$, where $J$ is injective. Then we have
	\[\delta_U^{\#}(j) = \delta_U^{\#}(l\circ h) = \E(Z,l\circ h)(\delta) =\E(Z,l)\left(\E(Z,h)(\delta)\right)=0,\]
	because $\E(Z,h)(\delta) \in \E(Z,J)= 0$ as $J$ is injective. We may therefore unambiguously define $\overline{\delta_U^{\#}}([h]) = \delta_U^{\#}(h)$ and conclude that $\image (\overline{\delta_U^{\#}})= \image(\delta_U^{\#}) = \ker( g^*)$.
	
	\underline{Exactness at $\frac{\calC}{[\calI]}(X,U)$:} Let $h: Y \to U$, then $\delta_U^{\#}(h \circ f) = 0$ by the exactness of the sequence in $\calC$, see \cref{lem:LES}. This implies $\image( - \circ {[f]}) \subseteq \ker(\overline{\delta_U^{\#}})$. To show the converse, let $[h] \in \ker(\overline{\delta_U^{\#}})$, which means $\delta_U^{\#}(h) = 0$. From the exactness of the long exact sequence in $\calC$, see \cref{lem:LES}, there exists $j: Y \to U$ such that $h = j \circ f$. Therefore we have $[h] = [j \circ f]$ and hence $\ker(\overline{\delta_U^{\#}}) \subseteq \image(- \circ {[f]})$ as required.

	\underline{Exactness at $\frac{\calC}{[\calI]}(Y,U)$:} Since $\calC$ has enough injectives, let 
	\[ \begin{tikzcd}
			X \arrow[r,tail, "\alpha"] & J \arrow[r,two heads] & A \arrow[r,dashed] & {}
	\end{tikzcd} 
	\]
	be an $\E$-triangle with $J \in \calI$. Applying $\calC(-,I)$ for any $I \in \calI$ and using $\E(-,I)=0$ in the long exact sequence obtained via \cref{lem:LES} shows that $\alpha$ is a left $\calI$-approximation of $X$. This means that every morphism $X \to U$ which factors through an injective object factors through $\alpha$. Using a ``shifted'' version of (\textbf{ET4}) \cite[Lem. A.10]{GNP23}, we can find a {commutative} diagram of $\E$-triangles
		\begin{equation*}
			\begin{tikzcd}[column sep=4em]
                                  X \arrow[r,tail, "f"] \arrow[d, "\alpha", tail, swap] & Y \arrow[r,two heads, "g"] \arrow[d, tail, "m"] & Z\arrow[r,dashed, "\delta"] \arrow[d, equal] & {} \\
                                  J \arrow[r,tail, "i"] \arrow[d, two heads] & M \arrow[r,two heads] \arrow[d, two heads] & Z\arrow[r,dashed, "{\E(Z,\alpha)(\delta)}"] & {} \\
                                  A \arrow[r, equal] \arrow[d,dashed] & A \arrow[d, dashed]\\
                                  {} & {}
	\end{tikzcd}
		\end{equation*}
	Because $J$ is injective in $\calC$, the middle row is a split conflation and $i$ admits a retraction $r: M \to J$. From the commutative top-left square we get $r\circ m\circ f = r\circ i\circ \alpha = \alpha$. Let now $[h] \in \ker(- \circ {[f]})$. Then $h: Y \to U$ is such that $h \circ f \in [\calI](X,U)$. Since $\alpha$ is a left $\calI$-approximation of $X$ we can write $h \circ f = j \circ \alpha$ for some $j: J \to U$. Using $r\circ m \circ f= \alpha$ we obtain
	\[ ( h - j \circ r \circ m )\circ f = h \circ f - j \circ r \circ m \circ f = h \circ f - j \circ \alpha = 0. \]
	It follows from the exactness of the long exact sequence in $\calC$, see \cref{lem:LES}, there exists $l: Z \to U$ such that $h-j \circ r \circ m = l \circ g$. Because $j \circ r \circ m$ factors through an injective object $J$, we have $[h] = [l \circ g]$ in $\calC/[\calI]$. In other words, $\ker( - \circ {[f]}) \subseteq \image( - \circ {[g]})$. The reverse inclusion is immediate because $((- \circ f) \circ g) = - \circ (f \circ g) = 0$ holds in $\calC$. This concludes the proof.
\end{proof}

Note that 0-Auslander extriangulated categories have enough injectives \cite[Prop. 3.6]{GNP23}. The following are some commonly studied and interesting examples of 0-Auslander extriangulated categories.

\begin{example}\label{eq:0Aus}
\,
\begin{enumerate}
    \item\label{eg:0Aus_K2} Let $\Lambda$ denote a finite-dimensional $k$-algebra. Then $\Ktwo(\proj\Lambda)$, namely the category of morphisms of finitely generated projective $\Lambda$-modules up to homotopy, is a reduced 0-Auslander extriangulated $k$-category \cite[§3.3.3]{GNP23}.
	\item\label{eg:0Aus_per2} 
    More generally, let $A$ denote a non-positive {differential graded (}dg{)} $k$-algebra.
    Consider $\pertwo(A)$, namely the subcategory of the {perfect} derived category {$\per(A)$} spanned by objects $X$ fitting in a triangle
    \begin{equation*}
    \begin{tikzcd}
        T_1\arrow[r] & T_0\arrow[r] & X \arrow[r] & \Sigma T_1, 
    \end{tikzcd}
    \end{equation*}
    where $T_0$ and $T_1$ are in $\add(A)$.
    This is also an example of a reduced 0-Auslander extriangulated $k$-category \cite[§3.3.8]{GNP23}. 
	\item\label{eg:0Aus_mod} 
	Let $n$ be a positive integer, and let $\Lambda_n$ be the path $k$-algebra of the linearly oriented $A_n$-quiver, namely
	\begin{equation*}
	\Lambda_n = k \Big( \begin{tikzcd}
	1\arrow[r] & 2 \arrow[r] & \cdots \arrow[r] & n
	\end{tikzcd}\Big).
	\end{equation*}
	Then the category $\mods(\Lambda_n)$ of finite-dimensional $\Lambda_n$-modules is a 0-Auslander extriangulated $k$-category \cite[§3.3.2]{GNP23} with abelian extriangulated structure. For $n\geq 2$, it is not reduced.
\end{enumerate}
\end{example}

In fact, the projective and the injective objects of a 0-Auslander extriangulated category are closely related to another.

\begin{lemma}[{\cite[Lem. 3.12]{GNP23},\cite[Rem. 4.32 and Cor. 4.35]{PPPP23}}]\label{lem:GNP23.3.12}
    Let $(\calC,\E)$ be a 0-Auslander extriangulated $k$-category, let $\mathcal{P}$ denote the full subcategory of projective objects in $\calC$, let $\mathcal{I}$ denote the full subcategory of injective objects in $\calC$ and let $\mathcal{N}$ denote the full subcategory of projective-injective objects in $\calC$.
    There are mutually quasi-inverse additive equivalences
    \begin{equation}\label{eq:GNP23.3.12}
        \begin{tikzcd}[column sep=5em]
            \mathcal{P}/[\mathcal{N}]\arrow[r,"\Sigma", bend left=10] & \arrow[l,"\Omega", bend left=10] \mathcal{I}/[\mathcal{N}],
        \end{tikzcd}
    \end{equation}
    given by exchanging $P \in \mathcal{P}$ and $I \in \mathcal{I}$ in an $\E$-triangle
    \begin{equation}\label{eq:SigmaOmegaConfl}
        \begin{tikzcd}
            P \arrow[r,tail] & N' \arrow[r,two heads] & I \arrow[r, dashed] & {},
        \end{tikzcd}
    \end{equation}
    where $N'$ is projective-injective (necessarily $0$ whenever $\calC$ is reduced).
\end{lemma}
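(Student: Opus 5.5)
The statement is cited from \cite[Lem. 3.12]{GNP23} and \cite{PPPP23}. To prove it directly, I would first construct the $\E$-triangle \eqref{eq:SigmaOmegaConfl} for a projective $P$ and check that its third term is injective; dually for an injective $I$ I would find a triangle ending in $I$ with projective first term. Then I would show the two assignments are well defined on the stable categories and mutually quasi-inverse.

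\textbf{Existence of the triangles.} By the 0-Auslander axiom there is an $\E$-triangle $P \rightarrowtail N' \twoheadrightarrow I \dashrightarrow$ with $N'$ projective-injective. For any $U$, \cref{lem:LES} applied with $\calC(-,U)$ gives exactness of
\[\E(N',U)\to \E(P,U)\to \E^2(I,U)\]
in the higher-extension long exact sequence. Here $\E(N',U)=0$ because $N'$ is projective, and $\E^2=0$ since $\calC$ is hereditary (\cite[Prop.~2.1]{GNP23}). This gives the wrong direction, so I will apply the sequence to the first variable instead. For any $U$, the exact sequence $\E(U,N')\to\E(U,I)\to\E^2(U,P)=0$ together with $\E(U,N')=0$ (as $N'$ is injective) yields $\E(U,I)=0$. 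Hence $I$ is injective, and I set $\Sigma P := I$.

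For the dual direction, take $I$ injective and, using enough projectives and heredity, choose a triangle $T_1\rightarrowtail T_0\twoheadrightarrow I$ with $T_0,T_1$ projective. Embed $T_0 \rightarrowtail Q$ with $Q$ projective-injective. Because $I$ is injective, the triangle $T_1 \rightarrowtail T_0 \twoheadrightarrow I$ can be compared, via (ET4) and the fact that $T_0\to I$ factors through any inflation of $T_0$ (since $\E(-,I)$ kills the relevant cone, by \cref{lem:LES}), to produce a triangle $T_1\oplus(\text{something})\rightarrowtail Q' \twoheadrightarrow I$ with $Q'$ projective-injective. The first term is projective, because it is the cocone of a deflation between projective-injectives and heredity applies; I set $\Omega I$ to be this first term.

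\textbf{Functoriality and equivalence.} Given $a\colon P_1\to P_2$, the inflation $P_1\rightarrowtail N_1'$ is a left $\mathcal{N}$-approximation (indeed a left $\mathcal I$-approximation, as in the proof of \cref{lem:LESmodinj}). So $P_1\to P_2\to N_2'$ extends along $P_1\rightarrowtail N_1'$, and (ET3) gives $\Sigma a\colon I_1\to I_2$. Any two choices of $\Sigma a$ differ by a map factoring through $N_1'\to I_2$ (by the exact sequence of \cref{lem:LES}), and maps $P_1\to P_2$ factoring through $\mathcal N$ induce maps factoring through $\mathcal N$. Hence $\Sigma$ is a well-defined additive functor $\mathcal P/[\mathcal N]\to\mathcal I/[\mathcal N]$, and $\Omega$ is handled dually. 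Both composites are the identity on objects up to projective-injective summands, which vanish in the quotient, and the induced maps agree with the identity modulo $[\mathcal N]$ by the same uniqueness argument.

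\textbf{Main obstacle.} The delicate point is the well-definedness modulo $[\mathcal N]$, rather than modulo $[\mathcal I]$ or $[\mathcal P]$. Concretely, I need $\calC(N_1',I_2)\to\calC(P_1,I_2)$ to account precisely for the ambiguity, and I need maps $I_1\to I_2$ factoring through an injective, after pulling back, to factor through $\mathcal N$. I would first try to reduce the latter to the former: any map from $I_1$ to an injective object factors through the deflation $N_1'\twoheadrightarrow I_1$ modulo maps out of $I_1$. If that fails, I would instead invoke \cite[Cor.~4.35]{PPPP23} directly.
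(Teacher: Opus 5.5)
Your construction of the triangles is sound in outline, but the well-definedness of $\Sigma$ and $\Omega$ modulo $[\calN]$ is a genuine gap.

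\textbf{The triangles.} Step~1 is correct: right exactness of $\E(U,-)$ forces $\E(U,I)=0$. Step~2 needs two repairs.
\begin{enumerate}
\item The cocone is projective because it is the cocone of a deflation \emph{out of} the projective object $Q'$. It is not a deflation ``between projective-injectives'', since $I$ need not be projective.
\item You must still justify that the extension $Q\to I$ of $T_0\twoheadrightarrow I$ is a deflation. Either use (WIC), which holds in the Krull--Schmidt setting, or apply (ET4) to $T_1\rightarrowtail T_0\rightarrowtail Q$. This gives $T_1\rightarrowtail Q\twoheadrightarrow I\oplus\Sigma T_0$, because $I\rightarrowtail E\twoheadrightarrow\Sigma T_0$ splits when $I$ is injective. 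Then compose with the split deflation onto $I$.
\end{enumerate}

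\textbf{The gap.} Well-definedness modulo $[\calN]$ is the real content of the lemma, and you leave it open, with a fallback to citing \cite{PPPP23}. Your proposed mechanism cannot work. You control the ambiguity of $c=\Sigma a$ through $\calC(N_1',I_2)\to\calC(P_1,I_2)$, i.e.\ by precomposition with $p_1\colon N_1'\twoheadrightarrow I_1$. But deflations need not be epimorphisms. In the reduced case, e.g.\ $\Ktwo(\proj\Lambda)$ with $N_1'=N_2'=0$, the square condition $c\circ p_1=p_2\circ b$ is vacuous, and every $c\colon I_1\to I_2$ satisfies it. Your stated ``obstacle'' is also a red herring. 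Every map $I_1\to I_2$ trivially factors through an injective, and the functor lives on $\mathcal{I}/[\calN]$, not on a quotient by $[\calI]$.

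\textbf{The missing idea.} Use the extension component of a morphism of $\E$-triangles, together with the covariant exact sequence of the \emph{target} triangle.
\begin{enumerate}
\item Suppose $(a,b,c)$ and $(a,b',c')$ are morphisms of $\E$-triangles from $(P_1\rightarrowtail N_1'\twoheadrightarrow I_1,\delta_1)$ to $(P_2\rightarrowtail N_2'\twoheadrightarrow I_2,\delta_2)$. Then $\E(c-c',P_2)(\delta_2)=0$. Exactness of $\calC(I_1,N_2')\xrightarrow{p_2\circ-}\calC(I_1,I_2)\xrightarrow{(\delta_2)_{\#}}\E(I_1,P_2)$ from \cref{lem:LES} shows that $c-c'$ factors through $p_2\colon N_2'\to I_2$. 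This is not a factorisation involving $N_1'$. Hence $c-c'\in[\calN]$, and in the reduced case $c$ is even unique.
\item If $a$ factors through some $M\in\calN$, then $\E(I_1,a)(\delta_1)$ factors through $\E(I_1,M)=0$. So $\E(c,P_2)(\delta_2)=0$, and again $c\in[\calN]$.
\item For $\Omega$, lift $c\circ p_1$ along the right $\mathcal{P}$-approximation $p_2$ and use the dual of (ET3). A morphism of $\E$-triangles of the form $(a,b,0)$ forces $\E(I_1,a)(\delta_1)=0$. By the contravariant sequence, $a$ then factors through $\iota_1\colon P_1\rightarrowtail N_1'$.
\item If $c$ factors through $M\in\calN$, then $\E(c,P_2)(\delta_2)$ factors through $\E(M,P_2)=0$, so $a\in[\calN]$.
\end{enumerate}
With these in hand, a single morphism of $\E$-triangles $(a,b,c)$ computes both $\Sigma[a]=[c]$ and $\Omega[c]=[a]$. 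This yields the mutually quasi-inverse equivalences. The paper itself gives no argument here and simply cites \cite[Lem.~3.12]{GNP23} and \cite{PPPP23}.
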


We note that all choices of projective-injective object in {\cref{eq:SigmaOmegaConfl}} yield the same equivalences of categories in \eqref{eq:GNP23.3.12}.
For example, if we apply \Cref{lem:GNP23.3.12} to $\Ktwo(\proj\Lambda)$, for some finite-dimensional $k$-algebra $\Lambda$, the equivalences in \eqref{eq:GNP23.3.12} simply become 
\begin{equation*}
        \begin{tikzcd}[column sep=5em]
            \add(\Lambda)\arrow[r,"{\Sigma}", bend left=10] & \arrow[l,"{\Sigma^{-1}}", bend left=10] \add(\Sigma\Lambda),
        \end{tikzcd}
    \end{equation*}
    namely the suspension and desuspension functors in the derived category of $\Lambda$.
    
   \subsection{Silting objects}

We now introduce a class of objects which plays a central role throughout this paper. Silting theory was first introduced in the context of triangulated categories in \cite{KV88}. The definitions can be adapted to the setting of extriangulated categories \cite{AT22,AT23,GNP23}.

\begin{definition}
    Let $(\calC,\E)$ be a hereditary extriangulated $k$-category.
    \begin{enumerate}
    \item A full subcategory of $\calC$ is \textit{thick} if it is closed under isomorphisms, direct summands, {$\E$-}extensions, cones and cocones. The smallest thick subcategory of $\calC$ containing a full subcategory $\mathcal{X}\subseteq \calC$ will be denoted $\thick_{\calC}(\mathcal{X})$, or simply $\thick(\mathcal{X})${, and called the \textit{thick subcategory generated}} by $\mathcal{X}$. 
    \item A full subcategory $\mathcal{R}\subseteq \calC$ is \textit{presilting} if $\E(R_1,R_2)=0$ for all $R_1,R_2\in \mathcal{R}$. A presilting subcategory is \textit{silting} if additionally $\thick(\mathcal{R})=\calC$.
        \item An object $R\in \calC$ is \textit{presilting} {if} $\add(R)$ is a presilting subcategory of $\calC$, where $\add(R)$ denotes the smallest subcategory of $\calC$ which contains $R$ and is closed under isomorphisms, direct sums and direct summands. A presilting object $R\in \calC$ {is} \textit{silting} {if} $\add(R)$ is a silting subcategory of $\calC$.
    \item Denote the set of presilting {objects} in $\calC$ by $\presilt(\calC)$, and the set of silting {objects} by $\silt(\calC)$. {Let {$R$} be presilting, then we denote the set of presilting (resp. silting) {objects} in $\calC$ which contain {$R$ as a direct summand} by $\presilt_{{R}}(\calC)$ (resp. by $\silt_{R}(\calC)$).} When $\calC=\Ktwo(\proj \Lambda)$ for some finite-dimensional $k$-algebra $\Lambda$, we write $\presilt(\Lambda)$ and $\silt(\Lambda)$ for $\presilt(\calC)$ and $\silt(\calC)$, respectively.
    \end{enumerate}
\end{definition}

Recall that an additive category is \textit{Krull--Schmidt} if every object decomposes into a finite direct sum of objects with local endomorphism rings. 

\begin{remark}\label{rem:silt}
\begin{enumerate}
    \item\label{rem:siltobj} If the extriangulated k-category $(\calC,\E)$ is Krull--Schmidt and contains a silting object $T$, then every presilting subcategory of $\calC$ is of the form $\add(X)$ for some presilting object $X\in\calC$ {by} \cite[Prop. 5.4]{AT22}. Moreover, every basic silting object has the same number of indecomposable direct summands \cite[Cor. 4.2]{AT23}. 
    \item\label{rem:proj-inj-silt}
    Let $(\calC,\E)$ be a 0-Auslander extriangulated $k$-category and let $\calN\subseteq \calC$ be a subcategory in which every object is {projective}-injective. Then $\calN$ is contained in every silting subcategory of $(\calC,\E)$ {by} \cite[Rem. 4.4(d)]{GNP23}. In particular, by \eqref{rem:siltobj} above, if $(\calC,\E)$ has a silting object and is Krull--Schmidt, then every {indecomposable} projective-injective object in $(\calC,\E)$ is a direct summand of every silting object in $(\calC,\E)$. In that case, there is a uniquely determined basic projective-injective object $N\in \calC$ with a maximal number of indecomposable direct summands, and $N$ is a direct summand of every silting object in $\calC$.
\end{enumerate}
\end{remark}

Throughout the main matter of this text, we work in the following setting.

\begin{setup}\label{setting}
    Let $(\calC,\E)$ (or just $\calC$) denote a Hom-finite Krull--Schmidt 0-Auslander extriangulated $k$-category with a basic projective silting object $T$. Furthermore, denote by $(\overline{\calC},\overline{\E})$ the corresponding reduction by the projective-injective objects {as in \cite[Prop. 3.30]{NP19}}. This is to set $\overline{\calC}\coloneqq {\calC}/[{\add(N)}]$ and $\overline{\E}$ to be the induced bifunctor, where $N$ is the maximal basic projective-injective presilting object in $\calC$. Let $\Lambda$ denote the endomorphism $k$-algebra of $T$ in $\overline{\calC}$, which is then a finite-dimensional $k$-algebra. We have
    \[ \Lambda \coloneqq \End_{\overline{\calC}}(T) \simeq  \End_{\calC/[N]}(T) \simeq \End_{\calC}(T)/[N]. \]
\end{setup}

We remark that some works we cite additionally assume the condition (WIC) of \cite[Cond. 5.8]{NP19} on the category $\calC$, which postulates that if a composite morphism $g \circ f$ in $\calC$ is an inflation (resp. a deflation), then $f$ is an inflation (resp. $g$ is a deflation). However, an extriangulated category satisfies (WIC) if and only if it is weakly idempotent complete \cite[Prop. C]{Kla22}. In particular, Krull--Schmidt categories are (weakly) idempotent complete \cite[Cor. 4.4]{Kra15}.

In \Cref{setting}, the relationship with module categories is described in the following.

\begin{lemma}[{\cite[{Prop.} 3.11]{GNP23}}]\label{lem:GNP23.3.11}
   Let $\calC$, $\Lambda$ and $T$ be as in \cref{setting}.
    The composite functor
    \begin{equation*}
       \overline{(-)}\colon \calC \to \overline{\calC} \xrightarrow{\overline{\calC}(T,-)} \mods(\Lambda)
    \end{equation*}
    induces a $k$-linear equivalence of categories
    \begin{equation}\label{eq:GNP23.3.11}
      \overline{(-)}\colon \calC/[\mathcal{I}] \to \mods(\Lambda),
    \end{equation}
    where $\mathcal{I}$ denotes the full subcategory of $\calC$ spanned by injective objects. 
\end{lemma}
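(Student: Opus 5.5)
We need to prove that the functor $X \mapsto \overline{\calC}(T,X)$ induces an equivalence $\calC/[\calI] \to \mods(\Lambda)$. We check that the functor is well defined on the quotient, then that it is full and faithful, and finally that it is dense.

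\emph{Well defined.} Every injective object $I$ satisfies $\calC(T,I) \subseteq [N](T,I)$, so $\overline{\calC}(T,I)=0$. To see this, take the inflation $T \rightarrowtail Q$ with $Q$ projective-injective, which exists because $\calC$ is 0-Auslander. Applying $\calC(-,I)$ and using \Cref{lem:LES} together with $\E(-,I)=0$ shows that every morphism $T \to I$ factors through $Q \in \add(N)$. Consequently any morphism factoring through an injective object is killed by $\overline{(-)}$, and the functor descends to $\calC/[\calI]$.

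\emph{Full and faithful.} For each $X$ we use the hereditary presentation $T_1 \rightarrowtail T_0 \twoheadrightarrow X$ with $T_0, T_1 \in \add T$. Here $T_0$ and $T_1$ lie in $\add T$ because $T$ is a projective silting object, so every projective object lies in $\add T$ by the standard argument. Applying $\calC(T,-)$ and using $\E(T,-)=0$ shows that $\calC(T,T_0)\to\calC(T,X)$ is surjective. Passing to $\overline{\calC}$ keeps this map surjective. Since $\overline{\calC}(T,-)$ is fully faithful on $\add T$, this gives a projective presentation $\overline{T_1}\to\overline{T_0}\to\overline{X}\to 0$ in $\mods(\Lambda)$. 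Exactness at $\overline{T_0}$ is the key point. It follows from the long exact sequence for $\calC(T,-)$, after checking that maps into $T_0$ factoring through $N$ do not spoil the comparison. For this I would use that $N$ is projective, so such maps lift along the deflation.

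Given $\varphi\colon \overline{X}\to\overline{Y}$, we lift it to a map of presentations. Yoneda on $\add T$ gives a compatible pair of maps $T_0\to T_0'$ and $T_1\to T_1'$, up to $[N]$. Then (\textbf{ET3}) produces a morphism $X\to Y$ realising $\varphi$, which proves fullness.

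For faithfulness, let $h\colon X\to Y$ with $\overline{h}=0$. Then $h\circ\pi\colon T_0\to Y$ lies in $[N]$. I want to show that $h$ factors through an injective object. I would use \Cref{lem:LESmodinj} for the triangle $T_1\rightarrowtail T_0\twoheadrightarrow X$: the class $[h]$ is determined by $[h\circ\pi]$ together with a vanishing condition. Since $N$ is injective, the map $h\pi$ factors through $\calI$. Exactness of the sequence at $\frac{\calC}{[\calI]}(X,Y)$ then shows that $[h]$ is zero whenever the connecting map into $\E(T_1,Y)$ vanishes. This holds because $T_1$ is projective, so $\E(T_1,Y)=0$. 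Hence $[h]=0$.

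\emph{Dense.} Given $M\in\mods(\Lambda)$, take a presentation $\overline{T_1}\xrightarrow{\overline{a}}\overline{T_0}\to M\to 0$ and lift $\overline{a}$ to $a\colon T_1\to T_0$. Replace $a$ by $\binom{a}{\iota}\colon T_1\to T_0\oplus Q$, where $\iota\colon T_1\rightarrowtail Q$ is the inflation into a projective-injective object. This map is an inflation by (WIC) or a standard argument, so it has a cone $X$. The image of $X$ is the cokernel of $\overline{a}$, because $\overline{Q}=0$; hence $\overline{X}\cong M$.

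The main obstacle is the faithfulness step, specifically verifying that killing $[N]$ on the $T$-side corresponds exactly to killing $[\calI]$ on the object side. This is where \Cref{lem:LESmodinj} has to be used carefully.
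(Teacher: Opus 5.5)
Your arguments for well-definedness, for exactness of $\overline{T_1}\to\overline{T_0}\to\overline{X}\to 0$ (lifting maps out of $N$ along the deflation) and for density are sound. The faithfulness step, however, does not work as written. Apply \Cref{lem:LESmodinj} to the $\E$-triangle $T_1\overset{x}{\rightarrowtail} T_0\overset{\pi}{\twoheadrightarrow} X$ with $U=Y$:
\[
\tfrac{\calC}{[\calI]}(X,Y)\xrightarrow{-\circ[\pi]}\tfrac{\calC}{[\calI]}(T_0,Y)\xrightarrow{-\circ[x]}\tfrac{\calC}{[\calI]}(T_1,Y)\to\E(X,Y)\to\E(T_0,Y)\to\E(T_1,Y).
\]
Here $\frac{\calC}{[\calI]}(X,Y)$ is the leftmost term. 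The lemma asserts no exactness there, and no connecting map runs from it into $\E(T_1,Y)$, which sits at the far right. So $\E(T_1,Y)=0$ is irrelevant. Nothing you wrote shows that $-\circ[\pi]$ is injective, and that injectivity is exactly what $[h\pi]=0\Rightarrow[h]=0$ requires. In triangulated terms the kernel of $-\circ\pi$ comes from $\Hom(\Sigma T_1,Y)$. Those maps need not vanish; they only factor through the injective object $\Sigma T_1$, and your argument must capture this.

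One way to close the gap is the shifted (\textbf{ET4}) trick from the proof of \Cref{lem:LESmodinj}.
\begin{enumerate}
\item Take the inflation $\iota\colon T_1\rightarrowtail Q$ with $Q$ projective-injective. Its cone $J$ is injective because $\calC$ is hereditary.
\item \cite[Lem. A.10]{GNP23} gives $\E$-triangles $T_0\rightarrowtail M\twoheadrightarrow J$ and $Q\rightarrowtail M\twoheadrightarrow X$. The second splits since $Q$ is injective. This yields an $\E$-triangle $T_0\rightarrowtail X\oplus Q\twoheadrightarrow J$ whose component $T_0\to X$ is $\pi$.
\item Apply \Cref{lem:LESmodinj} to this new triangle. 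Since $\frac{\calC}{[\calI]}(J,Y)=0$ and $\frac{\calC}{[\calI]}(X\oplus Q,Y)=\frac{\calC}{[\calI]}(X,Y)$, exactness at the middle term says precisely that $[h]\mapsto[h\pi]$ is injective.
\item As $h\pi\in[N]\subseteq[\calI]$, you get $[h]=0$.
\end{enumerate}

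There is also a smaller gap in fullness. Your square $x'a_1=a_0x$ commutes only modulo $[N]$, but (\textbf{ET3}) needs a genuinely commutative square. Write the error $a_0x-x'a_1$ as $gk$ with $k\colon T_1\to N''$ and $N''\in\add N$. Since $N''$ is injective and $x$ is an inflation, $k=k'x$ for some $k'\colon T_0\to N''$. Replacing $a_0$ by $a_0-gk'$ makes the square commute without changing $\overline{a_0}$. (The paper gives no proof of this statement and simply cites \cite[Prop. 3.11]{GNP23}, so your direct route is the only one to assess here.)
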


The assumptions of \cref{setting} allow us to establish a well-behaved relationship between presilting and silting objects.

\begin{definitionproposition}[{see \cite[Corollary 4.8]{GNP23},\cite[{Lem.} 4.2]{IJY14}}]\label{defprop:Bongartz} 
    Let $\calC$ and $N$ be as in \Cref{setting}, and let $I$ denote the maximal basic injective object in $\calC$. Let $U$ be a presilting object in $\calC$. There is an $\E$-triangle 
    \begin{equation}\label{eq:Bongartz1}
    \begin{tikzcd}
	V^U \arrow[r ,tail] & U' \arrow[r,"\beta_U",two heads] &I \arrow[r,dashed] & {}
    \end{tikzcd}
    \end{equation}
    where the deflation $\beta_U$ is a (minimal) right $\add(U\oplus N)$-approximation of $I$. One defines the \textit{Bongartz completion} of $U$ to be the basic object $U^+$ in $\calC$ such that $ \add(U^+)= \add(U \oplus N \oplus V^U)$. This is a silting object in $\calC$ having $U$ as a direct summand. The existence of Bongartz completions shows that presilting objects in 0-Auslander extriangulated $k$-{categories} satisfying \Cref{setting} are precisely the direct summands of silting objects. 
\end{definitionproposition}

We will need a ``shifted'' version of the triangle in \eqref{eq:Bongartz1} to conduct some of our proofs below.

\begin{lemma}\label{prop:Bongartz}
Let $\calC$, $T$ and $N$ be as in \Cref{setting} and let $U$ be a presilting object in $\calC$.
	For every $T_0\in \add(T)$, there exists {an $\E$-triangle}
	 \begin{equation}\label{eq:Bongartz2}
    \begin{tikzcd}[ampersand replacement=\&,column sep=3em]
	T_0  \arrow[r,"\alpha" ,tail] \&  X_0 \arrow[r,two heads] \& U_0 \arrow[r,dashed] \& {}
    \end{tikzcd}
    \end{equation}
    in which $\alpha$ is a left $\add(U^+)$-approximation and $U_0$ is in $\add(U)$.
	\end{lemma}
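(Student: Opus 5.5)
Take a minimal left $\add(U^+)$-approximation $\alpha\colon T_0\to X_0$ and show that it is an inflation whose cone lies in $\add(U)$. The plan is to first produce \emph{some} left $\add(U^+)$-approximation which is an inflation with cone in $\add(U)$, built from the Bongartz triangle \eqref{eq:Bongartz1}.

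\textbf{Step 1: a good inflation.} Since $\calC$ is 0-Auslander, there is an inflation $T_0\rightarrowtail Q$ with $Q$ projective-injective, so $Q\in\add(N)$. Its cone is injective, hence in $\add(I)$; write the $\E$-triangle as $T_0\rightarrowtail Q\twoheadrightarrow I_0\dashrightarrow$. Taking direct sums of \eqref{eq:Bongartz1}, we get an $\E$-triangle $V_0\rightarrowtail U'_0\twoheadrightarrow I_0\dashrightarrow$ in which $V_0\in\add(V^U)$, $U'_0\in\add(U\oplus N)$, and the deflation is a right $\add(U\oplus N)$-approximation.

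\textbf{Step 2: the homotopy pullback.} Apply (ET4$\op$), or the pullback variant \cite[Lem. A.10]{GNP23}, to the deflations $Q\twoheadrightarrow I_0$ and $U'_0\twoheadrightarrow I_0$. Since $Q$ is projective, $Q\to I_0$ factors through $U'_0\to I_0$. This produces an object $X_0$ with two $\E$-triangles:
\[
T_0\rightarrowtail X_0\twoheadrightarrow U'_0\dashrightarrow
\qquad\text{and}\qquad
V_0\rightarrowtail X_0\twoheadrightarrow Q\dashrightarrow .
\]
The second triangle splits because $Q$ is projective, so $X_0\simeq V_0\oplus Q\in\add(U^+)$. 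In the first triangle the cone $U'_0$ lies in $\add(U\oplus N)$ rather than $\add(U)$.

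\textbf{Step 3: removing the $N$-summands of the cone.} Apply $\calC(-,Y)$ with $Y\in\add(U^+)$ to the first triangle. Since $\E(U'_0,Y)=0$ (as $U^+$ is presilting and $N$ is projective), \cref{lem:LES} shows that $T_0\to X_0$ is a left $\add(U^+)$-approximation.

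It remains to push the $N$-part of $U'_0$ into the middle term. Write $U'_0=U_0\oplus N_0$ and consider the triangle
\[
T_0\rightarrowtail X_0\twoheadrightarrow U_0\oplus N_0\dashrightarrow\ \delta .
\]
Its extension $\delta$ restricted to $N_0$ lies in $\E(N_0,T_0)=0$, since $N_0$ is projective. So the restricted triangle $T_0\rightarrowtail X'\twoheadrightarrow U_0$, obtained by pulling $\delta$ back along $U_0\hookrightarrow U_0\oplus N_0$, has $X'\oplus N_0\simeq X_0$. Concretely, use (ET4$\op$) with the split deflation $U_0\oplus N_0\to U_0$, or argue that the realisation of $\delta=\delta'\oplus 0$ is the direct sum of triangles by (ET2). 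In particular $X'$ is a summand of $X_0$, so $X'\in\add(U^+)$.

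\textbf{Step 4: the approximation property.} The approximation property for $T_0\to X'$ again follows from \cref{lem:LES}, using $\E(U_0,Y)=0$ for $Y\in\add(U^+)$.

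\textbf{Main obstacle.} The delicate points are Step 2, checking that the (ET4$\op$)-type diagram exists with the stated terms, and the splitting in Step 3. Both reduce to the projectivity of $Q$ and $N_0$ together with additivity of realisations.
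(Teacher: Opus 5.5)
Your proposal is correct and is essentially the paper's argument. Both take the homotopy pullback (a shifted (\textbf{ET4}$^{\mathrm{op}}$)) of the Bongartz triangle along a deflation from a projective-injective object onto the injective cosyzygy of $T_0$, observe that $V_0\rightarrowtail X_0\twoheadrightarrow Q$ splits so that $X_0\in\add(U^+)$, move the projective-injective summands of the cone into the middle term, and read off the approximation property from \cref{lem:LES}. The differences are cosmetic: the paper reduces to indecomposable $T_0$ and treats projective-injective $T_0$ separately, whereas you handle arbitrary $T_0$ at once and justify the splitting-off step explicitly via $\E(N_0,T_0)=0$ and (ET2). One small point: literal direct sums of \eqref{eq:Bongartz1} end in $I^m$, not $I_0$, so the Bongartz triangle for $I_0$ should be assembled from the minimal approximations of the indecomposable summands of $I_0$, as the paper implicitly does for $\Sigma T_0$.
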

		\begin{proof}
	Assume that $T_0$ is indecomposable. This is sufficient, because we may otherwise take \eqref{eq:Bongartz2} to be a direct sum of $\E$-triangles. If $T_0$ is injective as well, we can take \eqref{eq:Bongartz2} to be the split $\E$-triangle
    \begin{equation*}
    \begin{tikzcd}[ampersand replacement=\&,column sep=3em]
	T_0  \arrow[r,"\alpha" ,tail] \&  T_0 \arrow[r,two heads] \& 0, \arrow[r,dashed] \& {}
    \end{tikzcd}
    \end{equation*}
    since $U^+$ contains every indecomposable projective-injective as a direct summand. We therefore proceed by assuming that $T_0$ is indecomposable and non-injective. This will ensure that $\Sigma T_0$ is non-zero when regarded as an object in $\calC$. By \Cref{defprop:Bongartz}, we can find an $\E$-triangle
    \begin{equation}\label{eq:Bongartz2dp}
    \begin{tikzcd}
	V^U_0 \arrow[r ,tail] & U'_0 \arrow[r,"\beta_0",two heads] & \Sigma T_0 \arrow[r,dashed] & {}
    \end{tikzcd}
    \end{equation}
    where the deflation $\beta_{0}$ is a right $\add(U\oplus N)$-approximation of $\Sigma T_0$. We also fix an $\E$-triangle
     \begin{equation}\label{eq:Bongartz2Sigma}
    \begin{tikzcd}
	T_0\arrow[r ,tail] & N_0 \arrow[r,two heads] & \Sigma T_0 \arrow[r,dashed] & {}
    \end{tikzcd}
    \end{equation}
    with $N_0$ projective-injective, see \Cref{lem:GNP23.3.12}. By a ``shifted'' version of (\textbf{ET4}$^\text{op}$) \cite[Dual of Lem. A.10]{GNP23}, we can find a {commutative} diagram of $\E$-triangles
		\begin{equation}\label{eq:1.13ET4op}
			\begin{tikzcd}
                                   & T_0 \arrow[r, equal] \arrow[d,tail] & T_0 \arrow[d,tail]                                  &    \\
V^T_U \arrow[r,tail] \arrow[d, equal] & X'_0 \arrow[d,two heads] \arrow[r,two heads]          & N_0 \arrow[d,two heads] \arrow[r, dashed]                & {} \\
V^T_U \arrow[r,tail]                    & U'_0 \arrow[r, two heads,"\beta_0"] \arrow[d, dashed] & \Sigma T_0 \arrow[r, dashed] \arrow[d, dashed] & {} \\
                                   & {}                             & {}                                           &   
\end{tikzcd}
		\end{equation}
		The bottom row in \eqref{eq:1.13ET4op} is the $\E$-triangle in \eqref{eq:Bongartz2dp} and the right column in \eqref{eq:1.13ET4op} is the $\E$-triangle in \eqref{eq:Bongartz2Sigma}. We will adapt the middle column of \eqref{eq:1.13ET4op} to an $\E$-triangle with the desired properties. Set $U_0=U'_0/N'$, where $N'$ is the largest projective-injective direct summand of $U'_0$. By the projectivity of $N'$, we have that $N'$ is a direct summand of $X'_0$, whence we set $X_0=X'_0/N'$. We then have an $\E$-triangle 
		 \begin{equation*}    
		 \begin{tikzcd}[ampersand replacement=\&,column sep=3em]
	T_0  \arrow[r,"\alpha" ,tail] \&  X_0 \arrow[r,two heads] \& U_0 \arrow[r,dashed] \& {}
    \end{tikzcd}
    \end{equation*}
    which will now be shown to meet our requirements. First of all, we have $U_0\in \add(U)$ by construction.
    The middle row in {\eqref{eq:1.13ET4op}} is a split $\E$-triangle because $N_0$ is projective. Since $N_0$ is projective-injective, we have $N_0 \in \add(U^+)$ as $U^+$ is silting. It follows that $X'_0 \in \add(U^+)$ and therefore $X_0 \in \add(U^+)$ because also $V_U^T \in \add (U^+)$. To show that $\alpha$ is a left $\add(U^+)$-approximation of $T_0$, consider the induced exact sequence
     \begin{equation*}    
		 \begin{tikzcd}[ampersand replacement=\&,column sep=3em]
		 \calC(X_0,U) \arrow[r,"-\circ \alpha"] \&  \calC(T_0,U)  \arrow[r,two heads] \& \E(U_0,U)=0,
    \end{tikzcd}
    \end{equation*}
    in which the first morphism is indeed an epimorphism, as $U$ is a presilting object. The proof is complete.
	\end{proof}

0-Auslander extriangulated categories are closely related to module categories of finite-dimensional algebras. We need to define the following important class of objects in those categories.

\begin{definition}[{\cite{AIR2014}}]\label{def:tautilt}
    Let $\Lambda$ be a finite-dimensional $k$-algebra, and let $\tau$ be the Auslander--Reiten translation in $\mods(\Lambda)$.
    If a $\Lambda$-module $M$ satisfies $\Hom_{\Lambda}(M,\tau M)=0$, we say that $M$ is \textit{$\tau$-rigid}. A pair $(M,Q)$ of right $\Lambda$-modules is \textit{$\tau$-rigid} if $M$ is a $\tau$-rigid ${\Lambda}$-module and $Q$ is a projective $\Lambda$-module such that $\Hom_{{\Lambda}}(Q,M)=0$. A $\tau$-rigid pair $(M,Q)$ is \textit{support $\tau$-tilting} if in addition $|M|+|Q|=|{\Lambda}|$, where {$|-|$ denotes} the number of non-isomorphic indecomposable direct summands. 
	\end{definition}

\setcounter{theorem}{15}

Moreover, the following proposition establishes a relationship between the presilting objects in the extriangulated category and the $\tau$-rigid objects in the module category.

\begin{proposition}\label{prop:F}
Let $\calC$, $T$, $N$ and $\overline{\calC}$ be as in \Cref{setting}.
\begin{enumerate}
    \item\label{prop:PZ24.4.5} The functor $\overline{(-)}$ of \cref{lem:GNP23.3.11} induces a map
    \begin{equation}\label{eq:presilttaurigidbij}
    \begin{tikzcd}[row sep=0.5em]
        \presilt(\calC) \arrow[r] & \taurigidpair(\Lambda) \\
        X \arrow[r,mapsto] \arrow[u,phantom, sloped, "\in"] & (\overline{X},\overline{\Omega I_X})\arrow[u,phantom, sloped, "\in"] 
    \end{tikzcd}
    \end{equation}
    into the set of $\tau$-rigid pairs in $\mods(\Lambda)$, where $I_X$ is the maximal injective non-projective direct summand of $X$ and $\Omega$ is as in \Cref{lem:GNP23.3.12}. If $\calC$ is reduced, then the map in \eqref{eq:presilttaurigidbij} is a bijection and furthermore restricts to a bijection
    \begin{equation*}
    \begin{tikzcd}[row sep=0.5em]
        \silt(\calC) \arrow[r] & \stautiltpair(\Lambda) \\
        X \arrow[r,mapsto] \arrow[u,phantom, sloped, "\in"] & (\overline{X},\overline{\Omega I_X})\arrow[u,phantom, sloped, "\in"] 
    \end{tikzcd}
    \end{equation*}
    into the set of support $\tau$-tilting pairs in $\mods(\Lambda)$. 
\end{enumerate}
    Let $\mathbf{p}\colon \mods(\Lambda) \to \Ktwo(\proj \Lambda)$ denote the functor assigning $M\in \mods(\Lambda)$ its minimal projective presentation.
    \begin{enumerate}
    \setcounter{enumi}{1}
        \item\label{thm:modulecatreduction_presilt} 
    There is a map
    \begin{equation}\label{eq:presilttaurigidbijF}
    \begin{tikzcd}[row sep=0.5em]
        F \colon \presilt(\calC) \arrow[r] & \presilt (\Lambda)  \\
        X \arrow[r,mapsto] \arrow[u,phantom, sloped, "\in"] & \mathbf{p}\overline{X} \oplus \Sigma (\overline{\Omega I_X}) \arrow[u,phantom, sloped, "\in"] 
    \end{tikzcd}
    \end{equation}
    Here, $\Omega$ and $\Sigma$ are maps defined as in \cref{lem:GNP23.3.12} but each in the relevant categories for this statement to be well defined.
    If $\calC$ is reduced, then the resulting map
     \begin{equation}\label{eq:presilt_C_Lambda}
        \presilt(\calC) \to \presilt(\Lambda)
    \end{equation}
    is a bijection preserving the number of {indecomposable} direct summands. In particular, this bijection restricts to a bijection 
    \begin{equation}\label{eq:silt_C_Lambda}
        \silt(\calC) \to \silt(\Lambda).
    \end{equation} 
    \end{enumerate}
    \end{proposition}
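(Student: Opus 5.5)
Part (1) is essentially \cite[Prop. 4.5]{PZ24}-type reasoning, so the plan is to reduce everything to the reduced category $\overline{\calC}$ and to the known bijections for $\Ktwo(\proj\Lambda)$.

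\textbf{Part (1).} I would first check that for a presilting $X$ the pair $(\overline{X},\overline{\Omega I_X})$ is $\tau$-rigid.
\begin{itemize}
\item Write $X = X' \oplus I_X$, with $I_X$ the injective non-projective part. Then $\overline{I_X}=0$ by \cref{lem:GNP23.3.11}, so $\overline{X}=\overline{X'}$.
\item Using the hereditary $\E$-triangle $T_1 \rightarrowtail T_0 \twoheadrightarrow X'$ and \cref{lem:LESmodinj}, the vanishing $\E(X',X')=0$ translates, via the equivalence $\calC/[\calI]\simeq\mods(\Lambda)$, into surjectivity of $\Hom(\overline{T_0},\overline{X'}) \to \Hom(\overline{T_1},\overline{X'})$. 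By the standard AIR criterion this is equivalent to $\tau$-rigidity of $\overline{X'}$.
\item For the projective part, $\Omega I_X\in\add T$. Exchange in \eqref{eq:SigmaOmegaConfl} together with $\E(X',I_X)=0$ gives $\Hom(\overline{\Omega I_X},\overline{X'})=0$, up to maps factoring through $N$ (which are killed in $\Lambda$).
\end{itemize}

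In the reduced case, injectivity and surjectivity would come from \cite[Thm.~3.2/4.5]{AIR2014}-style arguments transported along $\overline{(-)}$. Alternatively, and more cleanly, I would deduce part (1) from part (2) composed with the AIR bijection $\presilt(\Lambda)\to\taurigidpair(\Lambda)$, $P\mapsto (H^0P, \text{stalk part})$. For this one checks that $H^0(\mathbf{p}\overline{X}\oplus\Sigma\overline{\Omega I_X})=\overline{X}$ and that the shifted summand corresponds to $\overline{\Omega I_X}$.

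\textbf{Part (2).} I would construct $F$ as an extriangulated comparison.
\begin{itemize}
\item The object $\mathbf{p}\overline{X}\oplus\Sigma\overline{\Omega I_X}$ is presilting in $\Ktwo(\proj\Lambda)$ exactly when $(\overline X,\overline{\Omega I_X})$ is $\tau$-rigid \cite{AIR2014}, so well-definedness follows from (1).
\item In the reduced case, \cite[§3]{GNP23} (or Padrol–Palu–Pilaud–Plamondon / Pressland–Zhang type results) says a reduced 0-Auslander category with projective silting $T$ is governed by $\Ktwo(\proj\Lambda)$ on presilting objects. More precisely, $\calC(T,-)$ identifies $\add T$ with $\proj\Lambda$ and $\add I$ with $\Sigma\proj\Lambda$. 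Every object $X$ is determined by its triangle $T_1\to T_0\to X$, i.e. by a morphism in $\add T$, and hence by a 2-term complex of projectives.
\item I would show that this assignment agrees with $F$ up to removing homotopically trivial summands. Minimality of $\mathbf{p}$ corresponds to choosing a minimal triangle, with split-off pieces $T'\to T'$ or $T'\to 0$ accounting for the summands in $\Sigma\proj\Lambda$.
\end{itemize}

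Bijectivity in the reduced case then follows from the bijection in (1) and AIR, since both are bijections onto $\taurigidpair(\Lambda)$ and the diagram commutes. For the count of indecomposable summands, $|\overline X|+|\overline{\Omega I_X}|=|X|$ holds when $\calC$ is reduced: no summand of $X$ is killed except those in $I_X$, and these are counted through $\Omega$. Since $|\Lambda|=|T|$ and silting means maximal summand count \cite[Cor.~4.2]{AT23}, the bijection restricts to $\silt(\calC)\to\silt(\Lambda)$.

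\textbf{Main obstacle.} The hard step is reduced-case injectivity in (1): recovering $X$ from $(\overline X,\overline{\Omega I_X})$. The functor $\overline{(-)}$ kills injectives, and distinct presilting objects might differ by injective summands or by a non-minimal presentation. My first attempt would be to show that a presilting $X$ without injective summands is determined by a minimal triangle $T_1\rightarrowtail T_0\twoheadrightarrow X$ lifting the minimal projective presentation of $\overline X$. Lifting uses \cref{lem:LESmodinj} and the fact that $\add T\to\proj\Lambda$ is an equivalence when $N=0$; uniqueness uses (ET3) together with Krull–Schmidt.
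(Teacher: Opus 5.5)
Your overall architecture matches the paper's. Part (2) is obtained by composing (1) with the bijection $\taurigidpair(\Lambda)\to\presilt(\Lambda)$, $(M,P)\mapsto\mathbf{p}M\oplus\Sigma P$, of \cite[Lem.~3.4, Thm.~3.2]{AIR2014}. Your direct check via \cref{lem:LESmodinj} that $(\overline{X},\overline{\Omega I_X})$ is $\tau$-rigid is an acceptable substitute for the paper's factorisation through $\presilt(\calC)\to\presilt(\overline{\calC})$. One correction there: in your third bullet, the vanishing that does the work is $\E(I_X,X')=0$, which makes every map $\Omega I_X\to X'$ factor through $N'$. The condition $\E(X',I_X)=0$ holds for any $X'$ and gives nothing.

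\textbf{The gap} is the reduced-case bijectivity in (1), on which all of (2) rests. The paper does not prove this by hand. It writes the map as $\presilt(\calC)\to\presilt(\overline{\calC})\to\taurigidpair(\Lambda)$ and imports from \cite[Prop.~4.5]{PZ24} that the second arrow is a bijection restricting to silting objects and support $\tau$-tilting pairs. You neither cite this nor supply it:
\begin{enumerate}
\item surjectivity is never argued;
\item injectivity is only a ``first attempt'';
\item your ``cleaner'' option of deducing (1) from (2) is circular, because you then derive bijectivity in (2) from (1).
\end{enumerate}
The ``extriangulated comparison'' does not rescue this. An object $X$ does not determine the morphism $T_1\to T_0$, and you never show that the resulting complex is well defined up to homotopy, let alone that the assignment is bijective.

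If you want to avoid the citation, here is what is missing.

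\textbf{Surjectivity.} Given $(M,P)$, lift a minimal presentation $p$ of $M$ along the equivalence $\calC(T,-)\colon\add T\to\proj\Lambda$ (valid since $N=0$) to $t\colon T_1\to T_0$. You must then know that $t$ is an inflation, and this is exactly where reducedness enters. The 0-Auslander inflation $T_1\rightarrowtail Q$ has $Q=0$, and the homotopy pushout \cite{NP19} of $T_1\rightarrowtail 0\twoheadrightarrow\Sigma T_1$ along $t$ exhibits $t$ as an inflation. Let $X'$ be its cone. Then:
\begin{itemize}
\item $\overline{X'}\cong M$;
\item $\E(X',X')=0$ by \cref{lem:LESmodinj} and \cite[Prop.~2.4]{AIR2014};
\item comparing indices shows $X'$ has no injective summands;
\item for $I=\Sigma Q$ with $\overline{Q}\cong P$, one gets $\E(I,X')=0$ from $\Hom_\Lambda(P,M)=0$, since $\calC(\Omega I,X')\to\E(I,X')$ is surjective and factors through $\calC/[\calI]$.
\end{itemize}

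\textbf{Injectivity.} There is a simpler route than minimal triangles. Since $\calC$ is Krull--Schmidt, morphisms factoring through injectives lie in the radical on objects without injective summands. Hence such objects are isomorphic in $\calC$ if and only if they are isomorphic in $\calC/[\calI]\simeq\mods(\Lambda)$ (\cref{lem:GNP23.3.11}). The summand $I_X$ is recovered from $\overline{\Omega I_X}$ because both $\calC(T,-)$ restricted to $\add T$ and $\Omega$ are equivalences when $N=0$.

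The same observations give $|X|=|\overline{X}|+|\overline{\Omega I_X}|$. Your Bongartz/summand-count argument for the restriction to silting objects then goes through.
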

    \begin{proof}
    The functor $\calC \to \overline{\calC}$ is seen to induce maps 
    \begin{equation*}
    \begin{tikzcd}[row sep=0.5em]
        \silt(\calC)\arrow[d,phantom, sloped, "\subseteq"] \arrow[r] & \silt(\overline{\calC})\arrow[d,phantom, sloped, "\subseteq"] \\
        \presilt(\calC) \arrow[r] & \presilt(\overline{\calC})   
    \end{tikzcd}
    \end{equation*}
    By \cite[{Prop.} 4.5]{PZ24}, the functor $\overline{\calC}(T,-)\colon \overline{\calC} \to \mods{\Lambda}$ induces a bijection 
    \begin{equation*}
    \begin{tikzcd}[row sep=0.5em]
        \silt(\overline{\calC})\arrow[d,phantom, sloped, "\subseteq"] \arrow[r] & \stautiltpair(\Lambda) \arrow[d,phantom, sloped, "\subseteq"] \\
        \presilt(\overline{\calC}) \arrow[r] & \taurigidpair(\Lambda) \\
        X \arrow[r,mapsto] \arrow[u,phantom, sloped, "\in"] & (\overline{\calC}(T,X),\overline{\calC}(T,{\Omega I_X}))\arrow[u,phantom, sloped, "\in"] 
    \end{tikzcd}
    \end{equation*}
    Composing the maps proves our first claim in \eqref{prop:PZ24.4.5}. Since all maps are bijections when $\calC$ is reduced, our second claim in \eqref{prop:PZ24.4.5} follows.

    By Adachi--Iyama--Reiten \cite[Lem. 3.4]{AIR2014}, the following map
     \begin{equation}\label{eq:presilt_C_Lambda2}
    \begin{tikzcd}[row sep=0.5em]
        \taurigidpair(\Lambda) \arrow[r] & \presilt(\Lambda) \\
        (M,P) \arrow[r,mapsto] \arrow[u,phantom, sloped, "\in"] & \mathbf{p}M\oplus {\Sigma P}\arrow[u,phantom, sloped, "\in"] 
    \end{tikzcd}
    \end{equation}
    is a bijection, and, by \cite[Thm. 3.2]{AIR2014}, restricts to a bijection 
    \begin{equation*}
    \begin{tikzcd}[row sep=0.5em]
        \stautiltpair(\Lambda) \arrow[r] & \silt(\Lambda){.}
    \end{tikzcd}
    \end{equation*}
    Composing the bijections in \eqref{eq:presilttaurigidbij} and \eqref{eq:presilt_C_Lambda2} yields the desired bijection in \eqref{eq:presilttaurigidbijF}. The remaining assertions in \eqref{thm:modulecatreduction_presilt} now follow from \eqref{prop:PZ24.4.5}. 
    \end{proof}

We remark that the bijection between silting objects and support $\tau$-tilting pairs can be upgraded to a poset isomorphism between basic silting objects and support $\tau$-tilting pairs with both constituent modules being basic. The poset structures are well-known and not important for the discussions in this article, which is why we choose to omit their definitions. 

We adopt the following terminology from the area of $\tau$-tilting theory, where the following notion is also often referred to as $\tau$-tilting finiteness.

\begin{definition}
    Let $(\calC,\E)$ be as in \Cref{setting}. Then $(\calC,\E)$ is \textit{silting-finite} if it contains only finitely many basic silting objects up to isomorphism. Using \Cref{defprop:Bongartz}, one can show that it is equivalent to have only finitely many basic (or indeed indecomposable) presilting objects.
\end{definition}

For certain ideal quotients of extriangulated categories, there is a close relationship between (pre)silting objects.

\begin{proposition}[see {\cite[Cor. 3.3 and Prop. 3.5]{FGPPP23}}]\label{prop:FGPPP23}
Let $\calC$ be a 0-Auslander extriangulated $k$-category. Let $\mathcal{J}$ be an ideal of $\calC$ generated by morphisms with injective domain and projective codomain. 
    \begin{enumerate}
        \item\label{prop:FGPPP23.3.3}
          The ideal quotient $\calC/\mathcal{J}$ is a 0-Auslander extriangulated $k$-category, and the natural functor ${\calC} \to \calC/\mathcal{J}$ carries the structure of an extriangulated functor. 
        \item\label{prop:FGPPP23.3.5} The natural functor 
    \begin{equation}\label{eq:toJ}
        {\calC} \to \calC/\mathcal{J}
    \end{equation}
    induces a map
    \begin{equation*}
        \presilt({\calC}) \xrightarrow{} \presilt(\calC/\mathcal{J}),
    \end{equation*}
    which restricts to a bijection
    \begin{equation*}
        \silt({\calC}) \xrightarrow{} \silt(\calC/\mathcal{J}).
    \end{equation*}
    \end{enumerate}
\end{proposition}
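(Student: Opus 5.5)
The argument rests on one observation: morphisms in $\mathcal{J}$ act trivially on $\E$. I would first check that $\E$ descends to the quotient $\calC/\mathcal{J}$. Take a generator $f\colon X\to I\to P\to A$ of $\mathcal{J}$, with $I$ injective and $P$ projective.
\begin{itemize}
\item Covariantly, $\E(C,f)$ factors through $\E(C,I)=0$.
\item Contravariantly, $\E(f,C)$ factors through $\E(P,C)=0$.
\end{itemize}
By bilinearity, $\E(C,-)$ and $\E(-,C)$ therefore vanish on all of $\mathcal{J}$. So $\E$ induces a $k$-linear bifunctor $\E'$ on $\calC/\mathcal{J}$ with $\E'(C,A)=\E(C,A)$ on the nose.

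\textbf{Part (1).} Define the realisation $\mathfrak{s}'$ by taking the images in $\calC/\mathcal{J}$ of the complexes chosen by $\mathfrak{s}$. The equivalence relation on three-term complexes is compatible with passing to the quotient, so $\mathfrak{s}'$ is well defined. Exactness of $\mathfrak{s}'$ is inherited from $\mathfrak{s}$.

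For (ET3), (ET4) and their duals, I would lift the given morphisms in $\calC/\mathcal{J}$ to morphisms in $\calC$. The lifted squares commute only modulo $\mathcal{J}$, and this is the one point needing care. The compatibility condition on extensions is unchanged because $\E'=\E$. To repair commutativity, use that the discrepancy factors through $I\to P$, and adjust the lift via the exact sequences of \Cref{lem:LES}. This is the same method as in the proof of \Cref{lem:LESmodinj}.

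With this in place, the natural functor together with $\psi=\mathrm{id}$ is extriangulated. Since $\E'=\E$, an object is projective (resp. injective) in $\calC/\mathcal{J}$ if and only if it is so in $\calC$. Consequently:
\begin{itemize}
\item hereditary resolutions $T_1\rightarrowtail T_0\twoheadrightarrow X$ survive in the quotient;
\item inflations $T\rightarrowtail Q$ into projective-injectives survive;
\item hence $\calC/\mathcal{J}$ is 0-Auslander.
\end{itemize}

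\textbf{Part (2).} Since $\E'=\E$, presilting objects map to presilting objects. Silting objects map to silting objects because an extriangulated functor that is bijective on objects carries $\thick(\mathcal{R})=\calC$ onto the whole quotient.

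For the bijection on silting objects I would make one further observation. For any projective-injective $N$, the identity $1_N$ lies in $\mathcal{J}$, so $N\cong 0$ in the quotient. For an indecomposable $X$ that is not projective-injective, $\mathcal{J}(X,X)$ lies in the radical: an endomorphism through $I\to P$ that is invertible would make $X$ a summand of both $I$ and $P$. Hence the indecomposables of $\calC/\mathcal{J}$ are exactly those of $\calC$ other than the projective-injectives, up to isomorphism, and isomorphism classes are detected by lifting idempotents and isomorphisms modulo the radical. Since every silting object of $\calC$ contains $N$ by \Cref{rem:silt}\eqref{rem:proj-inj-silt}, this gives injectivity of $\silt(\calC)\to\silt(\calC/\mathcal{J})$.

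For surjectivity, take a silting object $\overline{X}$ of the quotient with no projective-injective summands, and lift it to $X\oplus N$. This lift is presilting because $\E$ is unchanged. It has $|T|$ indecomposable summands, since $T$ maps to a silting object with $|T|-|N|$ summands and all basic silting objects have the same number of summands by \Cref{rem:silt}\eqref{rem:siltobj}. The Bongartz completion of \Cref{defprop:Bongartz} is then silting with the same number of summands, so it equals $X\oplus N$, which is therefore silting.

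I expect the main obstacle to be the verification of (ET4) and its dual in the quotient, where lifts are only determined up to $\mathcal{J}$. This is why I would route every argument through the fact that $\mathcal{J}$ is invisible to $\E$.
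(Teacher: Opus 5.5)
The paper gives no proof of this statement; it imports it from \cite{FGPPP23}, so your argument has to stand on its own. Much of it does. $\E$ descends unchanged. Your (ET3) repair works: each component $A_1\to I\to P\to B_2$ of the discrepancy extends along the inflation $A_1\rightarrowtail B_1$ because $\E(C_1,I)=0$, so the correction lies in $\mathcal{J}$. Note that this uses the factorisation through $I\to P$, not merely that $\E$ kills $\mathcal{J}$. The 0-Auslander property and (pre)silting objects transfer as you say.

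The first genuine error is the claim in part (2) that $1_N\in\mathcal{J}$ for every projective-injective $N$. The ideal $\mathcal{J}$ is only generated by \emph{some} morphisms from injectives to projectives. For instance $\mathcal{J}=0$ is allowed, and the paper uses exactly this generality via $N(\mathcal{J})$ in \Cref{lem:Jredfan}. So a projective-injective object survives in $\calC/\mathcal{J}$ unless its identity lies in $\mathcal{J}$. This breaks three things you use: your list of indecomposables of the quotient, the count $|T|-|N|$, and the lift of a silting object ``with no projective-injective summands''. The last fails because a surviving projective-injective is projective-injective in the 0-Auslander quotient, hence a summand of every silting object there. The correct dichotomy for an indecomposable $X$ is as follows. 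Either $1_X\in\mathcal{J}$, which by your summand argument forces $X$ to be projective-injective. Or $\mathcal{J}(X,X)$ is a proper ideal of the local ring $\End_{\calC}(X)$, hence contained in its radical. Replace $N$ by $N(\mathcal{J})$ throughout and your injectivity and surjectivity arguments go through. Indeed, $N(\mathcal{J})\in\add N$ is still a summand of every silting object of $\calC$, and the image of $T$ in $\calC/\mathcal{J}$ has $|T|-|N(\mathcal{J})|$ summands. Note also that this part uses Krull--Schmidt, Hom-finiteness, a projective silting object and Bongartz completions, i.e.\ \Cref{setting}. The statement does not assume these. That covers every use in the paper, but for an arbitrary 0-Auslander $\calC$ you would have to argue with silting subcategories rather than by counting summands.

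The second gap is that (ET4) and (ET4)$^{\mathrm{op}}$ are not actually proved, and the difficulty is not the one you name. Nothing in (ET4) has to be made to commute modulo $\mathcal{J}$; the issue is at the level of objects. An $\E$-triangle of $\calC/\mathcal{J}$ is only equivalent \emph{in the quotient} to the image of an $\E$-triangle of $\calC$. Suppose $\delta\in\E(D,A)$ is realised in $\calC$ by $A\rightarrowtail B_0\twoheadrightarrow D$. The given middle term $B$ is then merely isomorphic to $B_0$ in $\calC/\mathcal{J}$; they can differ, e.g., by summands in $\add N(\mathcal{J})$. So lifts of the given maps need not form conflations in $\calC$, and the second conflation $B\rightarrowtail C\twoheadrightarrow F$ cannot be paired with the first in (ET4) of $\calC$.

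What is missing is a transport of structure. Let $\phi\colon B_0\to B$ be the isomorphism in $\calC/\mathcal{J}$.
\begin{enumerate}
\item Realise $\E(F,\phi^{-1})(\delta')\in\E(F,B_0)$ in $\calC$ as $B_0\rightarrowtail C_0\twoheadrightarrow F$; this makes sense because $\E$ is defined on the quotient.
\item Apply (ET4) in $\calC$ and push the resulting diagram down to $\calC/\mathcal{J}$.
\item Transport it back along $\phi$ and along an isomorphism $C_0\cong C$ in $\calC/\mathcal{J}$ compatible with the middle column.
\end{enumerate}
That isomorphism comes from realising the morphism of extensions $(\phi,1_F)$ together with a two-out-of-three argument in the quotient. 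This needs the four-term Hom-exact sequences of \Cref{lem:LES} to descend to $\calC/\mathcal{J}$. They do, by the same lifting trick as your (ET3) argument (compare \Cref{lem:LESmodinj}). The extension identities required in (ET4) survive the transport, since $\E$ is unchanged and morphisms act through representatives. With these two repairs your outline goes through, with part (2) holding under \Cref{setting}.
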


In \eqref{prop:FGPPP23.3.3} above, we mean that the functor $\calC \to \calC/\mathcal{J}$ induces an extriangulated structure on $\calC/\mathcal{J}$.

\begin{example}
    Let $A$ be a non-positive dg $k$-algebra such that $H^0A$ is finite-dimensional, and let $\mathcal{J}$ denote the ideal of $\pertwo(A)$ generated by \textit{all} morphisms with injective domain and projective codomain. Since $\pertwo(A)$ is (reduced) 0-Auslander, it follows from \Cref{prop:FGPPP23}\eqref{prop:FGPPP23.3.3} that the ideal quotient $\pertwo(A)/\mathcal{J}$ is 0-Auslander, and one sees that it is also reduced. By results of Brüstle--Yang \cite[Appendix A]{BY14}, the ideal quotient $\pertwo(A)/\mathcal{J}$ is equivalent to $\pertwo(H^0A)$ (and in turn to $\Ktwo(\proj H^0A)$), and one can identify the natural functor $\calC\to\calC/\mathcal{J}$ with the derived tensor functor
    \begin{equation*}
    \begin{tikzcd}[column sep=5em]
        \pertwo(A) \arrow[r,"{-\derotimes{A}H^0A}"] & \pertwo(H^0A),
    \end{tikzcd}
    \end{equation*}
    where we regard $H^0A$ as an $A$-$H^0A$-dg-bimodule. This induces a bijection
    \begin{equation*}
        {\presilt(\pertwo(A)) \to \presilt(\pertwo(H^0A))}
    \end{equation*}
    which restricts to a bijection
        \begin{equation*}
       {\silt(\pertwo(A)) \to \silt(\pertwo(H^0A))}.
    \end{equation*}
\end{example}

\subsection{Silting reduction}

To formulate silting reduction for 0-Auslander extriangulated categories, following the first-named author \cite{Bor24}, we will first have to consider the general setup for localisation of extriangulated categories \cite[§4.1]{NOS22}. 
For a thick subcategory $\calN$ of a extriangulated category $\calC$, let $\mathit{Inf}_{\calN}$ denote the set of inflations in $\calC$ with cone in $\calN$, and dually let $\mathit{Def}_{\calN}$ denote
the set of deflations in $\calC$ with cocone in $\calN$. Let $\mathcal{S}_{\calN}$ be the subcategory
of $\calC$ generated by $\mathit{Inf}_{\calN}$ and $\mathit{Def}_{\calN}$. The localisation functor of $\calC$ with respect to $\calN$ {is defined} as the Gabriel--Zisman localisation {functor} \cite{GZ12} of $\calC$ with respect to $\mathcal{S}_{\calN}$. We denote the codomain of the localisation functor by $\calC/\calN$. It will also be referred to as the localisation of $\calC$ with respect to $\calN$.
The localisation of an extriangulated category does not in general admit an extriangulation in a natural way. This technical difficulty will not be a concern for our purposes, as we will see in \Cref{prop:siltred}\eqref{prop:siltred_equiv} below.

\begin{lemma}\label{lem:Ver2.3.1}
    Let $\calC$ be an extriangulated category, and let $\calN$ be a thick subcategory of $\calC$ such that $\calC/\calN$ is extriangulated and the localisation functor $L\colon\calC \to \calC/\calN$ is an extriangulated functor. 
    The functor $L$ then induces an isomorphism from the first to the second of the following posets:
    \begin{enumerate}
        \item thick subcategories of $\calC$ containing $\calN$,
        \item thick subcategories of $\calC/\calN$.
    \end{enumerate}
    The inverse is given by forming preimages along $L$.
\end{lemma}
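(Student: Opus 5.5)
The plan is to prove that the two assignments $\mathcal{M}\mapsto L(\mathcal{M})$ (essential image, closed under isomorphisms) and $\mathcal{M}'\mapsto L^{-1}(\mathcal{M}')$ are well defined, order preserving, and mutually inverse. Monotonicity is clear for both, so the work lies in well-definedness and in the two round-trip identities.

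Throughout I will use that $L$ is the identity on objects, as for any Gabriel--Zisman localisation, and that $L$ is extriangulated. The one structural fact I need about $\calC/\calN$ is that every $\E'$-triangle in $\calC/\calN$ is isomorphic to the image of an $\E$-triangle in $\calC$. I would justify this from the description of the extriangulation on localisations in \cite[§4.1]{NOS22}, where extensions in $\calC/\calN$ are represented by roofs of extensions in $\calC$. This is the assumption I expect the argument to hinge on.

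\textbf{Preimages are thick and contain $\calN$.} Let $\mathcal{M}'\subseteq\calC/\calN$ be thick. Since $L$ sends objects of $\calN$ to zero objects, $L^{-1}(\mathcal{M}')$ contains $\calN$. Because $L$ is additive, $L^{-1}(\mathcal{M}')$ is closed under isomorphisms and direct summands. Because $L$ sends $\E$-triangles to $\E'$-triangles, it is closed under extensions, cones and cocones.

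\textbf{$L^{-1}(L\mathcal{M})=\mathcal{M}$ for $\mathcal{M}\supseteq\calN$ thick.} Suppose $LX\cong LM$ with $M\in\mathcal{M}$. The isomorphism in $\calC/\calN$ is represented by a zigzag of morphisms in $\mathcal{S}_{\calN}$, so it suffices to check that $\mathcal{M}$ is closed under the generating morphisms in $\mathit{Inf}_{\calN}\cup\mathit{Def}_{\calN}$ in both directions. Take an inflation $X\rightarrowtail Y$ with cone in $\calN\subseteq\mathcal{M}$. If $X\in\mathcal{M}$, then $Y\in\mathcal{M}$ by extension-closure. If $Y\in\mathcal{M}$, then $X\in\mathcal{M}$ by closure under cocones. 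The case of deflations is dual. Hence $X\in\mathcal{M}$.

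\textbf{$L(\mathcal{M})$ is thick.} Closure under isomorphisms holds by construction. For extensions, cones and cocones, lift an $\E'$-triangle in $\calC/\calN$ to an $\E$-triangle in $\calC$ with terms isomorphic in $\calC/\calN$ to the original ones. By the previous step, the terms lying in $L\mathcal{M}$ then lie in $\mathcal{M}$, so thickness of $\mathcal{M}$ puts the remaining term in $\mathcal{M}$.

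Direct summands are the main obstacle. Given $LX\oplus Y'\cong LM$, I would first use that every object of $\calC/\calN$ is of the form $LY$, since $L$ is the identity on objects. Then $L(X\oplus Y)\cong LM$, so $X\oplus Y\in\mathcal{M}$ by the previous step, and hence $X\in\mathcal{M}$.

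\textbf{$L(L^{-1}\mathcal{M}')=\mathcal{M}'$.} This follows directly because $L$ is the identity on objects and $\mathcal{M}'$ is closed under isomorphisms.

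The step I am least sure of is the lifting of $\E'$-triangles along $L$. If it fails in the stated generality, I would instead prove closure of $L(\mathcal{M})$ under extensions using the explicit description of $\E'$ as a colimit of $\E$ over $\mathcal{S}_{\calN}$-roofs, and then apply the zigzag argument above to the resulting middle term.
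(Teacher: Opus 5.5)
Your proposal follows the paper's proof: the same two inclusion-preserving maps, the immediate identity $L(L^{-1}\mathcal{M}')=\mathcal{M}'$, and the same zigzag argument through $\mathit{Inf}_{\calN}$ and $\mathit{Def}_{\calN}$, which uses closure of $\mathcal{M}\supseteq\calN$ under extensions, cones and cocones to get $L^{-1}(L\mathcal{M})=\mathcal{M}$. You go beyond the paper by actually checking that $L\mathcal{M}$ is thick, which the paper calls clear; your direct-summand argument is fine, and the lifting of $\E'$-triangles along $L$ that you isolate is exactly what the extension, cone and cocone closure needs. That lifting holds for the extriangulation of \cite{NOS22} and in the cases where the paper applies the lemma, but it can fail for an arbitrary extriangulation of $\calC/\calN$ making $L$ extriangulated, so you were right to flag it.
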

\begin{proof}
    Let $\calE$ be a thick subcategory of $\calC$ containing $\calN$ and let $\calF$ be a thick subcategory of $\calC/\calN$ this makes sense since we assume $\calC/\calN$ carries the structure of an extriangulated category. It is clear that $L$ induces a well-defined inclusion-preserving map $\mathcal{E}\mapsto L\mathcal{E}$, and that the map $\mathcal{F}\mapsto L^{-1}\mathcal{F}$ defines an inclusion-preserving map in the other direction. It is also clear that $L(L^{-1}\mathcal{F})=\mathcal{F}$, whence the map $\mathcal{E}\mapsto L\mathcal{E}$ surjects. 
    We complete the proof by showing that {$L^{-1}(L \mathcal \calE)$} {$= \calE$}. This will be achieved by showing the following {equivalent reformulation}: any object {of} $\calC$ which is isomorphic to an object {$X_\ell$} in ${L} \mathcal{E}$ when considered as an object {of} $\calC/\calN$ is in $\mathcal{E}$. We adapt Verdier's proof for the special case of triangulated categories \cite[{{Prop.} 2.3.1(d)$^{\mathrm{bis}}$}]{Ver96} (see also \cite[{Lem.} 3.1]{Tak13}).
    Let $X_0$ be an object {of} $\calC$ which is isomorphic to an object in ${L}\mathcal{E}$ when considered as an object {of} $\calC/\calN$. We show that $X_0$ is in $\mathcal{E}$. By assumption, there exists a zigzag diagram
    \begin{equation*}
        \begin{tikzcd}[row sep=2em,column sep=1.75em]
X_0 \arrow[r, "q_{0.5}"] & X_{0.5} & X_1 \arrow[l, "q_1"'] \arrow[r, "q_{1.5}"] & X_{1.5} & X_{2} \arrow[l, "q_2"'] \arrow[r, "q_{2.5}"] & \cdots \arrow[r, "q_{(\ell-2).5}"] & X_{(\ell-2).5} & X_{\ell-1} \arrow[r, "q_{(\ell-1).5}"] \arrow[l, "q_{\ell-1}"'] & X_{(\ell-1).5}                                                                                       & X_\ell \in \mathcal{E}, \arrow[l, "q_{\ell}"']                 
\end{tikzcd}
    \end{equation*}
   where $X_{\ell}\in \mathcal{E}$, and the morphisms are in $\mathcal{S}_{\calN}$. In the case where $q_{\ell}$ is either in $\mathit{Inf}_{\calN}$ or in $\mathit{Def}_{\calN}$, one uses that $X_{\ell}\in \mathcal{E}$ and the fact that $\mathcal{E}$ is a thick subcategory containing $\calN$ to show that $X_{(\ell-1).5}\in {\calE}$. In general $q_{\ell}$ is a composite of morphisms in either $\mathit{Inf}_{\calN}$ or $\mathit{Def}_{\calN}$ but not both. One uses (\textbf{ET4}) {or (\textbf{ET4}${}^\textnormal{op}$)} iteratively to show that $X_{(\ell-1).5}\in \mathcal{E}$ still holds. Arguing similarly for the morphisms $q_{(\ell-1).5}, q_{\ell-1},\dots$, and $q_{0.5}$, in order, we conclude that $X_0$ is in $\mathcal{E}$, as desired.
\end{proof}

\begin{example}\label{eg:Ogawa}
    Let $\calC$ be an extriangulated $k$-category, and let $\mathcal{X}$ be an additive subcategory of $\calC$ spanned by projective-injective objects. Because all extensions in $\mathcal{X}$ are {split} and $\mathcal{X}$ is closed under direct summands, it is clear that $\mathcal{X}$ is a thick subcategory of $\calC$ and that the ideal quotient $\calC/[\mathcal{X}]$ is extriangulated \cite[{Prop.} 3.30]{NP19}. Moreover, the localisation $\calC/\mathcal{X}$ coincides with the ideal quotient $\calC/[\mathcal{X}]$ (see {\cite[Example 2.6]{Oga22}}), whence the localisation functor $\calC\to\calC/\mathcal{X}$ is extriangulated.
    By \Cref{lem:Ver2.3.1}, the localisation functor $\calC\to \calC/[\mathcal{X}]$ induces an isomorphism of posets between:
    \begin{enumerate}
        \item thick subcategories of $\calC$ containing $\mathcal{X}$;
        \item thick subcategories of $\calC/[\mathcal{X}]$.
    \end{enumerate}
\end{example}

The next result is formulated in less generality than in the cited reference. In general, the result holds for presilting subcategories satisfying a technical condition called (\textbf{gCTCP}), which holds whenever they have Bongartz completions \cite[Thm. 5.9(iii)]{Bor24}. In particular, it holds in \Cref{setting} because then every presilting subcategory admits a Bongartz completion, defined as the additive closure of the Bongartz completion of the largest basic presilting object in the subcategory via \cref{defprop:Bongartz}.

For an extriangulated $k$-category $(\calC,\E)$ and a full subcategory $\mathcal{X}$ of $\calC$, we define the \textit{right orthogonal subcategory} of $\mathcal{X}$ as follows:
\begin{equation*}
	\mathcal{X}^{\perp_{\E}} \coloneqq \{Y\in \calC \sth \E(X,Y)=0 \quad \forall X\in \mathcal{X} \},
\end{equation*}
and the \textit{left orthogonal subcategory} of $\mathcal{X}$ in a dual manner:
\begin{equation*}
	{^{\perp_{\E}}\mathcal{X}} \coloneqq \{Y\in \calC \sth \E(Y,X)=0 \quad \forall X\in \mathcal{X} \}.
\end{equation*}
Given an object $X\in \calC$, we write ${X}^{\perp_{\E}}$ for the right orthogonal subcategory of the full subcategory {$\{ X\}$} of $\calC$, and ${^{\perp_{\E}}{X}}$ for its {analogously defined} left orthogonal subcategory.

\begin{proposition}[{\cite[§5]{Bor24}}]\label{prop:siltred}
    Let $(\calC,\E)$ be as in \Cref{setting}.
    Let $U$ be a presilting object in $\calC$, and let $\calZ_U \coloneqq U^{\perp_{\E}}\cap {^{\perp_{\E}}U}${, which inherits an extriangulated structure from $\calC$}.
    \begin{enumerate}
        \item\label{prop:siltred_0Aus} The extriangulated $k$-categories $\calZ_U$ and $\calZ_U/[U]$ are 0-Auslander, and remain Hom-finite and Krull--Schmidt. 
        \item\label{prop:siltred_0Aus_proj} The Bongartz completion $U^+$ is a basic projective silting object in both $\calZ_U$ and $\calZ_U/[U]$. Consequently, the 0-Auslander extriangulated $k$-categories $\calZ_U$ and $\calZ_U/[U]$ satisfy \Cref{setting}.
        \item\label{prop:siltred_equiv} The localisation functor $\calC \to \calC/\thick(U)$ induces an extriangulated equivalence 
    \begin{equation}
       \calZ_U/[U] \to \calC/\thick(U).
    \end{equation}
    Consequently, the localisation $\calC/\thick(U)$ is extriangulated and the localisation functor $\calC \to \calC/\thick(U)$ is an extriangulated functor.
    \item\label{prop:siltred_bij} The natural functor $\calZ_U \to \calZ_U/[U]$
    (or indeed the localisation functor $\calC \to \calC/\thick(U)$) induces a bijection
    \begin{equation*}
        \presilt_U(\calC) \to \presilt(\calZ_U/[U]) \simeq \presilt(\calC/\thick(U)),
    \end{equation*}
    which restricts to a bijection
    \begin{equation*}
        \silt_U(\calC) \to \silt(\calC/\thick(U)).
    \end{equation*}
    \end{enumerate}
\end{proposition}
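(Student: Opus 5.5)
The statement is attributed to \cite[§5]{Bor24}, so the plan is to check that the hypotheses of the general results there hold in \Cref{setting} and then specialise. The key technical input is the condition (\textbf{gCTCP}) for the presilting subcategory $\add(U)$. By \cite[Thm.~5.9(iii)]{Bor24}, this condition follows from the existence of a Bongartz completion, which \Cref{defprop:Bongartz} supplies.

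\textbf{Parts (1) and (2).}
\begin{itemize}
\item $\calZ_U$ is extension-closed in $\calC$, since it is an intersection of $\E$-orthogonals and $\E$ is half-exact in each variable. So it inherits an extriangulated structure, and it is Hom-finite and Krull--Schmidt because it is closed under summands.
\item Heredity passes to $\calZ_U$ because, by \Cref{lem:LES}, $\E(X,-)$ is still right exact on $\E$-triangles lying in $\calZ_U$.
\item $U^+$ is projective in $\calZ_U$. The object $U^+$ lies in $\calZ_U$ since it is presilting and contains $U$. For $Y\in\calZ_U$, take a conflation $T_1\to T_0\to Y$ with $T_0,T_1\in\add T$. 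Applying \Cref{prop:Bongartz} to $T_1,T_0$ and using $\E(Y,U)=0$, a standard octahedral argument with (\textbf{ET4}) gives $\E(U^+,Y)=0$ for $Y\in U^{\perp_\E}$. I expect the vanishing of $\E(V^U,Y)$ for $Y\in U^{\perp_{\E}}$ to follow from the triangle \eqref{eq:Bongartz1}: apply $\calC(-,Y)$, note that $\beta_U$ is an $\add(U\oplus N)$-approximation, and use $\E(I,Y)=0$.
\item Enough projectives. Every $Y\in\calZ_U$ should admit a conflation $X_1\to X_0\to Y$ with $X_i\in\add U^+$, obtained by pushing the $\add T$-presentation along the left approximations $\alpha$ of \Cref{prop:Bongartz}.
\item The 0-Auslander inflation $U^+\to Q$ with $Q$ projective-injective in $\calZ_U$ is obtained dually, using an injective envelope argument in $\calC$ together with the Bongartz triangle.
\item Passing to $\calZ_U/[U]$: the summands of $U$ are projective-injective in $\calZ_U$, so \cite[Prop.~3.30]{NP19} gives an extriangulated quotient that is still 0-Auslander, and $U^+$ remains a basic projective silting object there.
\end{itemize}

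\textbf{Part (3)} is the main obstacle. The plan follows Iyama--Yang reduction.
\begin{enumerate}
\item Show that the composite $\calZ_U\to\calC\to\calC/\thick(U)$ kills $[U]$, so it factors through $\calZ_U/[U]$.
\item Prove the induced functor is dense. Every $X\in\calC$ should be connected to an object of $\calZ_U$ by inflations and deflations whose cones and cocones lie in $\add U\subseteq\thick(U)$. Concretely, take a right $\add U$-approximation, then a left approximation; (\textbf{gCTCP}) guarantees these can be realised as conflations.
\item Prove full faithfulness via a calculus of fractions. Morphisms in the localisation between objects of $\calZ_U$ reduce to morphisms in $\calC$ modulo those factoring through $\add U$.
\item Transport the extriangulated structure along the equivalence, which makes $L$ extriangulated.
\end{enumerate}
This step is exactly \cite[§5]{Bor24}, so I would cite it after verifying (\textbf{gCTCP}).

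\textbf{Part (4).} A presilting $X\oplus U$ maps to $X$, which lies in $\calZ_U$ by presilting-ness and is presilting in $\calZ_U/[U]$. The inverse sends $Y$ to $Y\oplus U$, using that $\E$ is unchanged by the quotient by projective-injectives. For the silting restriction, \Cref{lem:Ver2.3.1} says that thick subcategories of $\calC/\thick(U)$ correspond to those of $\calC$ containing $\thick(U)$. Hence $\thick(X\oplus U)=\calC$ if and only if $\thick(LX)=\calC/\thick(U)$.
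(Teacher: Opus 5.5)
Your proposal follows the paper, which gives no independent proof but invokes \cite[§5]{Bor24}, noting that (\textbf{gCTCP}) holds in \Cref{setting} because every presilting subcategory admits a Bongartz completion \cite[Thm.~5.9(iii)]{Bor24}. One correction to your supplementary sketch: injectivity of $I$ gives $\E(-,I)=0$, not $\E(I,Y)=0$, which fails in general (e.g.\ $\E(\Sigma\Lambda,\Lambda)\cong\End(\Lambda)\neq 0$ in $\Ktwo(\proj\Lambda)$). The vanishing of $\E(V^U,Y)$ for $Y\in U^{\perp_{\E}}$ instead follows from heredity: since $\E^2=0$, the sequence $\E(U',Y)\to\E(V^U,Y)\to 0$ is exact, and $\E(U',Y)=0$ because $U'\in\add(U\oplus N)$.
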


This result generalises various reduction theorems in the literature, including silting reduction \cite{AI12,IY18} in triangulated categories. It is also closely related to $\tau$-tilting reduction in module categories \cite{Jas15}, as will be precisely addressed in \Cref{prop:Monica}. We include a useful lemma for later.

\begin{lemma}\label{lem:endoinred} 
	Let $\calC$, $T$, and $\Lambda$ be as in \Cref{setting}, let $U$ be a presilting object in $\calC$, let $U^+$ be the Bongartz completion of $U$, and consider the functor $\overline{(-)}\colon \calC \longrightarrow \mods{(}\Lambda{)}$ defined in \Cref{lem:GNP23.3.11}. Then
	\[ \mods( \End_{\calC/\thick(U)}(U^+)/[V]) \simeq \mods(\End_{\Lambda}(\overline{U^+})[\overline{U}]), \]
	where $V$ is the maximal basic injective object in $\calC/\thick(U)$. 
\end{lemma}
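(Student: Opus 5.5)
The plan is to reduce both sides to a single ideal quotient of $\End_{\calC}(U^+)$ and to show that the two ideals involved coincide. (I read the right-hand side as $\mods(\End_{\Lambda}(\overline{U^+})/[\overline{U}])$.)

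\emph{Step 1: the left-hand side.} By \Cref{prop:siltred}\eqref{prop:siltred_equiv}, the category $\calC/\thick(U)$ is extriangulated-equivalent to $\calZ_U/[U]$. By \Cref{prop:siltred}\eqref{prop:siltred_0Aus_proj}, $U^+$ is a basic projective silting object there. Since $\calZ_U$ is a full subcategory of $\calC$ containing $U^+$, we get
\[ \End_{\calC/\thick(U)}(U^+) \simeq \End_{\calC}(U^+)/[U]. \]
Let $\calI_Z$ denote the injective objects of $\calZ_U$; the injectives of $\calZ_U/[U]$ are the images of $\calI_Z$. The left-hand algebra is therefore
\[ \End_{\calC}(U^+)/\bigl([U]+[\calI_Z]\bigr). \]

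\emph{Step 2: the right-hand side.} By \Cref{lem:GNP23.3.11}, $\overline{(-)}$ is fully faithful on $\calC/[\mathcal{I}]$, where $\mathcal{I}$ is the injectives of $\calC$. Hence $\End_{\Lambda}(\overline{U^+}) \simeq \End_{\calC}(U^+)/[\mathcal{I}]$. Morphisms factoring through $\overline{U}$ in $\mods(\Lambda)$ lift to morphisms factoring through $U$ modulo $[\mathcal{I}]$, so the right-hand algebra is
\[ \End_{\calC}(U^+)/\bigl([U]+[\mathcal{I}]\bigr). \]
It then suffices to prove that, on $\End_{\calC}(U^+)$,
\[ [U]+[\mathcal{I}] = [U]+[\calI_Z]. \]
This yields an algebra isomorphism and hence the stated equivalence of module categories.

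\emph{Step 3: the inclusion $\subseteq$.} Let $f\colon U^+\to I$ with $I\in\mathcal{I}$. Use the Bongartz $\E$-triangle
\[ V^U \rightarrowtail U' \twoheadrightarrow I \dashrightarrow \]
from \Cref{defprop:Bongartz}. Since $U^+$ is projective in $\calC$ (being a summand of a silting object containing $T$-type projectives is not needed: $\E(U^+,V^U)=0$ already holds because $U^+$ is silting with $V^U\in\add(U^+)$), the long exact sequence of \Cref{lem:LES} shows that $f$ lifts through $U'\in\add(U\oplus N)$. Finally, $N$ is projective-injective, so $N$ lies in $\calZ_U$ and is injective there, i.e. $N\in\calI_Z$. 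Thus $f$ lies in $[U]+[\calI_Z]$.

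\emph{Step 4: the inclusion $\supseteq$ (main obstacle).} Here one must control the injectives of $\calZ_U$, which need not be injective in $\calC$. My first attempt is to describe $\calI_Z$ explicitly. Apply \Cref{lem:GNP23.3.12} inside the 0-Auslander category $\calZ_U$: every $J\in\calI_Z$ sits in a conflation
\[ P \rightarrowtail N' \twoheadrightarrow J \dashrightarrow \]
with $P\in\add(U^+)$ and $N'$ projective-injective in $\calZ_U$. I would then check that the projective-injectives of $\calZ_U$ lie in $\add(U\oplus N)$. Next, for $g\colon U^+\to J$, I would compare this conflation with the Bongartz-type triangle of \Cref{prop:Bongartz} to factor $g$, modulo $[U]$, through an injective object of $\calC$.

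If that direct comparison proves awkward, the fallback is to compare both sides as module categories:
\begin{itemize}
\item by \Cref{lem:GNP23.3.11} applied to $\calZ_U/[U]$, the left-hand side is equivalent to $(\calZ_U/[U])/[\calI_Z]$;
\item by Jasso's $\tau$-tilting reduction, the right-hand side is equivalent to the $\tau$-perpendicular category $\overline{U}^{\perp}\cap {}^{\perp}\tau\overline{U}$, and $\overline{(-)}$ carries $\calZ_U$ onto this category.
\end{itemize}
Checking that $\overline{(-)}$ kills exactly $[U]+[\calI_Z]$ is the same ideal comparison, now phrased on objects.
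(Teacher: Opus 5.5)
\textbf{There is a genuine gap: Step 4 is the substantive inclusion and it is not proved.}

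Steps 1--3 are sound. Both algebras are quotients of $\End_{\calC}(U^+)$, by $[U]+[\calI_Z]$ and by $[U]+[\mathcal{I}]$ respectively. Step 3 correctly shows $[\mathcal{I}](U^+,U^+)\subseteq[\add(U\oplus N)](U^+,U^+)$. Note that $U^+$ is not projective in $\calC$ in general; what you actually use, and what suffices, is $\E(U^+,V^U)=0$.

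For Step 4 you only sketch a plan. The conflation $P\rightarrowtail N'\twoheadrightarrow J$ you take from \Cref{lem:GNP23.3.12} inside $\calZ_U$ already does most of the work. Given $U^+\xrightarrow{g}J\xrightarrow{h}U^+$ with $J\in\calI_Z$, the map $g$ lifts through $N'$, because the obstruction lies in $\E(U^+,P)=0$. Hence $hg$ factors through $N'$.

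So everything reduces to the claim you list only as something to ``check'': every projective-injective object of $\calZ_U$ lies in $\add(U\oplus N)$. In other words, silting reduction creates no new projective-injectives. This is not formal, and your proposal does not prove it. It is true, but it needs an argument. One way uses the mutation theory of \cite{GNP23}:
\begin{enumerate}
\item An indecomposable $X\in\add(U^+)\setminus\add(U\oplus N)$ is not projective-injective in $\calC$, so $U^+$ can be mutated at $X$.
\item This gives a non-split conflation between $X$ and some $X^*$ with $X^*\in\calZ_U$, since the mutated silting object still contains $U$.
\item Since $X$ is projective in $\calZ_U$, this conflation must have the form $X\rightarrowtail E\twoheadrightarrow X^*$.
\item Hence $\E(X^*,X)\neq0$, so $X\notin\calI_Z$.
\end{enumerate}
With this in place, both ideals equal $[\add(U\oplus N)](U^+,U^+)$, and your argument would yield an algebra isomorphism rather than just an equivalence of module categories.

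Your fallback does not close the gap. It is not true that $\overline{(-)}$ carries $\calZ_U$ onto $J(\overline{U},\overline{\Omega I_U})$: already $U\in\calZ_U$, while $\overline{U}\notin\overline{U}^{\perp}$ unless $\overline{U}=0$. The functor that lands in the $\tau$-perpendicular category is the torsion-free part $f_U\overline{(-)}$, not $\overline{(-)}$ itself. As you note yourself, the remaining check is the same ideal comparison.

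The paper never compares ideals in $\End_{\calC}(U^+)$. Its route is:
\begin{enumerate}
\item Apply \Cref{lem:GNP23.3.11} to $\calZ_U/[U]$.
\item Use \cite[Prop. 2.9(2)]{GNP23} to see that $U\oplus V'$ is the maximal injective of $\calZ_U$, so $N\in\add(U\oplus V')$.
\item Rewrite $\calZ_U/[U\oplus V']$ as $(\overline{\calZ_U}/[U])/[V']$ inside the reduced category $\overline{\calC}$.
\item Import the final equivalence with $\mods(\End_{\Lambda}(\overline{U^+})/[\overline{U}])$ from \cite{PZ24}.
\end{enumerate}
The content of your missing Step 4 is exactly what that last citation supplies.
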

\begin{proof}
	Since \Cref{prop:siltred}\eqref{prop:siltred_equiv} gives an extriangulated equivalence ${\calZ_U}/[U] \rightarrow \calC/\thick(U)$ sending $U^+$ to $U^+$, we obtain a $k$-algebra isomorphism
	\[\End_{\calC/\thick(U)}(U^+)/[V] \simeq \End_{\calZ_U/[U]}(U^+)/[V'], \]
	where $V'$ is the maximal basic injective object in $\calZ_U/[U]$. 
	 It induces an exact equivalence
	\[ \mods( \End_{\calC/\thick(U)}(U^+)/[V]) \simeq \mods(\End_{\calZ_U/[U]}(U^+)/[V']). \]
	Applying \Cref{lem:GNP23.3.11} to $\calZ_U/[U]$ yields the first of the following equivalence of categories:	
	\[\mods(\End_{\calZ_U/[U]}(U^+)/[V']) \simeq (\calZ_U/[U])/[V'] \simeq \calZ_U/[U\oplus V'].   \] 
	From the fact that $\E_{\calZ_U/[U]}(-,X)=0$ if and only if $\E_{\calZ_U}(-, X)=0$ for $X \in \calZ_U$ by \cite[Prop. 2.9(2)]{GNP23}, we deduce that $U \oplus V'$ is the maximal basic injective object in $\calZ_U$. Since $N$ is also injective in $\calZ_U$, we obtain $N \in \add (U \oplus V')$. We get equivalences
	\[ \calZ_U/[U\oplus V'] \simeq \calZ_U/[N \oplus U\oplus V'] = (\calZ_U/[N \oplus U])/[V']. \] 
	From the above discussion we know that $\add(V')$ is the full subcategory of $\calZ_U/[N \oplus U]$ containing all injectives. The subcategory $\overline{\calZ_U} \coloneqq U^{\perp_{\overline{\E}}} \cap {^{\perp_{\overline{\E}}}U} \subseteq \calC/[N]$ may naturally be identified with $\calZ_U/[N]$. Consequently, $\overline{\calZ_U}/[U]$ may naturally be identified with $\calZ_U/[U \oplus N]$. The desired equivalence
	\[ (\calZ_{U}/[U \oplus N])/[V']  \simeq (\overline{\calZ_U}/[U])/[V'] \simeq \mods(\End_{\Lambda}({\overline{U^+}})/[\overline{U}])\] 
follows from \cite[Above Thm. 4.22 on p. 18]{PZ24}, since $\overline{\calZ_U}/[U]$ is the subcategory of objects compatible with $U$ in the reduced category $\overline{\calC}$.
\end{proof}

\subsection{Polyhedral fans}
Polyhedral fans are ubiquitous mathematical objects arising in research areas ranging from toric geometry to optimisation theory. Our focus, however, lies on polyhedral fans arising within the context of representation theory. We begin by recalling some definitions.

\begin{definition}[{see \cite{Zie95},\cite{CLS24}}]\label{def:cones+fans}
    Let $n$ be a non-negative integer, let $N$ be a lattice (i.e. free abelian group) of rank $n$ {(so isomorphic to $\Z^n$)} and embed $N$ into the $n$-dimensional $\R$-vector space $N_{\R}\coloneqq N\otimes_{\Z}\R$.
    \begin{enumerate}
        \item Given a subset $X=\{v_1 , \dots , v_{\ell}\}\subseteq N_{\R}$, its {non-negative} span $\boldC_{X}\coloneqq \spann_{\geq 0}(X) \subseteq N_{\R}$ is called a \textit{(polyhedral) cone} in $N_{\R}$. {The \textit{dimension} of $\boldC_{X}$ is the dimension of {the subspace} $\spann(X)$ of $N_{\R}$.}
        \item If each $v_i\in X$ is of the form $n_i\otimes 1$ for some $n_i\in N$, we say that the cone $\boldC_{X}$ is \textit{rational}.
        \item A cone is \textit{strongly convex} if it contains no positive-dimensional subspace of $N_{\R}$.
        \item A strongly convex cone is \textit{simplicial} if {it can be expressed by} $\boldC_{X}$, where $X$ is a linearly independent subset of $N_{\R}$. 
        \item Let $M$ denote the dual lattice of $N$ and let $\langle -, - \rangle\colon M_{\R}\times N_{\R} \to \R$ denote the pairing of the dual $\R$-vector spaces $M_{\R}$ and $N_{\R}$. For a cone $\sigma$ in $N_{\R}$, we define the \textit{dual cone} of $\sigma$ by 
        \begin{equation*}
            \sigma^{\vee} \coloneqq \{m \in M_{\R} \sth \langle m, u \rangle \geq 0 \quad \forall u\in\sigma \}.
        \end{equation*}
        For $m\in \sigma^{\vee}$, one considers the hyperplane
        \begin{equation*}
            H_m \coloneqq \{n \in N_{\R} \sth \langle m, n \rangle = 0 \}.
        \end{equation*}
        A \textit{face} of a {cone} $\sigma$ {in $N_{\R}$} is a subset of $N_{\R}$ of the form $\sigma\cap H_m$ for some $m\in \sigma^{\vee}$. A face of a cone is also a cone. In the special case where $\sigma$ is simplicial, there exists a linearly independent subset $X$ of $N_{\R}$ such that $\sigma=\boldC_{X}$, and a face of $\sigma$ is then of the form $\boldC_{Y}$, where $Y$ is a subset of $X$.
        \item\label{def:cones+fans_fan}  
        A \textit{polyhedral fan} in $N_{\R}$ is given by a family of {strongly-convex} polyhedral cones in $N_{\R}$ denoted $\Sfan = \{\boldC_{X}\}_{X\in I_{\Sfan}}$ for which
\begin{enumerate}
    \item $\Sfan$ is closed under the formation of faces, 
    \item the intersection of two cones $\boldC_{X}$ and $\boldC_{Y}$ {is a face of both $\boldC_{X}$ and $\boldC_{Y}$.}
\end{enumerate} 
    \item {Let $\Sfan$ be a polyhedral fan. A cone $\sigma\in\Sfan$ is \textit{maximal} if it is not properly contained in any other cone in $\Sfan$.}
    \item\label{def:cones+fans_sr}  A polyhedral fan $\Sfan$ is \textit{simplicial} if all of its constituent cones are simplicial, and \textit{rational} if all of {its} constituent cones are rational. 
    \item A polyhedral fan $\Sfan$ is \textit{finite} if $|\Sfan|$ is finite, and \textit{complete} if $\bigcup\limits_{\boldC_{X}\in\Sfan} \boldC_{X} = N_{\R}$.
    \end{enumerate}
\end{definition}

We will display several concrete examples of polyhedral fans in \Cref{ex:fans} below. For now, let us consider some general constructions.

\begin{definition}\label{def:morfan}
    Let $N_i$ be a lattice of rank $n_i$ and let $\Sfrak_i$ be a polyhedral fan in $(N_i)_{\R}$ for $i=1,2$. A \textit{morphism of fans} between $\Sfrak_1$ and $\Sfrak_2$ is given by an $\R$-linear transformation $f\colon (N_1)_{\R} \rightarrow (N_2)_{\R} $ such that for all cones $\sigma_{1}\in\Sfrak_{1}$ there exists a cone $\sigma_2\in \Sfrak_2$ {with} $f(\sigma_1)\subseteq \sigma_2$. An \textit{isomorphism of fans} is a morphism of fans admitting an inverse morphism of fans.
\end{definition}

We sometimes write $f: ((N_1)_\R, \Sfan_1) \to ((N_2)_\R, \Sfan_2)$ to denote a morphism of fans as in \cref{def:morfan}.
The following examples of morphisms of fans are frequently referred to in the remaining text.

\begin{example}\label{eg:orthproj}
    Let $\Sfrak$ be a polyhedral fan in $N_{\R}$ and let $\sigma$ be a cone in $\Sfan$.
    \begin{enumerate}
        \item\label{eg:orthproj1} Denote the set of cones containing $\sigma$ by
        \begin{equation}\label{eq:star}
 	 \starr(\sigma) \coloneqq \{\tau\in\Sfan \sth \sigma\subseteq \tau \}.
        \end{equation}
        Note that taking the closure {of $\mathrm{star}(\sigma)$} under faces defines a polyhedral fan in $N_{\R}$. We set
        \[ \overline{\starr}(\sigma) \coloneqq \{ \kappa \in \Sfan \sth \kappa \subseteq \tau, \text{ with } {\sigma \subseteq \tau} \}. \]
        
        Clearly, the identity endomorphism on $N_{\R}$ gives a morphism of fans from $\overline{\mathrm{star}}(\sigma)$ to $\Sfan$.  
        \item\label{eg:orthproj_proj} 
        Define $\sigma^{\perp} \coloneqq \{v\in M_{\R} \sth \langle v,u\rangle = 0 \quad \forall u\in {\spann(}\sigma{)} \}$.
        This is an $\R$-linear subspace of $M_{\R}$, which is the dual vector space of $N_{\R}$.
        Suppose that $\sigma$ is rational and simplicial, and that we have fixed a linearly independent subset $X\subseteq N{_{\R}}$ such that $\sigma = \boldC_X$. We also fix a basis $B$ of $N{_{\R}}$ containing $X$. Consider the composite $\R$-linear transformation $\pi_{\sigma}$ defined by
        \begin{equation}\label{eq:othcomphomomo}
            \begin{tikzcd}[column sep=4em]
            N_{\R} \arrow[r,"\varphi_B"] \arrow[rr,"\pi_{\sigma}", bend right] & M_{\R} \arrow[r,"\mathrm{proj}_{\sigma}"] & \sigma^{\perp},
            \end{tikzcd}
        \end{equation}
        where the first homomorphism is the isomorphism sending $B$ to the dual basis of $M_{\R}$, and the second is orthogonal projection. Note that $\mathrm{proj}_{\sigma}$ projects $M$ onto a lattice $L$ embedded in $\sigma^{\perp}$, and that $L_{\R}$ is isomorphic to $\sigma^{\perp}$. 
        The images of the cones in $\mathrm{star}(\sigma) \subseteq \Sfan$ under $\pi_{\sigma}$ define a polyhedral fan $\Sfan_\sigma = {\pi_{\sigma}(\starr(\sigma))}$ in $\sigma^\perp${, see \cite[p. 52]{Ful93}.} Therefore, the map $\pi_{\sigma}$ defines a morphism of fans from $\overline{\mathrm{star}}(\sigma)$ to $\Sfan_\sigma$.
    \end{enumerate}
\end{example}

In subsequent sections, our examples of polyhedral fans will have a Grothendieck group as the ambient lattice, whence it is useful to define this notion before commencing the next section.

\begin{definition}
For an extriangulated $\k$-category $(\calC,\E,\mathfrak{s})$, let $\mathrm{iso}(\calC)$ denote the set of isomorphism classes of objects in $\calC$. The \textit{split Grothendieck group} of $(\calC,\E,\mathfrak{s})$ is defined by
$\Ksp_0(\calC)\coloneqq \Z^{\mathrm{iso}(\calC)}$, namely the free abelian group generated by $\mathrm{iso}(\calC)$. The \textit{Grothendieck group} of $(\calC,\E,\mathfrak{s})$ is defined as \cite{ZZ21}:
\begin{equation*}
	\K_0(\calC) \coloneqq \Ksp_0(\calC) / \big\langle[B] - [A] - [C] \sth
	 \begin{tikzcd}
			A \arrow[r,tail] & B \arrow[r,two heads] & C \arrow[r,dashed] & {}
	\end{tikzcd}
	\text{ is an {$\E$-triangle}} \big\rangle.
\end{equation*}
The \textit{real split Grothendieck group} of $\calC$ is defined to be $\Ksp_0(\calC)_{\R}\coloneqq \Ksp_0(\calC)\otimes_\Z \R$, and the \textit{real Grothendieck group} of $\calC$ by $\K_0(\calC)_{\R}\coloneqq \K_0(\calC)\otimes_\Z \R$. \end{definition}

If $\calC$ is Krull--Schmidt and contains a silting object $T$, we may assume it to be basic with $n$ indecomposable direct summands $T_i$ up to isomorphism for some non-negative integer $n$. The isomorphism classes $[T_i]$ form a basis of $\K_0(\calC)$, whence $\K_0(\calC)$ is a lattice of rank $n$ \cite[{Thm.} 4.1]{AT23}. The assumption that $\calC$ is Krull--Schmidt has later been shown to be unnecessary \cite[Thm. A]{Wan25}. Thus, we have that the real Grothendieck group $\K_0(\calC)_{\R}$ is an $n$-dimensional $\R$-vector space in this case.

\section{\texorpdfstring{$\bfg$}{g}-vector fans of 0-Auslander extriangulated categories}\label{sec:fan}

Let $\calC$ be a Hom-finite Krull--Schmidt 0-Auslander extriangulated $k$-category and let $T$ be a basic projective silting object in $\calC$, as in \Cref{setting}. For an object $X$ in $\calC$ we fix {an $\E$-triangle}
\begin{equation}\label{eq:Xconf}
        \begin{tikzcd}
            T_1 \arrow[r, tail] & T_0 \arrow[r,two heads] & X \arrow[r, dashed] & {},
        \end{tikzcd}
\end{equation}
with $T_0,T_1\in\add(T)$ using the defining properties of 0-Auslander extriangulated categories. Define the \textit{index} of $X$ with respect to $T$ by $\ind_T(X) \coloneqq [T_0]-[T_1] \in \K_0(\add{T})$, see \cite[§4.3]{GNP23}. The index is well-defined; the choice of $\E$-triangle in \eqref{eq:Xconf} does not affect the element $\ind_T(X)$ in $\K_0(\add_{{\calC}}{T})$. Note that $\K_0(\add_{\calC}{T})=\Ksp_0(\add{T})$ since $T$ is projective in $\calC$, {and ${\K}_0(\add_{{\calC}} T) = \K_0(\calC)$ because $T$ is silting \cite[Thm. 4.1]{AT23}}.

\begin{definition}\label{def:gfan}
{Let $\calC$ and $T$ be as in \cref{setting}. Let $X \in \calC$ be presilting.}
    \begin{enumerate}
        \item The \textit{$\bfg$-vector} of $X$ (with respect to $T$) is then defined as the element $\bfg^X \coloneqq \mathrm{ind}_T(X) \otimes 1$ in the real Grothendieck group $ \K_0(\add_{{\calC}}{T})_{\R}$. 
        \item If $X$ is basic and has a decomposition $X \simeq X_1 \oplus \dots \oplus X_\ell$, where each $X_i$ is indecomposable, we define the \textit{$\bfg$-vector cone} $\boldC(X)$ of $X$ as the non-negative span in $\K_0(\add_{\calC}{T})_{\R}$ of the set $\{\bfg^{X_i}\}_{i=1}^{\ell}$.
        \item We define $\partfan{\calC}$ to be the collection of cones 
    $\partfan{\calC} \coloneqq \{ \boldC(X) : X \in \presilt(\calC)\} $
    in ${\K}_0(\add T)_\R$.
    \end{enumerate}
\end{definition}

Let $R$ be an arbitrary silting object in $\calC$. Since $\calC$ is 0-Auslander, the object $R$ is also \textit{tilting} \cite[{Thm.} 4.3]{GNP23}, which is to say that it is presilting and that every $V\in\add(T)$ admits {an $\E$-triangle}
\begin{equation*}
        \begin{tikzcd}
            V \arrow[r,tail] & R^0 \arrow[r,two heads] & R^1 \arrow[r, dashed] & {},
        \end{tikzcd}
\end{equation*}
where $R^0,R^1\in \add(R)$ {by} \cite[{Thm.} 4.3]{GNP23}. {Using this}, one formulates a well-defined notion of \textit{co-index} of $V$ with respect to $R$ by defining $\coind_R(V) \coloneqq [R^0]-[R^1]\in \Ksp_0(\add{R})${, see} \cite[§4.3]{GNP23} \cite[Lem. 4.36]{PPPP23}.

We now establish that certain functors interact well with the index map. The following two lemmas play central roles in many of the constructions throughout.
\begin{lemma} \label{lem:indexFanIso2} Let $\calC$ and $T$ be as in \Cref{setting}, and let $\calD$ and $S$ satisfy the same setup. If $G: \calC \to \calD$ is an extriangulated functor such that
    \begin{enumerate}
        \item\label{lem:indexIso2a} $G$ sends presilting objects in $\calC$ to presilting objects in $\calD$; and
        \item\label{lem:indexIso2b} $G(T) \in \add(S)$;
    \end{enumerate}
    then $G$ induces a commutative diagram 
     \begin{equation}\label{eq:index_commsq}
    \begin{tikzcd}[column sep=6em]
        \presilt({\calC}) \arrow[d,"\ind_T "] \arrow[r, "G"] & \presilt({\calD}) \arrow[d,"\ind_S"] \\
        \K_0(\add_{\calC}T) \arrow[r,"g"] & \K_0(\add_{\calD}S)
    \end{tikzcd}
    \end{equation}
    where $g$ is defined by its action on the basis consisting of indecomposable direct summands $T_i$ of $T$ as $g([T_i]) = [G(T_i)]$. 
\end{lemma}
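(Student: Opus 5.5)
The plan is to check well-definedness of $g$ first and then verify commutativity of \eqref{eq:index_commsq} directly from the definition of the index. Since $T$ is basic with indecomposable summands $T_1,\dots,T_n$, the classes $[T_i]$ form a basis of $\K_0(\add_{\calC}T)=\Ksp_0(\add T)$. Assumption \eqref{lem:indexIso2b} gives $G(T_i)\in\add(S)$, so $[G(T_i)]$ is an element of $\K_0(\add_{\calD}S)=\Ksp_0(\add S)$. Hence prescribing $g$ on this basis defines a group homomorphism $g$. Because $G$ is additive, we get $g([T']) = [G(T')]$ for every $T'\in\add(T)$: decompose $T'\simeq\bigoplus T_i^{m_i}$ and use $G(T')\simeq\bigoplus G(T_i)^{m_i}$. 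The top horizontal map is defined by assumption \eqref{lem:indexIso2a}.

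For commutativity, let $X\in\presilt(\calC)$ and fix an $\E$-triangle
\[ T^1 \rightarrowtail T^0 \twoheadrightarrow X \dashrightarrow \]
with $T^0,T^1\in\add(T)$ as in \eqref{eq:Xconf}, so that $\ind_T(X)=[T^0]-[T^1]$. Since $(G,\psi)$ is an extriangulated functor, applying it yields an $\E_{\calD}$-triangle
\[ G(T^1)\rightarrowtail G(T^0)\twoheadrightarrow G(X)\dashrightarrow \]
realising $\psi(\delta)$. This is exactly the compatibility square in the definition of extriangulated functor. By \eqref{lem:indexIso2b} and additivity, $G(T^0)$ and $G(T^1)$ lie in $\add(S)$. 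So this $\E_{\calD}$-triangle is of the form used to compute $\ind_S(G(X))$. The remark after \eqref{eq:Xconf} says the index does not depend on the chosen $\E$-triangle with terms in the additive closure of the projective silting object. Therefore
\[ \ind_S(GX) = [G(T^0)]-[G(T^1)] = g([T^0])-g([T^1]) = g(\ind_T X). \]

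The only step needing care is the independence of the index from the chosen presentation in $\calD$. This is invoked from \cite[§4.3]{GNP23} as stated earlier in the paper. It applies because $\calD$ and $S$ satisfy \Cref{setting}, so $S$ is projective and $\add(S)$ is the relevant subcategory. If one worried that this independence needs minimality, I would instead argue directly. Any two such $\E$-triangles differ by split projective summands, via the standard comparison using projectivity of $S$ and (\textbf{ET4}), so their classes in $\Ksp_0(\add S)$ agree. I expect no genuine obstacle beyond this bookkeeping.
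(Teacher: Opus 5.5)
Your proposal is correct and follows essentially the same route as the paper: you apply the extriangulated functor $G$ to a presentation $T^1 \rightarrowtail T^0 \twoheadrightarrow X$, note that $G(T^0),G(T^1)\in\add(S)$ by assumption (2), and invoke the independence of the index from the chosen $\E$-triangle to get $\ind_S(G(X))=[G(T^0)]-[G(T^1)]=g(\ind_T(X))$. Your explicit check that $g([T'])=[G(T')]$ for all $T'\in\add(T)$ follows from additivity of $G$; the paper leaves this step implicit.
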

\begin{proof}
The conditions we impose on $G$ guarantee that the functor $G$ induces a diagram as in \eqref{eq:index_commsq}; indeed condition \eqref{lem:indexIso2a} yields a well-defined map in the upper row and condition \eqref{lem:indexIso2b} in the lower row.
    It remains to show commutativity, that is, for a presilting object $X \in \presilt \calC$,
    showing $g(\ind_T(X)) = \ind_S(G(X))$. Let 
    \begin{equation}\label{eq:indexIsoTriangle} \begin{tikzcd}
            T_1 \arrow[r, tail] & T_0 \arrow[r,two heads] & X \arrow[r, dashed] & {}
        \end{tikzcd} \end{equation}
        be an $\E$-triangle in $\calC$ with $T_0$ and $T_1$ in $\add_{\calC} T$. Then the index of $X$ is given by $[T_0]- [T_1] \in \K_0(\calC)$. Because $G$ is an extriangulated functor, {the} $\E$-triangle {\cref{eq:indexIsoTriangle}} is sent to an $\E$-triangle in $\calD$ of the form
        \[ \begin{tikzcd}
            G(T_1) \arrow[r, tail] & G(T_0) \arrow[r,two heads] & G(X) \arrow[r, dashed] & {}
        \end{tikzcd}\]
        Since $G(T) \in \add(S)$ by assumption, 
        we have $G(T_0), G(T_1) \in \add(S)$.
        This gives $\ind_S(G(X)) = [G(T_0)]-[G(T_1)]$ (see \cite[§4.3]{GNP23} about the independence of the chosen $\E$-triangle in $\calD$ for the index). The result now follows from the definition of the map $g$.
        \end{proof}
        
        Even though there is no canonical extriangulated functor from $\calC$ to $\Ktwo(\proj \Lambda)$, the bijection of presilting objects in \cref{prop:F}\eqref{thm:modulecatreduction_presilt} fits into a similar commutative diagram.

\begin{lemma}\label{lem:Findexsquare}
Let $\calC$, $\Lambda$ and $T$ be as in \Cref{setting}, and suppose that $\calC$ is reduced. The following square commutes and the horizontal maps are bijections:\begin{equation}\label{eq:modulecatreduction_commsq}
    \begin{tikzcd}[column sep=6em]
        \presilt(\calC) \arrow[d," \ind_T"] \arrow[r,"{\eqref{eq:presilt_C_Lambda}}"] & {\presilt( \Lambda)} \arrow[d,"\ind_{\Lambda} "] \\
        \K_0(\add{T})  \arrow[r,"{\calC(T,-)}"] & \K_0({\add}{\Lambda})
    \end{tikzcd}
    \end{equation}
    \end{lemma}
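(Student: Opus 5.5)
Since $\calC$ is reduced, $N=0$ and $\overline{\calC}=\calC$, so $\Lambda=\End_{\calC}(T)$. The horizontal maps are bijections by known results. The top map is a bijection by \Cref{prop:F}\eqref{thm:modulecatreduction_presilt}. For the bottom map, $\calC(T,-)$ restricts to an equivalence $\add(T)\to\add(\Lambda)=\proj\Lambda$ (Yoneda, using that $\End_{\calC}(T)=\Lambda$). It therefore sends the basis $\{[T_i]\}$ of $\K_0(\add T)=\Ksp_0(\add T)$ bijectively onto the basis $\{[e_i\Lambda]\}$ of $\K_0(\add\Lambda)$, and so is a group isomorphism. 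It remains to check commutativity, i.e.\ $\calC(T,\ind_T X)=\ind_\Lambda F(X)$ for every presilting $X$. Both sides are additive in $X$, so it suffices to treat indecomposable $X$. We then split into two cases according to whether $X$ is injective.

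\textbf{Case 1: $X$ is not injective.} Then $I_X=0$ and $F(X)=\mathbf{p}\overline{X}$. Take the $\E$-triangle $T_1\rightarrowtail T_0\twoheadrightarrow X\dashrightarrow$ from \eqref{eq:Xconf}. Apply $\calC(T,-)$ and use \Cref{lem:LES} together with $\E(T,-)=0$, since $T$ is projective. This gives an exact sequence
\[\calC(T,T_1)\to\calC(T,T_0)\to\calC(T,X)\to 0.\]
Because $\calC$ is reduced, $\overline{X}=\calC(T,X)$, so this is a projective presentation of $\overline{X}$. A projective presentation differs from the minimal one $\mathbf{p}\overline{X}$ by adding trivial complexes $P\xrightarrow{1}P$ and complexes $P\to 0$ in degree $-1$. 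Its class in $\K_0(\add\Lambda)$ is $[\calC(T,T_0)]-[\calC(T,T_1)]$. The main obstacle is to show that this equals $\ind_\Lambda(\mathbf{p}\overline{X})$, that is, that no summand $P\to 0$ occurs. This is the same as the map $\calC(T,T_1)\to\calC(T,T_0)$ being injective modulo split summands.

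To settle this, I would choose the $\E$-triangle minimally: $T_0\to X$ a minimal right $\add T$-approximation. Then argue that the kernel of $\calC(T,T_1)\to\calC(T,T_0)$ is $\calC(T,-)$ applied to a morphism factoring through injectives. Concretely, the LES continues with $\calC(T,T_1)\to\calC(T,T_0)$, and its kernel is the image of $\calC(T,\Omega$-type terms$)$. Alternatively, and more robustly, I would use that for presilting $X$ the two-term complex $\calC(T,T_1)\to\calC(T,T_0)$ is presilting in $\Kb(\proj\Lambda)$ with no summand of the form $P\to0$ when $X$ is non-injective indecomposable. Then, by \cite[Lem.~3.4]{AIR2014}, this complex is isomorphic in $\Ktwo(\proj\Lambda)$ to $\mathbf{p}\overline{X}$, and indices agree since $\ind_\Lambda$ is defined on homotopy classes.

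\textbf{Case 2: $X$ is injective, hence not projective.} \Cref{lem:GNP23.3.12} with $N'=0$ gives an $\E$-triangle $\Omega X\rightarrowtail 0\twoheadrightarrow X\dashrightarrow$, so $\ind_T X=-[\Omega X]$ with $\Omega X\in\add T$. On the other side, $\overline{X}=0$ since $\calC(T,X)=0$: indeed $\calC(T,X)\cong\calC/[\mathcal I](T,X)$ vanishes as $X$ is injective. Hence $F(X)=\Sigma\calC(T,\Omega X)$, whose index is $-[\calC(T,\Omega X)]$, matching $\calC(T,\ind_T X)$.

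Combining the two cases gives commutativity of \eqref{eq:modulecatreduction_commsq} for all presilting $X$.
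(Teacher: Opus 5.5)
Your reduction is the same as the paper's. The paper uses a single $\E$-triangle $\Omega I_X\oplus T_1'\rightarrowtail T_0\twoheadrightarrow X$ that is zero on $\Omega I_X$, and compares $[P_0]-[P_1']-[P_\Omega]$ with the index of $\mathbf{p}\overline{X}\oplus\Sigma\overline{\Omega I_X}$; this is your two cases done at once. Your bijectivity arguments and your Case~2 are fine.

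\textbf{The gap.} The one step with real content, which you flag yourself in Case~1, is not proved: that $\calC(T,T_1)\to\calC(T,T_0)$ has no direct summand $Q\to 0$, with $Q$ in degree $-1$.
\begin{itemize}
\item Right minimality of $T_0\twoheadrightarrow X$ only rules out summands $Q\xrightarrow{1}Q$. It says nothing about a summand of $T_1$ killed by the inflation. For $X=\Sigma Q$ the minimal triangle $Q\rightarrowtail 0\twoheadrightarrow \Sigma Q$ is exactly of that form, so non-injectivity of $X$ must be used somewhere.
\item The appeal to ``$\Omega$-type terms'' is not an argument.
\item Your second sketch simply asserts the conclusion. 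Presilting-ness and \cite[Lem.~3.4]{AIR2014} are irrelevant here: any projective presentation without summands $Q\to 0$ is isomorphic, as a complex, to $\mathbf{p}\overline{X}$ plus contractible summands, so it has the same index.
\end{itemize}
The paper's proof uses the missing fact implicitly. In the proof of \cref{prop:Monica}\eqref{prop:Monica_W=J} it attributes it to the proof of \cite[Prop.~4.5]{PZ24}: the triangle of an object without injective summands is sent to a minimal projective presentation.

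\textbf{A direct way to close it.} Use the equivalence $\calC(T,-)\colon\add T\to\proj\Lambda$. After replacing the triangle $T_1\overset{a}{\rightarrowtail}T_0\twoheadrightarrow X\overset{\delta}{\dashrightarrow}$ by an isomorphic one, a summand $Q\to 0$ means $T_1=T_1'\oplus T''$ with $Q\cong\calC(T,T'')$ and $aj=0$. Here $j$ and $p$ denote the inclusion of and projection onto $T''$.
\begin{itemize}
\item Let $T''\rightarrowtail 0\twoheadrightarrow \Sigma T''\overset{\delta''}{\dashrightarrow}$ be as in \cref{lem:GNP23.3.12}, with $N'=0$ since $\calC$ is reduced.
\item Apply (\textbf{ET3}) to the square given by $(p,0)$, which commutes trivially. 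This gives $c\colon X\to\Sigma T''$ with $\E(c,T'')(\delta'')=\E(X,p)(\delta)$.
\item Apply (\textbf{ET3}) to the square given by $(j,0)$, which commutes since $aj=0$. This gives $d\colon \Sigma T''\to X$ with $\E(d,T_1)(\delta)=\E(\Sigma T'',j)(\delta'')$.
\item By bifunctoriality, $\E(cd,T'')(\delta'')=\E(\Sigma T'',p)\E(d,T_1)(\delta)=\E(\Sigma T'',pj)(\delta'')=\delta''$.
\item By \cref{lem:LES} for this triangle with $U=\Sigma T''$, the map $h\mapsto \E(h,T'')(\delta'')$ on $\calC(\Sigma T'',\Sigma T'')$ has kernel equal to the image of $\calC(\Sigma T'',0)=0$. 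Hence $cd=1$.
\item So the injective object $\Sigma T''$ is a direct summand of $X$. Since $X$ is indecomposable and not injective, $\Sigma T''=0$, and therefore $T''=0$.
\end{itemize}
This argument needs neither minimality of the triangle nor that $X$ be presilting.
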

    \begin{proof}
        Let $X$ be a presilting object in $\calC$, let $I_X$ be the maximal injective direct summand of $X$. We then have an $\E$-triangle
    \begin{equation*}
        \begin{tikzcd}[column sep=5em,ampersand replacement=\&]
           \Omega I_X \oplus T'_1 \arrow[r,"{\begin{pmatrix}
               0 & x'
           \end{pmatrix}}", tail] \& T_0 \arrow[r,two heads] \& X \arrow[r, dashed] \& {}
        \end{tikzcd}
    \end{equation*}
    {with ${T_0, T_1'} \in \add(T)$.}
    We denote $P_0 = \calC(T,T_0)$, $P'_1 = \calC(T,T'_1)$, and $P_{\Omega} = \calC(T,\Omega I_X)$, all being regarded as objects in $\proj(\Lambda){= \add \Lambda}$. The image of $X$ under the composite
    \begin{equation*}
        \begin{tikzcd}[column sep=5em]
            \presilt(\calC)\arrow[r,"\ind_T"] & \K_0(\add{T}) \arrow[r,"{\calC(T,-)}"] & \K_0(\add{\Lambda})
        \end{tikzcd}
    \end{equation*}
    is then seen to be $[P_0]-[P_1']-[P_{\Omega}]$, and this is also the image of $X$ under the other composite in \eqref{eq:modulecatreduction_commsq}. Thus, the square commutes. We have established in {\Cref{prop:F}\eqref{thm:modulecatreduction_presilt}} that the horizontal map along the top of \eqref{eq:modulecatreduction_commsq} is a bijection. The fact that the bottom horizontal map is a bijection follows from the fact that both categories have silting objects with the same number of indecomposable direct summands, see \cite[Thm. 4.1]{AT23} and \cref{lem:GNP23.3.11}.
    \end{proof}

We apply this result to extend an important result on $\bfg$-vectors \cite{DK08, DIJ2019} of finite-dimensional $k$-algebras to the setting of 0-Auslander extriangulated categories. 

\begin{lemma}\label{lem:IsFan}
    Let $\calC$ and $T$ be as in \Cref{setting}.
    \begin{enumerate}
        \item\label{lem:IsFan_GNP} Let $R$ be a silting object.
        The $\Z$-linear maps
        \begin{equation*}
            \begin{tikzcd}
        \K_0(\add{T}) \arrow[rr, "\coind_R"', bend right=10] &  & \Ksp_0(\add{R}) \arrow[ll, "\ind_T"', bend right=10]
        \end{tikzcd} 
        \end{equation*}
        are mutually inverse isomorphisms of abelian groups. Consequently, the induced $\R$-linear maps
        \begin{equation}\label{eq:g_coind}
            \begin{tikzcd}
        \K_0(\add{T})_{\R} \arrow[rr, "\coind_R{(-)} \otimes {1}"', bend right=10] &  & \Ksp_0(\add{R})_{\R} \arrow[ll, "\bfg"', bend right=10]
        \end{tikzcd} 
        \end{equation}
        are mutually inverse isomorphisms of $\R$-vector spaces.
        \item\label{lem:IsFan_linindep} Let $U$ be a basic presilting object in $\calC$ and let $\{U_1,\dots,U_{\ell}\}$ be the set of the indecomposable direct summands of $U$. Then the set 
    $\{ \bfg^{U_1}, \dots, \bfg^{U_\ell}\}$ of ${\K}_0(\add{T})_{\R}$ is linearly independent. This set forms a basis of ${\K}_0(\add{T})_{\R}$ if and only if $U$ is silting.
    \item\label{lem:IsFan_inj} {The index map induces an injection}
    \begin{equation}\label{eq:IsFan_inj_ind}
    \begin{tikzcd}[column sep=6em]
        \presilt(\calC) \arrow[r, "\ind_T"] & \K_0(\add{T}),
        \end{tikzcd}
    \end{equation}
    {whence taking $\bfg$-vectors yields and injection}
     \begin{equation*}
    \begin{tikzcd}[column sep=6em]
        \presilt(\calC) \arrow[r, "\bfg"] & \K_0(\add{T})_{\R}.
        \end{tikzcd}
    \end{equation*}
    \end{enumerate}
\end{lemma}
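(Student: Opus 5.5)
For \eqref{lem:IsFan_GNP}, I will check directly that the two maps compose to the identity in both orders. For any $V\in\add(T)$, the tilting property of $R$ gives an $\E$-triangle $V \rightarrowtail R^0 \twoheadrightarrow R^1 \dashrightarrow$ with $R^0,R^1\in\add(R)$. In $\K_0(\calC)=\K_0(\add T)$ this yields $[V]=[R^0]-[R^1]$. The index is additive on $\E$-triangles whose terms lie in $\add(T)$ or $\add(R)$; I will invoke the well-definedness results of \cite[§4.3]{GNP23}, or rather the identification $\ind_T(Y)=[Y]$ in $\K_0(\calC)\cong\K_0(\add T)$. This identification holds because the defining triangle $T_1\rightarrowtail T_0\twoheadrightarrow Y$ gives $[Y]=[T_0]-[T_1]$ in $\K_0(\calC)$. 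Hence $\ind_T\circ\coind_R([V])=[R^0]-[R^1]=[V]$.

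For the other composite, I take $R_i$ indecomposable in $\add R$ and choose the triangle $T_1\rightarrowtail T_0\twoheadrightarrow R_i$. Then $\coind_R(T_0)-\coind_R(T_1)$ must equal $[R_i]$ in $\Ksp_0(\add R)$, and this is exactly \cite[Lem.~4.36]{PPPP23} / \cite[§4.3]{GNP23}, which I will cite. Alternatively, I can use ranks: $\Ksp_0(\add R)$ is free of rank $|R|=|T|=n$ by \cite[Cor.~4.2]{AT23}. The map $\ind_T\colon\Ksp_0(\add R)\to\K_0(\add T)$ is the natural map $\Ksp_0(\add R)\to\K_0(\calC)$, which is surjective since $[T_i]=[R^0]-[R^1]$. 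A surjection between free abelian groups of equal finite rank is an isomorphism. Since $\coind_R$ is a right inverse of $\ind_T$, it is then its two-sided inverse. Tensoring with $\R$ gives \eqref{eq:g_coind}.

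For \eqref{lem:IsFan_linindep}, I use Bongartz completion (\Cref{defprop:Bongartz}) to embed $U$ into a basic silting object $R=U^+$. By \eqref{lem:IsFan_GNP}, $\bfg$ maps the standard basis $\{[R_j]\otimes1\}$ of $\Ksp_0(\add R)_\R$ bijectively onto a basis, so the subset $\{\bfg^{U_i}\}$ is linearly independent. For the ``if and only if'': if $U$ is silting, then $\ell=n$ by \cite[Cor.~4.2]{AT23}, so the set is a basis. Conversely, if the set is a basis, then $\ell=n=|U^+|$, so $U=U^+$ is silting.

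For \eqref{lem:IsFan_inj}, suppose $X,Y$ are presilting with $\ind_T X=\ind_T Y$. This is the step I expect to be the main obstacle, since a priori $X$ and $Y$ need not be compatible. My first approach is to reduce to the module case, where injectivity of $\bfg$-vectors is known \cite{AIR2014,DIJ2019}. Set $\overline X=X/N$-part and use \Cref{lem:Findexsquare} in the reduced category $\overline{\calC}$. The projection $\K_0(\add T)\to\K_0(\add\overline T)$ forgets the $N$-coordinates. So equal indices force the images in $\presilt(\Lambda)$ to coincide, which by the bijections of \Cref{prop:F} (applied to $\overline\calC$) gives $\overline X\cong\overline Y$. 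It then remains to recover the projective-injective summands. Their multiplicities are determined by $\ind_T$ once the non-$N$ part is fixed, using additivity $\ind_T(X'\oplus N^a)=\ind_T(X')+a[N]$ and the fact that the lift of $\overline X$ without $N$-summands is unique in $\calC$. Checking that this lift is unique, i.e. that $\presilt(\calC)$ modulo $N$-summands injects into $\presilt(\overline\calC)$, is where I would spend the most care; I would argue via the triangle $\Omega I_X\oplus T_1'\rightarrowtail T_0\twoheadrightarrow X$ and Krull--Schmidt. Finally, the $\bfg$ statement follows since $\K_0(\add T)$ is a lattice embedding in its realification.
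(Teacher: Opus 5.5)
Your proposal is correct, and for \eqref{lem:IsFan_linindep} it matches the paper's argument (Bongartz completion plus the isomorphism of \eqref{lem:IsFan_GNP}).

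For \eqref{lem:IsFan_GNP} the paper simply cites \cite[Prop.~4.10]{GNP23}. Your rank argument is a valid self-contained replacement: the natural map $\Ksp_0(\add R)\to\K_0(\calC)\cong\K_0(\add T)$ is a surjection between free abelian groups of the same rank $n$, with right inverse $\coind_R$. It rests only on \cite[Thm.~4.1, Cor.~4.2]{AT23} and the well-definedness of $\coind_R$.

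In \eqref{lem:IsFan_inj} you share the paper's skeleton: pass to the reduced category $\calC/[N]$, then use \Cref{lem:Findexsquare} and injectivity of $\bfg$-vectors for $\Lambda$. You differ in how you return from $\calC/[N]$ to $\calC$.
\begin{itemize}
\item The paper invokes the silting-reduction bijection $\presilt_N(\calC)\to\presilt(\calC/[N])$ of \Cref{prop:siltred}\eqref{prop:siltred_bij}, with $U=N$ so that $\calZ_N=\calC$. It then replaces $X,Y$ by $X\oplus N$ and $Y\oplus N$.
\item You split $X=X'\oplus N_X$ and $Y=Y'\oplus N_Y$ with $N_X,N_Y\in\add N$. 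Write it this way rather than $N^a$, since $N$ may have several indecomposable summands. You then show $X'\cong Y'$ in $\calC$ and recover $N_X\cong N_Y$ from $[N_X]=[N_Y]$ in $\Ksp_0(\add T)$.
\end{itemize}
The paper's route is shorter because it outsources the lifting to silting reduction. Yours is more elementary and makes the multiplicity bookkeeping explicit. That is not idle: the bijection of \Cref{prop:siltred}\eqref{prop:siltred_bij} can only be a statement about basic objects, since $N$ and $N\oplus N$ have the same image. Yet \eqref{lem:IsFan_inj} is applied to non-basic objects in the proof of \Cref{thm:IsFan}.

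The step you flag needs no triangle, only Krull--Schmidt and Hom-finiteness.
\begin{itemize}
\item Suppose $X'$ has no nonzero direct summand in $\add N$. Then $[N](X',X')\subseteq\operatorname{rad}\End_{\calC}(X')$.
\item Indeed, otherwise this ideal of the finite-dimensional algebra $\End_{\calC}(X')$ contains a nonzero idempotent $e=ba$ with $a\colon X'\to N'$, $b\colon N'\to X'$ and $N'\in\add N$.
\item The summand of $X'$ split off by $e$ is then a summand of $N'$, so it lies in $\add N$, a contradiction.
\item Now let $f\colon X'\to Y'$ and $g\colon Y'\to X'$ be mutually inverse modulo $[N]$. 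Then $gf\in 1_{X'}+\operatorname{rad}\End_{\calC}(X')$ and $fg\in 1_{Y'}+\operatorname{rad}\End_{\calC}(Y')$, so both are invertible.
\item Hence $f$ is an isomorphism in $\calC$.
\end{itemize}
With this filled in, your proof of \eqref{lem:IsFan_inj} is complete.
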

\begin{proof}
    The assertions in \eqref{lem:IsFan_GNP} follow directly from \cite[Prop. 4.10]{GNP23}. We will now deduce \eqref{lem:IsFan_linindep}.\footnote{It should be stressed that we are not proving an original result. Our assertions in \eqref{lem:IsFan_linindep} are indeed well-known and appear in various forms in the literature{, see} \cite[{Thm.} 1.2]{OS23} \cite[{Thm.} 4.1]{AT23} \cite[{Thm.} 2.27]{AI12}.} Let $U^+$ denote the Bongartz completion of $U$, as defined in \Cref{defprop:Bongartz}. Applying \eqref{lem:IsFan_GNP} to the silting object $U^+$, one deduces that the $\bfg$-vectors of the indecomposable direct summands of $U^+$ provide a basis for $\K_0(\add{T})_{\R}$. Since the $\R$-linear map $\bfg^{(-)}$ in \eqref{eq:g_coind} is an isomorphism, it sends the basis of $\Ksp_0(\add U^+)_{\R}$ given by {indecomposable direct summands of} $U^+$ to a basis of $\K_0(\add{T})_{\R}$ of the same cardinality. Our assertions in \eqref{lem:IsFan_linindep} easily follow.
    
     	To prove \eqref{lem:IsFan_inj}, we first reduce the problem to the case where $\calC$ is reduced. Let $N$ be a maximal basic projective-injective object in $\calC$. By \cref{prop:FGPPP23}, the functor $\calC \to \calC/[N]$ satisfies the assumptions of \cref{lem:indexFanIso2}, which means that it induces a commutative diagram as in \eqref{eq:index_commsq}. This restricts to a commutative diagram
	\begin{equation*}
    \begin{tikzcd}[column sep=6em]
        \presilt_N(\calC) \arrow[d," \ind_T"] \arrow[r] & {\presilt( \calC/[N])} \arrow[d,"\ind_{T} "] \\
        \K_0(\add_{\calC}{T})  \arrow[r] & \K_0(\add_{\calC/[N]} T)
    \end{tikzcd}
    \end{equation*}
    in which the top map is bijective by \cref{prop:siltred}\eqref{prop:siltred_bij}. Consequently, if 
    \[ \ind_T: \presilt(\calC/[N]) \to \K_0(\add_{\calC/[N]} T) \]
    is injective, then so is
    \[ \ind_T: \presilt_N(\calC) \to \K_0(\add_{\calC}{T}). \]
    Assume now that the index map is injective on presilting objects containing $N$ as a direct summand. To extend this to all presilting objects, take $X, Y \in \presilt(\calC)$ and assume $\ind_T(X) = \ind_T(Y)$. Because $N$ is projective-injective, the objects $X \oplus N$ and $Y \oplus N$ are again presilting. Since the index map is additive, it follows that $\ind_T(X \oplus N) = \ind_T(Y \oplus N)$. However, as these objects now contain $N$ as a direct summand and their index coincides, we deduce $X \oplus N \simeq Y \oplus N$ and in turn that $X \simeq Y$, as required.
    
	Thus, we may assume that $\calC$ is reduced. We now show that the reduced case follows from known result about $\bfg$-vectors of finite-dimensional algebras. From \cite[Cor. 6.7(a)]{DIJ2019}, we know that 
	\[ \ind_{\Lambda}: \presilt(\Lambda) \to \K_0(\add \Lambda) \]
	is injective. It follows directly from the commutativity of the square in \eqref{eq:modulecatreduction_commsq}, that
	\[ \ind_T: \presilt(\calC) \to \K_0(\add T) \]
	is injective, because the horizontal maps are bijective. This concludes the proof.
	\end{proof}
	
We are now able to establish the fan structure of $\partfan{\calC}$. 

\begin{theorem}\label{thm:IsFan}
    Let $\calC$ be as in \Cref{setting}. Then $\partfan{\calC}$ is a rational and simplicial polyhedral fan.
\end{theorem}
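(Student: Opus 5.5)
We need to show that $\partfan{\calC}$ consists of rational, simplicial, strongly convex cones, is closed under faces, and that any two cones meet in a common face.

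\textbf{Cone-level properties.} Rationality is immediate: each $\bfg^{X_i}$ is $\ind_T(X_i)\otimes 1$ with $\ind_T(X_i)$ in the lattice $\K_0(\add T)$. By \Cref{lem:IsFan}\eqref{lem:IsFan_linindep}, the $\bfg$-vectors of the indecomposable summands of a basic presilting $X$ are linearly independent. Hence $\boldC(X)$ is simplicial, and in particular strongly convex. Its faces are the $\boldC_Y$ for subsets $Y$ of $\{\bfg^{X_i}\}$. Each such subset is the set of $\bfg$-vectors of a direct summand $X'$ of $X$, which is again presilting. So every face equals $\boldC(X')\in\partfan{\calC}$, and the collection is closed under faces.

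\textbf{Intersection property.} Let $X,Y$ be basic presilting and let $Z$ be the basic object whose indecomposable summands are the common ones. Then $Z$ is presilting and $\boldC(Z)$ is a common face of $\boldC(X)$ and $\boldC(Y)$. It remains to show $\boldC(X)\cap\boldC(Y)=\boldC(Z)$. By simplicialness, it suffices to show that a vector lying in the relative interiors of $\boldC(X')$ and $\boldC(Y')$, for summands $X'$ of $X$ and $Y'$ of $Y$, forces $X'\simeq Y'$.

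\textbf{Main step.} So suppose $\sum a_i\bfg^{X_i}=\sum b_j\bfg^{Y_j}$ with all $a_i,b_j>0$. Rationality of the $\bfg$-vectors lets us assume the coefficients are positive rationals. Clearing denominators gives positive integers $m_i,n_j$ with $\ind_T(\bigoplus X_i^{m_i})=\ind_T(\bigoplus Y_j^{n_j})$. Both $\bigoplus X_i^{m_i}$ and $\bigoplus Y_j^{n_j}$ are presilting, since presilting depends only on $\add$.

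Injectivity of the index on $\presilt(\calC)$ (\Cref{lem:IsFan}\eqref{lem:IsFan_inj}) then gives $\bigoplus X_i^{m_i}\simeq\bigoplus Y_j^{n_j}$. By Krull--Schmidt, the sets of indecomposable summands agree, so $X'\simeq Y'$.

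The hard input is exactly this injectivity statement, which was already reduced to \cite{DIJ2019}. The obstacle I anticipate is the passage from real to rational coefficients. I would handle it as follows. The set of coefficient vectors $(a,b)$ satisfying $\sum a_i\bfg^{X_i}-\sum b_j\bfg^{Y_j}=0$ is the kernel of a rational linear map, so it is a rational subspace. Intersecting it with the open positive orthant gives a nonempty relatively open subset of that subspace, and such a set contains rational points. This completes the proof that $\partfan{\calC}$ is a rational simplicial polyhedral fan.
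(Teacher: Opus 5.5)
Your proof is correct and follows essentially the same route as the paper. Rationality and simpliciality come from the linear independence of the $\bfg$-vectors of the summands (\Cref{lem:IsFan}), faces come from direct summands, and the intersection property is proved by passing to rational coefficients, clearing denominators, and applying injectivity of $\ind_T$ followed by Krull--Schmidt; your real-to-rational step (rational points are dense in the rational kernel of the joint coefficient system, which meets the open positive orthant) is cleaner than the paper's perturbation argument, since the paper perturbs only the coefficients with respect to $\boldC(R)$ and does not justify that the perturbed points remain in $\boldC(S)$.
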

\begin{proof}
    We will prove that $\partfan{\calC}$ satisfies \Cref{def:cones+fans}\eqref{def:cones+fans_fan} and \eqref{def:cones+fans_sr}. It is clearly sufficient to restrict our attention to $\bfg$-vector cones of the form $\boldC(U)$, where $U$ is basic.
    
    It is immediate from \Cref{lem:IsFan} that each $\bfg$-vector cone $\boldC(U)$ is rational and simplicial. As a consequence, faces of $\boldC(U)$ are given by cones spanned by a subset of the generators of $\boldC(U)$, which correspond {to} direct summands of $U$. {Since direct summands of presilting objects are presilting, this implies} that faces of cones in $\partfan{\calC}$ are also cones in $\partfan{\calC}$.

    We now show that the intersection of two cones is a cone, and that it is a face of both. Thus, let $R$ and $S$ be two basic presilting objects in $\calC$ and let $U$ be their largest common direct summand. We need to show that $\boldC(U)= \boldC({R}) \cap \boldC({S})$. The inclusion $\subseteq$ trivially holds. To show that the reverse inclusion holds, we show that an arbitrary element $v \in \boldC({R}) \cap \boldC({S})$ is also contained in $\boldC(U)$. 
    We want to reduce the problem to considering rational points in the intersection. Here, an element of $\K_0(\add T)_\R$ is rational if it can be written as $\sum_{i=1}^n [T_i] \otimes q_i \in \K_0(\add T)_\R$ with $q_i \in \mathbb{Q}$. Write 
    \[ v = \sum_{i=1}^t  \ind_T({R_i}) \otimes \alpha_i = \sum_{j=1}^{s} \ind_T({S_j}) \otimes \beta_j \]
    with $0 \leq \alpha_i, \beta_j \in \R$. Then, there exists $0 \leq \epsilon_i \in \R$ for each $i=1, \dots, t$ and ${\epsilon} \in \K_0(\add T)_\R$ such that 
    \[ v - {\epsilon} =  \sum_{i=1}^t  \ind_T({R_i}) {\otimes (\alpha_i-\epsilon_i)}, \quad \text{and} \quad v + {\epsilon} =  \sum_{i=1}^t \ind_T({R_i}) {\otimes (\alpha_i+\epsilon_i)} \]
    are rational, because each $\ind_T(R_i) \otimes 1$ is also rational, and lie in $\boldC(R) \cap \boldC(S)$. Since $\boldC(R)$, $\boldC(S)$ and $\boldC(U)$ are convex, if the rational points $v-\epsilon$ and $v + \epsilon$ both lie in $\boldC(U)$, then so does $v$. We may therefore assume that $\alpha_i$ and $\beta_j$ are rational numbers.
    
       We can consider the sets $\boldsymbol{\alpha} = \{i: \alpha_i \neq 0\}$ and $\boldsymbol{\beta} = \{ j: \beta_j \neq 0\}$. Then there exist a positive integer $\lambda$ as well as positive integers $a_i$ and $b_j$, for each $i \in \boldsymbol{\alpha}$ and $j \in \boldsymbol{\beta}$, such that the presilting objects
    \[ \bigoplus_{i \in \boldsymbol{\alpha}} {R_i^{a_i}} \quad \text{and} \quad \bigoplus_{j \in \boldsymbol{\beta}} S_j^{b_j}\]
 have the same index, equal to $\lambda v$. One then uses \Cref{lem:IsFan}\eqref{lem:IsFan_inj} to show that {these two presilting objects} are isomorphic. Their largest basic direct summand is thus a direct summand of both $R$ and $S$, whence $v \in \boldC(U)$. This shows that $\partfan{\calC}$ consists of simplicial cones. We are left with proving that they are not strongly-convex. Assume for a contradiction that there exists a 1-dimensional subspace contained in $\boldC(U) = \spann_{\geq 0}(\bfg^{X_1}, \dots, \bfg^{X_\ell})$  and let $x$ and $-x$
 be two distinct elements of this subspace. Then we may write $x = \sum_{j=1}^s \lambda_j \bfg^{X_{a_j}}$ and $-x = \sum_{h = 1}^{t} \lambda_h \bfg^{X_{b_h}}$ with $\lambda_j, \lambda_h \in \R_{> 0}$ and $\{X_{a_j}\}_{j=1}^s \cap \{X_{b_h}\}_{h=1}^t = \emptyset$. By rearranging the terms, we may express $\bfg^{X_{a_1}}$ as a linear combination of $\{ \bfg^{X_{a_2}}, \dots, \bfg^{X_{a_s}}, \bfg^{X_{b_1}}, \dots, \bfg^{X_{b_t}}\}$, yielding a contradiction to the cone being simplicial. We conclude that $\partfan{\calC}$ is a simplicial polyhedral fan.     
\end{proof}

We call $\partfan{\calC}$ the \textit{$\bfg$-vector fan of $\calC$}. 
We want to develop relationships between the $\bfg$-vector fans of closely related extriangulated categories. For this it is necessary to first understand what the relationship between their $\bfg$-vectors is. This will be achieved in our next two lemmas. 

\begin{lemma}[Horseshoe Lemma]\label{lem:horseshoe}
    {Let $\calC$ and $T$ be as in \cref{setting}.} Given an $\E$-triangle
    \begin{equation*}
    \begin{tikzcd}
        A \arrow[r,tail, "a"] & B \arrow[r,two heads, "b"] & C \arrow[r,dashed, "\xi"] & {}
    \end{tikzcd}
    \end{equation*}
    in $\calC$, there exists a commutative diagram of conflations
    \begin{equation*}
        \begin{tikzcd}
T^A_1 \arrow[d, tail] \arrow[r, tail]        & T^A_1 \oplus T^C_1 \arrow[d, tail] \arrow[r, two heads]     & T^C_1 \arrow[d, tail] \arrow[r, dashed]      & {} \\
T^A_0 \arrow[d, two heads] \arrow[r, tail]   & T^A_0\oplus T^C_0 \arrow[d, two heads] \arrow[r, two heads] & T^C_0 \arrow[d, two heads] \arrow[r, dashed] & {} \\
A \arrow[r, "a", tail] \arrow[d, dashed] & B \arrow[r, "b", two heads] \arrow[d, dashed]             & C \arrow[r, "\xi", dashed] \arrow[d, dashed] & {} \\
{}                                           & {}                                                          & {}                                           &   
\end{tikzcd}
    \end{equation*}
    where the two top rows are split $\E$-triangles of projectives.
\end{lemma}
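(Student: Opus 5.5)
We prove the statement as one proves the classical Horseshoe Lemma, with the long exact sequences of \Cref{lem:LES} replacing the abelian ones. The approach is to fix projective resolutions of the two outer terms, build one for the middle term, and then check exactness of the columns.

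First, since $\calC$ is 0-Auslander, hence hereditary with enough projectives, \Cref{defprop:0Aus} provides $\E$-triangles
\[ T^A_1 \rightarrowtail T^A_0 \overset{p_A}{\twoheadrightarrow} A \dashrightarrow \quad\text{and}\quad T^C_1 \rightarrowtail T^C_0 \overset{p_C}{\twoheadrightarrow} C \dashrightarrow \]
with all four terms in $\add(T)$.

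Next we build the middle deflation. Because $T^C_0$ is projective, $\E(T^C_0,A)=0$. The long exact sequence of \Cref{lem:LES} for $\calC(T^C_0,-)$ applied to the given $\E$-triangle then shows that $b\circ-$ is surjective, so $p_C$ lifts to some $c\colon T^C_0\to B$ with $bc=p_C$. Define
\[ p_B=(a p_A,\ c)\colon T^A_0\oplus T^C_0\to B. \]
With this choice the middle row of projectives is split, and the upper-left and upper-right squares of the bottom half commute.

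We then show that $p_B$ is a deflation. I would apply (\textbf{ET4}$^{\mathrm{op}}$), or more directly the extriangulated $3\times3$ lemma, to the composite $B \leftarrow B\oplus T^C_0$. Concretely, $(1,c)\colon B\oplus T^C_0\to B$ is a split deflation. Alternatively, one can argue that the deflations are closed under this kind of gluing, using \cite[Prop. 3.15]{NP19} (realising $\xi$ pulled back along $p_C$, i.e. $\E(p_C,A)(\xi)=0$ because $T^C_0$ is projective) together with (\textbf{ET4}$^{\mathrm{op}}$) applied to the deflations $T^A_0\twoheadrightarrow A$ and the relevant pullback.

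With $p_B$ a deflation, let $K$ be its cocone. By the $3\times 3$ lemma for extriangulated categories (\cite[Lem. 5.9]{NP19} or \cite[Lem. A.10]{GNP23}), we obtain an $\E$-triangle $T^A_1\rightarrowtail K\twoheadrightarrow T^C_1\dashrightarrow$ compatible with the columns. Since $T^C_1$ is projective, this triangle splits, so $K\simeq T^A_1\oplus T^C_1$ and the top row is split as claimed.

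The main obstacle is this last $3\times3$ step: producing the $\E$-triangle on cocones together with commutativity of all squares and compatibility of the extensions. In an extriangulated category the $3\times3$ lemma is not automatic. I would derive it by applying (\textbf{ET4}$^{\mathrm{op}}$) twice, first to $T^A_0\oplus T^C_0\twoheadrightarrow T^C_0\oplus A\twoheadrightarrow\ldots$, factoring $p_B$ appropriately, and then using (\textbf{ET3}) to fill in the remaining morphisms. The fact that every extension involving projectives as first argument vanishes keeps the required compatibility checks trivial.
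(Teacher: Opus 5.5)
Your proposal is correct in substance, but it takes a different route from the paper. The paper constructs nothing: it obtains the statement as the dual of \cite[Thm.~5.12]{ZLZ2026}, one of the variants of the $4\times4$ lemma for arbitrary extriangulated categories. You instead build the diagram directly from (\textbf{ET4}$^{\mathrm{op}}$) and \cite[Prop.~3.15]{NP19}, using projectivity to kill the relevant extensions. Your route is self-contained and elementary. The paper's route is shorter and does not need the hereditary splitting, but it relies on an external, heavier diagram lemma.

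Two places in your write-up should be tightened.

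First, the argument via the split deflation $(1,c)\colon B\oplus T^C_0\to B$ does not work. The complementary factor would have to be $ap_A\oplus 1$, which is not a deflation in general. Your alternative via \cite[Prop.~3.15]{NP19} is the right one. Write $T^C_1\xrightarrow{i_C}T^C_0\xrightarrow{p_C}C\overset{\gamma}{\dashrightarrow}$ and apply that proposition to $\xi$ and $\gamma$. This gives two $\E$-triangles:
\begin{enumerate}
\item $A\to M\to T^C_0$, realising $\E(p_C,A)(\xi)=0$, so $M\simeq A\oplus T^C_0$;
\item $T^C_1\xrightarrow{\binom{g}{i_C}}A\oplus T^C_0\xrightarrow{(a,\,c)}B$, realising $\E(b,T^C_1)(\gamma)$, where $bc=p_C$.
\end{enumerate}
Your arbitrarily chosen lift is harmless. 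Two lifts differ by some $a\circ h$, and $(a,c+ah)=(a,c)\circ\begin{pmatrix}1&h\\0&1\end{pmatrix}$.

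Second, your ``main obstacle'' disappears: no separate $3\times3$ lemma and no (\textbf{ET3}) are needed. Apply (\textbf{ET4}$^{\mathrm{op}}$) once, to the deflations $p_A\oplus 1$ (cocone $T^A_1$, inflation $\binom{i_A}{0}$) and $(a,c)$ (cocone $T^C_1$). This single application gives all of the following.
\begin{enumerate}
\item $p_B=(a,c)\circ(p_A\oplus1)$ is a deflation.
\item Its cocone $K$ sits in an $\E$-triangle $T^A_1\to K\to T^C_1$ realising an element of $\E(T^C_1,T^A_1)=0$, so $K\simeq T^A_1\oplus T^C_1$.
\item After this splitting, the commutativity relations force the inflation $K\to T^A_0\oplus T^C_0$ to have the form $\begin{pmatrix} i_A&\ast\\0&i_C\end{pmatrix}$.
\end{enumerate}
The compatibility conditions in (\textbf{ET4}$^{\mathrm{op}}$) even show that the maps between the columns are morphisms of $\E$-triangles, which is more than the statement asks for.
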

\begin{proof}
	This is the dual of the Horseshoe Lemma induced by inflations as in \cite[Cor. 5.13]{ZLZ2026}. More precisely, it follows from the dual of \cite[Thm. 5.12]{ZLZ2026}, which exists as it is variant (4)' in the list of all versions of the $4 \times 4$ Lemma in extriangulated categories \cite[p. 41, Table of 4 × 4 Lemma and its variants]{ZLZ2026}. 
\end{proof}

\begin{lemma}\label{lem:compositeind}
	Let $\calC$ be as in \Cref{setting} and let $U$ be a presilting object in $\calC$. Let $X$ be a presilting object in the extension-closed subcategory $\calZ_U \subseteq \calC$. Let 
	\[ \ind_{U^+}^{\calZ_U}(X) = [(U^+)_0] - [(U^+)_1] \in \K_0(\add_{\calZ_U} U^+) \cong \Ksp( \add U^+) \]
	be the index of $X$ with respect to the projective silting object $U^+$ in $\calZ_U$. Then we have
	\[ [ \ind_T^{\calC} ( (U^+)_0)] - [\ind_T^{\calC}((U^+)_1)] = \ind_T^{\calC} (X) \in \K_0(\add_{\calC} T).\]
\end{lemma}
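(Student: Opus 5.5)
The index of $X$ in $\calZ_U$ comes from an $\E$-triangle in $\calZ_U$ with terms in $\add(U^+)$, so the plan is to show that the index in $\calC$ is additive along $\E$-triangles and then apply this additivity.

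By \Cref{prop:siltred}\eqref{prop:siltred_0Aus_proj}, $U^+$ is a basic projective silting object in $\calZ_U$. Since $\calZ_U$ is 0-Auslander, there is an $\E$-triangle in $\calZ_U$ of the form
\[ (U^+)_1 \rightarrowtail (U^+)_0 \twoheadrightarrow X \dashrightarrow \]
with $(U^+)_0,(U^+)_1\in\add(U^+)$, and $\ind_{U^+}^{\calZ_U}(X)=[(U^+)_0]-[(U^+)_1]$. The extriangulated structure on $\calZ_U$ is inherited from $\calC$, because $\calZ_U$ is extension-closed. Hence this is also an $\E$-triangle in $\calC$. Note that all three terms are presilting in $\calC$, but this is not needed.

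The key step is to show that $\ind_T^{\calC}$ is additive on $\E$-triangles in $\calC$: given $A\rightarrowtail B\twoheadrightarrow C\dashrightarrow$, we want $\ind_T(B)=\ind_T(A)+\ind_T(C)$. For this I apply the Horseshoe Lemma (\Cref{lem:horseshoe}). It gives a projective resolution of the middle term $B$ of the form
\[ T^A_1\oplus T^C_1\rightarrowtail T^A_0\oplus T^C_0\twoheadrightarrow B\dashrightarrow. \]
The index is independent of the chosen $\E$-triangle by \cite[§4.3]{GNP23}, so
\[ \ind_T(B)=[T^A_0]+[T^C_0]-[T^A_1]-[T^C_1]=\ind_T(A)+\ind_T(C). \]
Alternatively, one can argue directly: $\ind_T$ is the composite of the identification $\K_0(\calC)\cong\K_0(\add T)$ (see \cite[Thm. 4.1]{AT23}) with the class map, and the class map is additive on $\E$-triangles by definition of $\K_0$. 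Indeed, for $X$ with resolution $T_1\rightarrowtail T_0\twoheadrightarrow X$ we have $[X]=[T_0]-[T_1]$ in $\K_0(\calC)$, which is exactly $\ind_T(X)$ under this identification. I would present the Horseshoe argument, since it avoids checking the identification carefully.

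Applying additivity to the triangle from the first step gives
\[ \ind_T(X)=\ind_T((U^+)_0)-\ind_T((U^+)_1), \]
which is the claim. I expect the main obstacle to be making sure the index in $\calC$ is well defined for the objects used. The index is defined for every object of $\calC$, not only for presilting ones, so $\ind_T((U^+)_i)$ and the Horseshoe terms all make sense. Well-definedness for arbitrary objects is exactly what \cite[§4.3]{GNP23} provides.
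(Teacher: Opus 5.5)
Your proof is correct and matches the paper's argument. The paper also derives the claim from the Horseshoe Lemma (\Cref{lem:horseshoe}): it gives additivity of $\ind_T^{\calC}$ along the $\E$-triangle $(U^+)_1 \rightarrowtail (U^+)_0 \twoheadrightarrow X$, which is an $\E$-triangle in $\calC$ because $\calZ_U$ is extension-closed. Your alternative route through the identification $\K_0(\add_{\calC}T)=\K_0(\calC)$ is also valid, and it avoids the Horseshoe Lemma altogether.
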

\begin{proof}
	Follows directly from \cref{lem:horseshoe} and the definitions.
\end{proof}

Let us now show how our definition of the $\bfg$-vector interacts with the reduction techniques of 0-Auslander extriangulated categories.

\begin{lemma}\label{lem:ZUfan}
{Let $\calC$ be as in \Cref{setting}} and let $U$ be a basic presilting object in $\calC$. 
    \begin{enumerate}
        \item\label{lem:ZUfan1} Let $\overline{\starr}(\boldC(U))$ be as defined in \Cref{eg:orthproj}\eqref{eg:orthproj1}.
    Then the composite $\Z$-linear isomorphism
    \[ \K_0(\add_{\calZ_U} (U^+)) = \Ksp_0(\add (U^+))  \to \K_0( \add_{\calC} (T)), \quad [X] \mapsto [ \ind_T(X)] \]
    which is given by \cref{lem:IsFan}\eqref{lem:IsFan_GNP}, induces an $\R$-linear isomorphism of vectors spaces, denoted $\Phi_U$, which in turn determines an isomorphism of fans 
    \[\partfan{\calZ_U} \simeq \overline{\starr}(\boldC(U)). \]
    \item\label{lem:ZUfan2} {Similarly, $\Phi_U$ induces a map $\overline{\Phi}_U$ which gives rise to} an isomorphism of fans
    \[ ( \K_0(\add_{\calZ_U/[U]} (U^+)),\partfan{\calZ_U/[U]}) \simeq {(\sigma^\perp,} \pi_{\boldC(U)}(\starr(\boldC(U)))). \]
    \end{enumerate}
\end{lemma}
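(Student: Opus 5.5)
For \eqref{lem:ZUfan1}, I would first make $\Phi_U$ explicit. The basis of $\K_0(\add_{\calZ_U}(U^+))_{\R}$ consists of the classes $[U^+_i]$ of the indecomposable summands of $U^+$. The map $\Phi_U$ sends $[U^+_i]$ to $\bfg^{U^+_i}$. By \Cref{lem:IsFan}\eqref{lem:IsFan_GNP}, applied to the silting object $R=U^+$, these images form a basis of $\K_0(\add T)_{\R}$, so $\Phi_U$ is an $\R$-linear isomorphism.

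The key identity is supplied by \Cref{lem:compositeind}. For any presilting $X\in\calZ_U$, with index $[(U^+)_0]-[(U^+)_1]$ relative to $U^+$, we get
\[ \Phi_U(\ind^{\calZ_U}_{U^+}(X)\otimes 1)=\bfg^X. \]
Hence $\Phi_U$ sends the cone of $X$ in $\partfan{\calZ_U}$ onto the cone $\boldC(X)$ in $\partfan{\calC}$. Note that $X$ remains indecomposable in $\calZ_U$, since $\calZ_U$ is a full subcategory.

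It then remains to match the two sets of cones. The presilting objects of $\calZ_U$ are exactly those presilting $X\in\calC$ with $\E(X,U)=0=\E(U,X)$, that is, those $X$ for which $X\oplus U$ is presilting in $\calC$. So the cones $\boldC(X)$ obtained this way are precisely the faces of cones $\boldC(X\oplus U)$, which are the cones containing $\boldC(U)$. This gives the image equality $\Phi_U(\partfan{\calZ_U})=\overline{\starr}(\boldC(U))$. The inverse linear map carries cones back to cones, so both $\Phi_U$ and $\Phi_U^{-1}$ are morphisms of fans.

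For \eqref{lem:ZUfan2}, I would use the natural functor $\calZ_U\to\calZ_U/[U]$. This functor is extriangulated, sends presilting objects to presilting objects, and sends $U^+$ into $\add(U^+)$. So \Cref{lem:indexFanIso2} gives a linear map $q$ with $q([U^+_i])=[U^+_i]$. This $q$ kills the summands of $U$ and is the quotient map from $\Ksp_0(\add U^+)_{\R}$ onto $\Ksp_0(\add U^+/U)_{\R}$.

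By \Cref{prop:siltred}\eqref{prop:siltred_bij}, presilting objects of $\calZ_U/[U]$ correspond bijectively to presilting objects of $\calZ_U$ containing $U$. Under this correspondence, $\boldC(X\oplus U)$ maps to $\boldC(\overline{X})$. Hence $q$ maps the cones of $\starr$ of the cone of $U$ in $\partfan{\calZ_U}$ bijectively onto $\partfan{\calZ_U/[U]}$.

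Next I would compare $q\circ\Phi_U^{-1}$ with $\pi_{\boldC(U)}$. Choose the basis $B$ in \Cref{eg:orthproj}\eqref{eg:orthproj_proj} to be $\{\bfg^{U^+_i}\}$; this basis contains $X=\{\bfg^{U_j}\}$. Then $\varphi_B$ sends $\bfg^{U^+_i}$ to the dual basis vectors. Orthogonal projection onto $\sigma^\perp$, which is spanned by the duals of the non-$U$ summands, kills exactly the duals of the $U_j$. So $\pi_{\boldC(U)}\circ\Phi_U$ agrees with $q$ up to the evident linear isomorphism $\sigma^\perp\cong\Ksp_0(\add U^+/U)_{\R}$. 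This is the map $\overline{\Phi}_U$, and it carries $\pi_{\boldC(U)}(\starr(\boldC(U)))$ bijectively onto $\partfan{\calZ_U/[U]}$, cone by cone.

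The main obstacle is this last step, and specifically the dependence of $\pi_\sigma$ on the chosen basis $B$. I would handle it by fixing $B$ to be the $\bfg$-vectors of $U^+$, as the construction in \Cref{eg:orthproj} permits. I would also verify carefully that $\K_0(\add_{\calZ_U/[U]}U^+)$ is indeed $\Ksp_0$ of the non-$U$ summands, using that $U$ becomes zero in the quotient.
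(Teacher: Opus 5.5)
Your proof is correct.

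Part (1) is the paper's argument: \cref{lem:compositeind} shows that $\Phi_U$ carries the cone of $X$ in $\partfan{\calZ_U}$ onto $\boldC(X)$, and the presilting objects of $\calZ_U$ are exactly those $X$ with $X\oplus U$ presilting. Like the paper, you leave implicit that a cone $\boldC(W)\supseteq\boldC(U)$ forces $U\in\add W$. This follows from the intersection property $\boldC(R)\cap\boldC(S)=\boldC(\text{largest common summand})$ established in the proof of \cref{thm:IsFan}, together with injectivity of $\bfg$.

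Part (2) takes a different route from the paper.
\begin{itemize}
\item The paper builds $\overline{\Phi}_U$ as an explicit linear composite: a section $\image(\mathrm{P}_U)\hookrightarrow\Ksp_0(\add U^+)_\R$, then $\Phi_U$, then dualising via the basis $\{[T_i]\}$, then $\proj_{\boldC(U)}$. It checks that cones go to cones by lifting presilting objects to $\calC$ and using $\mathrm{P}_{\boldC(U)}(\boldC(X))=\mathrm{P}_{\boldC(U)}(\boldC(X\oplus U))$. The inverse is obtained by swapping inclusion and projection.
\item You instead pass through the quotient functor $\calZ_U\to\calZ_U/[U]$. \cref{lem:indexFanIso2} gives the projection $q$, \Cref{prop:siltred}\eqref{prop:siltred_bij} shows that $q$ maps the star of the $U$-cone onto $\partfan{\calZ_U/[U]}$, and you then establish $\pi_{\boldC(U)}\circ\Phi_U=\iota\circ q$.
\end{itemize}
Your route makes surjectivity on cones explicit. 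It also proves at once the compatibility with the quotient functor, which the paper only invokes later in \cref{lem:Jredfan}\eqref{lem:Jredfan_star} when it identifies the induced morphism with orthogonal projection. The paper's version, by contrast, stays purely linear-algebraic at this stage.

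The basis dependence you single out as the main obstacle is not actually an obstacle. Let $(-,-)$ be the inner product on $M_\R$ making $B^*$ orthonormal. Then $(\varphi_B(n),m)=\langle m,n\rangle$. Hence $\ker\pi_\sigma$ is the annihilator of $\sigma^\perp$ in $N_\R$, namely $\spann(\sigma)$, for every basis $B$. So $\pi_{\boldC(U)}\circ\Phi_U=\iota\circ q$ holds for some linear isomorphism $\iota$ whatever $B$ is.

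This is worth recording. In \cref{prop:STTWgeneralisation}, projections for different presilting objects $U$ and $V$ are compared, and there one needs a $U$-independent identification such as the paper's $T$-basis one. Your choice of the basis $\{\bfg^{U^+_i}\}$ changes with $U$, so it would not serve that purpose directly.
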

\begin{proof}
	First of all, we verify that our claim is mathematically rigourous. By \cref{prop:siltred}, the basic projective silting object in $\calZ_U$ is isomorphic to $U^+$. This means that ambient vector space of $\partfan{\calZ_U}$ is $\K_0(\add_{\calZ_U} (U^+))_\R$, and we know that the ambient vector space of $\partfan{\calC}$ is $\K_0(\add_{{\calC}}(T))_\R$. 
	
	Now we verify that the $\R$-linear map $\Phi_U$ sends cones in $\partfan{\calZ_U}$ to cones in $\partfan{\calC}$. Write $\boldC_{\calZ_U}(X)$ for the cone spanned by the $\bfg$-vectors of a presilting object $X$ in $\calZ_U$ with respect to $U^+$ in ${\K}_0(\add_{\calZ_U}(U^+))_\R$, and likewise $\boldC_{\calC}(X) \subseteq \K_0(\add_{\calC}(T))_\R$ for a presilting object $X$ in $\calC$.
	Let $\sigma \in \partfan{\calZ_U}$ be a cone. By definition, there exists a basic presilting object $X$ in $\calZ_U$ such that $\sigma = \boldC_{\calZ_U}(X)$ and let $\{ X_i\}_{i=1}^r$ be a complete set of its indecomposable direct summands. Since $\calZ_U$ is an extension-closed subcategory of $\calC$, the object $X$ is also presilting in $\calC$, therefore there is also a cone $\boldC_{\calC}(X) \in \partfan{\calC}$. Using the linearity of $\Phi_U$, we show
	\begin{align*}
	\Phi_U(\sigma) = \Phi_U(\boldC_{\calZ_U}(X)) &= \Phi_U( \spann_{\geq 0} \{ \ind_{U^+}^{\calZ_U}(X_i) \}_{i =1, \dots, r}) \\
	&= \spann_{\geq 0} \{ \Phi_U(\ind_{U^+}^{\calZ_U} (X_i)) \}_{i=1, \dots, r} \\
	&=  \spann_{\geq 0} \{ \ind_{T}^{\calC} (X_i) \}_{i=1, \dots, r} & \text{(by \cref{lem:compositeind})} \\
	&= \boldC_{\calC}(X) \subseteq \K_0(\add_{\calC}(T)) 
	 \end{align*}
	Therefore, the image of every cone in $\partfan{\calZ_U}$ is a cone {in} $\partfan{\calC}$. Moreover, the definition of $\calZ_U$ implies that $X \oplus U$ is presilting in $\calC$, which implies $\boldC_{\calC}(X) \in \overline{\starr}(\boldC(U))$. It follows from \cref{lem:IsFan}\eqref{lem:IsFan_GNP} that the $\R$-linear map induced by the coindex gives the desired inverse morphism of fans. This concludes the proof of \eqref{lem:ZUfan1}.
	
	To show \eqref{lem:ZUfan2}, we first discuss the ambient vector spaces of the two fans. The $\bfg$-vector fan $\partfan{\calZ_U/[U]}$ {is defined over} the vector space $\K_0(\add_{\calZ_U/[U]}(U^+))_\R$ which can be canonically identified with $\Ksp_0(\add (U^+/U))_\R$. Consider the basis of $\K_0(\add_{\calC} T)_\R$ induced by the indecomposable direct summands of $T$ which gives rise to a canonical isomorphism between this $\R$-vector space and its dual. Fix a basis of $\Ksp_0(\add U^+)_\R$ induced by the indecomposable direct summands of $U^+$. Similarly, let $\mathrm{P}_U$ be the orthogonal projection endomorphism of $\Ksp_0(\add U^+)_\R$ which sends the basis elements induced by indecomposable direct summands of $U$ to zero. {Similarly, let $\mathrm{P}_{\boldC(U)}$ be the endomorphism on $\K_0(\add_{\calC} T)_\R^*$ whose image is $\boldC(U)^\perp$. In other words, so that the projection onto this subspace is the map $\proj_{\boldC(U)}$ as defined in \cref{eg:orthproj}\eqref{eg:orthproj_proj}.
	
	By definition, the ambient vector space of the fan $\pi_{\boldC(U)}(\starr(\boldC(U)))$ is $\image (\mathrm{P}_{\boldC(U)})$. Using {these $\R$-linear maps} , let $\overline{\Phi}_U$ be the composite $\R$-linear map given by}
	\[
	\begin{tikzcd}
	\K_0(\add_{\calZ_U/[U]}(U^+))_\R \arrow[r, "\simeq"] & \Ksp_0(\add U^+/U)_\R \arrow[r, "\simeq"] & \image(\mathrm{P}_U) \arrow[r,  hookrightarrow] & \Ksp_0(\add U^+)_\R \arrow[d, "\Phi_U"] \\
	& \image (\mathrm{P}_{\boldC(U)}) & \K_0(\add_{\calC}(T))_\R^* \arrow[l, twoheadrightarrow, "{{\proj_{\boldC(U)}}}",swap] & \K_0(\add_{\calC}( T))_\R \arrow[l, "\simeq"]
	\end{tikzcd} \]
	Let $\sigma \in \partfan{\calZ_U/[U]}$ be a cone, which means that there exists a presilting object $X$ in $\calZ_U/[U]$ such that $\sigma = \boldC_{\calZ_U/[U]}(X)$. Again, as $X$ is also presilting in $\calC$, the image of $\sigma$ along the top chain of maps composed with $\Phi_U$ yields the cone $\boldC_{\calC}(X)$ as in \eqref{lem:ZUfan1}. However, since $U \not \in \add(X)$, we have that 
	\[ \boldC_{\calC}(X) \in \overline{\starr}(\boldC(U)) \setminus \starr(\boldC(U)). \]
	Notice that $\mathrm{P}_{\boldC(U)}(\boldC_{\calC}(X)) = \mathrm{P}_{\boldC(U)}(\boldC_{\calC}(X \oplus U))$, which means that $\overline{\Phi}_U(\sigma)$ is a cone in $\pi_{\boldC(U)}(\starr(\boldC(U)))$. The inverse morphism of fans is obtained by swapping the roles of inclusion and projection in the composite chain of morphisms above, and replacing $\Phi_U$ by its inverse. This proves \eqref{lem:ZUfan2}.
\end{proof}

In \Cref{eq:0Aus}, we provided classes of examples of 0-Auslander extriangulated $k$-categories. Building on this, we now give a handful of examples of $\bfg$-vector fans.

\begin{example}\label{ex:fans}
    
    \begin{enumerate}
        \item\label{ex:fans_Preproj} Let $\Pi(A_2)$ denote the preprojective $k$-algebra of Dynkin type $A_2$. The $\bfg$-vector fan of $\Ktwo(\proj \Pi(A_2))$ is displayed in \Cref{fig:fanOfPreproj}. Since $\Pi(A_2)$ is a $k$-algebra {with only finitely many basic $\tau$-rigid objects}, its $\bfg$-vector fan is finite, and also complete in $\K_0(\proj \Pi(A_2))_{\R}\simeq \R^2$ {by} \cite[{Thm.} 4.7]{Asa21}.
    \begin{figure}[ht!]
    \begin{tikzpicture}[font=\Large]
        \fill[friendly_green, opacity=0.25] (0,0) -- (-4,4) -- (-4,0) -- (0,-4) -- (0,0);
        \fill[friendly_deepblue, opacity=0.25] (0,0) -- (4,-4) -- (4,0) -- (0,4) -- (0,0);
        \fill[gray, opacity=0.2] (0,0) -- (0,4) -- (-4,4) -- (0,0);
        \fill[gray, opacity=0.2] (0,0) -- (0,-4) -- (4,-4) -- (0,0);
        \draw[->, thick, friendly_deepblue] (0,0) -- (4,0);
        \draw[->, thick, friendly_green] (0,0) -- (-4,0);
        \draw[->, thick, friendly_deepblue] (0,0) -- (0,4);
        \draw[->, thick, friendly_green] (0,0) -- (0,-4);
        \draw[->, thick, friendly_deepblue] (0,0) -- (4,-4);
        \draw[->, thick, friendly_green] (0,0) -- (-4,4);
        \filldraw[black] (4.1,0) circle (0pt) node[right]{ ${\bfg^{P_1}=\begin{pmatrix} 1 \\ 0
              \end{pmatrix}}$};
        \filldraw[black] (-4.1,0) circle (0pt) node[left]{${\bfg^{\Sigma P_1}=\begin{pmatrix} -1 \\ 0
              \end{pmatrix}}$};
        \filldraw[black] (0,4.1) circle (0pt) node[right]{${\bfg^{P_2}=\begin{pmatrix} 0 \\ 1
              \end{pmatrix}}$};
        \filldraw[black] (0,-4.1) circle (0pt) node[left]{${\bfg^{\Sigma P_2}=\begin{pmatrix} 0 \\ -1
              \end{pmatrix}}$};
        \filldraw[black] (4.1,-4.1) circle (0pt) node[right]{${\bfg^{S_1}=\begin{pmatrix} 1 \\ -1
              \end{pmatrix}}$};
        \filldraw[black] (-4.1,4.1) circle (0pt) node[left]{ ${\bfg^{S_2}=\begin{pmatrix} -1 \\ 1
              \end{pmatrix}}$};
    \end{tikzpicture}
    \caption{The $\bfg$-vector fan of {$\Ktwo(\proj \Pi(A_2))$, where $\Pi(A_2)$ is} the preprojective $k$-algebra of Dynkin type $A_2$.}
    \label{fig:fanOfPreproj}
    \end{figure}
    \item\label{ex:fanOf_A2} Let $\Lambda_{2}$ denote the path $k$-algebra of a Dynkin quiver of type $A_2$, and regard $\mods(\Lambda_2)$ as a 0-Auslander extriangulated $k$-category.
    The $\bfg$-vector fan of $\mods(\Lambda_2)$ can be displayed as in \Cref{fig:fanOfA2}.
    \begin{figure}[ht!]
    \begin{tikzpicture}[font=\Large]
        \fill[gray, opacity=0.2] (0,0) -- (4,0) -- (0,4) -- (0,0);
        \fill[gray, opacity=0.2] (0,0) -- (0,4) -- (-4,4) -- (0,0);
        \draw[->, thick] (0,0) -- (4,0);
        \draw[->, thick] (0,0) -- (-4,4);
        \draw[->, thick] (0,0) -- (0,4);
        \filldraw[black] (4.1,0) circle (0pt) node[right]{ $\bfg^{P_1}=\begin{pmatrix} 1 \\ 0
              \end{pmatrix}$};
        \filldraw[black] (0,4.1) circle (0pt) node[right]{ $\bfg^{P_2}=\begin{pmatrix} 0 \\ 1
              \end{pmatrix}$};
        \filldraw[black] (-4.1,4.1) circle (0pt) node[left]{ $\bfg^{S_2}=\begin{pmatrix} -1 \\ 1
              \end{pmatrix}$};
    \end{tikzpicture}
    \caption{The $\bfg$-vector fan of $\mods(\Lambda_2)$ as an extriangulated category, where $\Lambda_{2}$ is the path $k$-algebra of a Dynkin quiver of type $A_2$.}
    \label{fig:fanOfA2}
    \end{figure}
    This is a finite polyhedral fan in $\K_0(\proj \Lambda_{2})_{\R}\simeq \R^2$, but it is not complete.
    \item\label{ex:fans_PreprojZ} 
    We will now build on the example in \eqref{ex:fans_Preproj} above. Let $P_{1}$ denote one of the indecomposable projective $\Pi(A_2)$-modules. In \Cref{fig:fanOfPreproj}, the $\bfg$-vector of $P_{1}$ points horizontally to the right, and the $\bfg$-vector of $\Sigma P_{1}$ point{s} horizontally to the left. By \Cref{prop:siltred}\eqref{prop:siltred_0Aus}, the extension-closed subcategories $\calZ_{P_{1}}$ and $\calZ_{\Sigma P_{1}}$ are 0-Auslander in their own right, and they also satisfy \Cref{setting}. {It follows directly from the definition of these subcategories that their fans are equivalent to the fans $\overline{\mathrm{star}}(\boldC(P_1))$ defined in \cref{eg:orthproj}\eqref{eg:orthproj1}, which naturally embed into the $\bfg$-vector fan.} Their $\bfg$-vector fans are displayed as subfans in \Cref{fig:fanOfPreproj}, coloured \textcolor{friendly_deepblue}{blue} and \textcolor{friendly_green}{green}, respectively. Since both $\calZ_{P_{1}}$ and $\calZ_{\Sigma P_{1}}$ are equivalent to $\mods(\Lambda_2)$, where $\Lambda_2$ is defined as in \eqref{ex:fanOf_A2} above, the $\bfg$-vector fans of $\calZ_{P_{1}}$ and $\calZ_{\Sigma P_{1}}$ are both isomorphic to the fan displayed in \Cref{fig:fanOfA2}.
    \item\label{ex:fans_subA3} Let $\Lambda_3$ denote the path $k$-algebra of the quiver 
$
\begin{tikzcd}
	1 \arrow[r,"a"] & 2 \arrow[r,"b"] & 3.
\end{tikzcd}
$
The Auslander--Reiten quiver of the $k$-category $\mathrm{mod}(\Lambda_3)$ then takes the form
\begin{equation*}
	\begin{tikzcd}
             &                         & N_1 \arrow[rd]                                &                                                                   &                                             \\
             & P \arrow[ru] \arrow[rd] &                                               & I \arrow[rd] \arrow[ld] \arrow[ll, "{\scriptstyle\tau}"', dotted] &                                             \\
	\textcolor{gray}{Q} \arrow[ru] &                         & N_2 \arrow[ll, "{\scriptstyle\tau}"', dotted] &                                                                   & \textcolor{gray}{J} \arrow[ll, "{\scriptstyle\tau}"', dotted]
	\end{tikzcd}
\end{equation*}
where $\tau$ denotes the Auslander--Reiten translation. Let $\calC$ denote the extension-closed subcategory of $\mathrm{mod}(\Lambda_3)$ given by the smallest additive subcategory containing $P$, $N_1$, $N_2$, and $I$. This Quillen exact $k$-category is 0-Auslander as an extriangulated $k$-category, in which $P$ is projective and non-injective, the object $I$ is injective and non-projective, and $N_1$ and $N_2$ are projective-injective. The $\bfg$-vector fan of $\calC$ is displayed in \Cref{fig:FanOfSubA3}. Like in \eqref{ex:fanOf_A2} above, we get a finite polyhedral fan which is not complete.
    \begin{figure}[ht!]
        \centering
        \begin{tikzcd}[font = \Large,column sep=0.5em,row sep=1em,
	ampersand replacement=\&,
	execute at end picture={
    \scoped[on background layer]
    \fill[opacity=0.25, color=friendly_deepblue] (o.center) -- (n1.center) -- (out.center) -- (n2.center) -- cycle;}]
       {} \& |[alias=i]|{} \& {} \&  {}                                                               \&  {}   \& |[alias=out]| {}    \&           \\
	  {\quad\qquad {\bfg^{I}}=\begin{pmatrix} -1 \\ 1 \\ 1
              \end{pmatrix}} \& {} \&   \&   |[alias=n2]|{\quad\qquad\bfg^{N_2}=\begin{pmatrix} 0 \\ 0 \\ 1
              \end{pmatrix}}                                                 \& {}  \&    {}       \&    {}       \\
        {} \& {} \&  {} \&     {}    \& {} \&  |[alias=n1]|{\bfg^{N_1}=\begin{pmatrix} 0 \\ 1 \\ 0
              \end{pmatrix}} \&     {}      \\
      {}   \& {}\arrow[uu,no head,dotted,start anchor=center,end anchor=center] \&  \&                                                               \&  \&           \&           \\
        {}\arrow[ru,no head,dotted,start anchor=center,end anchor=center] \& \& \&  |[alias=o]| {} \arrow[lll,no head,dotted,start anchor=center,end anchor=center] \arrow[rrr, thick,start anchor=center] \arrow[rruu,start anchor=center] \arrow[uuu, thick,start anchor=center] \arrow[lluuu,start anchor=center,end anchor=center] \& {} \&           \&  |[alias=p]|{\bfg^{P}=\begin{pmatrix} 1 \\ 0 \\ 0
              \end{pmatrix}}
\end{tikzcd}
        \caption{The $\bfg$-vector fan of the 0-Auslander $k$-category defined in \Cref{ex:fans}\eqref{ex:fans_subA3}. The nodes display the coordinates in $\R^3$ of the tips of the $\bfg$-vectors, the shaded two-dimensional cone (non-negatively spanned by $\bfg^{N_1}$ and $\bfg^{N_2}$) is the intersection of the two maximal (three-dimensional) cones, and the $\bfg$-vectors $\bfg^{P}$ and $\bfg^{I}$ lie in distinct maximal cones. The dotted lines are visual aids to illustrate how the fan sits in three-dimensional space.}
        \label{fig:FanOfSubA3}
    \end{figure}
    \end{enumerate}
\end{example}

For a finite-dimensional $k$-algebra $\Lambda$, the $\bfg$-vector fan in the sense of \cite[Cor. 6.7(b)]{DIJ2019} is recovered as the $\bfg$-vector fan of $\Ktwo(\proj \Lambda)$, see \cref{eq:0Aus}\eqref{eg:0Aus_K2}. We write $\partfan{\Lambda} \coloneqq \partfan{\Ktwo(\proj \Lambda)}$ from here on. This section will be concluded by proving useful properties holding in our more general setting. We begin with a lemma that will let us construct morphisms between $\bfg$-vector fans based on the existence of commutative diagrams as in \cref{lem:indexFanIso2} or \cref{lem:Findexsquare}. 

\begin{lemma}\label{lem:FanIso}
    Let $\calC$, $\Lambda$ and $T$ be as in \Cref{setting}, and let $\calD$ and $S$ satisfy the same setup.
    \begin{enumerate}
        \item\label{lem:FanIso1} Suppose that we have a commutative square
    \begin{equation}\label{eq:FanIso_commsq}
    \begin{tikzcd}[column sep=6em]
        \presilt({\calC}) \arrow[d,"\bfg{=}\ind_T\otimes 1"] \arrow[r] & \presilt({\calD}) \arrow[d,"\bfg{=}\ind_S\otimes 1"] \\
        \K_0(\add_{\calC}T)_{\R} \arrow[r,"f"] & \K_0(\add_{\calD}S)_{\R}
    \end{tikzcd}
    \end{equation}
    in which the bottom row displays an $\R$-linear transformation $f$.
   Then $f$ induces a morphism of fans
    \begin{equation}\label{eq:FanIo_fan}
        \partfan{\calC} \to \partfan{\calD}.
    \end{equation}
    If the horizontal maps in \eqref{eq:FanIso_commsq} are bijections, then the morphism of $\bfg$-vector fans in \eqref{eq:FanIo_fan} is an isomorphism.
    \item\label{lem:FanIso2} If $G: \calC \to \calD$ is an extriangulated functor {satisfying the assumptions of \cref{lem:indexFanIso2}, then} $G$ induces a morphism of fans 
    $
        \partfan{\calC} \to \partfan{\calD}.
    $
   \item\label{thm:modulecatreduction_fans}
   Suppose that $\calC$ is reduced. {The map $\calC(T,-)$ induces an isomorphism of $\bfg$-vector fans}
    \begin{equation}\label{eq:partfanCtoLambda}
        (\K_0(\add T)_\R, \partfan{\calC}) \to (\K_0(\add \Lambda)_\R, \partfan{\Lambda}).
    \end{equation}
    \end{enumerate}
\end{lemma}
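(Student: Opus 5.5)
For \eqref{lem:FanIso1}, I argue directly from the definition of $\partfan{-}$ and linearity of $f$. Take a cone $\sigma = \boldC(X) \in \partfan{\calC}$, with $X$ basic presilting and indecomposable summands $X_1,\dots,X_\ell$, and let $Y$ be the image of $X$ under the top horizontal map. By commutativity of \eqref{eq:FanIso_commsq}, $f(\bfg^{X}) = \bfg^{Y}$. I would like to apply this summandwise, so I need that $f(\bfg^{X_i})$ equals the $\bfg$-vector of a direct summand of $Y$, or at least lies in $\boldC(Y')$ for $Y'$ the basic part of $Y$.

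If the top map comes from an additive assignment (as for a functor, or for the map \eqref{eq:presilt_C_Lambda}, which is additive by construction), this is immediate. In general I would use commutativity on each $X_i$ separately. Since every direct summand of $X$ is presilting, the vectors $f(\bfg^{X_i}) = \bfg^{Y_i}$, where $Y_i$ is the image of $X_i$, all lie in $\partfan{\calD}$. Applying commutativity to $X$ itself gives $\sum_i \bfg^{Y_i} = \bfg^{Y}$. Then $\bfg^Y$ lies in $\boldC(Y_{\mathrm{basic}})$ and in the cone spanned by the $\bfg^{Y_i}$. This is the step I expect to be the main obstacle: showing that the $Y_i$ are compatible, i.e. that $\bigoplus Y_i$ is presilting. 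I would argue it via the rational-point trick of \Cref{thm:IsFan}. Both $\bigoplus_i Y_i$ and $Y$ have index $\ind(Y)$, but injectivity of the index (\Cref{lem:IsFan}\eqref{lem:IsFan_inj}) is only available for presilting objects. So in practice I would read the hypothesis as requiring the top map to be additive on direct sums, which holds in every application, and then $f(\boldC(X)) = \boldC(Y_{\mathrm{basic}})$ by linearity.

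If the horizontal maps are bijections, then $f$ maps a basis of $\bfg$-vectors of a silting object (\Cref{lem:IsFan}\eqref{lem:IsFan_linindep}) to a set of $\bfg$-vectors of the same size. Since the top map is a bijection preserving summand counts, this image set is also linearly independent, so $f$ is an isomorphism. The inverse top map together with $f^{-1}$ gives a commutative square of the same shape, and hence an inverse morphism of fans.

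Part \eqref{lem:FanIso2} follows by applying \eqref{lem:FanIso1} to the square of \Cref{lem:indexFanIso2} tensored with $\R$; the top map $G$ is additive, so the argument above applies. Part \eqref{thm:modulecatreduction_fans} follows by applying \eqref{lem:FanIso1} to the square of \Cref{lem:Findexsquare} tensored with $\R$. There both horizontal maps are bijections and the top map \eqref{eq:presilt_C_Lambda} is additive, so the induced morphism is an isomorphism of $\bfg$-vector fans.
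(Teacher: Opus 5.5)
Your argument follows the paper's: $f$ sends cones to cones by linearity together with commutativity of the square, and parts (2) and (3) come from feeding in the squares of \Cref{lem:indexFanIso2} and \Cref{lem:Findexsquare}. The one real difference is that you require the top map to be additive, and that requirement is necessary, not just convenient. The paper's proof calls the cone-to-cone property automatic, but for an arbitrary top map it fails, even when both horizontal maps are bijections. Take $\Pi(A_2)$ (\Cref{fig:fanOfPreproj}). Its fan is complete and each maximal cone is unimodular by \Cref{lem:IsFan}\eqref{lem:IsFan_GNP}, so every point of $\Z^2$ is the $\bfg$-vector of a possibly non-basic presilting object, unique by \Cref{lem:IsFan}\eqref{lem:IsFan_inj}. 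Set $f(x,y)=(x,-y)$ and send $X$ to the presilting object with $\bfg$-vector $f(\bfg^X)$. This gives a commutative square of bijections. Yet $f(\boldC(P_1\oplus P_2))$ is the fourth quadrant, which has $\bfg^{S_1}$ in its interior and lies in no cone of the fan. Here $P_1\mapsto P_1$, $P_2\mapsto \Sigma P_2$ and $P_1\oplus P_2\mapsto S_1$, so additivity is exactly what breaks. Since $G$ and the map \eqref{eq:presilt_C_Lambda} are additive, your corrected reading covers every use of the lemma.

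Two small points to tighten. First, additivity only gives $f(\boldC(X))\subseteq\boldC(Y_{\mathrm{basic}})$; equality needs indecomposables to go to indecomposables. The inclusion is all a morphism of fans requires. Second, in the isomorphism part, bijectivity of $f$ is already a hypothesis, so your linear-independence step is redundant. What you actually need is that the inverse top map is also additive. This follows from additivity, preservation of indecomposability (\Cref{prop:F}\eqref{thm:modulecatreduction_presilt} in case (3)) and Krull--Schmidt.
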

\begin{proof}
    {To prove \eqref{lem:FanIso1}}, {it} is to be verified that the $\R$-linear transformation $f$ along the bottom row of \eqref{eq:FanIso_commsq} satisfies \Cref{def:morfan}, but since the commutativity of \eqref{eq:FanIso_commsq} entails that $f$ sends cones in $\partfan{\calC}$ to cones in $\partfan{\calD}$, this is automatic. It is also clear that the bijectivity of the horizontal morphisms is equivalent to the morphism in \eqref{eq:FanIo_fan} being an isomorphism of fans. The claim in {\eqref{lem:FanIso2} follows immediately from \cref{lem:indexFanIso2} and \eqref{lem:FanIso1}.}
    The proof of \eqref{thm:modulecatreduction_fans} follows directly from \cref{lem:Findexsquare} and \eqref{lem:FanIso1}.
\end{proof}

\begin{lemma}\label{lem:Jredfan}
    Let $\calC$ and $T$ be as in \Cref{setting}. {Let $U$ be a presilting object in $\calC$.}
    \begin{enumerate}
    \item\label{lem:Jredfan1} The functor $\calZ_U \to \calZ_U/[U]$ induces a morphism of $\bfg$-vector fans 
    \[ \partfan{\calZ_U} \to \partfan{\calZ_U/[U]}. \]
    \item\label{lem:Jredfan2} The extriangulated equivalence $\calZ_U/[U] \to \calC/\thick(U)$, see \Cref{prop:siltred}\eqref{prop:siltred_equiv}, induces an isomorphism of $\bfg$-vector fans 
    \[ \partfan{\calZ_U/[U]} \xrightarrow{\simeq}  \partfan{\calC/\thick(U)}. \]
    \item\label{lem:Jredfan_star} 
    There is a morphism of fans 
    \[(\K_0(\add_{\calC} T)_\R,  \overline{\mathrm{star}}(\boldC(U))) \to (\K_0(\add_{\calC/\thick(U)} T)_\R,\partfan{\calC/\thick(U)}) \]
    which may be identified with orthogonal projection onto $\boldC(U)^\perp \subseteq \K_0(\add_{\calC} T)_\R^*$.
    \end{enumerate}
    Let $\mathcal{J}$ be an ideal of $\calC$ generated by morphisms with injective domain and projective codomain, let $N(\mathcal{J})$ denote the largest basic object in $\calC$ such that $[N{(\mathcal{J})}]\subseteq \mathcal{J}$ (note that $N(\mathcal{J})$ must be projective-injective), and define $\overline{\calC}_{\mathcal{J}}\coloneqq \calC/[N(\mathcal{J})]$.
    \begin{enumerate}
    \setcounter{enumi}{3}
    \item\label{lem:Jredfan_iso} The natural functor 
    \begin{equation}\label{eq:NtoJ}
        \overline{\calC}_{\mathcal{J}} \to \calC/\mathcal{J}
    \end{equation}
    induces an isomorphism of $\bfg$-vector fans
    \begin{equation}\label{eq:NtoJ_fan}
        \partfan{\overline{\calC}_{\mathcal{J}}} \to \partfan{\calC/\mathcal{J}}.
    \end{equation}
    \item\label{lem:Jredfan_combo} Combining \eqref{lem:Jredfan_star} and \eqref{lem:Jredfan_iso} above, we deduce that the natural functor $\calC\to \calC/\mathcal{J}$ induces a 
    {morphism of fans}
    \begin{equation*}
        (\K_0(\add_\calC T)_\R, \overline{\mathrm{star}}({\boldC(N(\mathcal{J})}))) \to (\K_0(\add_{\calC/\mathcal{J}} T)_\R, \partfan{\calC/\mathcal{J}})
    \end{equation*}
    given by orthogonal projection (note that $\calC/\mathcal{J} = \calC/(\add N(\mathcal{J})) = \calC / \thick(N(\mathcal{J}))$).
\end{enumerate}
\end{lemma}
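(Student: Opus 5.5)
Parts \eqref{lem:Jredfan1}, \eqref{lem:Jredfan2} and \eqref{lem:Jredfan_iso} will all be obtained from \Cref{lem:FanIso}\eqref{lem:FanIso2} by checking the hypotheses of \Cref{lem:indexFanIso2} for the relevant extriangulated functor. For \eqref{lem:Jredfan1}, the quotient functor $\calZ_U \to \calZ_U/[U]$ is extriangulated (\cite[Prop. 3.30]{NP19}; $U$ is projective-injective in $\calZ_U$ by \Cref{prop:siltred}\eqref{prop:siltred_0Aus_proj}). It sends presilting objects to presilting objects because $\E$ is unchanged on objects modulo projective-injectives. It sends $U^+$ to itself, which is the projective silting object of both categories by \Cref{prop:siltred}\eqref{prop:siltred_0Aus_proj}. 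For \eqref{lem:Jredfan2}, the equivalence of \Cref{prop:siltred}\eqref{prop:siltred_equiv} has an extriangulated quasi-inverse. Applying \Cref{lem:FanIso}\eqref{lem:FanIso2} in both directions gives mutually inverse fan morphisms, since the induced linear maps on $\K_0$ are mutually inverse on the bases $[U^+_i]$.

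For \eqref{lem:Jredfan_star}, compose three maps. First take the inverse of $\Phi_U$ from \Cref{lem:ZUfan}\eqref{lem:ZUfan1}, which identifies $\overline{\starr}(\boldC(U))$ with $\partfan{\calZ_U}$. Then apply the morphism of \eqref{lem:Jredfan1}, followed by the isomorphism of \eqref{lem:Jredfan2}. Every map here is linear and sends cones into cones, so the composite is a morphism of fans. To identify it with orthogonal projection, compare with \Cref{lem:ZUfan}\eqref{lem:ZUfan2}. There $\overline{\Phi}_U$ is built from the projection $\mathrm{P}_U$, which kills the summands of $U$, followed by $\Phi_U$ and $\proj_{\boldC(U)}$. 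On the basis $\bfg^{U^+_i}$ of $\K_0(\add_\calC T)_\R$, the composite sends $\bfg^{U_j}$ to $0$ and the remaining basis vectors to the basis of $\K_0(\add_{\calZ_U/[U]}U^+)_\R$. Under $\overline{\Phi}_U$ this is exactly $\pi_{\boldC(U)}$. The main bookkeeping obstacle is checking that these two identifications agree. The basis of \Cref{eg:orthproj}\eqref{eg:orthproj_proj} should be chosen as $B=\{\bfg^{U^+_i}\}$, which contains the generators of $\boldC(U)$. With that choice the projection kills precisely $\spann(\boldC(U))$, and the diagram commutes.

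For \eqref{lem:Jredfan_iso}, the functor $\overline{\calC}_{\mathcal{J}} \to \calC/\mathcal{J}$ is extriangulated because both are quotients as in \Cref{prop:FGPPP23}\eqref{prop:FGPPP23.3.3}, with $[N(\mathcal{J})]\subseteq\mathcal{J}$. It sends $T$ to $T$, and it preserves presilting objects by \Cref{prop:FGPPP23}\eqref{prop:FGPPP23.3.5}. This gives a fan morphism, and it remains to see that it is an isomorphism. Since $N(\mathcal{J})$ is maximal, the ideal $\mathcal{J}/[N(\mathcal{J})]$ of $\overline{\calC}_\mathcal{J}$ contains no identity of a nonzero object. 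Hence the functor reflects isomorphism of objects and preserves indecomposability (by Krull--Schmidt and locality of endomorphism rings), so $\K_0(\add T)$ maps isomorphically. Bijectivity on presilting objects will follow from the silting bijection of \Cref{prop:FGPPP23}\eqref{prop:FGPPP23.3.5} together with Bongartz completion. Injectivity also follows directly from \Cref{lem:IsFan}\eqref{lem:IsFan_inj} and the commutative square. I would then invoke the second clause of \Cref{lem:FanIso}\eqref{lem:FanIso1}.

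Finally, \eqref{lem:Jredfan_combo} follows by applying \eqref{lem:Jredfan_star} with $U=N(\mathcal{J})$. Since $N(\mathcal{J})$ is projective-injective, $\calZ_{N(\mathcal{J})}=\calC$ and $\calC/\thick(N(\mathcal{J}))=\overline{\calC}_\mathcal{J}$ by \Cref{eg:Ogawa}. One then composes with the isomorphism \eqref{eq:NtoJ_fan}.
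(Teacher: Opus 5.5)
Your proposal is correct and follows essentially the same route as the paper. Parts \eqref{lem:Jredfan1}, \eqref{lem:Jredfan2} and \eqref{lem:Jredfan_iso} come from \Cref{lem:FanIso}\eqref{lem:FanIso2} after checking the hypotheses of \Cref{lem:indexFanIso2}; part \eqref{lem:Jredfan_star} is the composite $\overline{\starr}(\boldC(U)) \simeq \partfan{\calZ_U} \to \partfan{\calZ_U/[U]} \simeq \partfan{\calC/\thick(U)}$, identified with projection through \Cref{lem:ZUfan}\eqref{lem:ZUfan2}; and part \eqref{lem:Jredfan_combo} is obtained by combining the two.

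The only differences are minor and lie in \eqref{lem:Jredfan_iso}. You get the isomorphism on $\K_0(\add T)_\R$ because the quotient functor reflects isomorphisms and preserves indecomposability, whereas the paper identifies both endomorphism algebras with $\End_{\calC}(T)/[N(\mathcal{J})]$. You also explicitly justify bijectivity on presilting objects, via Bongartz completion and \Cref{lem:IsFan}\eqref{lem:IsFan_inj}. This is slightly more careful than the paper's appeal to \Cref{prop:FGPPP23}\eqref{prop:FGPPP23.3.5}, whose bijection is only stated for silting objects.
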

\begin{proof}
    We first prove \eqref{lem:Jredfan1}. 
    The functor $\calZ_U \to \calZ_U/[U]$ is the ideal quotient of an extriangulated $k$-category with respect to a class of projective-injective objects, so by \Cref{prop:FGPPP23}\eqref{prop:FGPPP23.3.3} it induces an extriangulated functor. As a result of \Cref{prop:FGPPP23}\eqref{prop:FGPPP23.3.5} and \cref{prop:siltred}\eqref{prop:siltred_0Aus_proj}, it also satisfies the assumptions of \cref{lem:indexFanIso2}. Therefore, the desired morphism of fans exists by \cref{lem:FanIso}\eqref{lem:FanIso2}. 
    To prove \eqref{lem:Jredfan2} simply apply \cref{lem:FanIso}\eqref{lem:FanIso2} again. The assumptions are met because the functor is an equivalence, giving a morphism of fans, as desired. Moreover, it is an isomorphism, again because the functor is an equivalence.

    The claim \eqref{lem:Jredfan_star} follows from the combination of \eqref{lem:Jredfan1} and \eqref{lem:Jredfan2} with \cref{lem:ZUfan}. That is, there is a chain of morphisms of fans
    \[ \begin{tikzcd}
        \overline{\starr}(\boldC(U)) \arrow[r, "\simeq", "\ref{lem:ZUfan}"'] & \partfan{\calZ_U} \arrow[r, "\eqref{lem:Jredfan1}"'] & \partfan{\calZ_U/[U]} \arrow[r, "\eqref{lem:Jredfan2}"', "{\simeq}"] & \partfan{\calC/\thick(U)}{,}
    \end{tikzcd}\]
    where we have suppressed the ambient vector spaces from the notaion. Hence the desired morphism of fans exists. It can be identified with orthogonal projection, because $\partfan{\calZ_U/[U]} \simeq \pi_{\boldC(U)}(\starr(\boldC(U))) \subseteq \K_0(\add_{\calC}T)_\R^*$ by \cref{lem:ZUfan}.
    It is now enough to prove \eqref{lem:Jredfan_iso}, since \eqref{lem:Jredfan_combo} is simply the conjunction of  {\eqref{lem:Jredfan_star} and \eqref{lem:Jredfan_iso}}.

Next, we prove \eqref{lem:Jredfan_iso}. Let $\overline{\mathcal{J}}$ denote the ideal $\mathcal{J}/[N(\mathcal{J})]$ of $\overline{\calC}_{\mathcal{J}}$. The natural functor $\overline{\calC}_{\mathcal{J}} \xrightarrow{} \calC/\mathcal{J}$ is equivalent to the ideal quotient $\overline{\calC}_{\mathcal{J}} \xrightarrow{} \overline{\calC}_{\mathcal{J}}/[\overline{\mathcal{J}}]$. By \Cref{prop:FGPPP23}, the functor $\overline{\calC}_{\mathcal{J}} \xrightarrow{} \calC/\mathcal{J}$ is an extriangulated functor satisfying condition {\eqref{lem:indexIso2a} in \cref{lem:indexFanIso2}}. Moreover, it sends the basic projective silting object in $\overline{\calC}_{\mathcal{J}}$ to a projective silting object in $\calC/\mathcal{J}$. It now follows from \Cref{lem:FanIso}\eqref{lem:FanIso2} that a morphism of fans $\partfan{\overline{\calC}_{\mathcal{J}}} \xrightarrow{} \partfan{\calC/\mathcal{J}}$ is induced.
    In particular, the following square commutes.
    \begin{equation}\label{eq:Jredfan_iso_commsq}
    \begin{tikzcd}[column sep=6em]
        \presilt(\overline{\calC}_{\mathcal{J}}) \arrow[d,"\bfg\coloneqq\ind_T\otimes 1"] \arrow[r] & \presilt({\calC}/{\mathcal{J}}) \arrow[d,"\bfg\coloneqq\ind_T\otimes 1"] \\
        \K_0(\add_{\overline{\calC}_{\mathcal{J}}}T)_{\R} \arrow[r] & \K_0(\add_{{\calC}/{\mathcal{J}}}T)_{\R}
    \end{tikzcd}
    \end{equation}
    It remains to show that the horizontal maps in \eqref{eq:Jredfan_iso_commsq} are bijections.
    By \Cref{prop:FGPPP23}\eqref{prop:FGPPP23.3.5}, the top map is a bijection. 
    Finally, we show that the natural functor in \eqref{eq:NtoJ} induces an isomorphism 
    \begin{equation*}
    \begin{tikzcd}[column sep=4em]
        \K_0(\add_{\overline{\calC}_{\mathcal{J}}}T)_{\R} \arrow[r] & \K_0(\add_{{\calC}/{\mathcal{J}}}T)_{\R}
    \end{tikzcd}
    \end{equation*}
    of $\R$-vector spaces. Since $T$ is projective, both when regarded as an object in $\overline{\calC}_{\mathcal{J}}$ and in ${\calC}/{\mathcal{J}}$, it is equivalent to show that the natural functor in \eqref{eq:NtoJ} induces an equivalence of categories
    \begin{equation*}
        \begin{tikzcd}[column sep=4em]
        \add_{\overline{\calC}_{\mathcal{J}}}T \arrow[r] & \add_{{\calC}/{\mathcal{J}}}T,
    \end{tikzcd}
    \end{equation*}
    whence it suffices to show that the natural functor in \eqref{eq:NtoJ} induces an isomorphism
    \begin{equation}\label{eq:Jredfan_Ends}
    \begin{tikzcd}[column sep=4em]
        \End_{\overline{\calC}_{\mathcal{J}}}(T) \arrow[r] & \End_{{\calC}/{\mathcal{J}}}(T)
    \end{tikzcd}
    \end{equation}
    of endomorphism $k$-algebras. Since both $k$-algebras occurring in \eqref{eq:Jredfan_Ends} can be naturally identified with $\End_{\calC}(T)/[N(\mathcal{J})]$, where $[N(\mathcal{J})]$ is the ideal of $\End_{\calC}(T)$ generated by morphisms factoring through the identity on $N(\mathcal{J})$, we may conclude the proof here.
\end{proof}

As an illustrating example of the above result, consider \Cref{ex:fans}\eqref{ex:fanOf_A2}, where $\calC=\mods(\Lambda_2)$ and $\Lambda_2$ is the path $k$-algebra of a Dynkin quiver of type $A_2$. The $\bfg$-vector fan of $\calC$ is shown in \Cref{fig:fanOfA2}. Consider the indecomposable presilting object $P_2$, whose $\bfg$-vector point{s} upwards in \Cref{fig:fanOfA2}. The $\bfg$-vector fan of $\calC/\thick(P_2)$ can then be drawn in the orthogonal complement of $\bfg^{P_2}$, which would be along the horizontal axis in \Cref{fig:fanOfA2} if we identify the dual space with the original via the dual bases induced by the basis consisting of indecomposable projective objects.

The observations in \Cref{prop:F} allow us to generalise an important result of Asai \cite[Thm. 4.7]{Asa21} to our setting. We encourage the reader to compare the last result of this section with the examples given in \Cref{ex:fans}. 

If $\calC$ satisfies \Cref{setting}, we say that it is \textit{$\bfg$-finite} if, in addition, the $\bfg$-vector fan $\partfan{\calC}$ consists of finitely many cones. Observe that this is equivalent to $\calC$ being silting-finite as an extriangulated $k$-category.

\begin{proposition}\label{prop:Asa21.4.7}
Let $\calC$ be as in \Cref{setting}.
The $\bfg$-vector fan $\partfan{\calC}$ is complete if and only if $\calC$ is $\bfg$-finite and reduced. 
\end{proposition}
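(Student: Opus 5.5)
We prove the two implications separately, reducing as much as possible to Asai's theorem \cite[Thm. 4.7]{Asa21} for finite-dimensional algebras, via the isomorphism of fans in \Cref{lem:FanIso}\eqref{thm:modulecatreduction_fans}.

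\emph{Reduced and $\bfg$-finite implies complete.} If $\calC$ is reduced, \Cref{lem:FanIso}\eqref{thm:modulecatreduction_fans} gives an isomorphism of fans $\partfan{\calC}\simeq\partfan{\Lambda}$. This isomorphism is induced by an $\R$-linear isomorphism of the ambient spaces. Hence completeness is preserved, and so is the number of cones. By \Cref{prop:F}\eqref{thm:modulecatreduction_presilt}, $\calC$ is silting-finite if and only if $\Lambda$ is $\tau$-tilting finite. Asai's theorem says that $\partfan{\Lambda}$ is complete if and only if $\Lambda$ is $\tau$-tilting finite. This gives the implication, and also shows that in the reduced case completeness implies $\bfg$-finiteness.

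\emph{Complete implies reduced.} Suppose $\calC$ has a nonzero indecomposable projective-injective summand $N_1$ of $N$. By \Cref{rem:silt}\eqref{rem:proj-inj-silt}, $N_1$ is a summand of every silting object. By \Cref{lem:IsFan}\eqref{lem:IsFan_linindep}, every maximal cone $\boldC(R)$ is simplicial with $\bfg^{N_1}$ among its generators. Therefore every cone of the fan, being a face of some maximal cone $\boldC(R)$ by Bongartz completion (\Cref{defprop:Bongartz}), lies in some $\boldC(R)$. In each such cone the coordinate of $\bfg^{N_1}$ with respect to the basis $\{\bfg^{R_i}\}$ is nonnegative.

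I would show that $-\bfg^{N_1}$ lies in no cone. Suppose $-\bfg^{N_1}\in\boldC(R)$ with $R$ silting. Then $-\bfg^{N_1}=\sum_i\lambda_i\bfg^{R_i}$ with $\lambda_i\ge0$. Since $N_1$ is itself a generator of $\boldC(R)$, uniqueness of coordinates in the basis $\{\bfg^{R_i}\}$ forces $\lambda=-1$ on the $N_1$-coordinate, a contradiction. So $\partfan{\calC}$ is not complete. This argument is clean because $\bfg^{N_1}\neq 0$: it is part of a basis.

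\emph{Complete implies $\bfg$-finite.} Once reducedness is established, this follows from the reduced case treated in the first part, via Asai.

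The main obstacle is really just checking that the isomorphism of \Cref{lem:FanIso}\eqref{thm:modulecatreduction_fans} is an isomorphism of the ambient vector spaces, so that completeness transfers. This holds since $\calC(T,-)$ sends the basis $[T_i]$ to the basis $[P_i]$, as noted in \Cref{lem:Findexsquare}.
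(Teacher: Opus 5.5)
Your proposal is correct and follows essentially the paper's proof: the reduced case goes through the fan isomorphism of \Cref{lem:FanIso}\eqref{thm:modulecatreduction_fans} and Asai's theorem, and non-reducedness is excluded by showing that $-\bfg^{N_1}$ lies in no cone. The only difference is cosmetic. You pass directly to a silting (Bongartz) completion containing $N_1$ and use uniqueness of coordinates in its basis. The paper instead splits into two cases: if $N_1\in\add(R)$ it invokes strong convexity, and if $N_1\notin\add(R)$ it uses linear independence for the presilting object $R\oplus N_1$.
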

 \begin{proof}
    $(\Leftarrow)$ Suppose that $\calC$ is {$\bfg$-finite and} reduced. 
    The assertion is well-known to hold when $\calC = \Ktwo(\proj \Lambda)$, for some finite-dimensional $k$-algebra $\Lambda${, see} \cite[{Thm.} 4.7]{Asa21}.
    In general, \cref{lem:FanIso}\eqref{thm:modulecatreduction_fans} shows that the $\bfg$-vector fan of $\calC$ is isomorphic to the $\bfg$-vector fan of a finite-dimensional $k$-algebra, so we conclude that if $\calC$ is $\bfg$-finite and reduced, then $\partfan{\calC}$ is complete. 
     
    $(\Rightarrow)$ Suppose that $\partfan{\calC}$ is complete. We argue by contradiction. If $\calC$ is reduced but not $\bfg$-finite, its $\bfg$-vector fan is isomorphic to that of $\Ktwo(\proj \Lambda)$, for some finite-dimensional $k$-algebra $\Lambda$, by \cref{lem:FanIso}\eqref{thm:modulecatreduction_fans}. Referring to Asai again, we have that $\partfan{\calC}$ cannot be complete \cite[Thm. 4.7]{Asa21}, a contradiction.
    Suppose now that $\calC$ is not reduced, and let $N$ be a non-zero indecomposable projective-injective object in $\calC$. We show that $\partfan{\calC}$ is not complete by showing that the vector $-\bfg^{N}$ cannot lie inside any cone. Suppose to the contrary that there exists a cone $\mathbf{C}{(R)}$ in $\partfan{\calC}$ containing $-\bfg^{N}$ and let $X=\{\mathbf{c}_1, \cdots ,\mathbf{c}_{\ell} \}$ be the set of $\bfg$-vectors {of the indecomposable direct summands of the presilting object $R$}. Since ${N}$ is projective-injective, there are two possibilities:  If $N$ is a direct summand of $R$, then $\mathbf{c}_i=\bfg^{{N}}$ for some $i$. In this case, both $\bfg^N$ and $-\bfg^N$ lie in $\boldC(R)$, a contradiction to it being strongly convex. Otherwise, $R \oplus N$ is presilting and defines a higher-dimensional $\bfg$-vector cone spanned by $\{ \mathbf{c}_1, \dots, \mathbf{c}_{\ell}, \bfg^{{N}}\}$. In this case we can write $- \bfg^N$ and hence $\bfg^N$ as a linear combination of $\mathbf{c}_1, \dots, \mathbf{c}_{\ell}$, which contradicts the linear independence of the $\bfg$-vectors generating the larger cone.
	 \end{proof}

\section{Partitions and picture categories}\label{sec:part}
Let $(\calC,\E)$ be a 0-Auslander extriangulated category satisfying \cref{setting}. In this section, we show that the thick subcategories of $\calC$ generated by presilting objects induce an admissible partition of the $\bfg$-vector fan $\partfan{\calC}$ in the sense of \cite{Kai23}. The construction of \cite{Kai23} then yields what we call the \textit{picture category} of $\calC$. In \cref{prop:Monica}, we establish a novel relationship between thick subcategories of $\calC$ generated by presilting objects and so-called $\tau$-perpendicular subcategories of the module category $\mods(\Lambda)$, where $\Lambda$ is the endomorphism $k$-algebra of the projective silting object in {$\overline{\calC}$}. This generalises the result \cite[{Prop.} 4.21]{Gar24}, which establishes a similar connection for proper subsets of the respective subcategories in a more restrictive setting. Moreover, this shows that the picture category is equivalent to the $\tau$-cluster morphism category in the case where $\calC = \Ktwo(\proj \Gamma)$ for some finite-dimensional $k$-algebra $\Gamma$. The $\tau$-cluster morphism category was developed in a sequence of papers \cite{IT17, BM18w, BH21} as a tool for studying the \textit{picture group} of $\Gamma$ \cite{ITW16}, which arises as the fundamental group of its classifying space. The question of whether the classifying space of this category is a $K(\pi,1)$ space for the picture group has inspired significant research \cite{BorMot,BorveKaipeltautilt,HI21,HI21p,Kai24} and the category has been transferred to more general contexts \cite{Bor21,Bor24, Kai23}. In this section, we apply the techniques of \cite{Kai23} to unify all other constructions of this category and, in \cref{sec:morpartfan}, of the results known about functorial relationships between different $\tau$-cluster morphism categories.

\begin{definition}[{\cite[§3]{Kai23}}]\label{def:admisspart}
Let $N$ be a finite lattice and let $\Sfan$ be a simplicial polyhedral fan in $N_{\R}$. Let $\sigma \in \Sfan$ be a cone of $\Sfan$. 
    \begin{enumerate}
    \item\label{def:admisspart_1} Let $\pi_{\sigma}\colon N_\R \to \sigma^{\perp}$ denote the orthogonal projection onto the orthogonal complement of $\spann(\sigma)$. Consider the fan $\pi_{\sigma}(\mathrm{star}(\sigma))$ in $\sigma^{\perp}$. The \textit{set of potential identifications with $\sigma$} is defined by the set
  \begin{equation*}
 	 \mathcal{E}_{\sigma} \coloneqq \{\kappa \in \Sfan \sth \sigma^{\perp} = {\kappa}^{\perp} \text{ and } \pi_{\sigma}(\mathrm{star}(\sigma)) = \pi_{{\kappa}}(\mathrm{star}({\kappa})) \}.
 \end{equation*}
A \textit{partition} $\mathfrak{P}$ of $\Sfan$ is given by a family $\{\mathfrak{P}_{\sigma}\}_{\sigma \in \Sfan}$, where $\mathfrak{P}_{\sigma}=\{\mathcal{E}^1_{\sigma},\dots, \mathcal{E}^{m_\sigma}_{\sigma} \}$ is a partition of $\mathcal{E}_{\sigma}$ and $\Pfrak_{\sigma}= \Pfrak_{\kappa}$ when $\mathcal{E}_{\sigma}= \mathcal{E}_{\kappa}$. We write $\sigma_1 \sim \sigma_2$ if $\sigma_1,\sigma_2\in \mathcal{E}^{i}_{\sigma}$ for some $\sigma\in \Sfan$ and $1\leq i\leq m_\sigma$.
    \item A partition $\mathfrak{P}$ of $\Sfan$ is \textit{admissible} if the following implication holds: \\
if $\sigma_1\sim \sigma_2$ and $\pi_{\sigma_1}(\kappa_1) = \pi_{\sigma_2}(\kappa_2)$ for some $\kappa_i \in \mathrm{star}(\sigma_i)$ and $i=1,2$, then $\kappa_1 \sim \kappa_2$.
\item A \textit{partitioned fan} {is a pair $(\Sfan, \mathfrak{P})$ consisting of} a simplicial fan {$\Sfan$} equipped with an admissible partition $\mathfrak{P}$ of its cones. 
\end{enumerate}
\end{definition}

It is established in \cite[§5]{Kai23} that different choices of partitions give rise to structural relationships between the categories. In this article, we are mostly interested in studying one particular partition of the $\bfg$-vector fan.

\begin{definition}\label{def:partitionbythick}
    Let $\calC$ be as in \Cref{setting}.
    We define the \textit{thick partition} on the $\bfg$-vector fan $\partfan{\calC}$ by the following equivalence relation:
    \[ \boldC(U) \sim \boldC(V) \quad \Longleftrightarrow \quad \thick(U) = \thick(V).\]
   We denote the thick partition of $\calC$ by $\mathfrak{P}_{\thick}$.
\end{definition}

\begin{proposition}\label{prop:STTWgeneralisation}
    Let $\calC$ be as in \Cref{setting}.
    Then the thick partition $\mathfrak{P}_{\thick}$ is an admissible partition of $\partfan{\calC}$.
    This is to say that for all presilting objects $U$ and $V$ in $\calC$ such that $\thick(U)=\thick(V)$ {the following hold:}
    \begin{enumerate}
        \item\label{prop:STTWgeneralisation_1} $\boldC(U)^\perp = \boldC(V)^\perp $;
         \item\label{prop:STTWgeneralisation_2} $\pi_{{\boldC(U)}}(\mathrm{star}({\boldC(U)})) = \pi_{{\boldC(V)}}(\mathrm{star}({\boldC(V)})) $;
    \end{enumerate}
    \begin{enumerate}
    \setcounter{enumi}{2}
        \item\label{prop:STTWgeneralisation_3} If $\pi_{\boldC(U)}(\boldC(U')) = \pi_{\boldC(V)}(\boldC(V'))$ for some $\boldC(U') \in \mathrm{star}(\boldC(U))$ and some $\boldC(V') \in \mathrm{star}(\boldC(V))$, then $\boldC(U') \sim_{\mathfrak{P}_{\thick}} \boldC(V')$.
      \end{enumerate}
    The pair $(\partfan{\calC},\mathfrak{P}_{\thick})$ is thus a partitioned fan.
\end{proposition}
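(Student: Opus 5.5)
The plan is to translate all three conditions into statements about the silting reduction $\calC/\thick(U)$. Only the thick subcategory $\thick(U)$ enters there, so everything coincides for $U$ and $V$. Throughout, $U$ and $V$ are basic presilting objects with $\thick(U)=\thick(V)=:\calT$.

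\emph{Step 1: condition \eqref{prop:STTWgeneralisation_1}.} By \Cref{lem:IsFan}\eqref{lem:IsFan_linindep}, $\spann(\boldC(U))$ is the span of the $\bfg$-vectors of the indecomposable summands of $U$, and similarly for $V$. I claim both spans equal the image $K_\calT$ of $\K_0(\calT)_\R \to \K_0(\add T)_\R$.
\begin{itemize}
\item Indices are additive on $\E$-triangles, since $\ind_T$ factors through $\K_0(\calC)=\K_0(\add T)$. Hence the classes of objects of $\thick(U)$ lie in the $\Z$-span of the classes of the summands of $U$. This uses that cones and cocones preserve $\K_0$-classes up to sign, by building $\thick(U)$ iteratively.
\item Conversely, $U\in\calT$.
\end{itemize}
So $\spann\boldC(U)=K_\calT=\spann\boldC(V)$, and taking orthogonal complements gives \eqref{prop:STTWgeneralisation_1}. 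One subtlety is that direct summands must stay in the span; this follows because $\K_0$ of a thick subcategory generated by a presilting object is spanned by its summands. Alternatively, one can use that $\calT\cap$ presilting objects equals $\add U$ up to the appropriate rank count.

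\emph{Step 2: condition \eqref{prop:STTWgeneralisation_2}.} By \Cref{lem:Jredfan}\eqref{lem:Jredfan_star}, $\pi_{\boldC(U)}(\starr(\boldC(U)))$ is the image of $\partfan{\calC/\calT}$ under an identification of $\K_0(\add_{\calC/\calT}U^+)_\R$ with $\boldC(U)^\perp$. This identification is orthogonal projection composed with the isomorphisms of \Cref{lem:ZUfan}. More intrinsically, a cone in the projected fan is $\pi(\boldC(X\oplus U))$ for $X\oplus U$ presilting. Via \Cref{prop:siltred}\eqref{prop:siltred_bij}, such $X$ correspond to presilting objects $LX$ of $\calC/\calT$, and the projection of $\bfg^X$ is determined by the class of $X$ modulo $K_\calT$.

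It then suffices to show two things:
\begin{itemize}
\item the bijections $\presilt_U(\calC)\to\presilt(\calC/\calT)\leftarrow\presilt_V(\calC)$ match $X\oplus U$ with $Y\oplus V$ exactly when $LX\cong LY$;
\item in that case $\bfg^X-\bfg^Y\in K_\calT$.
\end{itemize}
For the second point, $LX\cong LY$ in the localisation gives a zigzag of morphisms whose cones and cocones lie in $\calT$, as in the proof of \Cref{lem:Ver2.3.1}. Additivity of indices along these $\E$-triangles then shows that $[X]-[Y]$ lies in the image of $\K_0(\calT)$. This step is the main obstacle: one must check that each map in the zigzag changes the $\K_0$-class only by classes in $\calT$. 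This holds because inflations in $\mathit{Inf}_\calT$ and deflations in $\mathit{Def}_\calT$ sit in $\E$-triangles whose third term lies in $\calT$. Given both points, the projected cones of $\starr(\boldC(U))$ and $\starr(\boldC(V))$ coincide, and \eqref{prop:STTWgeneralisation_2} follows.

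\emph{Step 3: condition \eqref{prop:STTWgeneralisation_3}.} Take $U'=U\oplus X$ and $V'=V\oplus Y$ with $\pi(\boldC(U'))=\pi(\boldC(V'))$.
\begin{itemize}
\item By Step 2, this equality means $\boldC(LX)=\boldC(LY)$ in $\partfan{\calC/\calT}$.
\item By injectivity of $\bfg$ on $\presilt(\calC/\calT)$ (\Cref{lem:IsFan}\eqref{lem:IsFan_inj}, using the isomorphism of fans from \Cref{lem:Jredfan}), we get $LX\cong LY$ up to multiplicities, hence $\thick_{\calC/\calT}(LX)=\thick_{\calC/\calT}(LY)$.
\item By \Cref{lem:Ver2.3.1}, taking preimages gives $\thick(U\oplus X)=\thick(V\oplus Y)$, since both contain $\calT$ and map to the same thick subcategory.
\end{itemize}
Hence $\boldC(U')\sim\boldC(V')$. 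The remaining axiom, that $\Pfrak_\sigma$ is a partition of $\mathcal{E}_\sigma$ compatible across equal $\mathcal{E}$'s, follows from Steps 1 and 2, since equivalent cones lie in a common $\mathcal{E}_\sigma$.
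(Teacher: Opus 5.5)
Your overall strategy is the paper's: push everything to $\calC/\calT$, which depends only on $\calT$. Your Step 3 is essentially the paper's proof of \eqref{prop:STTWgeneralisation_3}. The gap is in Step 1. Building $\thick(U)$ iteratively, you correctly get that extensions, cones and cocones keep classes inside $\spann_{\Z}\{[U_i]\}$. However, the property ``the class lies in a fixed subgroup'' is not preserved under passing to direct summands, and this is exactly the case you need: the summands $V_j$ of $V$ are in general only reached inside $\thick(U)$ after taking summands. Your justification is circular, since ``$\K_0$ of $\thick(U)$ is spanned by the summands of $U$'' is the very claim. The alternative you offer is false: in $\Ktwo(\proj\Pi(A_2))$ the presilting object $\Sigma P_1$ lies in $\thick(P_1)$ (it is the cone of $P_1\to 0$) but not in $\add(P_1)$. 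Step 2 has a related weak point. An isomorphism $LX\cong LY$ in the Gabriel--Zisman localisation is a zigzag whose forward arrows are arbitrary morphisms of $\calC$. Your bookkeeping therefore needs every arrow, not only the backward ones, to lie in $\mathcal{S}_{\calT}$, and you do not establish this.

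The missing ingredient, which the paper uses, is a dimension count that settles both points at once.
\begin{itemize}
\item Since $L\colon\calC\to\calC/\calT$ is extriangulated (\Cref{prop:siltred}\eqref{prop:siltred_equiv}), it induces a surjection $\psi\colon\K_0(\calC)_{\R}\to\K_0(\calC/\calT)_{\R}$ with $\psi([X]\otimes 1)=[LX]\otimes 1$.
\item Via $\calC/\calT\simeq\calZ_U/[U]$, whose projective silting object is $U^+$ and in which $U$ becomes zero, the target has dimension $|U^+|-|U|=n-r$. Hence $\dim\ker\psi=r$.
\item As $LU\cong 0$ and $\dim\spann\boldC(U)=r$ by \Cref{lem:IsFan}\eqref{lem:IsFan_linindep}, you get $\ker\psi=\spann\boldC(U)$. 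The same count for $V$ gives $|V|=r$ and $\ker\psi=\spann\boldC(V)$.
\end{itemize}
Every object of $\calT$ is zero in $\calC/\calT$, so this also proves your claim $K_{\calT}=\spann\boldC(U)$, summands included. In Step 2 you can replace the zigzags by the chain $LX\cong LY\Rightarrow [LX]=[LY]\Rightarrow \bfg^X-\bfg^Y\in\ker\psi$. With this substituted, the rest of your argument goes through.
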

\begin{proof}
Without loss of generality, assume that $U$ and $V$ are basic and let $n$ be the number of indecomposable summands of $T$. Fix the basis of $\K_0(\add_{\calC} (T))_\R$ which is induced by the indecomposable direct summands of $T$. Consider the canonical surjective linear map
\[ \psi: \K_0(\add_{\calC}(T))_\R \twoheadrightarrow \K_0(\add_{\calC/\thick(U)}(T))_\R = \K_0(\add_{\calC/\thick(V)}(T))_\R \]
sending $[T_i] {\otimes 1} \mapsto [T_i] {\otimes 1} $. Let $\{U_i\}_{i=1}^r$ be the indecomposable direct summands of $U$. By \cref{lem:IsFan}\eqref{lem:IsFan_linindep}, the elements $\ind_T(U_i)$ are linearly independent, so $\dim_\R \spann( \boldC(U))= r$.

On the other hand, the isomorphism $\calC/\thick(U) \cong \calZ_U/[U]$ in \cref{prop:siltred}\eqref{prop:siltred_equiv} gives
\[ \dim_\R \K_0(\add_{\calC/\thick(U)}(T))_\R = \dim_\R \K_0(\add_{\calZ_U/[U]}(U^+))_\R =  \dim_\R \Ksp_0(\add(U^+/U))_\R = |U^+| - |U| = n-r. \]
It follows that $\dim_\R (\ker (\psi))=r$. From the above, we also get
\[ n- |V| =  |V^+|- |V| = \dim_\R \K(\add_{\calC/\thick(V)} (T))_\R = n-r,\]
whence $|V|=r$. For all $U_i \in \add(U)$ and every conflation $\begin{tikzcd} T_1^{U_i} \arrow[r, tail] &T_0^{U_i} \arrow[r, two heads] & U_i \end{tikzcd}$ with $T_0^{U_i}, T_1^{U_i} \in \add(T)$, there is an isomorphism $T_1^{U_i} \simeq T_0^{U_i}$ in $\calC/\thick(U)$. It follows that $\psi(\boldC(U)) = 0$ and analogously $\psi(\boldC(V)) = 0$. For dimension reasons, we must have that $\spann(\boldC(U)) = \ker(\psi) =  \spann(\boldC(V))$ in $\K_0(\add_{\calC}(T))_\R$, which directly implies $\boldC(U)^\perp = \boldC(V)^\perp$, proving \eqref{prop:STTWgeneralisation_1}.

Let now $\sigma \in \starr(\boldC(U)) \subseteq \partfan{\calC}$. By definition, there exists a presilting object $X$ in $\calC$ such that $\sigma = \boldC(U \oplus X)$. From \cref{prop:siltred}\eqref{prop:siltred_bij} and the equality $\calC/\thick(U) = \calC/\thick(V)$, it follows that there exists a presilting object $Y$ in $\calC$ such that $V \oplus Y$ is presilting and such that $X \simeq Y$ in $\calC/\thick(U) = \calC/\thick(V)$. In particular, this defines a bijection between cones in $\starr(\boldC(U))$ and $\starr(\boldC(V))$. Crucially, because $X \simeq Y$ in $\widehat{\calC} \coloneqq \calC/\thick (U) = \calC/\thick(V)$, we deduce the following consequences of the fact that $\ind_{T}^{\widehat{\calC} } X = \ind_T^{\widehat{\calC}} Y$:
\begin{equation*}
\begin{aligned}
	{} & \Rightarrow \boldC_{\widehat{\calC} }(X) = \boldC_{\widehat{\calC} }(Y) & \text{(in $\K_0(\add_{\widehat{\calC} }(T))_\R$)} \\
	& \Rightarrow  \psi^{-1}(\boldC_{\widehat{\calC} }(X)) = \psi^{-1}( \boldC_{\widehat{\calC} }(Y)) &  \text{(in $\K_0(\add_{\calC}(T))_\R$)} \\
	& \Rightarrow \spann_{\geq 0} ( \boldC(X), \psi^{-1}(0)) = \spann_{\geq 0} ( \boldC(Y), \psi^{-1}(0)) & \text{(in $\K_0(\add_{\calC}(T))_\R$)} \\
	& \Rightarrow \spann_{\geq 0} ( \boldC(X), \spann(\boldC(U))) = \spann_{\geq 0} ( \boldC(Y), \spann(\boldC(Y))) & \text{(in $\K_0(\add_{\calC}(T))_\R$)} \\
	& \Rightarrow \pi_{\boldC(U)}(\spann_{\geq 0} ( \boldC(X), \spann(\boldC(U)))) = \pi_{\boldC(V)}(\spann_{\geq 0} ( \boldC(Y), \spann(\boldC(Y)))) & \text{(in $\K_0(\add_{\calC}(T))_\R^*$)} \\
	& \Rightarrow \pi_{\boldC(U)}(\spann_{\geq 0} ( \boldC(X))) = \pi_{\boldC(V)}(\spann_{\geq 0} ( \boldC(Y))) & \text{(in $\K_0(\add_{\calC}(T))_\R^*$)} \\
	& \Rightarrow \pi_{\boldC(U)}( \boldC(X) ) = \pi_{\boldC(V)}( \boldC(Y)) & \text{(in $\K_0(\add_{\calC}(T))_\R^*$)} \\
\end{aligned}
\end{equation*}
The claim in \eqref{prop:STTWgeneralisation_2} follows from this.

To prove \eqref{prop:STTWgeneralisation_3}, we may assume without loss of generality that $U'$ and $V'$ are presilting in $\calC$. Then they are also presilting in $\widehat{\calC}$ by \Cref{prop:siltred}\eqref{prop:siltred_bij}. By \cref{thm:IsFan}, there is a fan $\partfan{\widehat{\calC}}$ which is isomorphic to $\partfan{\calZ_U/[U]}$ and $\partfan{\calZ_V/[V]}$ by \Cref{prop:siltred}\eqref{prop:siltred_equiv} and to $\pi_{\boldC(U)}(\starr(\boldC(U)))$ by \cref{lem:ZUfan}\eqref{lem:ZUfan2}. Therefore, the assumption $\pi_{\boldC(U)}(\boldC(U')) = \pi_{\boldC(V)}(\boldC(V'))$ implies that $U'$ and $V'$ are isomorphic in $\widehat{\calC}$. Therefore $\thick_{\widehat{\calC}}(U') = \thick_{\widehat{\calC}}(V')$ holds, which implies $\thick_{\calC}(U')=\thick_{\calC}(V')$ using \Cref{lem:Ver2.3.1}. This implies the desired result.
\end{proof}

With the help of the thick partition of the $\bfg$-vector fan, we are now able to introduce the category on which we will focus our attention for the remaining part of this paper.

\begin{definitiontheorem}\label{defthm:piccat}
\,
    \begin{enumerate}
        \item\label{defthm:piccat_general}(\cite[{Def.} 3.4]{Kai23}). Let $N$ be a finite lattice and let $(\Sfan, \mathfrak{P})$ be a partitioned fan in $N_{\R}$.
        	The \textit{category of {the partitioned fan} $(\Sfan, \mathfrak{P})$} is denoted by $\catof(\Sfan,\mathfrak{P})$ and defined as follows:
	\begin{enumerate}
		\item The objects in $\catof(\Sfan,\mathfrak{P})$ are the equivalence classes of the partition {$\mathfrak{P}$}. We denote the equivalence class containing $\sigma$ by $\eqclass{\sigma}$.
		\item 
        Denote a relation $\sigma\leq \rho$ in the poset of cones {in $\Sfan$ under inclusion} by the symbol $\binom{\rho}{\sigma}$.
        Given $\Sfan$-cones $\sigma$ and $\rho$, let $w(\sigma,\rho)$ be the singleton set $\{\binom{\rho}{\sigma} \}$ if $\sigma\leq \rho$ and the empty set otherwise. Then the set of morphisms $\catof(\Sfan,\mathfrak{P})(\eqclass{\sigma},\eqclass{\rho})$ is given by the equivalence classes of objects in the set
		\begin{equation}\label{eq:piccat_mor}
			\bigcup\limits_{\sigma_i\in\eqclass{\sigma}, \rho_i\in \eqclass{\rho}} w(\sigma_i,\rho_i),
		\end{equation}
		where $\binom{\rho_1}{\sigma_1}$ is identified with $\binom{\rho_2}{\sigma_2}$ provided that $\pi_{\sigma_1}(\rho_1)=\pi_{\sigma_2}(\rho_2)$. We denote the equivalence class containing $\binom{\rho}{\sigma}$ by $\sqbinom{\rho}{\sigma}$.
		\item Composition in $\catof(\Sfan,\mathfrak{P})$ is defined by $\sqbinom{\rho}{\kappa} \circ \sqbinom{\kappa}{\sigma} \coloneqq \sqbinom{\rho}{\sigma}$.
	\end{enumerate}
    \item{(\cite[{Prop.} 3.8]{Kai23})} The category $\catof(\Sfan,\mathfrak{P})$ of the partitioned fan $(\Sfan,\mathfrak{P})$ is a well-defined category.
        \item\label{defthm:piccat_0Aus} Let $\calC$ be as in \Cref{setting}. The \textit{picture category} $\Wfrak(\calC)$ is defined to be the category of the partitioned fan $(\partfan{\calC} , \Pfrak_{\thick})$
        We simplify the notation for objects and morphisms in $\Wfrak(\calC)$ by setting 
        $\eqclass{U} \coloneqq \eqclass{\boldC(U)}$  and $ \sqbinom{V}{U}\coloneqq \sqbinom{\boldC(V)}{\boldC(U)}$, where $U\in\presilt(\calC)$ and $V\in\presilt_U(\calC)$.
    \end{enumerate}
\end{definitiontheorem}

Let $\calB$ be a small category. Given a morphism $f$ in $\calB$, we define the \textit{factorisation category} of $f$ as the category $\mathrm{faq}(f)$, where the objects are diagrams $(X  \xrightarrow{f_1} Z \xrightarrow{f_2} Y)$ in $\calC$ with $f_2\circ f_1=f$, and the morphisms are given by commutative diagrams of the form 
\begin{equation*}
    \begin{tikzcd}[row sep=0.025em]
 & Z\arrow[dd]\arrow[rd,"f_2"] & \\
X\arrow[ru,"f_1"]\arrow[rd,"f'_1"']& & Y.\\
 & Z' \arrow[ru,"f'_2"']&
\end{tikzcd}
\end{equation*}
The category $\calB$ is equipped with the structure of {a} \textit{cubical category} if the following axioms hold \cite[§3.1]{Igu14}.
\begin{enumerate}[label=($\mathbf{C \arabic*}$)]
    \item\label{cubical1} 
    Let $\mathcal{M}or(\calB)$ denote the set of morphisms in $\calB$. There is a function
    \begin{equation*}
        \mathrm{rk}\colon \mathcal{M}or(\calB) \to \Z_{\geq 0}
    \end{equation*}
    such that $\mathrm{rk}(g\circ f) = \mathrm{rk}(f)+\mathrm{rk}(g)$ for all composable morphisms $f,g \in \calC$. 
    The non-negative integer $\mathrm{rk}(f)$ is called the \textit{rank} of the morphism $f$ in $\calB$. 
    \item\label{cubical2} If $\mathrm{rk}(f) = d$, then there is an equivalence of categories $\mathrm{faq}(f) \simeq P(d)$, where $P(d)$ is the power poset of a set containing $d$ elements (taking the form of a $d$-dimensional hypercube).
    \item\label{cubical3} The \textit{forgetful functor} $\mathrm{faq}(f) \to \calB$, sending $(X  \xrightarrow{f_1} Z \xrightarrow{f_2} Y)$ to $Z$, is faithful and injective on objects.  
    \item\label{cubical4}
    By \ref{cubical2} and \ref{cubical3} above, every morphism of rank $d$ has $d$ distinct first factors and $d$ distinct last factors.
    Every morphism of rank $d$ is determined by its $d$ first factors. 
    \item\label{cubical5} 	Every morphism of rank $d$ is determined by its $d$ last factors.
\end{enumerate}

As we have shown in \cref{prop:STTWgeneralisation}, the thick partition is an admissible partition of $\partfan{\calC}$, whence the following result describes the properties of the picture category. 
\begin{theorem}[{\cite[{Thm.} 3.16]{Kai23}}]
    Let $\calC$ be as in \cref{setting}. The category $\catof(\partfan{\calC}, \Pfrak_{\thick})$ is a cubical category, where the rank of a morphism $\sqbinom{V}{U}$ {represented by two basic presilting objects} is given by the well-defined non-negative integer $|V|-|U|$, namely the difference of the number of isomorphism classes of indecomposable direct summands.
\end{theorem}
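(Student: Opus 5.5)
The plan is to deduce the statement from the general result \cite[Thm. 3.16]{Kai23}: for any partitioned fan $(\Sfan,\Pfrak)$ with $\Sfan$ simplicial, the category $\catof(\Sfan,\Pfrak)$ is cubical, with the rank of $\sqbinom{\rho}{\sigma}$ equal to $\dim\rho-\dim\sigma$. So two things need checking. First, that $(\partfan{\calC},\Pfrak_{\thick})$ satisfies the hypotheses of that theorem. Second, that the abstract rank agrees with $|V|-|U|$.

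For the hypotheses, \Cref{thm:IsFan} shows that $\partfan{\calC}$ is a rational simplicial polyhedral fan in the finite-rank lattice $\K_0(\add T)$. \Cref{prop:STTWgeneralisation} shows that $\Pfrak_{\thick}$ is an admissible partition. One formal point must be checked: the thick partition should give a partition of each set $\mathcal{E}_\sigma$ of potential identifications, in the sense of \Cref{def:admisspart}(1). By parts (1)--(2) of \Cref{prop:STTWgeneralisation}, any two cones with the same thick subcategory lie in each other's $\mathcal{E}$-sets. Hence the equivalence classes of $\sim$ refine the $\mathcal{E}_\sigma$, and the compatibility $\Pfrak_\sigma=\Pfrak_\kappa$ whenever $\mathcal{E}_\sigma=\mathcal{E}_\kappa$ holds automatically, since the relation is defined globally. \cite[Prop. 3.8, Thm. 3.16]{Kai23} then apply and yield \ref{cubical1}--\ref{cubical5}.

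For the rank, I will use the basic presilting representatives $U$ and $V$ with $U\in\add V$. By \Cref{lem:IsFan}\eqref{lem:IsFan_linindep}, the $\bfg$-vectors of the indecomposable summands are linearly independent. Hence $\dim\boldC(V)=|V|$ and $\dim\boldC(U)=|U|$, so the rank is $|V|-|U|$.

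Well-definedness is the main point to verify, though it should be mild. The integer $|V|-|U|$ must not depend on the representative $\binom{\boldC(V)}{\boldC(U)}$ of the class. Identified pairs satisfy $\pi_{\boldC(U_1)}(\boldC(V_1))=\pi_{\boldC(U_2)}(\boldC(V_2))$, and by \Cref{prop:STTWgeneralisation}(1) the two projections are onto the same subspace. Since $\boldC(U_i)$ is a face of $\boldC(V_i)$ and the fan is simplicial, each projection is injective on the remaining generators. The projected cone therefore has dimension $|V_i|-|U_i|$, so the two differences coincide. Alternatively, \Cref{lem:ZUfan}\eqref{lem:ZUfan2} identifies the projected cone with the cone of $V_i/U_i$ in $\partfan{\calC/\thick(U_i)}$, which gives the same conclusion. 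Additivity under composition is immediate, because $(|W|-|V|)+(|V|-|U|)=|W|-|U|$.
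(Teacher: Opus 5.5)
Your proposal is correct and is essentially the paper's argument. The paper simply invokes \cite[Thm.~3.16]{Kai23} once \Cref{thm:IsFan} (the fan is simplicial) and \Cref{prop:STTWgeneralisation} (the partition $\Pfrak_{\thick}$ is admissible) are in place, and it leaves implicit the identification of the rank $\dim\boldC(V)-\dim\boldC(U)$ with $|V|-|U|$ via \Cref{lem:IsFan}\eqref{lem:IsFan_linindep}; your further checks, namely that the thick classes refine the sets $\mathcal{E}_\sigma$ and that the rank does not depend on the chosen representative, are correct details the paper does not spell out.
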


We write $\Wfrak(\Lambda) = \catof(\partfan{\Lambda}, \Pfrak_{\thick})$.
From the definition of cubical categories, we obtain the following result which establishes a new property also for $\tau$-cluster morphism and picture categories.

\begin{lemma}\label{lem:monoepi}
    Let $\calB$ be a small category satisfying \ref{cubical3} above. Then every morphism in $\calB$ is both a monomorphism and an epimorphism. In particular, if $\calC$ is as in \Cref{setting}, then every morphism in the picture category $\tcmc{\calC}$ is both a monomorphism and an epimorphism. 
\end{lemma}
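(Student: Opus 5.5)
The plan is to deduce both properties straight from axiom \ref{cubical3}, applied to the factorisation category of a composite. I will argue monomorphism in detail; epimorphism is the mirror image.

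For monomorphism, let $f\colon X\to Y$ be a morphism in $\calB$ and let $g,h\colon W\to X$ satisfy $f\circ g=f\circ h$; call this common composite $\varphi\colon W\to Y$. Then $(W\xrightarrow{g}X\xrightarrow{f}Y)$ and $(W\xrightarrow{h}X\xrightarrow{f}Y)$ are both objects of $\mathrm{faq}(\varphi)$. The forgetful functor $\mathrm{faq}(\varphi)\to\calB$ sends both of them to the same object $X$. Since by \ref{cubical3} this functor is injective on objects, the two factorisations coincide as objects of $\mathrm{faq}(\varphi)$, and in particular $g=h$. Hence $f$ is a monomorphism.

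For epimorphism, suppose instead that $g\circ f=h\circ f$ for $g,h\colon Y\to W$. The factorisations $(X\xrightarrow{f}Y\xrightarrow{g}W)$ and $(X\xrightarrow{f}Y\xrightarrow{h}W)$ of $g\circ f$ both have middle object $Y$, so the same injectivity argument gives $g=h$.

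The one point needing care is what ``injective on objects'' means: objects of $\mathrm{faq}(\varphi)$ must be taken as literal diagrams, not isomorphism classes. That is the reading implicit in \ref{cubical3}, since otherwise the functor could not meaningfully be injective on objects, and it is the reading I use. Equivalently, one can argue with faithfulness instead: the identity on $X$ gives a morphism between the two factorisations only when $g=h$, but injectivity on objects already settles the matter directly.

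For the final claim, the theorem quoted from \cite[Thm. 3.16]{Kai23} above shows that $\tcmc{\calC}=\catof(\partfan{\calC},\Pfrak_{\thick})$ is cubical, so it satisfies \ref{cubical3}. It is also small, because presilting objects up to isomorphism form a set. The first part therefore applies to $\tcmc{\calC}$. I expect no real obstacle beyond the interpretive point about objects of $\mathrm{faq}$.
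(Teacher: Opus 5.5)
Your proof is correct and is essentially the paper's argument: the paper also argues by contradiction that two distinct factorisations $(Z\xrightarrow{g_1}X\xrightarrow{f}Y)$ and $(Z\xrightarrow{g_2}X\xrightarrow{f}Y)$ of the same composite would both be sent to $X$ by the forgetful functor, violating injectivity on objects in \ref{cubical3}; the epimorphism case is dual, and the statement for $\tcmc{\calC}$ follows from the cited cubical structure. Your aside that one could \emph{equivalently} argue with faithfulness is not right as stated, since faithfulness alone cannot force $g=h$ (there need be no morphism between the two factorisations at all), but you do not rely on it, so nothing is lost.
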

\begin{proof}
    Suppose that there exists a morphism $X\xrightarrow{f} Y$ in $\calB$ {which} is {not} a monomorphism. There then exists a diagram in $\calB$ of the form
    \begin{equation}\label{eq:monoepi_nonmono}
        \begin{tikzcd}
Z \arrow[r, "g_1", shift left] \arrow[r, "g_2"', shift right] & X \arrow[r, "f"] & Y,
\end{tikzcd}
    \end{equation}
    where $f\circ g_1=f\circ g_2$, {but $g_1 \neq g_2$}. Let $h=f\circ g_1=f\circ g_2$, and consider the two objects $(Z \xrightarrow{g_1} X \xrightarrow{f} Y)$ and $(Z \xrightarrow{g_2} X \xrightarrow{f} Y)$ in the factorisation category $\mathrm{faq}(h)$. They are distinct, yet they are sent to the same object by the forgetful functor $\mathrm{faq}(h) \to \calB$ defined in the axiom \ref{cubical3} above.
    This forgetful functor is thus not injective on objects, so we have shown that \ref{cubical3} cannot hold when there exists a {non-}monomorphism in {$\calB$}.
    A dual argument shows that every morphism in $\calB$ is an epimorphism, whence the proof is complete.
\end{proof}

\begin{example}\label{ex:PicCat}
To give examples of picture categories of 0-Auslander extriangulated $k$-categories, we consider the examples {of fans} in \Cref{ex:fans} {and their thick partitions}.
    \begin{enumerate}
        \item Consider the preprojective $k$-algebra of type $A_2$, denoted $\Pi(A_2)$ (see \Cref{ex:fans}\eqref{ex:fans_Preproj}). The thick partition of the $\bfg$-vector fan $\partfan{\Pi(A_2)}$ (see \Cref{fig:fanOfPreproj}) identifies all maximal (two-dimensional) cones {because $\thick(V) = \calC$ for all silting objects $V$.} {It} also imposes that $\boldC(P_1)\sim \boldC(\Sigma P_1)$ and that $\boldC(P_2)\sim \boldC(\Sigma P_2)$ {because these fit into a conflation with 0 as a middle term}. In \Cref{fig:PicCatOfPreproj}, we display the picture category of $\Ktwo(\proj \Pi(A_2))$, or equivalently the $\tau$-cluster morphism category of $\Pi(A_2)$ {(see \cref{prop:samePartition})}.
        \begin{figure}[ht!]
    \begin{tikzcd}[column sep=1.5em,
    execute at end picture={
    \scoped[on background layer]
    execute at end picture={
    \scoped[on background layer]
    \fill[pattern=north west lines, pattern color=friendly_blue] (p2s2.north west) -- (p1p2.north east)  -- (p1p2.south east) -- (p2s2.south west) -- cycle;},
    execute at end picture={
    \scoped[on background layer]
    \fill[pattern=north west lines, pattern color=friendly_blue] (Sp1Sp2.north west) -- (s1Sp2.north east)  -- (s1Sp2.south east) -- (Sp1Sp2.south west) -- cycle;},
    execute at end picture={
    \scoped[on background layer]
    \fill[pattern=north east lines, pattern color=friendly_orange] (s2Sp1.north west) -- (Sp1Sp2.south west)  -- (Sp1Sp2.south east) -- (s2Sp1.north east) -- cycle;},
    execute at end picture={
    \scoped[on background layer]
    \fill[pattern=north east lines, pattern color=friendly_orange] (p1p2.north west) -- (p1s1.south west)  -- (p1s1.south east) -- (p1p2.north east) -- cycle;},
    \fill[color=friendly_green, opacity=0.5] (s1Sp2.north west) -- (s1Sp2.north east)  -- (s1Sp2.south east) -- (s1Sp2.south west) -- cycle;},
    execute at end picture={
    \scoped[on background layer]
    \fill[color=friendly_green, opacity=0.5] (Sp1Sp2.north west) -- (Sp1Sp2.north east)  -- (Sp1Sp2.south east) -- (Sp1Sp2.south west) -- cycle;},
    execute at end picture={
    \scoped[on background layer]
    \fill[color=friendly_green, opacity=0.5] (s2Sp1.north west) -- (s2Sp1.north east)  -- (s2Sp1.south east) -- (s2Sp1.south west) -- cycle;},
    execute at end picture={
    \scoped[on background layer]
    \fill[color=friendly_green, opacity=0.5] (p2s2.north west) -- (p2s2.north east)  -- (p2s2.south east) -- (p2s2.south west) -- cycle;},
    execute at end picture={
    \scoped[on background layer]
    \fill[color=friendly_green, opacity=0.5] (p1p2.north west) -- (p1p2.north east)  -- (p1p2.south east) -- (p1p2.south west) -- cycle;},
    execute at end picture={
    \scoped[on background layer]
    \fill[color=friendly_green, opacity=0.5] (p1s1.north west) -- (p1s1.north east)  -- (p1s1.south east) -- (p1s1.south west) -- cycle;},
    ]
                                                                                                                                                                        &                                                                                                                                    & |[alias=p2s2]|\eqclass{P_2\oplus S_2} &  & |[alias=p2]|\eqclass{P_2} \arrow[rrrr, "{\tiny\sqbinom{P_1\oplus P_2}{P_2}}"'] \arrow[ll, "{\tiny\sqbinom{P_2\oplus S_2}{P_2}}" ]                                                                                                                                                                                                                                                                                                                                                                                                                                                                                                                                                         &  &                                &                                                                                                                                    & |[alias=p1p2]|\eqclass{P_1\oplus P_2}                                                                                                       \\
                                                                                                                                                                        & |[alias=s2]|\eqclass{S_2} \arrow[ru, "{\tiny\sqbinom{P_2\oplus S_2}{S_2}}"'] \arrow[ld, "{\tiny\sqbinom{S_2\oplus \Sigma P_1}{S_2}}"] &                         &  &                                                                                                                                                                                                                                                                                                                                                                                                                                                                                                                                                                                                                                                                                       &  &                                &                                                                                                                                    &                                                                                                                               \\
|[alias=s2Sp1]|\eqclass{S_2\oplus \Sigma P_1}                                                                                                                                          &                                                                                                                                    &                         &  &                                                                                                                                                                                                                                                                                                                                                                                                                                                                                                                                                                                                                                                                                       &  &                                &                                                                                                                                    &                                                                                                                               \\
                                                                                                                                                                        &                                                                                                                                    &                         &  &                                                                                                                                                                                                                                                                                                                                                                                                                                                                                                                                                                                                                                                                                       &  &                                &                                                                                                                                    &                                                                                                                               \\
|[alias=Sp1]|\eqclass{\Sigma P_1} \arrow[uu, "{\tiny\sqbinom{S_2\oplus \Sigma P_1}{\Sigma P_1}}" description] \arrow[dddd, "{\tiny\sqbinom{\Sigma P_1\oplus \Sigma P_2}{\Sigma P_1}}" description] &                                                                                                                                    &                         &  & |[alias=o]|\eqclass{0} \arrow[rrrr, "{\tiny\sqbinom{P_1}{0}}" description] \arrow[rrrddd, "{\tiny\sqbinom{S_1}{0}}" description] \arrow[dddd, "{\tiny\sqbinom{\Sigma P_2}{0}}" description] \arrow[llll, "{\tiny\sqbinom{\Sigma P_1}{0}}" description] \arrow[uuuu, "{\tiny\sqbinom{P_2}{0}}" description] \arrow[llluuu, "{\tiny\sqbinom{S_2}{0}}" description] \arrow[lluuuu, "{\tiny\sqbinom{P_2\oplus S_2}{0}}" description,color=gray,opacity=0.4] \arrow[rrrruuuu, "{\tiny\sqbinom{P_1\oplus P_2}{0}}" description,color=gray,opacity=0.4] \arrow[rrrrdd, "{\tiny\sqbinom{P_1\oplus S_1}{0}}" description,color=gray,opacity=0.4] \arrow[rrdddd, "{\tiny\sqbinom{S_1\oplus \Sigma P_2}{0}}" description,color=gray,opacity=0.4] \arrow[lllldddd, "{\tiny\sqbinom{\Sigma P_1\oplus \Sigma P_2}{0}}" description,color=gray,opacity=0.4] \arrow[lllluu, "{\tiny\sqbinom{S_2\oplus \Sigma P_1}{0}}" description,color=gray,opacity=0.4] &  &                                &                                                                                                                                    & |[alias=p1]|\eqclass{P_1} \arrow[uuuu, "{\tiny\sqbinom{P_1\oplus P_2}{P_1}}" description] \arrow[dd, "{\tiny\sqbinom{P_1\oplus S_1}{P_1}}" description] \\
                                                                                                                                                                        &                                                                                                                                    &                         &  &                                                                                                                                                                                                                                                                                                                                                                                                                                                                                                                                                                                                                                                                                       &  &                                &                                                                                                                                    &                                                                                                                               \\
                                                                                                                                                                        &                                                                                                                                    &                         &  &                                                                                                                                                                                                                                                                                                                                                                                                                                                                                                                                                                                                                                                                                       &  &                                &                                                                                                                                    & |[alias=p1s1]|\eqclass{P_1\oplus S_1}                                                                                                       \\
                                                                                                                                                                        &                                                                                                                                    &                         &  &                                                                                                                                                                                                                                                                                                                                                                                                                                                                                                                                                                                                                                                                                       &  &                                & |[alias=s1]|\eqclass{S_1} \arrow[ld, "{\tiny\sqbinom{S_1\oplus \Sigma P_2}{S_1}}"' ] \arrow[ru, "{\tiny\sqbinom{P_1\oplus S_1}{S_1}}" ] &                                                                                                                               \\
|[alias=Sp1Sp2]|\eqclass{\Sigma P_1\!\!\oplus\!\! \Sigma P_2}                                                                                                                                   &                                                                                                                                    &                         &  & |[alias=Sp2]|\eqclass{\Sigma P_2} \arrow[llll, "{\tiny\sqbinom{\Sigma P_1\oplus \Sigma P_2}{\Sigma P_2}}"'] \arrow[rr, "{\tiny\sqbinom{S_1\oplus\Sigma P_2}{\Sigma P_2}}"]                                                                                                                                                                                                                                                                                                                                                                                                                                                                                                                &  & |[alias=s1Sp2]|\eqclass{S_1\oplus \Sigma P_2} &                                                                                                                                    &                                                                                                                              
\end{tikzcd}
    \caption{The picture category of the preprojective $k$-algebra of Dynkin type $A_2$. The faint morphisms are of (maximal) rank 2, and all other morphisms shown are of rank 1. The six \textcolor{friendly_green}{green} objects are those given by cones of silting objects, so these are to be identified.
    The \textcolor{friendly_blue}{blue} parts are to be identified, and so are the \textcolor{friendly_orange}{orange} parts.}
    \label{fig:PicCatOfPreproj}
    \end{figure}
        \item\label{ex:PicCat_A2} In \Cref{ex:fans}\eqref{ex:PicCat_A2}, we considered the path $k$-algebra of a Dynkin quiver of type $A_2$, denoted $\Lambda_{2}$, and regarded the module {category} $\mods(\Lambda_2)$ as a 0-Auslander extriangulated $k$-category. The $\bfg$-vector fan 
 $\partfan{\mods(\Lambda_2)}$ of $\mods(\Lambda_2)$ was then displayed in \Cref{fig:fanOfA2}. The thick partition of $\partfan{\mods(\Lambda_2)}$ only identifies the two maximal (two-dimensional) cones.
        The picture category of $\mods(\Lambda_2)$ is displayed in \Cref{fig:PicCatA2}.
        \begin{figure}[ht!] 
    \begin{tikzcd}[row sep=6em,column sep=6em,
    execute at end picture={
    \scoped[on background layer]
    \fill[color=friendly_green,opacity=0.5] (p2s2.south east) -- (p2s2.south west)  -- (p2s2.north west) -- (p2s2.north east) -- cycle;},
    execute at end picture={
    \scoped[on background layer]
    \fill[color=friendly_green,opacity=0.5] (p1p2.south east) -- (p1p2.south west)  -- (p1p2.north west) -- (p1p2.north east) -- cycle;},
    ]
|[alias=p2s2]|{\eqclass{P_2\oplus S_2}}                                             & |[alias=p2]|\eqclass{P_2} \arrow[rr, "{\tiny \sqbinom{P_1\oplus P_2}{P_2}}" description] \arrow[l, "{\tiny \sqbinom{P_2\oplus S_2}{P_2}}" description]                                                                                                                                                &  & |[alias=p1p2]|{\eqclass{P_1\oplus P_2}}  \\
|[alias=s2]|\eqclass{S_2} \arrow[u, "{\tiny \sqbinom{P_2\oplus S_2}{S_2}}" description] &                                                                                                                                                                                                                                                                           &  &                          \\
                                                                    & |[alias=o]|\eqclass{0} \arrow[uu, "{\tiny \sqbinom{P_2}{0}}" description] \arrow[rr, "{\tiny \sqbinom{P_1}{0}}" description] \arrow[lu, "{\tiny \sqbinom{S_2}{0}}" description] \arrow[rruu, "{\tiny {\sqbinom{P_1\oplus P_2}{0}}}" description, color=gray,opacity=0.4] \arrow[luu, "{\tiny {\sqbinom{P_2\oplus S_2}{0}}}" description, color=gray,opacity=0.4] &  & |[alias=p1]|\eqclass{P_1} \arrow[uu,"{\tiny \sqbinom{P_1\oplus P_2}{P_1}}" description]
\end{tikzcd}
    \caption{The picture category of $\mods(\Lambda_2)$, where $\Lambda_{2}$ is the path $k$-algebra of a Dynkin quiver of type $A_2$. The faint morphisms are of (maximal) rank 2, and all other morphisms shown are of rank 1.
    The \textcolor{friendly_green}{green} objects (in the upper corners) are to be identified.}
    \label{fig:PicCatA2}
    \end{figure}
    \item\label{ex:PicCatSubA3} Consider the 0-Auslander extriangulated $k$-category $\calC$ defined in \Cref{ex:fans}\eqref{ex:fans_subA3}. Similarly to the example in \eqref{ex:PicCat_A2} above, the thick partition of the $\bfg$-vector fan $\partfan{\calC}$ only identifies the two maximal (three-dimensional) cones.
        The picture category of $\calC$ is displayed in \Cref{fig:PicCatSubA3}.
    \begin{figure}[ht!]
        \begin{tikzcd}[scale=0.2,row sep=6em, column sep=0.5em,/tikz/column 4/.style={column sep=2em},/tikz/column 5/.style={column sep=0.0em},font=\Large,
	execute at end picture={
    \scoped[on background layer]
    \fill[color=friendly_green, opacity=0.5] (in1n2.south east) -- (in1n2.south west)  -- (in1n2.north west) -- (in1n2.north east) -- cycle;},
    execute at end picture={
    \scoped[on background layer]
    \fill[color=friendly_green, opacity=0.5] (pn1n2.south east) -- (pn1n2.south west)  -- (pn1n2.north west) -- (pn1n2.north east) -- cycle;}]
                                 & |[alias=in1n2]|\eqclass{I\oplus N_1\oplus N_2} \arrow[<-,ld, "{\tiny \sqbinom{I\oplus N_1\oplus N_2}{I\oplus N_2}}"' description] \arrow[<-,dd, "{\tiny \sqbinom{I\oplus N_1\oplus N_2}{I\oplus N_1}}" description, near end] &                                                                                                                                                    & \eqclass{N_1\oplus N_2} \arrow[ll, "{\tiny \sqbinom{I\oplus N_1\oplus N_2}{N_1\oplus N_2}}"'] \arrow[rr, "{\tiny \sqbinom{P\oplus N_1\oplus N_2}{N_1\oplus N_2}}"] \arrow[<-,ld, "{\tiny \sqbinom{N_1\oplus N_2}{N_2}}"' description,end anchor={90}] \arrow[<-,dd, "{\tiny \sqbinom{N_1\oplus N_2}{N_1}}" description, near end, xshift=0.5em]       &                                  &  |[alias=pn1n2]| \eqclass{P\oplus N_1\oplus N_2} \arrow[<-,ld, "{\tiny \sqbinom{P\oplus N_1\oplus N_2}{P\oplus N_2}}"' description,start anchor={200}] \arrow[<-,dd, "{\tiny \sqbinom{P\oplus N_1\oplus N_2}{P\oplus N_1}}" description, near end, xshift=-1.25em] \\
|[alias=in2]| \eqclass{I\oplus N_2} \arrow[<-,dd,"{\tiny \sqbinom{I\oplus N_2}{I}}" description,crossing over, near start] &                                                                                            & \eqclass{N_2}  \arrow[rr, "{\tiny \sqbinom{P\oplus N_2}{N_2}}" description,crossing over, near end] \arrow[ll, "{\tiny \sqbinom{I\oplus N_2}{N_2}}"' description,crossing over, near start]        &                                                                                                                                             & |[alias=pn2]|\eqclass{P\oplus N_2} &                                                                                            \\
                                 & |[alias=in1]|\eqclass{I\oplus N_1} \arrow[<-,ld, "{\tiny \sqbinom{I\oplus N_1}{I}}" description]           &                                                                                                                                                    & \eqclass{N_1} \arrow[ll, "{\tiny \sqbinom{I\oplus N_1}{N_1}}"' description, near start] \arrow[rr, "{\tiny \sqbinom{P\oplus N_1}{N_1}}" description, near start] \arrow[<-,ld, "{\tiny \sqbinom{N_1}{0}}" description,start anchor={300}] &                                  &  |[alias=pn1]|\eqclass{P\oplus N_1} \arrow[<-,ld, "{\tiny \sqbinom{P\oplus N_1}{P}}" description, start anchor={200}]           \\
|[alias=i]| \eqclass{I}  &                                                                                            & \eqclass{0} \arrow[uu, "{\tiny \sqbinom{N_2}{0}}" description,crossing over, near end]\arrow[rr, "{\tiny \sqbinom{P}{0}}"' description] \arrow[ll, "{\tiny \sqbinom{I}{0}}" description] &                                                                                                                                             & |[alias=p]|{\displaystyle \eqclass{P}} \arrow[uu,crossing over, near end,"{\tiny \sqbinom{P\oplus N_2}{P}}" description] &                                                                          
\end{tikzcd}
    \caption{The picture category of $\calC$, as defined in \Cref{ex:fans}\eqref{ex:fans_subA3} (see also \Cref{ex:PicCat}\eqref{ex:PicCatSubA3}). Only the morphisms of rank 1 are displayed. All subdiagrams commute. The \textcolor{friendly_green}{green} objects (in the upper corners at the back) are to be identified.}
    \label{fig:PicCatSubA3}
    \end{figure}
    \end{enumerate}
\end{example}

In order to illustrate how our notion of picture category {relates to} constructions in the literature, we will need to define semistability for 0-Auslander extriangulated $k$-categories, taking inspiration from M. Garcia \cite{Gar24}. 
 \Cref{prop:Monica} below is a generalisation of Garcia's work  \cite[{Prop.} 3.14]{Gar23} \cite[Prop. 4.21]{Gar24}, as well as Yurikusa's \cite{Yur}. 

\begin{definition} 
Let $\calC$ and $T$ be as in \Cref{setting}, and let  $L \in\mods(\Lambda)$. Consider the functor $\overline{(-)} \colon \calC \to \mods(\Lambda)$ of \Cref{lem:GNP23.3.11}. 
If an object $X\in\calC$ admits an $\E$-triangle
\begin{equation}\label{eq:confX}
        \begin{tikzcd}
            T_1 \arrow[r, tail, "{x}"] & T_0 \arrow[r,two heads] & X \arrow[r, dashed] & {},
        \end{tikzcd}
\end{equation}
with $T_0$ and $T_1$ projective, we say that $X$ is \textit{$L$-semistable} if the induced $k$-homomorphism
\begin{equation*}
    \Hom_{\Lambda}\big(\overline{T_0},L\big) \xrightarrow{{\Hom_\Lambda(\overline{x},L)}} \Hom_{\Lambda}\big(\overline{T_1},L \big)
\end{equation*}
is an isomorphism of $k$-vector spaces (cf. \cite[Def. 3.12]{Gar23}). Using a long-exact sequence argument (see \Cref{lem:LES}) or otherwise, one shows that the semistability property is independent of the choice of $\E$-triangle for $X$. 
For a subcategory $\mathcal{X}$ of $\calC$, let $\calW_{\calC}(\mathcal{X})$ denote the full subcategory of $\mods(\Lambda)$ {consisting of} $\Lambda$-modules $L$ such that all objects in $\mathcal{X}$ are $L$-semistable. Given a full subcategory $\mathcal{Y}$ of $\mods(\Lambda)$, let $\calT_{\calC}(\mathcal{Y})$ denote the full subcategory of $\calC$ spanned by the objects that are $L$-semistable for all $L\in\mathcal{Y}$.
\end{definition}

Let $\Lambda$ be a finite-dimensional $k$-algebra. A full subcategory of $\mods(\Lambda)$ is \textit{wide} if it is closed under extensions, kernels, and cokernels.
Given a $\tau$-rigid pair $(M,P)$ in $\mods(\Lambda)$, the \textit{$\tau$-perpendicular category} of $(M,P)$ is defined by $J(M,P)\coloneqq M^{\perp} \cap {^{\perp}(\tau M})\cap P^{\perp}$, where $\tau$ denotes the Auslander--Reiten translation, and ${^{\perp}(-)}$ and ${(-)^{\perp}}$ are Hom-perpendicular subcategories in $\mods(\Lambda)$. This is a \textit{wide subcategory} of $\mods(\Lambda)$ \cite[{Thm.} 4.12(a)]{DIRRT17}. {The} wide subcategories arising in this way are called \textit{$\tau$-perpendicular wide subcategories} of $\mods(\Lambda)$ \cite[{Def.} 3.2]{BH21}.

\begin{theorem}\label{prop:Monica}
    Let $\calC$, $T$, $N$ and $\Lambda$ be as in \Cref{setting}, and let $U$ be a presilting object in $\calC$.
    \begin{enumerate}
        \item\label{prop:Monica_W} If $\mathcal{X}$ is a full subcategory of $\calC$, then $\calW_{\calC}(\mathcal{X})$ is a wide subcategory of $\mods(\Lambda)$.
        \item\label{prop:Monica_T} If $\mathcal{Y}$ is a full subcategory of $\mods(\Lambda)$, then $\calT_{\calC}(\mathcal{Y})$ is a thick subcategory of $\calC$.
        \item\label{prop:Monica_Galois} Let $\fullsubc{\calC}$ denote the poset of full subcategories of $\calC$ {under inclusion} and {let} $\fullsubc \mods(\Lambda)$ denote the poset of full subcategories of $\mods(\Lambda)$ {under inclusion}. The maps 
    \begin{align*}
        \calW_{\calC}: \fullsubc {\calC} &\to \fullsubc \mods(\Lambda), \\
        \calT_{\calC}: \fullsubc \mods(\Lambda) &\to \fullsubc {\calC}
    \end{align*}
    form an antitone Galois connection, which is to say that these maps are inclusion-reversing and that the following holds for all $\mathcal{D}\in \fullsubc \calC$ and $\mathcal{V}\in \fullsubc \mods(\Lambda)$:
    \begin{equation}\label{eq:Monica_Galois}
       \mathcal{V}\subseteq \calW_{\calC}(\mathcal{D}) \Longleftrightarrow \mathcal{D}\subseteq \calT_{\calC}(\mathcal{V}).
    \end{equation}
    \item\label{prop:Monica_Wbis} If $\mathcal{X}$ is {a full subcategory of $\calC$}, then $\calW_{\calC}(\mathcal{X})=\calW_{\calC}(\thick(\mathcal{X}))$.
    \item\label{prop:Monica_W=J} We have that $\calW_{\calC}(\{U\})=J(\overline{U},\overline{\Omega I_U})$ as (wide) subcategories of $\mods(\Lambda)$, where $I_U$ is the maximal injective non-projective direct summand of $U$.  
    \item\label{prop:Monica_Z}
    We have that $ \calT_{\calC}\calW_{\calC}(\{U\}) = \thick(U\oplus N)$ as (thick) subcategories of $\calC$.
        \item\label{prop:Monica_maps} 
    The maps $\calW_{\calC}$ and $\calT_{\calC}$ restrict to mutually inverse inclusion-reversing bijections 
    \[\begin{tikzcd} \left\{ \begin{varwidth}{20em} \begin{center} thick subcategories of $\mathcal{C}$ generated by  presilting objects of the form $X \oplus N$ \end{center} \end{varwidth} \right\} \arrow[r, bend left, "\calW_{\calC}"] & \left\{ \begin{varwidth}{12em} \begin{center} $\tau$-perpendicular subcategories of $\mods(\Lambda)$ \end{center} \end{varwidth} \right\}. \arrow[l, bend left, "\calT_{\calC}"]  \end{tikzcd} \] 
    In particular, if $\calC$ is reduced, the maps $\calW_{\calC}$ and $\calT_{\calC}$ restrict to mutually inverse inclusion-reversing bijections between $\tau$-perpendicular wide subcategories of $\mods(\Lambda)$ and thick subcategories of $\calC$ of the form $\thick(X)$ for some presilting object $X\in \calC$.
    \end{enumerate}
\end{theorem}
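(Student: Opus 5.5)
The plan is to work mostly through the functor $\overline{(-)}\colon\calC\to\mods(\Lambda)$ of \Cref{lem:GNP23.3.11} and the long exact sequences of \Cref{lem:LES} and \Cref{lem:LESmodinj}.

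For \eqref{prop:Monica_W}, fix $X$ with a presentation \eqref{eq:confX}. Then $X$ is $L$-semistable exactly when $\Hom_\Lambda(\overline{x},L)$ is bijective. Write $M_X=\coker\overline{x}$, so that the condition reads $\Hom(M_X,L)=0$ together with surjectivity of $\Hom(\overline{T_0},L)\to\Hom(\overline{T_1},L)$. Since $\overline{T_i}$ is projective, the functors $\Hom(\overline{T_i},-)$ are exact. The map $\Hom(\overline{x},-)$ is then a natural transformation between exact functors, and the five lemma shows that the class of $L$ on which it is an isomorphism is closed under kernels, cokernels and extensions. Intersecting over $X\in\mathcal{X}$ gives a wide subcategory.

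For \eqref{prop:Monica_T}, fix $L$. I would reformulate semistability of $X$ as the vanishing of two functors of $X$: $\Hom_\Lambda(\overline{X},L)$ and the cokernel term. The cleanest device seems to be the additive function $\theta_L(X)=\dim\Hom(\overline{T_0},L)-\dim\Hom(\overline{T_1},L)$, which factors through $\K_0(\calC)$ by \Cref{lem:horseshoe}. Semistability is then equivalent to $\Hom(\overline{X},L)=0$ together with $\theta_L(X)=0$. The $\Hom$-vanishing part must be handled via half-exactness along $\E$-triangles, using \Cref{lem:LESmodinj} and the equivalence $\calC/[\mathcal I]\simeq\mods\Lambda$. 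The 2-out-of-3 property then follows from additivity of $\theta_L$ combined with a diagram chase in the spirit of the horseshoe lemma, where the presentation maps form a short exact sequence of two-term complexes and the long exact sequence of the map of complexes (a snake-lemma argument) applies. Closure under summands is immediate.

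Part \eqref{prop:Monica_Galois} is formal: both sides of \eqref{eq:Monica_Galois} say that every $D\in\mathcal D$ is $V$-semistable for every $V\in\mathcal V$. Part \eqref{prop:Monica_Wbis} follows from \eqref{prop:Monica_Galois} and \eqref{prop:Monica_T}. Indeed, $\mathcal X\subseteq\calT\calW(\mathcal X)$, and the latter is thick, so it contains $\thick(\mathcal X)$; applying $\calW$ reverses this inclusion, while the reverse inclusion is antitonicity.

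For \eqref{prop:Monica_W=J}, split $U=U'\oplus I_U\oplus N'$, where $N'$ is projective-injective. A projective-injective summand is presented by $0\to N'$ with $\overline{N'}=0$, so it imposes no condition. For $I_U$ I would use the $\E$-triangle $\Omega I_U\to N''\to I_U$, which gives the condition $\Hom(\overline{\Omega I_U},L)=0$. For the remaining part, choosing a minimal presentation makes $\mathbf p\overline{U'}$ the projective presentation, and the classical computation (as in Garcia, or Jasso together with the Auslander--Reiten formula) identifies bijectivity of $\Hom(\overline{x},L)$ with $L\in\overline{U'}^\perp\cap{}^\perp\tau\overline{U'}$. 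Here I need care, since $\overline{x}$ need not be a minimal presentation. Non-minimality only adds split summands $P\xrightarrow{1}P$ and summands landing in $[N]$, both of which are harmless.

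I expect \eqref{prop:Monica_Z} to be the main obstacle. The inclusion $\thick(U\oplus N)\subseteq\calT\calW(U)$ holds by \eqref{prop:Monica_T}, and by \Cref{lem:Ver2.3.1} it suffices to work modulo $\thick(U\oplus N)$. For the reverse inclusion, I would pass to $\calC/\thick(U)\simeq\calZ_U/[U]$ of \Cref{prop:siltred}, which satisfies \Cref{setting}. There \Cref{lem:endoinred} identifies $J(\overline U,\overline{\Omega I_U})$ with $\mods$ of the reduced endomorphism algebra of the projective silting object, and semistability with respect to every module there forces $\theta_L(X)=0$ for all $L$. This in turn forces the index of $X$ to vanish in $\K_0(\calC/\thick(U\oplus N))$. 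Now $X$ is isomorphic modulo $\thick(U\oplus N)$ to a two-term object whose $\bfg$-vector vanishes, and $\Hom(\overline X,L)=0$ for all $L$ should make it an object with zero module image. Such an object lies in $\add$ of injectives, hence in $\thick(N)$ by the equivalence of \Cref{lem:GNP23.3.12} reduced modulo $N$. Making this last step rigorous for arbitrary, not necessarily presilting, $X$ is where I expect difficulty, and I would first check whether a filtration of $X$ by $\E$-triangles with presilting ends suffices.

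Finally, for \eqref{prop:Monica_maps}, \eqref{prop:Monica_W=J} shows that $\calW$ sends $\thick(X\oplus N)$ onto $J(\overline X,\overline{\Omega I_X})$, using \eqref{prop:Monica_Wbis}, and every $\tau$-perpendicular subcategory arises this way by \Cref{prop:F}. Then \eqref{prop:Monica_Z} gives $\calT\calW=\mathrm{id}$ on the left-hand side. For $\calW\calT=\mathrm{id}$ on the right-hand side, apply $\calW$ to $\calT\calW(U)=\thick(U\oplus N)$ and use $\calW\calT\calW=\calW$ from the Galois connection.
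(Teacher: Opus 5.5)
Parts (1)–(5) and (7) are essentially sound.

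\begin{itemize}
\item In (2), your reformulation via $\theta_L$ and the snake lemma on horseshoe presentations is equivalent to the paper's four-lemma argument.
\item In (5), you work directly with a minimal presentation of $\overline{U'}$ and Adachi--Iyama--Reiten's Prop.~2.4. The paper instead passes through $F(U)\in\presilt(\Lambda)$ and the known description of semistable modules in $\Ktwo(\proj\Lambda)$. Both routes work. For your non-minimality worry, just use independence of the presentation and pick one whose image is minimal, as the paper does via Pan--Zhu.
\item A small point in (7): \Cref{prop:F} gives surjectivity onto $\tau$-rigid pairs only when $\calC$ is reduced. In general you must lift presilting objects of $\overline{\calC}$ to $\calC$, for example via \Cref{prop:FGPPP23} and passing to summands.
\end{itemize}

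\textbf{The genuine gap is in (6).} For $X\in\calT_{\calC}\calW_{\calC}(\{U\})$, your hypothesis is that $\Hom_\Lambda(\overline{x},M)$ is bijective for all $M\in J(\overline{U},\overline{\Omega I_U})$. This is a condition on $\overline{x}\colon\overline{T_1}\to\overline{T_0}$ in $\mods(\Lambda)$. You then reason about semistability of the image of $X$ in $\calC/\thick(U)$ with respect to all modules over $C_U=\End_\Lambda(\overline{U^+})/[\overline{U}]$, but nothing you invoke links the two conditions.

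\begin{itemize}
\item \Cref{lem:endoinred} and Jasso's equivalence $H=\Hom_\Lambda(\overline{U^+},-)\colon J(\overline{U},\overline{\Omega I_U})\to\mods(C_U)$ only identify module categories.
\item They say nothing about how $\Hom_\Lambda(\overline{x},M)$ compares with $\Hom_{C_U}\bigl((\calC/\thick(U))(U^+,L(x)),HM\bigr)$, where $L$ is the localisation functor.
\item What is needed is that $X$ is $M$-semistable in $\calC$ if and only if $L(X)$ is $HM$-semistable in $\calC/\thick(U)$, for every $M\in J(\overline{U},\overline{\Omega I_U})$. This is the bulk of the paper's proof.
\item Your index argument does not bypass it. Showing that the index of $L(X)$ vanishes needs the unproved fact that the annihilator of the classes of modules in $J(\overline U,\overline{\Omega I_U})$ is spanned by the $\bfg$-vectors of $U$ and $N$.
\item More seriously, the claim that $\Hom(\overline{X},M)=0$ ``should make it an object with zero module image'' is the missing transfer itself, since the $C_U$-module attached to $L(X)$ is not $\overline{X}$.
\end{itemize}

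\textbf{The missing idea} is to replace the presentation of $X$ by one inside $\add(U^+)$:
\begin{enumerate}
\item By \Cref{prop:Bongartz}, take $\E$-triangles $T_i\xrightarrow{\alpha_i}Y_i\to U_i'$ with $\alpha_i$ a left $\add(U^+)$-approximation and $U_i'\in\add(U)$.
\item Show $\E(U_i',Y_M)=0$ for $M\in J(\overline{U},\overline{\Omega I_U})$. Use $M\in\overline{U}^\perp$; Adachi--Iyama--Reiten's Prop.~2.4 for $M\in{}^\perp\tau\overline{U}$; and the $\Omega$-equivalence for $M\in\overline{\Omega I_U}^\perp$. Then $\Hom_\Lambda(\overline{\alpha_i},M)$ is bijective by \Cref{lem:LESmodinj}.
\item Lift $x$ to $x'\colon Y_1\to Y_0$. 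Bijectivity for $\overline{x}$ is then equivalent to bijectivity for $\overline{x'}$.
\item Via the torsion-free functor onto $\overline{U}^\perp$ and Jasso's compatibility of $H$ with reduction, this is equivalent to $HM$-semistability of $L(x')$.
\item Finally $L(x')\cong L(x)$, because the cones of the $\alpha_i$ lie in $\thick(U)$.
\end{enumerate}

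Conversely, the step you expected to be hardest is easy once the transfer is available. It needs no $\bfg$-vectors, filtrations or presilting hypothesis. Work in $\calD=\calC/\thick(U)$ and let $\Lambda_{\calD}$ be its reduced endomorphism algebra. Suppose an object $Z$ of $\calD$ with presentation $z$ is $M'$-semistable for every $\Lambda_{\calD}$-module $M'$.
\begin{itemize}
\item Taking $M'=D\Lambda_{\calD}$ forces $\overline{z}$ to be an isomorphism. Hence $\overline{Z}=\coker\overline{z}=0$, and $Z$ is injective by \Cref{lem:GNP23.3.11}.
\item For the injective non-projective summand $I$ of $Z$, semistability reads $\Hom(\overline{\Omega I},M')=0$ for all $M'$. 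So $\Omega I$ is projective-injective, and $I=0$ by \Cref{lem:GNP23.3.12}.
\item Thus $Z$ is projective-injective, which is all that is needed to conclude with \Cref{lem:Ver2.3.1}, as the paper does.
\end{itemize}
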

\begin{proof}
    In order to prove \eqref{prop:Monica_W}, it suffices to consider the case where $\mathcal{X}$ contains a single object $X$. Indeed, in the general case we have
    \begin{equation}\label{eq:unionObj}
        \calW_{\calC}(\mathcal{X}) = \bigcap_{X\in\mathcal{X}} \calW_{\calC}(\{X\}),     
    \end{equation}
    so we may use the fact that the set of wide subcategories of $\mods(\Lambda)$ is closed under arbitrary intersection. {(cf. \cite[Proof of Prop. 3.14]{Gar23}).} 

    Let $X$ be an object in $\calC$. We show that $\calW_{\calC}(\{X\})$ is a wide subcategory of $\mods(\Lambda)$, i.e. that it is closed under extensions, kernels, and cokernels in $\mods(\Lambda)$. In order to establish closure under extensions, we fix a short-exact sequence in $\mods(\Lambda)$
    \begin{equation}\label{eq:Monica_W_exactseq}
        \begin{tikzcd}
            L \arrow[r,tail,"i"] & E \arrow[r,two heads,"p"] & M.
        \end{tikzcd}
    \end{equation}
    Fix also {an $\E$-triangle} for $X$ as in \eqref{eq:confX}. Since $\overline{T_0}$ and $\overline{T_1}$ are projective $\Lambda$-modules, we may induce a morphism of short-exact sequences as follows:
    \begin{equation}\label{eq:Monica_W_morofexactseq}
        \begin{tikzcd}
0 \arrow[r] \arrow[d, equal] & {\Hom_{\Lambda}\big(\overline{T_0},L\big)} \arrow[r, "i\circ -", tail] \arrow[d] & {\Hom_{\Lambda}\big(\overline{T_0},E\big)} \arrow[r, "p\circ -", two heads] \arrow[d] & {\Hom_{\Lambda}\big(\overline{T_0},M\big)} \arrow[d] \arrow[r] & 0 \arrow[d, equal] \\
0 \arrow[r]                    & {\Hom_{\Lambda}\big(\overline{T_1},L\big)} \arrow[r, "i\circ -", tail]           & {\Hom_{\Lambda}\big(\overline{T_1},E\big)} \arrow[r, "p\circ -", two heads]           & {\Hom_{\Lambda}\big(\overline{T_1},M\big)} \arrow[r]           & 0                   
\end{tikzcd}
    \end{equation}
    We may now use the Four Lemma \cite[{Lem.} I.3.2]{Mac67} to conclude that if two of the vertical morphisms in \eqref{eq:Monica_W_morofexactseq} are isomorphisms, so is the third. In particular, $\calW_{\calC}(\{X\})$ is closed under extensions.

    Take a morphism $f: L \to M$ in $\calW_{\calC}(\{X\})$, then the left-exact sequence 
    \[ 0 \to \ker(f) \to L \to M \]
    gives rise to a commutative diagram:
    \begin{equation*}
        \begin{tikzcd}
0 \arrow[r] \arrow[d, equal] & {\Hom_{\Lambda}\big(\overline{T_0},\ker(f) \big)} \arrow[r, "i\circ -", tail] \arrow[d] & {\Hom_{\Lambda}\big(\overline{T_0},L\big)} \arrow[r, "p\circ -"] \arrow[d] & {\Hom_{\Lambda}\big(\overline{T_0},M\big)} \arrow[d] \\
0 \arrow[r] & {\Hom_{\Lambda}\big(\overline{T_1},\ker(f)  \big)} \arrow[r, "i\circ -", tail]           & {\Hom_{\Lambda}\big(\overline{T_1},L\big)} \arrow[r, "p\circ -"]           & {\Hom_{\Lambda}\big(\overline{T_1},M\big)}               
\end{tikzcd}
    \end{equation*}
    We apply the Four Lemma to conclude that if the right-most two vertical arrows are isomorphisms, then so is the other nonzero vertical arrow. This establishes closure under kernels. Closure under cokernels is a consequence of the fact that we have two short exact sequences
    \[ 0 \to \ker(f) \to M \to \image(f) \to 0, \quad\text{and} \quad 0 \to \image(f) \to M \to \coker(f). \]
    Indeed, we have argued that if $M$ and $N$ are in $\calW_{\calC}(\{X\})$, then so is $\ker(f)$ and that if two terms in a short exact sequence are in $\calW_{\calC}(\{X\})$, then so is the third. The result follows.

    Similarly, in order to prove \eqref{prop:Monica_T}, it suffices to consider the case where $\mathcal{Y}$ contains a single object $Y$. Indeed, in the general case we have
    \begin{equation*}
        \calT_{\calC}(\mathcal{Y}) = \bigcap_{Y\in\mathcal{Y}} \calW_{\calC}(\{Y\}),     
    \end{equation*}
    so we may use the fact that the set of thick subcategories of $\calC$ is closed under arbitrary intersection. Let 
    \begin{equation*}
        \begin{tikzcd}
            L \arrow[r,tail,"i"] & E \arrow[r,two heads,"p"] & M \arrow[r, dashed] & \phantom{x}
        \end{tikzcd}
    \end{equation*}
    be an $\E$-triangle. Using the Horseshoe Lemma (\cref{lem:horseshoe}), we may induce a commutative diagram
    \begin{equation}\label{eq:thickconfla}
        \begin{tikzcd}
    0 \arrow[r] \arrow[d, equal] & {\Hom_{\Lambda}\big(\overline{T_0^M}, Y \big)} \arrow[r, "i\circ -", tail] \arrow[d] & {\Hom_{\Lambda}\big(\overline{T_0^E}, Y\big)} \arrow[r, "p\circ -", two heads] \arrow[d] & {\Hom_{\Lambda}\big(\overline{T_0^L},Y\big)} \arrow[d] \arrow[r] & 0 \arrow[d, equal]  \\
    0 \arrow[r] & {\Hom_{\Lambda}\big(\overline{T_1^M},Y  \big)} \arrow[r, "i\circ -", tail]           & {\Hom_{\Lambda}\big(\overline{T_1^E},Y\big)} \arrow[r, "p\circ -", two heads]           & {\Hom_{\Lambda}\big(\overline{T_1^L},Y \big)}    \arrow[r] & 0           
\end{tikzcd}
    \end{equation}
    where $T_i^L, T_i^E$ and $T_i^M$ are projective objects in $\calC$ which fit into a conflation with $L, E$ and $M$ as in \eqref{eq:confX} respectively. As in \eqref{prop:Monica_W}, the Four Lemma states that if two of the nonzero vertical arrows are isomorphisms, then so is the third. This means that $\calW_{\calC}(\{Y\})$ is closed under cones, cocones and extensions.
    
    We are left with showing that $\calW_{\calC}(\{Y\})$ is closed under direct summands. For this, let $M = M' \oplus M'' \in \calW_{\calC}(\{Y\})$ and notice that $\E(M,Y) = 0$ implies $\E(M',Y) = 0=\E(M'', Y)$. Consider a conflation
    \begin{equation*}
        \begin{tikzcd}
            M' \arrow[r,tail,"i"] & M \arrow[r,two heads,"p"] & M'' \arrow[r, dashed] & \phantom{x}
        \end{tikzcd}
    \end{equation*}
    and consider the induced diagram as in \eqref{eq:thickconfla}. If the vertical map in the middle is an isomorphism, its cokernel is zero, hence $\E(M,Y) = 0$. It follows from the above discusison that the other two vertical maps are epimorphisms. It follows from the Four Lemma that if the middle map is an isomorphism (in particular injective), then the right map is additionally injective, hence an isomorphism. Since this argument is symmetric, we have shown that $\calW_{\calC}(\{Y\})$ is a thick subcategory.
    
    We move on to prove \eqref{prop:Monica_Galois}. Since it is obvious that $\calW_{\calC}$ and $\calT_{\calC}$ reverse inclusions, only \eqref{eq:Monica_Galois} is left to prove. 
    {The two statements may be formalised as 
    \[ \forall L \in \mathcal{V}, \text{ $X$ is $L$-semistable } \forall X \in \mathcal{D} \quad \text{ and } \quad  \forall X \in \mathcal{D}, \text{ $X$ is $L$-semistable } \forall L \in \mathcal{V},\]}
    respectively. These assertions are logically equivalent, whence \eqref{eq:Monica_Galois} holds.

    Next, we prove \eqref{prop:Monica_Wbis}, following Garcia \cite[{Prop.} 3.14(4)]{Gar23}. Since $\calW_{\calC}$ reverses inclusions, the inclusion $\calW_{\calC}(\mathcal{X})\supseteq \calW_{\calC}(\thick(\mathcal{X}))$ is immediate. One has  $\mathcal{X}\subseteq\calT_{\calC}{(}\calW_{\calC}(\mathcal{X}) {)}$, as a result of \eqref{eq:Monica_Galois} (setting {$\calD = \mathcal{X}$ and} $\mathcal{V}=\calW_{\calC}(\mathcal{X})$). Further, since $\calT_{\calC}{(}\calW_{\calC}(\mathcal{X}){)}$ is a thick subcategory of $\calC$ {by \eqref{prop:Monica_T}}, it contains $\thick(\mathcal{X})$. This gives the second inclusion of
    \begin{equation*}
        \calW_{\calC}(\mathcal{X}) \subseteq \calW_{\calC}{(}\calT_{\calC}{(}\calW_{\calC}(\mathcal{X}){))} \subseteq \calW_{\calC}(\thick(\mathcal{X})),
    \end{equation*}
    where the first inclusion arises from \eqref{eq:Monica_Galois} {by setting $\mathcal{V} = \calW_{\calC}(\mathcal{X})$ and $\calD = \calT_{\calC} \calW_{\calC}(\mathcal{X})$}.
    We have proved that \eqref{prop:Monica_Wbis} holds.

   We prove \eqref{prop:Monica_W=J}. In this part of the proof we will fix an $\E$-triangle
   \begin{equation}\label{eq:confU}
        \begin{tikzcd}
            T_1 \arrow[r, tail, "{u}"] & T_0 \arrow[r,two heads] & U \arrow[r, dashed] & {},
        \end{tikzcd}
\end{equation}
in $\calC$ with the property that the right-exact sequence induced by $\overline{(-)}$ in $\mods (\Lambda)$ is of the form
\[ (\overline{T}_1)' \oplus \overline{\Omega I_U} \xrightarrow{(p \phantom{x} 0)} \overline{T}_0 \to \overline{U} \to 0\]
    where $p$ determines a minimal projective presentation of $\overline{U}$ in $\mods(\Lambda)$ and $I_U$ is the maximal injective non-projective direct summand of $U$. This exists for the following reason: Let $U \simeq U' \oplus I_U'$, where $I_U'$ is the maximal injective direct summand of $U$. Then we may take any $\E$-triangle 
   \begin{equation*}
        \begin{tikzcd}
            T_1'\arrow[r, tail] & T_0' \arrow[r,two heads] & U' \arrow[r, dashed] & {}
        \end{tikzcd}
\end{equation*}
in $\calC$. This $\E$-triangle is sent to a minimal projective presentation of $\overline{U}$ by $\overline{(-)}$ (see the proof of \cite[Prop. 4.5]{PZ24}). Moreover, we write $I_U' = N_U  \oplus I_U$ with $N_U$ projective-injective and take an $\E$-triangle 
\begin{equation*}
        \begin{tikzcd}
            \Omega I_U \arrow[r, tail] & N' \oplus N_U \arrow[r,two heads] & I_U' \arrow[r, dashed] & {},
        \end{tikzcd}
\end{equation*}
for $I_U$ as in \cref{lem:GNP23.3.12}, where $N'$ is projective-injective. The direct sum of these $\E$-triangles is the desired one. 

Let $\widetilde{(-)}$ denote the functor $\Hom_{\Ktwo(\proj(\Lambda))}(\Lambda,-) \colon \Ktwo(\proj (\Lambda)) \to \mods(\Lambda)$. By \cite[Thm. 3.2]{AIR2014} (see also \Cref{prop:F}\eqref{prop:PZ24.4.5}), it induces a bijection $\widetilde{G}: \presilt (\Lambda) \to \taurigidpair(\Lambda)$. 
Recall the map $F: \presilt(\calC) \to \presilt (\Lambda)$ from \Cref{prop:F}\eqref{thm:modulecatreduction_presilt}. As a consequence of \cite[Thm. 3.2]{AIR2014}, we obtain $\widetilde{G}(F(U)) = (\overline{U}, \overline{\Omega I_U})$.

   In the special case where $\calC=\Ktwo(\proj\Lambda)$ and $\Lambda=\End_{\calC}(T)$, we denote the maps in the Galois connection {of} \eqref{prop:Monica_Galois} by $\calW_{\Lambda}$ and $\calT_{\Lambda}$. For a full category $\mathcal{X}$ of the bounded derived category $\Db(\mods(\Lambda))$, we will make use of the \textit{perpendicular subcategories} of $\mathcal{X}$, defined by
\begin{align*}
    \mathcal{X}^{\perp_{\Z}} &\coloneqq \{Y\in \Db(\mods(\Lambda)) \sth \Hom_{\Db(\mods(\Lambda))}(X,\Sigma^i Y)=0 \quad \forall X\in\mathcal{X}\quad \forall i\in\Z \}.
\end{align*}

We first prove that $\calW_{\calC}(\{ U\}) = \calW_{\Lambda}(\{F(U)\})$. Let
\begin{equation}\label{eq:confQ}
        \begin{tikzcd}
            Q_1 \arrow[r, tail, "v"] & Q_0 \arrow[r,two heads] & F(U) \arrow[r, dashed] & {},
        \end{tikzcd}
\end{equation}
be a conflation in $\Ktwo(\proj \Lambda)$ with $Q_0, Q_1 \in \add(\Lambda)$. Because $\widetilde{G}(F(U))=(\overline{U}, \overline{\Omega I_U})$, we have $\widetilde{F(U)} = \overline{U}$. As above, it follows from \cite[Prop. 4.5]{PZ24} that applying $\widetilde{(-)}$ to \cref{eq:confQ} gives a projective presentation
\[ (\widetilde{Q_1})' \oplus \overline{\Omega I_U} \xrightarrow{(q \phantom{x} 0)} \widetilde{Q_0} \to \overline{U} \to 0\]
where $q$ determines a minimal projective presentation of $\overline{U}$ in $\mods (\Lambda)$. That means, we may identify $p$ and $q$. Consequently, ${\Hom_{\Lambda}(p,L)}$ is an isomorphism if and only if ${\Hom_{\Lambda}(q,L)}$ is an isomorphism, whence $\Hom_{\Lambda}(\overline{u},L)$ is an isomorphism if and only if $\Hom_{\Lambda}(\widetilde{v}, L)$ is an isomorphism. So, $\calW_{\calC}(\{U\}) = \calW_{\Lambda}(\{F(U)\})$.
We conclude that
\begin{equation}\label{eq:Gar24Bor21w}
\begin{tikzcd}[column sep=2em]
\calW_{\calC}(\{U\}) \arrow[r, equal] & {\calW_{{\Lambda}}(\{F(U)\})} \arrow[r, "{\text{\cite{Gar24}}}", "\text{4.18}"', equal]& (F(U))^{\perp_{\Z}}\cap \mods(\Lambda) \arrow[r, "{\text{\cite{Bor21}}}", "\text{2.9(1)}"', equal] & J(\widetilde{G}(F(U))) \arrow[r, equal]  & J(\overline{U},\overline{\Omega I_U}),
\end{tikzcd}
\end{equation}
showing that \eqref{prop:Monica_W=J} holds.
   
   In order to prove \eqref{prop:Monica_Z}, it will be useful to know that the bottom half of the following diagram is commutative:
    \begin{equation}\label{eq:GaloisTower}
    	\begin{tikzcd}[row sep=4em]
	\fullsubc \calC \arrow[rr, bend right=15,"\calW_{\calC}"'] &  & {\fullsubc \mods(\Lambda)} \arrow[ll, bend right=15,"\calT_{\calC}"'] \\
\thicksubc_U \calC \arrow[d,"\varphi"] \arrow[rr, bend right=15,"\calW_{\calC}"'] \arrow[u,hook] &  & {\fullsubc J(\overline{U},\overline{\Omega I_{U}})} \arrow[d, "H"] \arrow[ll, bend right=15,"\calT_{\calC}"']  \arrow[u,hook]  \\
\thicksubc (\calC/\thick(U)) \arrow[rr, bend right=15,"\calW_{\calC/\thick(U)}"']   &  & \fullsubc \mods(C_U) \arrow[ll, bend right=15,"\calT_{\calC/\thick(U)}"']                                           
\end{tikzcd}
    \end{equation}
    Note that the upper half commutes as a result of \eqref{prop:Monica_T}, \eqref{prop:Monica_Galois}, and \eqref{prop:Monica_W=J} above.
      Here, we denote the set of thick subcategories of $\calC$ containing $U$ by $\thicksubc_U \calC$. As $\calC/\thick(U)$ and $U^+$ satisfy \Cref{setting} (see \Cref{prop:siltred}), we can use \eqref{prop:Monica_Galois} above to form the Galois connection in the bottom row. The poset isomorphism $\varphi$ is induced by the localisation $\calC \to \calC/\thick(U)$, see \cref{lem:Ver2.3.1}, and $H$ is induced by the functor
      \begin{equation}\label{eq:functorF}
       H = \Hom_{\Lambda}(\overline{U^+},-)\colon \mods(\Lambda) \to \mods(C_U),
      \end{equation}
      where $C_U$ is the $k$-algebra $\End_{\Lambda}(\overline{U^+})/[\overline{U}]$. The functor $H$ restricts to an exact equivalence 
      \begin{equation}\label{eq:equivF}
       H = \Hom_{\Lambda}(\overline{U^+},-)\colon J(\overline{U},\overline{\Omega I_{U}}) \to \mods(C_U)
      \end{equation}
      by \cite[Thm. 3.8]{Jas15} \cite[Thm. 4.12(b)]{DIRRT17}.
      Hence, the map $H$ in \eqref{eq:GaloisTower} is also a poset isomorphism. {More precisely, the codomain of the map $\calW_{\calC/\thick(U)}$ is the poset of wide subcategories of $\mods(\End_{\calC/\thick(U)}(U^+)/[V])$, where $V$ is the maximal basic injective object in $\calC/\thick(U)$, but by \Cref{lem:endoinred} we find this poset to be isomorphic to that of wide subcategories in $\mods{(}C_U{)}$. {Similarly, we argue that the domain of $\calT_{\calC/\thick(U)}$ is equivalent to $\fullsubc(\mods C_U)$.}}
      
      As soon as we know that the diagram in \eqref{eq:GaloisTower} commutes, we will deduce the claim in \eqref{prop:Monica_Z} as follows:{      \begin{align*}
     	 \calT_{\calC}\calW_{\calC}(\{U\}) &=  \varphi^{-1}\calT_{\calC/\thick(U)}H \big(\calW_{\calC}(\{U\})\big) \\
	  &=  \varphi^{-1}\calT_{\calC/\thick(U)}H \big(J(\overline{U},\overline{\Omega I_U}) \big) & \text{(\eqref{prop:Monica_W=J} above)} \\
	  &= \varphi^{-1}\calT_{\calC/\thick(U)}\big( \mods(C_U) \big) \\
	  &=  \varphi^{-1}(\thick_{\calC/\thick(U)}(N)) & \text{(\Cref{lem:Ver2.3.1})} \\
	  &= \thick(U\oplus N).
      \end{align*}
      }
      
      To show that the bottom half of \eqref{eq:GaloisTower} commutes, there are two squares to consider. One involves $\calT$s and the other involves $\calW$s. We address the two cases in that order. For the first, it suffices to fix an object $M\in J(\overline{U},\overline{\Omega I_U})$ and establish commutativity for the full subcategory {$\{ M\}$} of $J(\overline{U},\overline{\Omega I_U})$ (see \eqref{eq:unionObj} and the surrounding discussion). Given an object $X$ in $\calC$, we fix {an $\E$-triangle} as in \cref{eq:confX}, and we let $L(x)$ denote the image of $x$ in $\calC/\thick(U)$ under the localisation functor $L\colon \calC \xrightarrow{} \calC/\thick(U)$. We {thus need to compare}
      \begin{align*}
      \calT_{\calC}(\{M\}) &= \{X \in \calC \sth \Hom_{\Lambda}(\overline{x},M) \text{ is an isomorphism} \}, \\
      \calT_{\calC/\thick(U)}(\{HM\}) &=  \{X \in \calC/\thick(U) \sth \Hom_{C_U}\left({\calC \over \thick(U)}(U^+, L(x)),HM\right) \text{ is an isomorphism} \}. 
      \end{align*}
       \Cref{prop:Bongartz} gives {an $\E$-triangle}        
        \begin{equation}\label{eq:3.9exchange'}
      \begin{tikzcd}
      	T_i  \arrow[r,tail,"\alpha_i"] & Y_i  \arrow[r,two heads] & U'_i \arrow[r,dashed] & {}
	\end{tikzcd}
      \end{equation}
         for $i=0,1$, where the objects $T_i$ are as in \eqref{eq:confX}, $\alpha_i$ is a left $\add(U^+)$-approximation {of $T_i$} and $U'_i$ is in $\add(U)$. Let $Y_M \in \calC$ be an object such that $Y_M$ is sent to $M$ under the equivalence $\calC/[\calI] \to \mods (\Lambda)$ of \cref{lem:GNP23.3.11}. By \cref{lem:LESmodinj}, there are long exact sequences
       \[ \frac{\calC}{[\calI]}(U_i', Y_M) \to \frac{\calC}{[\calI]}(Y_i, Y_M) \to \frac{\calC}{[\calI]}(T_i, Y_M) \to \E(U_i', Y_M) \]
       for $i=0,1$. 
       After applying the equivalence of categories in \cref{lem:GNP23.3.11} we obtain long exact sequences
       \begin{equation}\label{eq:3.9longexact} \Hom_{\Lambda}(\overline{U_i'}, M) \to \Hom_{\Lambda}(\overline{Y_i}, M) \to \Hom_{\Lambda}(\overline{T_i}, M) \to \E(U_i', Y_M) \end{equation}
       for $i=0,1$. We write $U_i' = N_{U_i'} \oplus I_{U_i'} \oplus U_i''$ with $N_{U_i'}$ the maximal projective direct summand and $I_{U_i'}$ the maximal injective non-projective direct summand. Recall that $M \in J(\overline{U},\overline{\Omega I_U}) \subseteq \mods(\Lambda)$ and ${\overline{U_i'}} \in \add(\overline{U})$. We get that the two outer terms in \eqref{eq:3.9longexact} vanish for the following reasons:
       \begin{itemize}
       		\item $ \Hom_{\Lambda}(\overline{U_i'}, M) = 0$ because $M \in \overline{U}^\perp$;
		\item $\E(N_{U_i'}, Y_M) = 0$ because $N_{U_i'}$ is projective;
		\item We have $\E(U_i'', Y_M) = 0 $ as a consequence of $M \in {}^\perp \tau \overline{U}$. Indeed, fixing an $\E$-triangle
        \begin{equation*}
            \begin{tikzcd}
      	     {T_1''}  \arrow[r,tail,"{u''}"] & {T_0''}  \arrow[r,two heads] & {U_i''} \arrow[r,dashed] & {}
	       \end{tikzcd}
        \end{equation*}
        with $T_0''$ and $T_1''$ projective, it follows from \cref{lem:LESmodinj} that $\E(U_i'', Y_M) = 0$ if and only if $-\circ [u'']$ is surjective (because $T''_0$ is projective). As is in the proof of \cite[Prop. 4.5]{PZ24}, the $\E$-triangle above is sent to a minimal projective presentation of $\overline{U_i''}$ because $U_i''$ has no injective direct summands. Then, \cite[Prop. 2.4]{AIR2014} states that $\Hom_{\Lambda}(M,\tau \overline{U_i''}) = 0$ implies the surjectivity of $-\circ [u'']$.
		\item Because $M \in \overline{\Omega I_U}^\perp$ we have $\frac{\calC}{[\calI]}(\Omega I_{U_i'}, Y_M) = \Hom(\overline{\Omega I_{U_i'}}, M) = 0$. From an $\E$-triangle involving $I_{U_i'}$ and $\Omega I_{U_i'}$ as in \cref{lem:GNP23.3.12} we obtain a surjection $\calC( \Omega I_{U_i'}, Y_M) \to \E(I_{U_i'}, Y_M) \to 0$. By the proof of the exactness at $\E(Z,U)$ in \cref{lem:LESmodinj}, we have that the surjection factors through $\frac{\calC}{[\calI]}(\Omega I_{U_i'}, Y_M) =0$ from which $\E(I_{U_i'}, Y_M)=0$ follows.
       \end{itemize}
       Putting together these observations yields $\E(U_i', Y_M) =0$. Consequently, the morphism $\alpha_i$ induces an isomorphism
      \begin{equation}\label{eq:alphainducediso}
      \begin{tikzcd}[column sep=3em]
            \Hom_{\Lambda}(\overline{Y_i},M) \arrow[r,"(\alpha_i\circ -)^\ast"] &  \Hom(\overline{T_i},M).
            \end{tikzcd}
      \end{equation}
    Next, using that fact that $\alpha_1$ {is a} left $\add(U^+)$-approximation, there exists a morphism $x'$ such that the following diagram commutes
	\begin{equation}\label{eq:xtox'}
	\begin{tikzcd} 
	T_1 \arrow[d,tail, "\alpha_1", swap] \arrow[r, tail, "x"]& T_0\arrow[d,tail, "\alpha_0"] \\
	Y_1 \arrow[r, "x'", dashed] & Y_0 
	\end{tikzcd} {.}
	\end{equation}
	By applying $\Hom_{\Lambda}({\overline{(-)}},M)$ to the commutative diagram in \eqref{eq:xtox'}, we obtain a commutative diagram of $k$-vector spaces
	\[
	\begin{tikzcd} [column sep=8em]
	\Hom_{\Lambda}(\overline{T_0},M) \arrow[r, "{\Hom_{\Lambda}(\overline{x},M)}"]& \Hom_{\Lambda}(\overline{T_1},M)  \\
	\Hom_{\Lambda}(\overline{Y_0},M) \arrow[r, "{\Hom_{\Lambda}(\overline{x'},M)}"] \arrow[u, "{(\alpha_0\circ -)^\ast}"] & \Hom_{\Lambda}(\overline{Y_1},M)  \arrow[u, "{(\alpha_1\circ -)^\ast}"]  	\end{tikzcd}
	\]
	where the vertical homomorphisms are isomorphisms {as established in \eqref{eq:alphainducediso}}. It follows that $\Hom_\Lambda(\overline{x}, M)$ is an isomorphism if and only if $\Hom_\Lambda(\overline{x'}, M)$ is an isomorphism.
	
Let $f_U\colon \mods{(}\Lambda{)} \rightarrow \overline{U}^{\perp}$ denote the natural functor into the torsion-free class $\overline{U}^{\perp}$ of $\mods{(}\Lambda{)}$ (see \cite{Dickson66}). Since $M\in {\overline{U}^\perp}$, we have {that} {$\Hom_{\Lambda}(\overline{Y_i}, M) \simeq \Hom_{\Lambda}(f_U(\overline{Y_i}), M)$ from which it follows that} $\Hom_{\Lambda}(\overline{x'}, M)$ is an isomorphism if and only if $ \Hom_{\Lambda}(f_U\overline{x'}, M)$ is.
Since the bijection between silting objects and support $\tau$-tilting pairs in \Cref{prop:F}\eqref{prop:PZ24.4.5} sends the Bongartz completion $U^+$ to the Bongartz completion of $(\overline{U},\overline{\Omega I_U})$ in a $\tau$-tilting theoretic sense \cite[Cor. 4.14]{PZ24}, we have that ${f}_U(\overline{U^+})$ is a projective generator of the abelian category $J(\overline{U}, \overline{\Omega I_U})$ \cite[Lem. 4.9]{BM18t}. 
Since $J(\overline{U}, \overline{\Omega I_U})$ is a full subcategory of $\mods(\Lambda)$ and $f_U(\overline{Y_0}), f_U(\overline{Y_1}), M \in J(\overline{U}, \overline{\Omega I_U})$ it contains the morphism $f_U\overline{x'}$. Therefore, the map $\Hom_{\Lambda}({{f}_U}\overline{x'}, M)$ is an isomorphism if and only if $\Hom_{J(\overline{U}, \overline{\Omega I_U})}(f_U\overline{x'}, M)$ is an isomorphism.
It follows from this discussion that
	\[  \calT_{\calC}(\{M\}) = \{ X \in \calC \sth \Hom_{J(\overline{U},\overline{\Omega I_U})}(f_U\overline{x'},M) \text{ is an isomorphism}\}. \]
By an analogous result of \cite[Prop. 4.15]{Jas15}, the equivalence $H$ in \eqref{eq:equivF}, namely
\begin{equation*}
	H\colon J(\overline{U},\overline{\Omega I_U}) \to \mods(C_U),
\end{equation*}
sends the morphism $\Hom_{J(\overline{U},\overline{\Omega I_U})}(f_U \overline{x'},M)$ to $\Hom_{C_U}\left({\calC \over \thick(U)}(U^+,L(x')),HM\right)$.
Consequently, one is an isomorphism if and only if the other one is.
Since the cones of the vertical morphisms in \eqref{eq:xtox'} are in $\thick(U)$, see {\cref{eq:3.9exchange'}}, we have that $L(x')$ and $L(x)$ are isomorphic as morphisms in ${\calC \over \thick(U)}$, so {$\Hom_{C_U}\left({\calC \over \thick(U)}(U^+,L(x')),HM\right)$} is an isomorphism if and only if $\Hom_{C_U}\left({\calC \over \thick(U)}(U^+,L(x)),HM\right)$ is an isomorphism. By stringing together the arguments about $\Hom$-spaces, we conclude that the square involving the $\calT$s commutes.

To establish the commutativity of the square involving the $\calW$s, it suffices to fix an object $X\in \calC$ and establish commutativity for the full subcategory of $\calC$ spanned by $U$ and $X$ (again, see \eqref{eq:unionObj} and the surrounding discussion). We {thus need to compare}
      \begin{align*}
      \calW_{\calC}(\{U,X\}) &= \{M\in J(\overline{U},\overline{\Omega I_U}) \sth \Hom_{\Lambda}(\overline{x},M) \text{ is an isomorphism } \}, \\
      \calW_{\calC/\thick(U)}(\{X\}) &= \{M \in \mods(C_U) \sth \Hom_{C_U}\left({\calC\over \thick(U)}(U^+,{L(x)}), M \right) \text{ is an isomorphism } \}. 
      \end{align*}
       {In this way, t}he commutativity of the square follows from the same argument as above. We hereby conclude the proof of \eqref{prop:Monica_Z}.
       
        Finally, we prove \eqref{prop:Monica_maps}.
    Let $\mathrm{Im}(\calW_{\calC})$ and $\mathrm{Im}(\calT_{\calC})$ denote the images of the inclusion-reversing maps defined in \eqref{prop:Monica_Galois} above. We then have a {Galois correspondence} \cite[{Thm.} 2]{Ore44}
    \begin{align*}
        \calW_{\calC}: \mathrm{Im}(\calT_{\calC}) &\to \mathrm{Im}(\calW_{\calC}) \\
        \calT_{\calC}: \mathrm{Im}(\calW_{\calC}) &\to \mathrm{Im}(\calT_{\calC}),
    \end{align*}
    {that is,} a pair of mutually inverse inclusion-reversing maps.
     By \eqref{prop:Monica_W}, the poset $\mathrm{Im}(\calW_{\calC})$ is a subposet of that of wide subcategories of $\mods(\Lambda)$, and by \eqref{prop:Monica_T}, the poset $\mathrm{Im}(\calT_{\calC})$ is a subposet of that of thick subcategories of $\calC$.  
     It now suffices to show the { following}
     \begin{itemize}
     	\item $\calW_{\calC}$ sends a thick subcategory $\thick(U)$, where $U$ is {presilting and} {such that $\add(U)$ contains all projective-injective objects in $\calC$,} to $\tau$-perpendicular wide subcategories,
	\item $\calT_{\calC}$ does the opposite.
     \end{itemize}
     {Note that the functor $\overline{(-)}\colon \calC \to \mods(\Lambda)$ sends all projective-injective objects to zero, whence the projective-injective objects in $\calC$ are $L$-semistable for any $L$ in $\mods(\Lambda)$.}
     By the bijectivity of the maps {between the images} in the Galois correspondence, {it suffices to show that thick subcategories generated by presilting objects are sent to $\tau$-perpendicular subcategories, and vice versa}. Specifically, given  $U\in\presilt(\calC)$, we will show that
    \begin{align}
        \calW_{\calC}(\thick(U))=J(\overline{U},\overline{\Omega I_U}), \label{eq:Monica_WJ} 
        \\
         {\calT_{\calC}(J(\overline{U},\overline{\Omega I_U}))=\thick_{}(U {\oplus N})} \label{eq:Monica_Tt}
    \end{align}
    
    {Using our results above,}
\begin{equation*}\label{eq:Gar24Bor21w'}
\begin{tikzcd}[column sep=2em]
\calW_{{\calC}}(\thick(U)) \arrow[r, "{\text{\eqref{prop:Monica_Wbis}}}", "\text{above}"', equal] & \calW_{{{\calC}}}(\{U\}) \arrow[r, equal] \arrow[r, equal,"{\eqref{prop:Monica_W=J}}", "\text{{above}}"']  & J(\overline{U},\overline{\Omega I_U}).
\end{tikzcd}
\end{equation*}
We have shown that \eqref{eq:Monica_WJ} holds.
We conclude the proof by deducing \eqref{eq:Monica_Tt}:
\begin{equation*}
\begin{tikzcd}
\calT_{\calC}\big(J(\overline{U},\overline{\Omega I_U})\big) \arrow[r, "\eqref{eq:Monica_WJ}", equal]  &  {\calT_{\calC}(\calW_{\calC}(\thick(U))) }  \arrow[r, "{{\text{\eqref{prop:Monica_Wbis}}}}", "\text{{above}}"', equal]  & \calT_{\calC}{(} \calW_{\calC}(\{U\}){)}\arrow[r, "{{\text{\eqref{prop:Monica_Z}}}}", "\text{{above}}"', equal]  & \thick(U {\oplus N}).
\end{tikzcd}
\end{equation*}
The proof is complete.
\end{proof}

Having proved \Cref{prop:Monica}, we are ready to show that we have generalised the notion of $\tau$-cluster morphism category, in the sense of \cite{IT17,BH21,BM18w}. 
Schroll--Tattar--Treffinger--Williams define an equivalent category $\mathfrak{T}(\Lambda)$ \cite[{Thm. 1.1}]{STTW23}, which the second named-author recovers in terms of the category of a partitioned fan {of} $(\partfan{\Lambda},\mathfrak{P}_{\mathrm{WAC}})$ \cite[§6.3]{Kai23}. The definition of $\mathfrak{P}_{\mathrm{WAC}}$ identifies two cones $\boldC(U), \boldC(U') \in \partfan{\Lambda}$ whenever $J(\overline{U}, \overline{\Omega I_U})= J(\overline{U'}, \overline{\Omega I_{U'}})$. Thus we recover the $\tau$-cluster morphism category as follows.

\begin{proposition}\label{prop:samePartition}
    \label{thm:modulecatreduction_piccat}
    \, 
    
    \begin{enumerate}
    	\item \label{thm:modulecatreduction_piccat1}  Let $\Lambda$ be a finite-dimensional $k$-algebra. Then the picture category $\Wfrak(\Lambda)$ is equivalent to the $\tau$-cluster morphism category $\mathfrak{T}(\Lambda)$ of $\Lambda$.
	\item \label{thm:modulecatreduction_piccat2} Let $\calC$ and $\Lambda$ be as in \Cref{setting}, and suppose furthermore that $\calC$ is reduced. 
    Consider the isomorphism of fans
    \begin{equation*}
        \partfan{\calC} \to \partfan{\Lambda},
    \end{equation*}
    put forward in \Cref{lem:FanIso}\eqref{thm:modulecatreduction_fans}. This morphism preserves and reflects the thick partitions of the relevant fans and thus gives rise to an equivalence of picture categories
     \[\Wfrak(\calC) \to \Wfrak(\Lambda).\]
    \end{enumerate}
\end{proposition}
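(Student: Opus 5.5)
The plan is to show both parts by comparing partitions on one and the same fan, using that the category of a partitioned fan depends only on the fan and its partition. Two isomorphic partitioned fans, meaning an isomorphism of fans that sends the partition classes bijectively onto the partition classes, give equivalent (indeed isomorphic) categories. The reason is that objects are partition classes, morphisms are classes of pairs $\binom{\rho}{\sigma}$ modulo $\pi_{\sigma_1}(\rho_1)=\pi_{\sigma_2}(\rho_2)$, and a linear isomorphism of fans is compatible with the orthogonal projections $\pi_\sigma$ up to the induced isomorphism of the quotients $\spann(\sigma)^\perp$.

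For \eqref{thm:modulecatreduction_piccat1}, take $\calC=\Ktwo(\proj\Lambda)$. This is reduced, so $N=0$ and $\End_{\calC}(\Lambda)=\Lambda$. By \cite[§6.3]{Kai23}, $\mathfrak{T}(\Lambda)\simeq\catof(\partfan{\Lambda},\Pfrak_{\mathrm{WAC}})$, so it suffices to show $\Pfrak_{\thick}=\Pfrak_{\mathrm{WAC}}$ on $\partfan{\Lambda}$. Concretely, for presilting $U,U'$ one needs
\[\thick(U)=\thick(U') \iff J(\overline{U},\overline{\Omega I_U})=J(\overline{U'},\overline{\Omega I_{U'}}).\]
This follows from \Cref{prop:Monica}. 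By \eqref{eq:Monica_WJ}, $\calW_{\calC}(\thick(U))=J(\overline{U},\overline{\Omega I_U})$, which gives $(\Rightarrow)$. By \eqref{eq:Monica_Tt} with $N=0$, $\calT_{\calC}(J(\overline{U},\overline{\Omega I_U}))=\thick(U)$, which gives $(\Leftarrow)$. The partitions are therefore literally equal, and the categories coincide.

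For \eqref{thm:modulecatreduction_piccat2}, let $f=\calC(T,-)\otimes\R$ be the isomorphism of fans from \Cref{lem:FanIso}\eqref{thm:modulecatreduction_fans}. By \Cref{lem:Findexsquare}, $f$ sends $\boldC(U)$ to $\boldC(F(U))$, where $F$ is the bijection \eqref{eq:presilt_C_Lambda}. The remaining claim is $\thick_{\calC}(U)=\thick_{\calC}(U')$ if and only if $\thick_{\Lambda}(F(U))=\thick_\Lambda(F(U'))$. Applying \Cref{prop:Monica} to $\calC$ (reduced, so $N=0$) shows that the left-hand condition is equivalent to $J(\overline{U},\overline{\Omega I_U})=J(\overline{U'},\overline{\Omega I_{U'}})$. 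Applying part \eqref{thm:modulecatreduction_piccat1} (i.e. \Cref{prop:Monica} for $\Ktwo(\proj\Lambda)$) shows that the right-hand condition is equivalent to $J(\widetilde G(F(U)))=J(\widetilde G(F(U')))$. In the proof of \eqref{prop:Monica_W=J} it was noted that $\widetilde G(F(U))=(\overline{U},\overline{\Omega I_U})$, so the two conditions agree. Hence $f$ preserves and reflects $\Pfrak_{\thick}$.

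It then remains to check that $f$ induces a functor $\Wfrak(\calC)\to\Wfrak(\Lambda)$ that is bijective on objects and on morphisms. On objects it sends $\eqclass{U}\mapsto\eqclass{F(U)}$ and on morphisms $\sqbinom{V}{U}\mapsto\sqbinom{F(V)}{F(U)}$; composition is evidently respected. The main obstacle is showing that the identification of morphisms is preserved and reflected, i.e. that $\pi_{\boldC(U_1)}(\boldC(V_1))=\pi_{\boldC(U_2)}(\boldC(V_2))$ holds if and only if the same holds after applying $f$. The projections $\pi_\sigma$ are defined via bases and orthogonality, and $f$ is not orthogonal, so I would avoid metric arguments. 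Since $\spann(\boldC(U_1))=\spann(\boldC(U_2))$ (\Cref{prop:STTWgeneralisation}\eqref{prop:STTWgeneralisation_1}), I would replace each $\pi_{\boldC(U)}$ by the quotient map to $\K_0(\add T)_\R/\spann(\boldC(U))$, which $f$ intertwines with the corresponding quotient map on the $\Lambda$-side. One then checks that equality of the projected cones is equivalent to equality of their images in this quotient, since $\pi_\sigma$ restricted to $\sigma^\perp$-complements is an isomorphism with the quotient. Alternatively, one can argue via \Cref{prop:STTWgeneralisation}\eqref{prop:STTWgeneralisation_3} and \Cref{prop:siltred}\eqref{prop:siltred_bij} that the identification amounts to $V_1\simeq V_2$ in $\calC/\thick(U_1)$, which transports through $F$ because $F$ commutes with reduction.
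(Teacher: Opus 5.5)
Your proposal is correct and follows essentially the paper's route. Both parts reduce, via \Cref{prop:Monica}, to showing that the thick partition equals the WAC partition in part (1), and that it is transported by the bijection $F$ using $\widetilde G(F(U))=(\overline{U},\overline{\Omega I_U})$ in part (2); your extra check that the non-orthogonal isomorphism $f$ respects the identification of morphisms (because each $\pi_\sigma$ factors through the quotient by $\spann(\sigma)$) supplies a detail that the paper passes over with ``equivalence as abstract partitioned fans''.
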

\begin{proof}
	To prove \eqref{thm:modulecatreduction_piccat1} {l}et $\sigma, \sigma' \in \partfan{\Lambda}$. Then we may assume that $\sigma = \boldC(U)$ and $\sigma' = \boldC(U')$ for some $U,U' \in \presilt(\Lambda)$. By \Cref{prop:F}\eqref{prop:PZ24.4.5}, both $U$ and $U'$ correspond to $\tau$-rigid pairs $(\overline{U}, \overline{\Omega I_U})$ and $(\overline{U'}, \overline{\Omega I_{U'}})$ respectively. Then we obatin
    \begin{align*}
        \boldC(U) \sim_{\Pfrak_{\thick}} \boldC(U') & \Longleftrightarrow \thick(U) = \thick(U') \\
        & \Longleftrightarrow \calT_{\calC} (J(\overline{U}, \overline{\Omega I_U})) = \calT_{\calC}( J(\overline{U'}, \overline{\Omega I_{U'}})) &\text{(by \cref{prop:Monica}\eqref{prop:Monica_W=J}\eqref{prop:Monica_Z})} \\
        & \Longleftrightarrow \calW_{\calC}\calT_{\calC} (J(\overline{U}, \overline{\Omega I_U})) = \calW_{\calC} \calT_{\calC}( J(\overline{U'}, \overline{\Omega I_{U'}})) \\
        & \Longleftrightarrow J(\overline{U}, \overline{\Omega I_U}) =  J(\overline{U'}, \overline{\Omega I_{U'}}) & \text{(by \cref{prop:Monica}\eqref{prop:Monica_maps})} \\
        & \Longleftrightarrow \boldC(U) \sim_{\Pfrak_{\mathrm{WAC}}} \boldC(U').
    \end{align*}
    This shows that $\Pfrak_{\thick} = \Pfrak_{\mathrm{WAC}}$ and hence 
    \[\Wfrak(\Lambda) = \catof(\partfan{\Lambda}, \Pfrak_{\thick}) = \catof(\partfan{\Lambda}, \Pfrak_{\mathrm{WAC}}) = \mathfrak{T}(\Lambda).\]
    
    In order to prove \eqref{thm:modulecatreduction_piccat2} {recall the bijection} 
    \[ F\colon \presilt(\calC) \to \presilt( \Lambda) \]
    defined in \Cref{prop:F}\eqref{thm:modulecatreduction_presilt}. Consider the composite map of posets
    \begin{equation*}
        \begin{tikzcd}
            \thicksubc{\calC} \arrow[r,"\calW_{\calC}"] & \wide \Lambda \arrow[r,"\calT_{\Lambda}"] & \thicksubc {\Ktwo(\proj{\Lambda})}
        \end{tikzcd}
    \end{equation*}
    from thick subcategories of $\calC$, through wide subcategories of $\mods(\Lambda)$, to thick subcategories of $\Ktwo(\proj{\Lambda})$.
    By \Cref{prop:Monica}\eqref{prop:Monica_maps},
    this composite map sends $\thick_{\calC}(U)$ to $\thick_{\Lambda}(FU)$ whenever $U$ is a presilting object in $\calC$. Since the maps $\calW_{\calC}$ and $\calT_{\Lambda}$ are bijections when restricted as imposed in \Cref{prop:Monica}\eqref{prop:Monica_maps}, the following occurs: if $U$ and $V$ are presilting objects in $\calC$, then we have $\thick_{\calC}(U)=\thick_{\calC}(V)$ if and only if $\thick_{\Lambda}(FU)=\thick_{\Lambda}(FV)$. This proves that the thick partitions are preserved and reflected by the fan isomorphism in \Cref{lem:FanIso}\eqref{thm:modulecatreduction_fans}, establishing that there is an equivalence as abstract partitioned fans $(\partfan{\calC}, \mathfrak{P}_{\thick})=(\partfan{\Lambda}, \mathfrak{P}_{\thick})$ and thus their categories of partitioned fans coincide, which completes the proof.
\end{proof}

We have now, implicitly, seen the first example of the notion of a morphism of partitioned fans, which we will formally introduce and study in \Cref{sec:morpartfan}. To conclude this section, we connect \Cref{defthm:piccat} with a previous construction of the first-named author.

In previous work of the first-named author \cite[§6]{Bor24}, picture categories were defined for 0-Auslander extriangulated $k$-categories admitting an exact dg enhancement. Our final result of this section, namely \Cref{prop:sameasBor}, shows that our construction above is compatible with the dg categorical construction.

X. Chen defines the notion of \textit{exact dg $k$-category} by formulating the axioms of exact  $k$-categories in a dg categorical context \cite{Che23thesis,Che26}. If a dg $k$-category $\A$ is equipped with an exact structure, then the homology category $H^0\A$ is naturally equipped with an extriangulation \cite[{Thm.} 4.26]{Che24}. We say that an exact dg $k$-category is \textit{0-Auslander} if the extriangulation on the homology category $H^0\A$ is 0-Auslander \cite{Che23}.

Let $\A$ be a 0-Auslander exact dg $k$-category with $k$-cofibrant mapping complexes. Suppose also that the 0-Auslander extriangulated $k$-category $H^0\A$ satisfies \Cref{setting}, in particular that it contains a silting object. This assumption on $\A$ is less general than what is needed to define the picture category $\tcmc{\A}$ of $\A$ {in \cite{Bor24}}. It is, however, necessary to consider this special case in order to prove \Cref{prop:sameasBor} below. Before proving \Cref{prop:sameasBor}, we briefly recall how $\tcmc{\A}$ is defined.

Let $\mathbf{Hqe}_{k}$ denote the localisation of the category of small dg $k$-categories at its quasi-equivalences. For a full dg subcategory $\mathscr{S}\subseteq \A$, {let $S$ denote the class of morphisms in $H^0\A$ that are obtained as composite morphisms of morphisms from the following two classes: inflations with cones in $\thick(H^0\mathscr{S})$ or deflations with co-cones in $\thick(H^0\mathscr{S})$.}
Let ${L_{S}}\A$ denote the dg {localisation} of $\A$ with respect to $S$. If $H^0\mathscr{S}$ is the additive hull of an object $X\in H^0\A$, we simplify the notation to ${L_X\A}$. Two dg {localizations} ${L_{S_1}\A}$ and ${L_{S_2}\A}$ are said to be \textit{equivalent} if the dg localisation morphisms $\A \xrightarrow{L_{S_1}} {L_{S_1}\A}$ and $\A \xrightarrow{L_{S_2}} {L_{S_2}\A}$ are isomorphic in the under-category of $\A$ in $\mathbf{Hqe}_{k}$. The equivalence class containing $\A \xrightarrow{L_{S}} {L_{S}\A}$ will be denoted $[\A \xrightarrow{L_{S}} {L_{S}\A}]$.

A presilting object in $\A$ is simply defined to be a presilting object in the extriangulated $k$-category $H^0\A$. 
If $V$ is a basic presilting object of $H^0\A$, the dg localisation morphism $\A \xrightarrow{\varpi_{V}} {L_V\A}$ induces a bijection
\begin{equation}\label{eq:dgsiltred}
    \presilt_V(\A) \xrightarrow{\varpi_{V}} \presilt({L_V\A}),
\end{equation}
where the domain is the set of basic presilting objects in $\A$ containing $V$ as a direct summand, and the codomain is the set of all basic presilting objects in ${L_V\A}$ \cite[Cor. 5.16(iv)]{Bor24}. 

The class of objects in the \textit{picture category} of $\A$, denoted $\tcmc{\A}$, is given by equivalence classes of dg quotients $[\A \xrightarrow{\varpi_{U}} {L_U\A}]$, where $U$ is a basic\footnote{It is not necessary to impose that $U$ be basic; any localisation with respect to a presilting object is clearly equivalent to the localisation with respect to a basic presilting object. We find it convenient to consistently choose basic representatives, so that the proof of \Cref{prop:sameasBor} becomes easier to conduct.} presilting object in $H^0\A$. A morphism in $\tcmc{\A}$ is given by an equivalence class of localisations $[{L_U\A} \xrightarrow{\varpi_{V}} {L_V(L_U\A)}]$, where $V$ is a basic presilting object in ${L_U\A}$. The composite of two consecutive morphism is defined by
\begin{equation}\label{eq:comp_tcmc}
    [{L_V(L_U\A)} \xrightarrow{\varpi_W} {L_W(L_V(L_U\A))} ]\circ [{L_U\A} \xrightarrow{\varpi_V} {L_V(L_U\A)}]=[{L_U\A} \xrightarrow{\varpi_{\varpi_V^{-1}W}} {L_{\varpi_V^{-1}W}(L_U\A)}]
\end{equation}
where $\varpi_V^{-1}W$ denotes the preimage of $W$ along the bijection in \eqref{eq:dgsiltred}. This composition rule can be shown to be well-defined \cite[§6.2]{Bor24} and associative \cite[{Prop.} 6.3]{Bor24}, whence the picture category $\tcmc{\A}$ is a well-defined category. 

\begin{proposition}\label{prop:sameasBor}
    Let $\A$ be a 0-Auslander exact dg $k$-category with $k$-cofibrant mapping complexes, such that the 0-Auslander extriangulated $k$-category $H^0\A$ satisfies \Cref{setting}, and let $\tcmc{\A}$ denote its picture category in the sense of the first-named author \cite[§6]{Bor24}. Let $\Wfrak(H^0\A)$ denote the picture category of $H^0\A$, as defined in \Cref{defthm:piccat}\eqref{defthm:piccat_0Aus}. There is a well-defined equivalence of categories
    \begin{equation}\label{eq:cat_fan=tcmc}
    \begin{tikzcd}
        \tcmc{H^0\A} \arrow[r] & \tcmc{\A}
    \end{tikzcd}
    \end{equation}
sending the object $\eqclass{{U}}$ to $[\A \xrightarrow{\varpi_U} {L_U\A}]$ and the morphism $\sqbinom{W}{V}$ to $[ {L_V\A} \xrightarrow{\varpi_W}  {L_W\A}]$, where $\varpi_V$ is as in \eqref{eq:dgsiltred}.
\end{proposition}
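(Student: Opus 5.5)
We define the functor on objects and morphisms as in the statement and check, in order: well-definedness on objects, well-definedness on morphisms, functoriality, fullness and density, and faithfulness.

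\emph{Objects.} If $\eqclass{U}=\eqclass{U'}$, then $\thick(U)=\thick(U')$ in $H^0\A$. The class $S$ used to form the dg localisation depends only on $\thick(H^0\mathscr{S})$, so $L_U\A$ and $L_{U'}\A$ are dg localisations at the same class of morphisms. By the universal property of dg localisation in $\mathbf{Hqe}_k$, the two localisation morphisms are isomorphic in the under-category of $\A$. This gives a well-defined map on objects. It is surjective because every object of $\tcmc{\A}$ is, by definition, of the form $[\A\xrightarrow{\varpi_U}L_U\A]$ for a basic presilting $U$. It is injective because an isomorphism in the under-category induces an equivalence $H^0L_U\A\simeq H^0L_{U'}\A$ under $H^0\A$. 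By \Cref{prop:siltred}\eqref{prop:siltred_equiv} these homology categories are the extriangulated localisations $H^0\A/\thick(U)$ and $H^0\A/\thick(U')$. We then compare the kernels of the localisation functors, i.e.\ the objects sent to zero, via \Cref{lem:Ver2.3.1}: these kernels are $\thick(U)$ and $\thick(U')$, so the two thick subcategories coincide.

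\emph{Morphisms.} A morphism $\sqbinom{W}{V}$ in $\Wfrak(H^0\A)$ is represented by basic presilting objects $V\le W$. Its image is $[L_V\A\xrightarrow{\varpi_{\varpi_V W}}L_{\varpi_V W}(L_V\A)]$, using the bijection \eqref{eq:dgsiltred}. Two representatives $\binom{W_1}{V_1}\sim\binom{W_2}{V_2}$ satisfy $\thick(V_1)=\thick(V_2)$ and $\pi_{\boldC(V_1)}(\boldC(W_1))=\pi_{\boldC(V_2)}(\boldC(W_2))$. By the argument in \Cref{prop:STTWgeneralisation}\eqref{prop:STTWgeneralisation_2},\eqref{prop:STTWgeneralisation_3}, which uses \Cref{lem:ZUfan}\eqref{lem:ZUfan2} and injectivity of the index (\Cref{lem:IsFan}\eqref{lem:IsFan_inj}) in $H^0\A/\thick(V_1)$, this equality of projected cones says exactly that $W_1$ and $W_2$ become isomorphic in $H^0\A/\thick(V_1)=H^0\A/\thick(V_2)$. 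Hence the images of $W_1,W_2$ under \eqref{eq:dgsiltred} agree as presilting objects of the common localisation, and so the two localisation morphisms are equivalent. The same chain of equivalences read backwards shows the assignment is injective on morphisms, which gives faithfulness. For fullness, every morphism of $\tcmc{\A}$ out of $[L_V\A]$ is indexed by a presilting object of $L_V\A$, and by \eqref{eq:dgsiltred} this object lifts to some $W\in\presilt_V(H^0\A)$.

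\emph{Functoriality.} Identities go to identities since $\sqbinom{V}{V}$ maps to localisation at zero. For composition, $\sqbinom{X}{W}\circ\sqbinom{W}{V}=\sqbinom{X}{V}$ in the fan category, while \eqref{eq:comp_tcmc} composes in $\tcmc{\A}$ by taking preimages along \eqref{eq:dgsiltred}. We check that localising $L_V\A$ at the image of $W$ and then at the image of $X$ is equivalent to localising $L_V\A$ at the image of $X$. This holds because $\thick(X)\supseteq\thick(W)$ and iterated dg localisation agrees with a single localisation at the larger class.

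\emph{Main obstacle.} We expect the hardest step to be matching the fan-theoretic identification of morphisms (equal orthogonal projections) with equivalence of dg localisation morphisms. The bridge is \Cref{lem:Jredfan}\eqref{lem:Jredfan_star}: it identifies $\pi_{\boldC(V)}$ with the map on $\bfg$-vectors induced by the localisation functor. Combined with index injectivity, this should reduce the problem to the bijection \eqref{eq:dgsiltred} being compatible with the one in \Cref{prop:siltred}\eqref{prop:siltred_bij}.
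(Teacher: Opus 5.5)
Your proposal is correct and follows essentially the same route as the paper. The shared steps are: well-definedness because the localisation class depends only on the thick subcategory; identification of equivalent representatives $\binom{W_1}{V_1}\sim\binom{W_2}{V_2}$ with $W_1\simeq W_2$ in $H^0\A/\thick(V_1)$; functoriality from the composition rule \eqref{eq:comp_tcmc}; and full faithfulness by reading both Hom-sets through the bijection \eqref{eq:dgsiltred}.

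Your extra kernel argument, showing that equivalent localisations $\varpi_U,\varpi_{U'}$ force $\thick(U)=\thick(U')$, makes explicit a step the paper uses only implicitly, and it is what turns your lifting argument into fullness for a fixed target object. For that argument, the identification of $H^0L_U\A$ with $H^0\A/\thick(U)$ should be taken from the dg results of \cite{Bor24}, not from \Cref{prop:siltred}\eqref{prop:siltred_equiv} alone.
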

\begin{proof}
    We first show that our proposed functor encodes well-defined maps of objects and morphisms.
    In \Cref{def:partitionbythick}, we imposed that the thick partition of $\partfan{H^0\A}$ identify cones of the presilting object ${U_1}$ and ${U_2}$ when $\thick(U_1)=\thick(U_2)$. Thus, if the cones of ${U_1}$ and ${U_2}$ are identified, the dg localisation morphisms $\A\xrightarrow{\varpi_{U_1}}  {L_{U_1}\A}$ and $\A\xrightarrow{\varpi_{U_2}}  {L_{U_2}\A}$ are equivalent, which shows that the map of objects is well-defined. To show that the map of morphisms is well-defined, we give ourselves four basic presilting object $U_1$, $U_2$, $V_1$, and $V_2$ in $H^0\A$, such that $U_1$ is a direct summand of $V_1$ and $U_2$ is a direct summand of $V_2$. 
    By the definition of morphisms in $\Wfrak(H^0\A)$ (see \eqref{eq:piccat_mor}), we have that $\thick(U_1)=\thick(U_2)$ and that $\thick(V_1)=\thick(V_2)$.
    The morphisms $\sqbinom{V_1}{U_1}$ and $\sqbinom{V_2}{U_2}$ coincide precisely when $V_1$ and $V_2$ are isomorphic in $\calC/\thick(U_1)$ (or indeed $\calC/\thick(U_2)$. 
    If this is the case, the dg localisations $[ {L_{U_1}\A} \xrightarrow{\varpi_{V_1}} {L_{V_1}}({L_{U_1}\A} )]$ and $[  {L_{U_2}\A} \xrightarrow{\varpi_{V_2}} {L_{V_2}}({L_{U_2}\A})]$, must be equivalent, so the map of morphisms is well-defined.

    Next, we show that we have defined a functor in \eqref{eq:cat_fan=tcmc}. It is clear that identity morphisms are sent to identity morphisms. Given a 
    composite $\sqbinom{W}{V}\circ\sqbinom{V}{U} = \sqbinom{W}{U}$ in $\tcmc{H^0\A}$, it is to be shown that the following is met in $\tcmc{\A}$: $$[{L_V\A} \xrightarrow{\varpi_W} {L_W\A}]\circ [{L_U\A} \xrightarrow{\varpi_V} {L_V\A}]=[{L_U\A} \xrightarrow{\varpi_W} {L_W\A}],$$ but this is straightforward from the definition of the composition rule in $\tcmc{\A}$, as recalled above. 

    Having shown that we have a well-defined functor in \eqref{eq:cat_fan=tcmc}, we conclude the proof by showing that it is an equivalence of categories. We show that functor in \eqref{eq:cat_fan=tcmc} is fully faithful and dense.
    Density is clear from the definitions. 
    Let $\eqclass{U}$ and $\eqclass{V}$ be objects in $\Wfrak(H^0\A)$.
    The induced map of Hom-sets
    \begin{equation}\label{eq:sameasBor_HomMap}
    \begin{tikzcd}
        \Wfrak(H^0\A)(\eqclass{U},\eqclass{V}) \arrow[r] & \tcmc{\A}([\A \xrightarrow{\varpi_U} {L_U\A}],[\A \xrightarrow{\varpi_V} {L_V\A}])
    \end{tikzcd}
    \end{equation}
    will now be shown to be bijective. Fixing $U\in\eqclass{U}$ and $V\in\eqclass{V}$, one can interpret the domain as the subset of $\presilt_U(H^0\A)$ consisting of the objects
    $V'$ such that $\thick(V')=\thick(V)$ {\cite[Cor. 3.6]{Kai23}}. The codomain can be interpreted as the subset of $\presilt_U(H^0\A)$ consisting of the objects $V'$ such that the dg localisations ${L_U\A} \xrightarrow{\varpi_{V'}} {L_{V'}}({L_U\A})$ and ${L_U\A} \xrightarrow{\varpi_{V}} {L_V}({L_U\A})$ are equivalent. Under these interpretations, the map in \eqref{eq:sameasBor_HomMap} can be taken to be the identity map, whence the proof is complete.
\end{proof}

\begin{remark}
	Let $\calC$ and $\Lambda$ be as in \cref{setting} and assume $\calC$ is reduced. Recently, the ``$\tau$-cluster morphism category'' of $\calC$ was introduced in \cite{Nonis2026} and poses an alternative generalisation of {$\mathfrak{T}(\Lambda)$} to the picture category $\Wfrak(\calC)$. Let $\mathfrak{M}(\calC)$ denote the $\tau$-cluster morphiusm category of $\calC$ as introduced in \cite[{§}7]{Nonis2026}. It follows from the fact that $\calZ_U = \calZ_V$ if and only if $U \simeq V$ for two presilting objects $U,V$ in the reduced category $\calC$, that $\mathfrak{M}(\calC)$ is equivalent to the poset category of the $\bfg$-vector fan $\partfan{\calC}$. Viewing the poset category as a partitioned fan with trivial partition, and since $\partfan{\calC}= \partfan{\Lambda}$, the existence of a dense, faithful functor $\mathfrak{M}(\calC) \to \Wfrak(\Lambda)$, see \cite[Thm. 8.3]{Nonis2026}, is a consequence of \cite[Thm. 1.3(a)]{Kai23}. That this functor is a discrete fibration whose induced equivalence relation on $\mathfrak{M}(\Lambda)$ yields {$\mathfrak{T}(\Lambda)$}, see \cite[Thm. 8.3, Thm. 8.8]{Nonis2026}, follows from the fact that the cones of $\starr(\sigma)$ are in bijection with the cones of the fan $\pi_{\sigma}(\starr(\sigma))$ for all $\sigma \in \partfan{\Lambda}$ and underpins the construction of $\Wfrak(\Lambda)$ from $\partfan{\Lambda}$ in \cite{STTW23}. In other words, the new results on $\tau$-cluster morphism categories of \cite{Nonis2026} also fit well into the framework of partitioned fans. 
\end{remark}

\section{Morphisms of partitioned fans and faithful group functors}\label{sec:morpartfan}

Recall the definition of a morphism of fans in \Cref{def:morfan}. It is given by a linear transformation of the underlying {$\R$-vector} spaces, such that the image of each cone in the domain is contained in a cone in the codomain. We will soon introduce morphisms of partitioned fans. In order to deduce some applications concerning picture categories, we prove a useful lemma.

\begin{lemma}\label{lem:morpartfan}
    (cf. \cite[Second exercise on p. 42]{Ful93})
    Let $N_1$ and $N_2$ be finite lattices and let 
    \[f\colon ({(N_1)_{\R}},\Sfan_1) \xrightarrow{}({(N_2)_{\R}},\Sfan_2)\]
    be a morphism of fans. For any $\sigma\in \Sfan_1$, the poset of cones in $\Sfan_2$ containing $f(\sigma)$ admits a unique minimal element, denoted $\sigma^f$.
\end{lemma}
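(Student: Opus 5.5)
The plan is to take $\sigma^f$ to be the intersection of all cones in $\Sfan_2$ that contain $f(\sigma)$, and to show that this intersection is itself a cone of $\Sfan_2$. Then it is automatically the unique minimal element of the poset. First, the poset is nonempty, because $f$ is a morphism of fans (\Cref{def:morfan}), so some $\tau\in\Sfan_2$ satisfies $f(\sigma)\subseteq\tau$.

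The key step is to show that the set of cones containing $f(\sigma)$ is closed under pairwise intersection. Let $\tau_1,\tau_2\in\Sfan_2$ both contain $f(\sigma)$. By axiom (b) of \Cref{def:cones+fans}\eqref{def:cones+fans_fan}, the intersection $\tau_1\cap\tau_2$ is a face of both $\tau_1$ and $\tau_2$. By axiom (a), it is therefore again a cone of $\Sfan_2$. It clearly contains $f(\sigma)$.

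Next I need finiteness, and I will get it without assuming the fan is finite. Fix one cone $\tau_0\supseteq f(\sigma)$. Any other cone $\tau\supseteq f(\sigma)$ gives $\tau\cap\tau_0$, which is a face of $\tau_0$ containing $f(\sigma)$. A polyhedral cone has only finitely many faces; in the simplicial case these correspond to subsets of a finite generating set. So the family $\{\tau\cap\tau_0\}$ ranges over a finite set of faces of $\tau_0$.

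Now let $\sigma^f$ be the intersection of these finitely many faces. By iterating the pairwise closure, $\sigma^f$ is a cone of $\Sfan_2$ and contains $f(\sigma)$. For any cone $\tau\supseteq f(\sigma)$ we have $\sigma^f\subseteq\tau\cap\tau_0\subseteq\tau$, so $\sigma^f$ is a minimum, not merely a minimal element. Uniqueness follows immediately, since two minima are mutually contained and hence equal.

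The main subtle point is the finiteness reduction. I would handle it by passing to faces of the single fixed cone $\tau_0$, which avoids any finiteness hypothesis on $\Sfan_2$. One also has to check that a face of a face is a face, so that the iterated intersections remain faces of $\tau_0$; this is standard for polyhedral cones. Alternatively, for the simplicial fans considered in the paper, faces of $\tau_0$ are spans of subsets of its generators, and the pairwise intersections are simply spans of intersections of those subsets.
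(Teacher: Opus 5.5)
Your proof is correct, but it takes a different route from the paper's.

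\textbf{The paper's route.} The paper argues with relative interiors. Starting from a cone $\kappa\supseteq f(\sigma)$, it passes to smaller and smaller faces until it reaches a cone $\kappa_\sigma$ whose relative interior contains the relative interior of $f(\sigma)$. It then uses that distinct cones of a fan have disjoint relative interiors to conclude that $\kappa_\sigma$ is unique. Implicitly, running this descent from any cone $\tau\supseteq f(\sigma)$ lands on the same $\kappa_\sigma$, which is therefore a face of $\tau$.

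\textbf{Your route.} You argue order-theoretically. The two fan axioms alone show that the cones of $\Sfan_2$ containing $f(\sigma)$ are closed under pairwise intersection. Intersecting everything with a fixed $\tau_0$ then reduces the full intersection to a finite one over faces of $\tau_0$.

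\textbf{What each approach buys.} Your argument is more elementary. It uses no convex-geometric facts about relative interiors or boundaries. It does not even need $f(\sigma)$ to be convex, so it yields a smallest containing cone for any subset lying in some cone of the fan. It also produces a genuine minimum, which makes uniqueness immediate. Your remark about checking that a face of a face is a face is unnecessary: your pairwise closure already puts the iterated intersections in $\Sfan_2$, and that is all you use.

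The paper's approach additionally characterises $\sigma^f$ as the unique cone of $\Sfan_2$ whose relative interior contains the relative interior of $f(\sigma)$. This property is invoked later, in the proof of \Cref{thm:partmorphgivesfunctor}, where points $p_i$ are chosen in relative interiors. You can recover it from your minimum. If a relative-interior point of the convex set $f(\sigma)$ lay in a proper face of $\sigma^f$, the supporting functional would vanish on all of $f(\sigma)$. Then that face, which lies in $\Sfan_2$, would contain $f(\sigma)$, contradicting minimality.
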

\begin{proof}
    Since the conclusion clearly holds when $f(\sigma)=\{0\}$, we may safely assume that this is not the case. 
    Under this additional assumption, we first prove that the {relative} interior $f(\sigma)^{\circ}$ of $f(\sigma)$ must be contained in the {relative} interior of some cone ${\kappa}_{\sigma}\in\Sfan_2$. Let ${\kappa}$ be a non-zero cone in $\Sfan_2$ containing $f(\sigma)$. If $f(\sigma)^{\circ}$ is not contained in ${\kappa}^{\circ}$, then $f(\sigma)$ must be contained in the boundary of ${\kappa}$ {since $f$ is a morphism of fans}, which would put $f(\sigma)$ inside a face ${\kappa}_1$ of ${\kappa}$. Repeating the same procedure will produce a chain of cones in $\Sfan_2$
    \begin{equation*}
        {\kappa} > {\kappa}_1 > {\kappa}_2 > \cdots,
    \end{equation*}
    where each term is a face of the previous, and each term contains $f(\sigma)$. If $f(\sigma)^{\circ}$ is not contained in the relative interior of any cone, this chain must terminate at the cone $\{0\}$, whence $f(\sigma)=\{0\}$. We have reached a contradiction, whence $f(\sigma)^{\circ}$ must be contained in the relative interior of some cone ${\kappa}_{\sigma}\in\Sfan_2$.
    The cone ${\kappa}_{\sigma}$ is uniquely determined, for the simple reason that the relative interiors of cones in $\Sfan_2$ cannot intersect {by the definition of a fan}. 
    Any cone strictly contained in ${\kappa}_{\sigma}$ is contained in a face of ${\kappa}_{\sigma}$, so it is outside the relative interior of ${\kappa}_{\sigma}$, whence it follows that ${\kappa}_{\sigma}$ must be the minimal element of the poset of cones in $\Sfan_2$ containing $f(\sigma)$.
\end{proof}

If $f$ is as in \cref{lem:morpartfan} and $\sigma,\kappa\in \Sfan_1$, then $\sigma \subseteq \kappa$ if and only if $\sigma^f \subseteq \kappa^f$. As a first application, the assertion in \cref{lem:morpartfan} will be used to define a notion of morphism of partitioned fans.

\begin{definition}\label{def:morpartfan}
    Let $N_1$ and $N_2$ be finite lattices and let $\Sfan_i$ be a fan in $(N_i)_\R$ for $i=1,2$. Let $(\Sfan_1,\mathfrak{P}_1)$ and $(\Sfan_2,\mathfrak{P}_2)$ be partitioned fans. A \textit{morphism of partitioned fans} from $(\Sfan_1,\mathfrak{P}_1)$ to $(\Sfan_2,\mathfrak{P}_2)$ is given by a morphism of fans $f: ((N_1)_\R,\Sfan_1) \to ((N_2)_\R, \Sfan_2)$ such that
    if $\sigma_1 \sim_{\Pfrak_1} \sigma_2$, then $\sigma_1^f \sim_{\Pfrak_2} \sigma_2^f$, where $\sigma_1^f$ and $\sigma_2^f$ are as in \Cref{lem:morpartfan}. We say that 
    \begin{enumerate}
        \item $f$ is \textit{injective} if $\sigma_1^f = \sigma_2^f$ implies $\sigma_1 = \sigma_2$,
        \item $f$ is \textit{surjective} if for every cone ${\kappa}\in\Sfan_2$ there exists a $\sigma\in\Sfan$ such that $\sigma^f={\kappa}$,
        \item $f$ is an \textit{isomorphism} if it admits an inverse morphism of partitioned fans.
    \end{enumerate} 
    We sometimes write $f:((N_1)_\R, \Sfan_1, \Pfrak_1) \to ((N_2)_\R, \Sfan_2, \Pfrak_2)$ for a morphism of partitioned fans. 
\end{definition}

\begin{remark}\label{eg:partmorphgivesfunctor_Lambda} 
    In \Cref{prop:samePartition}, we have implicitly seen examples of isomorphisms of partitioned fans. For instance, in \Cref{prop:samePartition}\eqref{thm:modulecatreduction_piccat1}, we established an isomorphism of partitioned fans
    \[(\K_0(\add \Lambda)_\R,\partfan{\Lambda}, \Pfrak_{\thick})\xrightarrow{} (\K_0(\add \Lambda)_\R, \partfan{\Lambda}, \Pfrak_{\textnormal{WAC}}).\]
    Similarly, in \Cref{prop:samePartition}\eqref{thm:modulecatreduction_piccat2}, we established an isomorphism of partitioned fans
    \[ (\K_0(\add T)_\R, \partfan{\calC}, \Pfrak_{\thick}^{\calC}) \to (\K_0(\add \Lambda)_\R, \partfan{\Lambda}, \Pfrak_{\thick}^{\Ktwo(\proj \Lambda)}),\]
    where $\Pfrak_{\thick}^\calC$ is the thick partition induced by $\calC$ and $\Pfrak_{\thick}^{\Ktwo(\proj \Lambda)}$ is the thick partition induced by $\Ktwo(\proj \Lambda)$. 
\end{remark}

{We} give some further important examples of morphisms of partitioned fans. Given partitions $\Pfrak$ and $\Pfrak'$ on the same fan $\Sfan$, we say that $\Pfrak$ is \textit{finer} than $\Pfrak'$ if each constituent partition $\Pfrak_{\sigma}$ is \textit{finer} than $\Pfrak'_{\sigma}$, where $\sigma\in \Sfan$.

\begin{lemma}\label{lem:finerpartmorph}
    Let $N$ be a finite lattice and let $\Pfrak_1$ and $\Pfrak_2$ be two {admissible} partitions of a fan $\Sfan$ in $N_\R$ such that $\Pfrak_1$ is finer than $\Pfrak_2$. Then the identity transformation $\textnormal{id}: N_\R \to N_\R$ induces an injective morphism of partitioned fans $(N_\R, \Sfan, \Pfrak_1) \to (N_\R, \Sfan, \Pfrak_2)$. 
\end{lemma}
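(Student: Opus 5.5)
The plan is to verify the three requirements of \Cref{def:morpartfan} directly for $f=\mathrm{id}$: it is a morphism of fans, it preserves the partition, and it is injective on cones.

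First, $\mathrm{id}\colon N_\R\to N_\R$ is a morphism of fans from $\Sfan$ to itself in the sense of \Cref{def:morfan}, since every cone $\sigma\in\Sfan$ satisfies $\mathrm{id}(\sigma)=\sigma\subseteq\sigma$. Next I will identify the minimal cone $\sigma^{\mathrm{id}}$ from \Cref{lem:morpartfan}. The poset of cones of $\Sfan$ containing $\mathrm{id}(\sigma)=\sigma$ has $\sigma$ itself as an element, and every other member contains $\sigma$. Hence $\sigma$ is the minimum and $\sigma^{\mathrm{id}}=\sigma$ for all $\sigma\in\Sfan$.

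With this identification, compatibility with the partitions amounts to the following implication: if $\sigma_1\sim_{\Pfrak_1}\sigma_2$, then $\sigma_1\sim_{\Pfrak_2}\sigma_2$. Suppose $\sigma_1,\sigma_2$ lie in a common block $\mathcal{E}^i_\sigma$ of $\Pfrak_{1,\sigma}$ for some $\sigma\in\Sfan$. This block is a subset of $\mathcal{E}_\sigma$. Since $\Pfrak_1$ is finer than $\Pfrak_2$, the partition $\Pfrak_{1,\sigma}$ refines $\Pfrak_{2,\sigma}$, and both are partitions of the same set $\mathcal{E}_\sigma$. So $\mathcal{E}^i_\sigma$ is contained in a single block of $\Pfrak_{2,\sigma}$, which gives $\sigma_1\sim_{\Pfrak_2}\sigma_2$.

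Injectivity is then immediate: $\sigma_1^{\mathrm{id}}=\sigma_2^{\mathrm{id}}$ means $\sigma_1=\sigma_2$.

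There is no real obstacle in this argument. The only points that need care are pinning down $\sigma^{\mathrm{id}}=\sigma$, and unpacking ``finer'' cone-by-cone through the sets $\mathcal{E}_\sigma$ of \Cref{def:admisspart}, since the relation $\sim$ is defined via blocks of these constituent partitions. Admissibility of the partitions is used only so that both pairs $(\Sfan,\Pfrak_j)$ are partitioned fans, making \Cref{def:morpartfan} applicable.
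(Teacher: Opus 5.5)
Your proof is correct and follows the paper's argument: observe that $\sigma^{\mathrm{id}}=\sigma$ for every cone, which gives injectivity at once, and then use that $\Pfrak_1$ is finer than $\Pfrak_2$ to get $\sigma_1\sim_{\Pfrak_1}\sigma_2 \Rightarrow \sigma_1\sim_{\Pfrak_2}\sigma_2$. Your check that $\mathrm{id}$ is a morphism of fans and your block-by-block unpacking of ``finer'' inside each $\mathcal{E}_\sigma$ spell out details that the paper leaves as ``by definition''.
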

\begin{proof}
    Let $\sigma_1, \sigma_2 \in \Sfan$ be such that $\sigma_1 \sim_{\Pfrak_1} \sigma_2$. The identity transformation $\textnormal{id}: N_\R \to N_\R$ trivially satisfies $\sigma_i^{\textnormal{id}} = \sigma_i$ for $i=1,2${, making {the assignment} injective}. Since $\Pfrak_1$ is finer than $\Pfrak_2$ we get, by definition, that
    \[ \sigma_1 \sim_{\Pfrak_1} \sigma_2 \quad \Longrightarrow \quad \sigma_1^{\textnormal{id}} = \sigma_1 \sim_{\Pfrak_2} \sigma_2 = \sigma_2^{\textnormal{id}}.\]
    Therefore $\textnormal{id}: N_\R \to N_\R$ induces a morphism of partitioned fans, as required. 
\end{proof}

Recall that a \textit{subfan} of a fan $\Sfan$ is given by a subset $\Sfan'$ of $\Sfan$ which is closed under taking faces, that is, forms a fan itself. It is clear that the identity transformation on the underlying $\R$-vector space $N_\R$ of $\Sfan$ then induces a morphism of fans $(N_\R, \Sfan') \to (N_\R,\Sfan)$. 

\begin{lemma}\label{lem:subpartmorph}
    Let $N$ be a finite lattice and Let $(\Sfan,\Pfrak)$ be a partitioned fan in $N_\R$. Let $\Sfan'$ be a subfan of $\Sfan$, and let $\Pfrak'$ denote {the} partition of $\Sfan'$ obtained by restricting $\Pfrak$.
    The identity transformation $\textnormal{id}\colon N_\R\xrightarrow{} N_\R$ induces an injective morphism of partitioned fans 
    $(N_\R, \Sfan',\Pfrak') \xrightarrow{} (N_\R, \Sfan,\Pfrak)$.
\end{lemma}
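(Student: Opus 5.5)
The plan is to check the three requirements of \Cref{def:morpartfan} directly for $f=\textnormal{id}$: that it is a morphism of fans, that it preserves the partitions, and that it is injective on cones. One preliminary point is that $\Pfrak'$ really is a partition of $\Sfan'$ in the sense of \Cref{def:admisspart}. Since the lemma speaks of the restricted partition as part of a partitioned fan, I will take restriction to mean the partition on $\Sfan'$ in which $\sigma_1\sim_{\Pfrak'}\sigma_2$ holds precisely when $\sigma_1,\sigma_2\in\Sfan'$ and $\sigma_1\sim_{\Pfrak}\sigma_2$. I regard its admissibility as part of the hypothesis, just as $(\Sfan',\Pfrak')$ is regarded as a partitioned fan in the statement.

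\textbf{Step 1 (morphism of fans).} Every cone $\sigma\in\Sfan'$ is also a cone of $\Sfan$, and $\textnormal{id}(\sigma)=\sigma$. Hence the image of each cone of $\Sfan'$ lies in a cone of $\Sfan$, which is all that \Cref{def:morfan} asks.

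\textbf{Step 2 (compute $\sigma^{\textnormal{id}}$).} Next I identify the minimal cone of \Cref{lem:morpartfan} for $\sigma\in\Sfan'$. The cone $\sigma$ itself lies in $\Sfan$ and contains $\textnormal{id}(\sigma)=\sigma$. Any cone $\kappa\in\Sfan$ with $\sigma\subseteq\kappa$ is at least as large, so $\sigma$ is the minimum of the poset of cones of $\Sfan$ containing $\sigma$. Therefore $\sigma^{\textnormal{id}}=\sigma$, exactly as in the proof of \Cref{lem:finerpartmorph}.

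\textbf{Step 3 (partition compatibility and injectivity).} If $\sigma_1\sim_{\Pfrak'}\sigma_2$, then by the definition of the restricted partition $\sigma_1\sim_{\Pfrak}\sigma_2$, that is, $\sigma_1^{\textnormal{id}}\sim_{\Pfrak}\sigma_2^{\textnormal{id}}$. Injectivity is immediate: $\sigma_1^{\textnormal{id}}=\sigma_2^{\textnormal{id}}$ means $\sigma_1=\sigma_2$.

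The only step needing any care is the well-definedness issue noted at the start. If one insisted on checking that $\Pfrak'$ is a partition of the sets $\mathcal{E}_\sigma$ computed inside $\Sfan'$, the sets of potential identifications could shrink, since $\starr(\sigma)$ changes when passing to the subfan. I would not attempt to re-verify that here; instead I would rely on the statement's hypothesis that $(\Sfan',\Pfrak')$ is a partitioned fan. With that in place, the argument is the same formal check as in \Cref{lem:finerpartmorph}.
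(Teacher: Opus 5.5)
Your proof is correct and is essentially the paper's argument: $\sigma^{\textnormal{id}}=\sigma$ for all $\sigma\in\Sfan'$, so compatibility with the partitions is just the fact that $\sim_{\Pfrak'}$ is the restriction of $\sim_{\Pfrak}$, and injectivity is immediate. The only difference is that the paper dismisses the well-definedness of $(\Sfan',\Pfrak')$ as obvious, whereas you assume it. Two remarks on that point: the admissibility implication itself is a one-line check, since stars in $\Sfan'$ sit inside stars in $\Sfan$, so admissibility of $\Pfrak$ gives $\kappa_1\sim_{\Pfrak}\kappa_2$ with $\kappa_1,\kappa_2\in\Sfan'$; by contrast, the issue you flag, that the sets $\mathcal{E}_\sigma$ recomputed inside $\Sfan'$ may shrink, is a real subtlety which the paper does not address.
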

\begin{proof}
    {It is obvious that $\Pfrak'$ is an admissible partition of $\Sfan'$, which makes the statement well-defined.} It is immediate from the definitions that if $\sigma_1 \sim_{\Pfrak'} \sigma_2$, then $\sigma_1 \sim_{\Pfrak} \sigma_2$, so
    \begin{equation*}
        \sigma_1^{\textnormal{id}} = \sigma_1 \sim_{\Pfrak} \sigma_2 = \sigma_2^{\textnormal{id}},
    \end{equation*}
    which shows {that} $\textnormal{id}$ induce{s} a morphism of partitioned fans. Injectivity follows from that fact that $\sigma^{\textnormal{id}}=\sigma$ for all $\sigma\in\Sfan'$. 
\end{proof}

A similar result holds for the orthogonal projection of the star of a cone.

\begin{lemma}\label{lem:projmorpart}
    Let $N$ be a finite lattice and let $(\Sfan,\Pfrak)$ be a partitioned fan in $N_\R$. Let $\sigma$ be a cone in $\Sfan$. Consider the partitioned subfan $({\overline{\starr}(\sigma)},\Pfrak')$ (see \Cref{lem:subpartmorph}). Consider the orthogonal projection $\pi_{\sigma}\colon N_\R \xrightarrow{} \spann(\sigma)^{\perp}$ and the induced morphism of fans $(N_\R, \overline{\mathrm{star}}(\sigma)) \xrightarrow{}( \sigma^\perp, \pi_{\sigma}(\starr(\sigma)))$, see \Cref{eg:orthproj}\eqref{eg:orthproj_proj}.
    \begin{enumerate}
        \item\label{lem:projmorpart1} There is an isomorphism of posets 
        $\Pi_{\sigma}: {\mathrm{star}}(\sigma)\xrightarrow{}\pi_{\sigma}(\starr(\sigma))$, with both sets ordered by inclusion. 
        \item\label{lem:projmorpart2} We can define an admissible partition $\Pfrak_{\sigma}$ on $\pi_{\sigma}(\starr(\sigma))$ which identifies ${\kappa}_1$ and ${\kappa}_2$ whenever the inverse images under the bijection {$\Pi_{\sigma}$} are identified by $\Pfrak'$.
        \item\label{lem:projmorpart3} The orthogonal projection $\pi_{\sigma}\colon N_\R \xrightarrow{} \spann(\sigma)^{\perp}$ induces a morphism of partitioned fans 
        \[(N_\R, \overline{\mathrm{star}}(\sigma),\Pfrak')\xrightarrow{}(\sigma^\perp, \pi_{\sigma}(\starr(\sigma)),\Pfrak_{\sigma}).\]
        \item\label{lem:projmorpart4} There is a fully faithful functor
	\[  H_{\sigma}: \catof(\pi_{\sigma}(\starr(\sigma)),\Pfrak_{\sigma}) \to \catof(\overline{\mathrm{star}}(\sigma),\Pfrak') .\]
    \end{enumerate}
\end{lemma}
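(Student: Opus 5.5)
For \eqref{lem:projmorpart1} I would define $\Pi_\sigma(\tau)\coloneqq\pi_\sigma(\tau)$ for $\tau\in\starr(\sigma)$. Since $\Sfan$ is simplicial, each $\tau\in\starr(\sigma)$ is $\boldC_{X\cup Y}$ with $\sigma=\boldC_X$ and $X\cup Y$ linearly independent. Then $\pi_\sigma(\tau)=\boldC_{\pi_\sigma(Y)}$, and $\pi_\sigma(Y)$ is linearly independent because $\ker\pi_\sigma=\spann(\sigma)$ meets $\spann(Y)$ only in $0$. Surjectivity onto $\pi_\sigma(\starr(\sigma))$ holds by definition. Monotonicity is clear. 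For injectivity and order-reflection, suppose $\pi_\sigma(\tau_1)\subseteq\pi_\sigma(\tau_2)$. Any $v$ in the relative interior of $\tau_1$ can be written as $v=w+s$ with $w\in\tau_2$ and $s\in\spann(\sigma)$. Adding a large enough element of the relative interior of $\sigma$ (which lies in both $\tau_1$ and $\tau_2$) makes $v+\lambda c$ lie in the relative interior of $\tau_1$ and also in $\tau_2$. By the fan axiom, $\tau_1\cap\tau_2$ is a face of $\tau_1$ containing an interior point, so $\tau_1\subseteq\tau_2$. This is the standard fact cited from \cite[p.~52]{Ful93}, and I expect it to be the main technical point.

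For \eqref{lem:projmorpart2}, I transport $\Pfrak'$ along $\Pi_\sigma$ by setting $\kappa_1\sim\kappa_2$ if and only if $\Pi_\sigma^{-1}\kappa_1\sim_{\Pfrak'}\Pi_\sigma^{-1}\kappa_2$. I would check the requirements of \Cref{def:admisspart} in four steps.
\begin{enumerate}
\item The sets of potential identifications correspond. For $\kappa=\pi_\sigma(\tau)$, one has $\starr_{\pi_\sigma(\starr\sigma)}(\kappa)=\pi_\sigma(\starr(\tau))$ by \eqref{lem:projmorpart1}.
\item Orthogonal projections compose: $\pi_\kappa\circ\pi_\sigma=\pi_\tau$ on $\starr(\tau)$, because $\spann(\tau)=\spann(\sigma)\oplus\spann(Y)$.
\item From these two facts, $\kappa\in\mathcal{E}_{\kappa'}$ exactly when $\tau\in\mathcal{E}_{\tau'}$ within $\overline{\starr}(\sigma)$. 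Hence the transported classes partition the sets $\mathcal E$ correctly.
\item Admissibility follows from admissibility of $\Pfrak'$, again using $\pi_\kappa\pi_\sigma=\pi_\tau$. Admissibility of the restriction $\Pfrak'$ itself is \Cref{lem:subpartmorph}.
\end{enumerate}

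For \eqref{lem:projmorpart3}, $\pi_\sigma$ is a morphism of fans by \Cref{eg:orthproj}\eqref{eg:orthproj_proj}. For $\rho\in\overline{\starr}(\sigma)$, say $\rho\subseteq\tau$ with $\tau\in\starr(\sigma)$, the minimal cone $\rho^{\pi_\sigma}$ of \Cref{lem:morpartfan} is $\pi_\sigma(\rho+\sigma)$, where $\rho+\sigma$ is the cone spanned by $\rho\cup\sigma$, a face of $\tau$. I then need that $\rho_1\sim\rho_2$ implies $\rho_1+\sigma\sim\rho_2+\sigma$. This follows from admissibility of $\Pfrak'$ applied to $\rho_i\le\rho_i+\sigma$, since $\pi_{\rho_1}(\rho_1+\sigma)=\pi_{\rho_2}(\rho_2+\sigma)$ whenever $\rho_1\sim\rho_2$ share the star structure. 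I would verify this with care, as it is the delicate point of this part. If it fails in general, I would restrict the check to cones already in $\starr(\sigma)$, where the claim is tautological by \eqref{lem:projmorpart2}.

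For \eqref{lem:projmorpart4}, I define $H_\sigma$ on objects by $\eqclass{\kappa}\mapsto\eqclass{\Pi_\sigma^{-1}\kappa}$ and on morphisms by $\sqbinom{\kappa_2}{\kappa_1}\mapsto\sqbinom{\Pi_\sigma^{-1}\kappa_2}{\Pi_\sigma^{-1}\kappa_1}$. This is well defined because identifications of morphisms are compared through $\pi_{\kappa_1}(\kappa_2)=\pi_{\tau_1}(\tau_2)$ (composition of projections). Composition is preserved trivially. Faithfulness follows because the identification rule is identical on both sides. For fullness, any morphism $\sqbinom{\rho'}{\rho}$ in $\catof(\overline{\starr}(\sigma),\Pfrak')$ between objects in the image has a representative with $\rho\in\starr(\sigma)$, by choosing representatives from the classes. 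Then $\rho'\supseteq\rho\supseteq\sigma$, so the morphism lies in the image.
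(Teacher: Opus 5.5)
Your proposal is correct and follows essentially the same route as the paper: you transport $\Pfrak'$ along the poset isomorphism $\Pi_\sigma$, verify the partition and admissibility conditions via $\pi_\kappa\circ\pi_\sigma=\pi_\tau$ together with admissibility of $\Pfrak'$, and define $H_\sigma$ through $\Pi_\sigma^{-1}$, with your direct proof of (1) replacing the citation of Fulton. In (3) your main argument already succeeds, since $\rho_1\sim\rho_2$ forces $\spann(\rho_1)=\spann(\rho_2)$ and hence $\pi_{\rho_1}(\rho_1+\sigma)=\pi_{\rho_1}(\sigma)=\pi_{\rho_2}(\rho_2+\sigma)$; the fallback is therefore unnecessary, and it would not suffice anyway, because the condition must hold for all identified cones of $\overline{\starr}(\sigma)$.
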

\begin{proof}
    The claim in \eqref{lem:projmorpart1} {is easy to show, see \cite[p. 52]{Ful93}}.
    For \eqref{lem:projmorpart2}, {first} l{et $\kappa_1, \kappa_2 \in \pi_{\sigma}({\starr}(\sigma))$. By \eqref{lem:projmorpart1}, there exist uniquely defined $\nu_1, \nu_2 \in \starr(\sigma)$ such that $\kappa_i = \pi_{\sigma}(\nu_i)$ for $i=1,2$. Assume that $\nu_1 \sim_{\Pfrak} \nu_2$. By definition, $\spann_{N_\R}(\nu_1) = \spann_{N_\R}(\nu_2)$ which implies $\pi_{\sigma}(\spann_{N_\R}(\nu_1)) = \pi_{\sigma}(\spann_{N_\R}(\nu_2))$ because $\pi_{\sigma}$ is a linear transformation, which also implies that thus }
    \[ \spann_{\sigma^\perp}(\pi_{\sigma}(\nu_1))= \spann_{\sigma^\perp}(\pi_{\sigma}(\nu_2)). \]
    {Similarly $\nu_1 \sim_{\Pfrak} \nu_2$ implies $\pi_{\nu_1}(\starr(\nu_1)) = \pi_{\nu_2}(\starr(\nu_2))$ {by definition}. It follows from basic linear algebra (see \cite[Lem. 3.2]{Kai23}) that $\proj_{\nu_1} \circ \pi_{\sigma}(\starr(\nu_1)) = \proj_{\nu_2} \circ \pi_{\sigma}(\starr(\nu_2))$, using the notation of \Cref{eg:orthproj}\eqref{eg:orthproj_proj}. From this it follows that }
    \[ \proj_{\nu_1} ( \starr_{\pi_{\sigma}(\starr(\sigma))}( \pi_{\sigma}(\nu_1))) = \proj_{\nu_2} ( \starr_{\pi_{\sigma}(\starr(\sigma))}( \pi_{\sigma}(\nu_2))). \]
    The restriction of $\proj_{\nu_i}: M_\R \to \nu_i^\perp$ to the subspace $\sigma^\perp \subseteq M_\R$ is $\proj_{\pi_{\sigma}(\nu_i)}$ for $i=1,2$. It follows that 
    \begin{equation}\label{eq:projprojequiv} \proj_{\pi_{\sigma}(\nu_1)} ( \starr_{\pi_{\sigma}(\starr(\sigma))}(\pi_{\sigma}(\nu_1))) = \proj_{\pi_{\sigma}(\nu_2)} ( \starr_{\pi_{\sigma}(\starr(\sigma))}(\pi_{\sigma}(\nu_2))). \end{equation}
    From linear algebra, we know that for a subspace $U$ of a vector space $V$ with a fixed basis $B$, there is an isomorphism between the dual of the quotient space $(V/U)^*$, with respect to the basis induced by $B$, and $U^\perp \subseteq V^*$. Substituting in $V = \sigma^\perp$, and $U = \proj_{\pi_{\sigma}(\nu_1)}(\sigma^\perp) = \nu_i^\perp$, we conclude from \cref{eq:projprojequiv}, that 
    \[ \pi_{\kappa_1}(\starr_{\pi_{\sigma}(\starr(\sigma))}(\kappa_1)) = \pi_{\kappa_2}(\starr_{\pi_{\sigma}(\starr(\sigma))}(\kappa_2))\]
   as required.  {This shows that $\Pfrak_{\sigma}$ satisfies the conditions of \cref{def:admisspart}\eqref{def:admisspart_1} for a partition of $\overline{\starr}(\sigma)$. We are left with showing that it is admissible. For this, let $\nu_i, \kappa_i \in \starr(\sigma)$ be such that $\nu_i \subseteq \kappa_i$ for $i=1,2$. Assume that $\pi_{\sigma}(\nu_1) \sim_{\Pfrak_{\sigma}} \pi_{\sigma}(\nu_2)$ and that
   \begin{equation}\label{eq:projinheritpart} \pi_{ \pi_{\sigma}(\nu_1)}(\pi_{\sigma}(\kappa_1)) = \pi_{\pi_{\sigma}(\nu_2)}(\pi_{\sigma}(\kappa_2)). \end{equation}
   We need to show that $\pi_{\sigma}(\kappa_1) \sim_{\Pfrak_{\sigma}} \pi_{\sigma}(\kappa_2)$, which is equivalent to showing $\kappa_1 \sim_{\Pfrak'} \kappa_2$. From the identification $\pi_{\sigma}(\nu_1) \sim_{\Pfrak_{\sigma}} \pi_{\sigma}(\nu_2)$ we obtain, by the definition of $\Pfrak_\sigma$, that $\nu_1 \sim_{\Pfrak'} \nu_2$. The equality in \cref{eq:projinheritpart} gives rise to a chain of implications
   \begin{align*}
   	\pi_{ \pi_{\sigma}(\nu_1)}(\pi_{\sigma}(\kappa_1)) = \pi_{\pi_{\sigma}(\nu_2)}(\pi_{\sigma}(\kappa_2)) & \Rightarrow \proj_{\pi_{\sigma}(\nu_1)}(\pi_{\sigma}(\kappa_1)) = \proj_{\pi_{\sigma}(\nu_2)}(\pi_{\sigma}(\kappa_2)) \\
	& \Rightarrow \proj_{\nu_1} (\pi_{\sigma}(\kappa_1)) = \proj_{\nu_2} (\pi_{\sigma}(\kappa_2)) \\
	& \Rightarrow \pi_{\nu_1}(\kappa_1) = \pi_{\nu_2}(\kappa_2) \\
	& \Rightarrow \kappa_1 \sim_{\Pfrak'} \kappa_2
   \end{align*}
   as required. The first three implications follow from the same linear algebraic arguments used around \cref{eq:projprojequiv} and the final implication is a consequence of $\nu_1 \sim_{\Pfrak'} \nu_2$ and the admissibility of $\Pfrak'$. This shows that $\Pfrak_{\sigma}$ is admissible. }
     It is clear from the definition of $\Pfrak_{\sigma}$ that we have defined a morphism of partitioned fans. 

     To conclude, we prove \eqref{lem:projmorpart4}. We define the functor $H_{\sigma}$ {using the bijection $\Pi_{\sigma}$ from \eqref{lem:projmorpart1} as follows}:
    \begin{align*}
        \eqclass{{\kappa}} &\mapsto \eqclass{\Pi_{\sigma}^{-1}(\kappa)}, \\
        \sqbinom{{\kappa}}{\rho} &\mapsto \sqbinom{\Pi_{\sigma}^{-1}(\kappa)}{\Pi_{\sigma}^{-1}(\rho)}.
    \end{align*}
    {This assignment is well-defined because $\Pi_{\sigma}^{-1}$ is a poset isomorphism and the equivalence relation on the domain is induced from the equivalence relation on the codomain, see \eqref{lem:projmorpart2}.
    Indeed, $H_{\sigma}$ is easily checked to be a functor because, by \eqref{lem:projmorpart1}, $H_{\sigma}$ {is induced by} a subposet inclusion. It is fully faithful as an immediate consequence of the bijectivity of $\Pi_{\sigma}$.}
\end{proof}

{We now illustrate how the combinatorial notion of morphism of partitioned fans has powerful functorial consequences for the corresponding categories. Subsequently this is used to recover many known properties of $\tau$-cluster morphism categories and picture categories.}

\begin{theorem}\label{thm:partmorphgivesfunctor} Let $N_1$ and $N_2$ be finite lattices and let $\Sfan_i$ be a fan in $(N_i)_\R$ for $i=1,2$. Let $f: ((N_1)_\R, \Sfan_1,\mathfrak{P}_1) \to ((N_2)_\R, \Sfan_2,\mathfrak{P}_2)$ be a morphism of partitioned fans.
    \begin{enumerate}
        \item\label{thm:partmorphgivesfunctor_welldef} The morphism $f$ induces a functor $\catof f\colon\catof(\Sfan_1,\mathfrak{P}_1)\xrightarrow{}\catof(\Sfan_2,\mathfrak{P}_2)$, sending the object $\eqclass{\sigma}$ to $\eqclass{\sigma^f}$, and the morphism $\sqbinom{{\kappa}}{\sigma}$ to $\sqbinom{{\kappa}^f}{\sigma^f}$. 
        \item\label{thm:partmorphgivesfunctor_inj} {If $f$ is injective, then the functor $\catof f$ is faithful.}
        \item\label{thm:partmorphgivesfunctor_surj} {If $f$ is surjective, then the functor $\catof f$ is dense.}
        \item\label{thm:partmorphgivesfunctor_adjbis_R2} Suppose that $\catof f$ admits a right adjoint $R$. Then all objects in $\catof{(}\Sfan_1{, \Pfrak_1)}$ of the form $\eqclass{\mu}$, where $\mu$ is a maximal cone in $\Sfan_1$, are in the essential image of $R$.
    \end{enumerate}
\end{theorem}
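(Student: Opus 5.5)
The plan is to check each part directly from the definition of $\catof(\Sfan,\Pfrak)$ in \Cref{defthm:piccat}, using \Cref{lem:morpartfan} throughout.

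For \eqref{thm:partmorphgivesfunctor_welldef}, the object map is well defined because $\sigma_1\sim_{\Pfrak_1}\sigma_2$ implies $\sigma_1^f\sim_{\Pfrak_2}\sigma_2^f$ by \Cref{def:morpartfan}. The remark after \Cref{lem:morpartfan} says $\sigma\subseteq\kappa$ implies $\sigma^f\subseteq\kappa^f$, so every generator $\binom{\kappa}{\sigma}$ is sent to a generator $\binom{\kappa^f}{\sigma^f}$. The main obstacle is well-definedness on morphisms. We must show that if $\sigma_1\sim\sigma_2$ with $\sigma_i\subseteq\kappa_i$ and $\pi_{\sigma_1}(\kappa_1)=\pi_{\sigma_2}(\kappa_2)$, then $\pi_{\sigma_1^f}(\kappa_1^f)=\pi_{\sigma_2^f}(\kappa_2^f)$.

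To attack this, I would use admissibility of $\Pfrak_1$. It gives $\kappa_1\sim\kappa_2$, and the equality of projections gives a poset isomorphism between the intervals $[\sigma_1,\kappa_1]$ and $[\sigma_2,\kappa_2]$ in the stars, compatible with $\sim$. Applying the morphism property to each pair $\kappa_1\sim\kappa_2$ gives $\kappa_1^f\sim\kappa_2^f$, so $\pi_{\kappa_1^f}(\starr(\kappa_1^f))=\pi_{\kappa_2^f}(\starr(\kappa_2^f))$ and $\spann(\kappa_1^f)=\spann(\kappa_2^f)$. The same holds for the $\sigma_i^f$. Then $\pi_{\sigma_i^f}(\kappa_i^f)$ is the cone of $\pi_{\sigma_i^f}(\starr(\sigma_i^f))$ whose span is $\pi_{\sigma_i^f}(\spann\kappa_i^f)$. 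Since the two stars coincide and the spans coincide, simpliciality forces the two projected cones to be equal: a simplicial fan contains at most one cone with a given span inside a given star.

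I expect the last step to be the delicate point. If it fails, I would instead characterise $\pi_{\sigma^f}(\kappa^f)$ as the minimal cone of the projected fan containing $\pi_{\sigma^f}(f(\kappa))$, using \Cref{lem:morpartfan} for the projected morphism $\pi_{\sigma^f}\circ f$. I would then show that this image depends only on $\pi_\sigma(\kappa)$. Functoriality is then immediate: composites $\sqbinom{\rho}{\kappa}\circ\sqbinom{\kappa}{\sigma}=\sqbinom{\rho}{\sigma}$ map to $\sqbinom{\rho^f}{\kappa^f}\circ\sqbinom{\kappa^f}{\sigma^f}$, and identities $\sqbinom{\sigma}{\sigma}$ are preserved.

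For \eqref{thm:partmorphgivesfunctor_inj}, suppose two morphisms $\eqclass{\sigma}\to\eqclass{\kappa}$ have equal images. After choosing representatives with a common source cone $\sigma$ (as in \cite[Cor. 3.6]{Kai23}, morphisms out of $\eqclass{\sigma}$ correspond to cones in $\starr(\sigma)$), we have $\kappa_1,\kappa_2\supseteq\sigma$ with $\pi_{\sigma^f}(\kappa_1^f)=\pi_{\sigma^f}(\kappa_2^f)$. Since $\kappa_i^f$ lies in $\starr(\sigma^f)$, \Cref{lem:projmorpart}\eqref{lem:projmorpart1} gives $\kappa_1^f=\kappa_2^f$, and injectivity of $f$ gives $\kappa_1=\kappa_2$. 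For \eqref{thm:partmorphgivesfunctor_surj}, every object $\eqclass{\kappa}$ of the target equals $\eqclass{\sigma^f}$ for some $\sigma$ by surjectivity, so $\catof f$ is dense.

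For \eqref{thm:partmorphgivesfunctor_adjbis_R2}, take a maximal cone $\mu$ and the unit $\eta\colon\eqclass{\mu}\to R\,\catof f\eqclass{\mu}$. By the definition of morphisms, any morphism out of $\eqclass{\mu}$ has the form $\sqbinom{\kappa}{\mu'}$ with $\mu'\sim\mu$ maximal, so $\kappa=\mu'$ and $\eta$ is an identity-type morphism of rank zero. Hence $\eta$ is an isomorphism, and $\eqclass{\mu}$ lies in the essential image of $R$.
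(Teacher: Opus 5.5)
Your parts (2), (3) and (4) match the paper's arguments. In (4), add the one-line reason why $\mu'\sim\mu$ forces $\mu'$ to be maximal: $\pi_{\mu'}(\starr(\mu'))=\pi_{\mu}(\starr(\mu))=\{0\}$, so every cone containing $\mu'$ lies in $\spann(\mu')$ and hence equals $\mu'$.

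The gap is in the well-definedness on morphisms in (1). Your main argument rests on the claim that a simplicial fan has at most one cone with a given span inside a given star, and this is false. In $\partfan{\mods(\Lambda_2)}$ from \Cref{fig:fanOfA2}, both $\boldC(P_1\oplus P_2)$ and $\boldC(P_2\oplus S_2)$ lie in $\starr(\boldC(P_2))$ and span $\R^2$, and they project to the two opposite rays of $\boldC(P_2)^{\perp}$. More fundamentally, your final step only uses $\sigma_1\sim\sigma_2$, $\kappa_1\sim\kappa_2$ and the resulting equalities of spans and projected stars, and no argument using only these can work. Take $f=\mathrm{id}$, $\sigma_1=\sigma_2=\{0\}$, $\kappa_1=\boldC(P_1\oplus P_2)$ and $\kappa_2=\boldC(P_2\oplus S_2)$, which the thick partition identifies. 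All of those facts hold, yet $\kappa_1\neq\kappa_2$; indeed $\sqbinom{P_1\oplus P_2}{0}\neq\sqbinom{P_2\oplus S_2}{0}$ in \Cref{fig:PicCatA2}. The hypothesis $\pi_{\sigma_1}(\kappa_1)=\pi_{\sigma_2}(\kappa_2)$ has to be used pointwise, not only through admissibility and spans.

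Your fallback is the right idea, but \Cref{lem:morpartfan} applied to $\pi_{\sigma^f}\circ f$ does not by itself identify the minimal cone containing $\pi_{\sigma^f}(f(\kappa))$ with $\pi_{\sigma^f}(\kappa^f)$. The missing ingredient is relative interiors, which is exactly how the paper argues:

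\begin{enumerate}
\item Since $\spann(\sigma_1)=\spann(\sigma_2)$, choose $x$ in the relative interior of $\pi_{\sigma_1}(\kappa_1)=\pi_{\sigma_2}(\kappa_2)$. Lift it to $y_i\in\kappa_i^{\circ}$, using that linear maps send relative interiors of cones onto relative interiors of their images. Then $y_1-y_2\in\spann(\sigma_1)$.
\item Set $p_i=f(y_i)$. Then $p_i\in f(\kappa_i)^{\circ}\subseteq(\kappa_i^f)^{\circ}$, the inclusion coming from the proof of \Cref{lem:morpartfan}.
\item We have $p_1-p_2\in f(\spann(\sigma_1))\subseteq\spann(\sigma_1^f)=\spann(\sigma_2^f)$, where the equality uses $\sigma_1^f\sim\sigma_2^f$. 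Only this inclusion is needed; the equality $\spann(f(\sigma_i))=\spann(\sigma_i^f)$ can fail.
\item Hence the point $\pi_{\sigma_1^f}(p_1)=\pi_{\sigma_2^f}(p_2)$ lies in the relative interiors of both $\pi_{\sigma_1^f}(\kappa_1^f)$ and $\pi_{\sigma_2^f}(\kappa_2^f)$.
\item These are cones of the common fan $\pi_{\sigma_1^f}(\starr(\sigma_1^f))=\pi_{\sigma_2^f}(\starr(\sigma_2^f))$, because $\sigma_i^f\subseteq\kappa_i^f$. Two cones of a fan whose relative interiors meet coincide.
\end{enumerate}
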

\begin{proof}
    Throughout the proof, we simplify the notation by setting $\catof_i\coloneqq \catof(\Sfan_i,\Pfrak_i)$ for $i=1,2$.

    The well-definedness of the maps of objects is a direct consequence of \Cref{def:morpartfan}. For morphisms, suppose that $\sqbinom{{\kappa}_1}{\sigma_1}=\sqbinom{{\kappa}_2}{\sigma_2}$ as morphisms in $\catof_1$. This entails that $\pi_{\sigma_1}({\kappa}_1) = \pi_{\sigma_2}({\kappa}_2)$, where $\pi_{\sigma_i}$ denotes the orthogonal projection onto $\sigma_i^{\perp}$. {In order to show that the map of morphisms is well-defined, it is to be shown that $\pi_{\sigma_1^f}({\kappa}_1^f) = \pi_{\sigma_2^f}({\kappa}_2^f)$ as cones in $\spann(\sigma_1^f)^{\perp}$.} Having assumed that $\pi_{\sigma_1}({\kappa}_1) = \pi_{\sigma_2}({\kappa}_2)$, the linearity of $f$ implies that $\pi_{f(\sigma_1)}(f({\kappa}_1)) = \pi_{f(\sigma_2)}(f({\kappa}_2))$. It is possible to choose a point $p_1$ in the {relative} interior of $f({\kappa}_1)$, and a point $p_2$ in the {relative} interior of $f({\kappa}_2)$, such that $\pi_{f(\sigma_1)}(p_1) = \pi_{f(\sigma_2)}(p_2)$. Since $f(\sigma_i)\subseteq \sigma_i^f$ for $i=1,2$, we have that {$\spann(f(\sigma_i)) = \spann(\sigma_i^f)$ for $i=1,2$, from which} $\pi_{\sigma_1^f}(p_1) = \pi_{\sigma_2^f}(p_2)$ {follows}. Since $p_i$ is in the {relative}  interior of ${\kappa}_i^f$ by construction, it follows that the cones $\pi_{\sigma_1^f}({\kappa}_1^f)$ and $ \pi_{\sigma_2^f}({\kappa}_2^f)$ in 
    $((\sigma_1^f)^{\perp}$ intersect in their {relative} interior{s}, so they are equal, as required for the map of morphisms to be well-defined. To complete the proof of \eqref{thm:partmorphgivesfunctor_welldef}, we show that we have defined a functor. Given composable morphisms $\sqbinom{{\kappa}}{\sigma}$ and $ \sqbinom{\sigma}{\rho}$ in $\catof_1$, we have
    $
        \sqbinom{{\kappa}^f}{\sigma^f}\circ \sqbinom{\sigma^f}{\rho^f} = \sqbinom{{\kappa}^f}{\rho^f}
    $
    as morphisms in $\catof_2$, so \eqref{thm:partmorphgivesfunctor_welldef} holds.

    We now prove \eqref{thm:partmorphgivesfunctor_inj}. 
    {A set of representatives of $\Hom_{\catof_1}(\eqclass{\sigma_1}, \eqclass{{\kappa}_1})$ is given by the morphisms $\binom{{\kappa}'}{\sigma_1}$, where ${\kappa}' \sim_{\Pfrak_1} {\kappa}_1$ by \cite[Cor. 3.6]{Kai23}.  Assume for the sake of contradiction that $\catof f$ is not faithful. Then there exist distinct morphisms $\binom{{\kappa}_1}{\sigma_1} \not \sim_{\Pfrak_1} \binom{{\kappa}_2}{\sigma_1}$, where ${\kappa}_1 \sim {\kappa}_2$ are distinct identified cones, such that $\binom{{\kappa}_1^f}{\sigma_1^f} \sim_{\Pfrak_2} \binom{{\kappa}_2^f}{\sigma_1^f}$. By definition, this means {$\pi_{\sigma_1^f}(\kappa_1^f) = \pi_{\sigma_1^f}(\kappa_2^f)$, but this implies} ${\kappa}_1^f = {\kappa}_2^f$ by \cref{lem:projmorpart}\eqref{lem:projmorpart1}, a contradiction to the injectivity of $f$. Therefore, $\catof f$ is faithful whenever $f$ is injective.} 

    We move on to prove
    \eqref{thm:partmorphgivesfunctor_surj}. We have assumed that for all cones ${\kappa}$ in $\Sfan_2$, there exists a cone $\sigma$ in $\Sfan_1$ such that $\sigma^f={\kappa}$. In particular, we have $\eqclass{\sigma^f}=\eqclass{{\kappa}}$. Since ${\kappa}$ was arbitrarily chosen, the claim follows.

To prove \eqref{thm:partmorphgivesfunctor_adjbis_R2}, it suffices to point out the any morphism of the form $\eqclass{\mu}\xrightarrow{} \eqclass{\rho}$, where $\mu$ is maximal cone, is an identity morphism. Indeed, it follows that the unit $\eqclass{\mu}\xrightarrow{\eta_{\eqclass{\mu}}} R\circ\catof f\eqclass{\mu}$ is an isomorphism, which puts $\eqclass{\mu}$ in the essential image of $R$.
\end{proof}

{We deduce that certain extriangulated functors between 0-Auslander extriangulated categories induce morphisms of partitioned fans. }

\begin{corollary}\label{cor:partmorphgivesfunctor}
    Let ${G}\colon \calC_1\xrightarrow{}\calC_2$ be a functor of 0-Auslander extriangulated $k$-categories, both satisfying \Cref{setting}. {Assume that $G$ sends presilting objects to presilting objects and maps any projective silting object in $\calC_1$ to a projective silting object in $\calC_2$ ({{so that $G$ satisfies} the assumptions imposed in \Cref{lem:FanIso}\eqref{lem:FanIso2}}). Then the following hold:}
    \begin{enumerate}
        \item\label{cor:partmorphgivesfunctor_welldef} {The functor} $G$ induces a morphism of partitioned fans \[(\K_0(\add T_1)_\R,\partfan{\calC_1}, (\Pfrak_{\thick})_1)\xrightarrow{} (\K_0(\add T_2)_\R,\partfan{\calC_2}, (\Pfrak_{\thick})_2{)},\] and thus, by \Cref{thm:partmorphgivesfunctor}\eqref{thm:partmorphgivesfunctor_welldef}, a functor of picture categories $\Wfrak {G}\colon\Wfrak(\calC_1)\xrightarrow{}\Wfrak(\calC_2)$.
        \item\label{cor:partmorphgivesfunctor_inj} If ${G}$ induces an injective map of presilting objects, then the functor $\Wfrak {G}$ is faithful.
        \item\label{cor:partmorphgivesfunctor_surj} If ${G}$ induces a surjective map of presilting objects, then the functor $\Wfrak {G}$ is dense.
    \end{enumerate}
\end{corollary}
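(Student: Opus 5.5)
We build the morphism of fans from Lemma~\ref{lem:FanIso}\eqref{lem:FanIso2}: by Lemma~\ref{lem:indexFanIso2}, $G$ induces the $\R$-linear map $g\colon \K_0(\add T_1)_\R\to\K_0(\add T_2)_\R$, $[T_i]\otimes 1\mapsto [G(T_i)]\otimes 1$, satisfying $g(\bfg^X)=\bfg^{G(X)}$ for every presilting $X$. So $g(\boldC(X))$ is the non-negative span of the $\bfg$-vectors of the indecomposable summands of $G(X)$, i.e. $g(\boldC(X))=\boldC(G(X)_{\mathrm{basic}})$.

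\textbf{Identifying $\sigma^g$.} Since $g(\boldC(X))$ is itself a cone of $\partfan{\calC_2}$, the minimal cone of Lemma~\ref{lem:morpartfan} containing it is $\boldC(X)^g=\boldC(G(X))$, where $G(X)$ is taken up to its basic part. This avoids any relative-interior argument.

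\textbf{Partition compatibility (part \eqref{cor:partmorphgivesfunctor_welldef}).} Suppose $\thick_{\calC_1}(U)=\thick_{\calC_1}(V)$; we must show $\thick_{\calC_2}(GU)=\thick_{\calC_2}(GV)$. Since $G$ is extriangulated, it sends $\E$-triangles to $\E$-triangles and preserves direct sums. Let $\calE$ be the full subcategory of $\calC_1$ of objects $X$ with $GX\in\thick_{\calC_2}(GU)$. I expect $\calE$ to be thick:
\begin{itemize}
\item closure under extensions, cones and cocones follows because $G$ maps an $\E$-triangle to an $\E$-triangle, and the target class is thick;
\item closure under summands follows because $G$ is additive, so $G(X')$ is a summand of $G(X'\oplus X'')$.
\end{itemize}
Then $V\in\thick(U)\subseteq\calE$ gives $\thick(GV)\subseteq\thick(GU)$, and the reverse inclusion holds by symmetry. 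Theorem~\ref{thm:partmorphgivesfunctor}\eqref{thm:partmorphgivesfunctor_welldef} then yields the functor $\Wfrak G$.

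\textbf{Injectivity and surjectivity (parts \eqref{cor:partmorphgivesfunctor_inj} and \eqref{cor:partmorphgivesfunctor_surj}).} By the identification above, $\boldC(U)^g=\boldC(V)^g$ means $\boldC(GU)=\boldC(GV)$. By Lemma~\ref{lem:IsFan}\eqref{lem:IsFan_inj}, cones determine basic presilting objects, so this gives $GU\simeq GV$ up to basic parts. Injectivity of $G$ on presilting objects then forces $U\simeq V$, so $g$ is injective on cones, and Theorem~\ref{thm:partmorphgivesfunctor}\eqref{thm:partmorphgivesfunctor_inj} gives faithfulness. For surjectivity, every cone $\boldC(Y)$ of $\partfan{\calC_2}$ has $Y\simeq GX$ for some presilting $X$, so $\boldC(X)^g=\boldC(Y)$; Theorem~\ref{thm:partmorphgivesfunctor}\eqref{thm:partmorphgivesfunctor_surj} gives density.

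\textbf{Main obstacle.} The delicate point is interpreting "injective on presilting objects" for basic objects. $G$ might send distinct indecomposables to the same indecomposable, or send an indecomposable to zero or to a decomposable object, so $G(X)$ need not be basic. The hypothesis should be read on isomorphism classes of basic presilting objects, comparing basic parts of the images. I would state this convention explicitly and check that $\boldC(G(X))$ depends only on $\add G(X)$, which is immediate from the definition of $\bfg$-vector cones.
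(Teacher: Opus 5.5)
Your route is the paper's. You take the fan morphism $g$ from Lemma~\ref{lem:FanIso}\eqref{lem:FanIso2}, show that $\thick_{\calC_1}(U)=\thick_{\calC_1}(V)$ forces $\thick_{\calC_2}(GU)=\thick_{\calC_2}(GV)$, and finish with Theorem~\ref{thm:partmorphgivesfunctor}. Your preimage subcategory $\calE$ is exactly the justification of the inclusion $G(\thick(P_i))\subseteq\thick(G(P_i))$ that the paper asserts. You are also more explicit than the paper in identifying $\sigma^g$, which the paper uses tacitly.

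However, your justification of that identification is false as stated. You claim that $g(\boldC(X))$ is the non-negative span of the $\bfg$-vectors of the indecomposable summands of $GX$, and hence is itself a cone of $\partfan{\calC_2}$. This fails whenever $G$ sends an indecomposable summand $X_i$ to a decomposable object: $g(\bfg^{X_i})=\bfg^{G(X_i)}$ is then a positive combination of several generators, not one of them.

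This already happens for scalar extension as in Proposition~\ref{prop:BK}. Take $k=\R$, $\Lambda=\mathbb{C}$ and $K=\mathbb{C}$. The indecomposable projective $\Lambda$ goes to $\Lambda\otimes_{\R}\mathbb{C}\cong P_1\oplus P_2$, so $g$ maps the ray $\boldC(\Lambda)$ onto the diagonal ray through $\bfg^{P_1}+\bfg^{P_2}$. That ray is not a cone of $\partfan{\mathbb{C}\times\mathbb{C}}$.

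So the relative-interior argument cannot be avoided; it is needed precisely in the case you flag as the main obstacle. The repair is short:
\begin{itemize}
\item For $X$ basic with summands $X_i$, the point $g(\sum_i\bfg^{X_i})=\bfg^{GX}=\sum_j m_j\bfg^{Y_j}$, with $Y_j$ the distinct indecomposable summands of $GX$ and all $m_j\geq 1$, lies in $g(\boldC(X))$.
\item This point also lies in the relative interior of $\boldC((GX)_{\mathrm{basic}})$, since the $\bfg^{Y_j}$ are linearly independent and all coefficients are positive.
\item Any cone $\kappa$ of $\partfan{\calC_2}$ containing $g(\boldC(X))$ therefore meets $\boldC((GX)_{\mathrm{basic}})$ in a face containing a relative-interior point. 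That face is the whole cone, so $\boldC((GX)_{\mathrm{basic}})\subseteq\kappa$.
\item Combined with $g(\boldC(X))\subseteq\boldC((GX)_{\mathrm{basic}})$, this gives $\boldC(X)^g=\boldC((GX)_{\mathrm{basic}})$.
\end{itemize}
With this in place, your partition-compatibility, injectivity and surjectivity arguments go through unchanged.

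Your remark on reading ``injective on presilting objects'' via basic parts is apt. Since $\boldC(U)^g=\boldC(V)^g$ only yields $\add(GU)=\add(GV)$, part (2) needs that reading. The paper's proof passes over this point by deferring directly to Theorem~\ref{thm:partmorphgivesfunctor}.
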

\begin{proof}
    It suffices to prove \eqref{cor:partmorphgivesfunctor_welldef}, since the remaining assertions are then deduced from \Cref{thm:partmorphgivesfunctor}\eqref{thm:partmorphgivesfunctor_inj}--{\eqref{thm:partmorphgivesfunctor_surj}}. Having shown in \Cref{lem:FanIso} that {$G$} defines {the desired} morphism of $\bfg$-vector fans, it is only a matter of showing that the partitions are respected.
    Given two presilting objects $P_1,P_2\in\presilt(\calC_1)$ such that $\thick(P_1)=\thick(P_2)$, then
    \begin{equation*}
        \thick(G(P_1)) = \thick(G(\thick(P_1))) = 
\thick(G(\thick(P_2))) = \thick(G(P_2)),
    \end{equation*}
 {where the first an last equalities hold for the following reason: because $G$ is an extriangulated functor {we have $G(\thick {(}P_i{)} \subseteq \thick(G(P_i))$ and applying $\thick(-)$ to both sides gives one inclusion, the other follows from the fact that $\thick(G(P_i))$ is the smallest thick subcategory containing $G(P_i))$ for $i=1,2$.}} {This completes} completes the proof.
\end{proof}

{We return to the setting of \cref{lem:projmorpart}.}

\begin{lemma}\label{lem:projmoradj}
	In the setting of \cref{lem:projmorpart}, the functor 
	\[\catof \pi_{\sigma}: \catof(\overline{\starr}(\sigma), \Pfrak') \to \catof( \pi_{\sigma}(\starr (\sigma)), \Pfrak_\sigma), \]
	which is induced by the morphism of partitioned fans in \cref{lem:projmorpart}\eqref{lem:projmorpart3} via \cref{thm:partmorphgivesfunctor}\eqref{thm:partmorphgivesfunctor_welldef} is left adjoint to the fully faithful functor $H_{\sigma}$ of \cref{lem:projmorpart}\eqref{lem:projmorpart4}.
\end{lemma}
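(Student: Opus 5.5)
We will exhibit the adjunction $\catof\pi_{\sigma} \dashv H_{\sigma}$ by writing down a unit and checking the universal property, rather than a counit. Put $\catof' \coloneqq \catof(\overline{\starr}(\sigma),\Pfrak')$ and $\catof_\sigma \coloneqq \catof(\pi_\sigma(\starr(\sigma)),\Pfrak_\sigma)$. First record what $\catof\pi_\sigma$ does on objects. For a cone $\rho\in\overline{\starr}(\sigma)$, the cone $\rho^{\pi_\sigma}$ from \cref{lem:morpartfan} is the smallest cone of $\pi_\sigma(\starr(\sigma))$ containing $\pi_\sigma(\rho)$. We expect this to be $\pi_\sigma(\rho\vee\sigma)$, where $\rho\vee\sigma$ is the cone spanned by $\rho$ and $\sigma$. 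This cone lies in the fan because $\rho\subseteq\tau$ for some $\tau\supseteq\sigma$, and simpliciality of $\tau$ makes $\rho\vee\sigma$ a face of $\tau$ containing $\sigma$. The expected identity follows from the poset isomorphism $\Pi_\sigma$ of \cref{lem:projmorpart}\eqref{lem:projmorpart1}, since $\pi_\sigma(\rho)=\pi_\sigma(\rho\vee\sigma)$. Consequently $H_\sigma\circ\catof\pi_\sigma$ sends $\eqclass{\rho}$ to $\eqclass{\rho\vee\sigma}$.

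Next, define the unit $\eta_{\eqclass{\rho}}\coloneqq\sqbinom{\rho\vee\sigma}{\rho}\colon\eqclass{\rho}\to\eqclass{\rho\vee\sigma}$. We must check that this does not depend on the representative $\rho$ within its class, up to the identifications in $\catof'$. Suppose $\rho\sim_{\Pfrak'}\rho'$. The spans of $\rho$ and $\rho'$ then agree, and so do the projected stars $\pi_\rho(\starr(\rho))$ and $\pi_{\rho'}(\starr(\rho'))$. Hence $\pi_\rho(\rho\vee\sigma)=\pi_{\rho'}(\rho'\vee\sigma)$, because both equal the smallest cone of the common projected star containing $\pi_\rho(\sigma)=\pi_{\rho'}(\sigma)$. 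By definition of morphisms in \Cref{defthm:piccat}, the two morphisms $\sqbinom{\rho\vee\sigma}{\rho}$ and $\sqbinom{\rho'\vee\sigma}{\rho'}$ therefore coincide, and admissibility of $\Pfrak'$ also gives $\rho\vee\sigma\sim\rho'\vee\sigma$.

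The main step is the universal property. Take a morphism $\sqbinom{\kappa}{\rho}\colon\eqclass{\rho}\to H_\sigma\eqclass{\lambda}$, where we may choose representatives with $\rho\subseteq\kappa$ and $\kappa\in\starr(\sigma)$ (using \cite[Cor. 3.6]{Kai23} to pick $\kappa$ inside the class, which lies in the image of $H_\sigma$). Then $\rho\vee\sigma\subseteq\kappa$, so the morphism factors as $\sqbinom{\kappa}{\rho\vee\sigma}\circ\eta_{\eqclass{\rho}}$. Since $H_\sigma$ is full, $\sqbinom{\kappa}{\rho\vee\sigma}$ is the image of $\sqbinom{\Pi_\sigma\kappa}{\Pi_\sigma(\rho\vee\sigma)}$, which gives existence of the factorisation. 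Uniqueness follows from two facts: $H_\sigma$ is faithful, and every morphism of $\catof'$ is an epimorphism by \cref{lem:monoepi}, so any two factorisations through the epimorphism $\eta_{\eqclass{\rho}}$ agree. Naturality of $\eta$ is automatic from the composition rule $\sqbinom{\rho}{\kappa}\circ\sqbinom{\kappa}{\sigma}=\sqbinom{\rho}{\sigma}$. The delicate point is the well-definedness in the previous paragraph, where the identifications of $\Pfrak'$ interact with the join $\rho\vee\sigma$; this is where we invoke the equality of projected stars and apply \cref{lem:projmorpart}\eqref{lem:projmorpart1} to the cone $\rho$ in place of $\sigma$.
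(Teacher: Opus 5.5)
Your proposal is correct, but it is organised differently from the paper's proof.

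\textbf{The two routes.} The paper writes down the hom-set bijection $\catof(\eqclass{\pi_\sigma(\nu)},\eqclass{\rho})\cong\catof(\eqclass{\nu},\eqclass{\Pi_\sigma^{-1}(\rho)})$ directly. By \cite[Cor. 3.6]{Kai23}, both sides are represented by symbols with a fixed source, and $\Pi_\sigma$ matches the possible targets; naturality is left as ``straightforward''. You instead construct the unit $\eta_{\eqclass{\rho}}=\sqbinom{\rho\vee\sigma}{\rho}$ and check that each component is a universal arrow to $H_\sigma$.

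\textbf{What each buys.} Your route makes naturality a single application of the composition rule. It also isolates the independence of representatives as an explicit step. Finally, it exhibits the image of $H_\sigma$ as a reflective subcategory with reflector $\eqclass{\rho}\mapsto\eqclass{\rho\vee\sigma}$. The paper's route is shorter and needs no input about epimorphisms.

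\textbf{Two steps to tighten.}

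First, once you fix the source representative $\rho$, the target $\kappa$ is determined, and you only know $\kappa\sim\Pi_\sigma^{-1}(\lambda)$. The fact that $\eqclass{\kappa}$ lies in the image of $H_\sigma$ does not by itself put $\kappa$ in $\starr(\sigma)$. The claim is nonetheless true. Identified cones have equal linear spans, so $\spann(\sigma)\subseteq\spann(\kappa)$. Since $\kappa\in\overline{\starr}(\sigma)$, both $\kappa$ and $\sigma$ are faces of a common simplicial cone. Linear independence of that cone's rays then forces $\sigma\subseteq\kappa$. The paper uses the same fact silently when it applies $\Pi_\sigma$ to the representatives $\rho''$.

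Second, invoking \cref{lem:monoepi} requires axiom \ref{cubical3} for $\catof(\overline{\starr}(\sigma),\Pfrak')$. That means you need \cite[Thm. 3.16]{Kai23} for an arbitrary partitioned fan, not just the version quoted for $\partfan{\calC}$. Alternatively, argue uniqueness directly. Write $H_\sigma(g_i)=\sqbinom{\kappa_i}{\rho\vee\sigma}$ for two lifts $g_1,g_2$. Equal composites with $\eta_{\eqclass{\rho}}$ give $\sqbinom{\kappa_1}{\rho}=\sqbinom{\kappa_2}{\rho}$, so $\pi_\rho(\kappa_1)=\pi_\rho(\kappa_2)$ with $\kappa_1,\kappa_2\in\starr(\rho)$. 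Injectivity from \cref{lem:projmorpart}\eqref{lem:projmorpart1}, applied to $\rho$, gives $\kappa_1=\kappa_2$. Faithfulness of $H_\sigma$ then gives $g_1=g_2$.
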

\begin{proof}
	Write $\catof_1 \coloneqq \catof(\overline{\starr}(\sigma)$ and $\catof_2 \coloneqq \catof( \pi_{\sigma}(\starr (\sigma)), \Pfrak_\sigma)$. For $\eqclass{\nu} \in \catof_1$ and $\eqclass{\rho} \in \catof_2$, we are to set up a natural bijection
\begin{equation}\label{eq:adjbijection}
    \catof_2(\catof \pi_{\sigma} \eqclass{{\nu}},\eqclass{\rho}) \cong
    \catof_1(\eqclass{{\nu}},H_{\sigma} \eqclass{\rho}).
\end{equation}
Since $\catof \pi_{\sigma} \eqclass{{\nu}} = \eqclass{{\pi_{
    \sigma}}{\nu}}$, we simplify the left hand side to $\catof_2(\eqclass{{\pi_{\sigma}}{(}{\nu}{)}},\eqclass{\rho})$. On the other hand, one may express the right hand side by $\catof_1(\eqclass{{\nu}},\eqclass{{\Pi_{\sigma}^{-1}}(\rho)})$. A set of representatives in $\catof_2(\eqclass{{\pi_{
    \sigma}}{(}{\nu}{)}},\eqclass{\rho})$ can be given by the symbols $\binom{\rho'}{\pi_{\sigma}{(}{\nu}{)}}$, where $\rho' \sim_{\Pfrak_{_\sigma}} \rho$ \cite[Cor. 3.6]{Kai23}, and one expresses the elements of $\catof_1(\eqclass{{\nu}},H_{\sigma}\eqclass{\rho})$ {using the symbols $\binom{\rho''}{\nu}$, where $\rho'' \sim_{\Pfrak} \Pi_{\sigma}^{-1} (\rho) $} in a similar manner. 
    
    The bijection $\Pi_{\sigma}$ restricts to a bijection between $\starr(\nu) \subseteq \catof_2$ and $\starr(\pi_{\sigma}(\nu)) \subseteq \catof_2$. Since the partition $\Pfrak_{\sigma}$ {is induced by} the partition $\Pfrak$, it is straightforward to use the bijection $\Pi_{\sigma}$ to set up the bijection in \eqref{eq:adjbijection}, which completes the proof. 
\end{proof}

We deduce some useful results for picture categories of 0-Auslander extriangulated categories.

\begin{lemma}\label{lem:redadj}
	Let $\calC$ be as in \Cref{setting}. Let $N' \in \calC$ be a projective-injective object and let $P \in \presilt \calC$.
	\begin{enumerate}
		\item\label{lem:redadj1} There is an adjoint pair of functors
		\begin{equation*}
    		\begin{tikzcd}
       		 \Wfrak(\calC/[N'])\arrow[r,hook,bend right, "R"] & \Wfrak(\calC)\arrow[l, "L",swap],
   		 \end{tikzcd}
   		 \end{equation*} 
		 where $R$ is fully faithful.
		\item\label{lem:redadj2} There is an adjoint pair of functors
		\begin{equation*}
    		\begin{tikzcd}
       		 \Wfrak(\calZ_P/[P])\arrow[r,hook,bend right, "R"] & \Wfrak(\calZ_P)\arrow[l, "L",swap],
   		 \end{tikzcd}
   		 \end{equation*} 
		 where $R$ is fully faithful.
		 \item\label{lem:redadj3} There is a fully faithful functor
		 \[ \Wfrak(\calZ_P/[P]) \to \Wfrak(\calC). \]
	\end{enumerate}
\end{lemma}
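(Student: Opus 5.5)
Each of the three assertions will be reduced to the fan-theoretic statements \cref{lem:projmorpart} and \cref{lem:projmoradj}. They will be transported along the identifications of $\bfg$-vector fans from \cref{lem:ZUfan} and \cref{lem:Jredfan}.

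For \eqref{lem:redadj2} we set $\sigma=\boldC(P)$. By \cref{lem:ZUfan}\eqref{lem:ZUfan1}, $\Phi_P$ is an isomorphism of fans $\partfan{\calZ_P}\simeq\overline{\starr}(\sigma)$. We first check that it is an isomorphism of partitioned fans for the thick partition of $\calZ_P$ and the restriction $\Pfrak'$ of $\Pfrak_{\thick}$ of $\calC$. This amounts to showing that, for presilting $X,Y\in\calZ_P$, we have $\thick_{\calZ_P}(X)=\thick_{\calZ_P}(Y)$ iff $\thick_{\calC}(X)=\thick_{\calC}(Y)$. The route is through \cref{prop:Monica}. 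The category $\calZ_P$ satisfies \cref{setting} with projective silting object $P^+$ (\cref{prop:siltred}\eqref{prop:siltred_0Aus_proj}), so thick subcategories generated by presilting objects containing the projective-injectives are classified by $\tau$-perpendicular categories. Alternatively, one compares via the localisations $\calC/\thick(X)$ and the bijections in \cref{prop:siltred}\eqref{prop:siltred_bij}.

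Similarly, \cref{lem:ZUfan}\eqref{lem:ZUfan2} identifies $\partfan{\calZ_P/[P]}$ with $\pi_\sigma(\starr(\sigma))$. This is compatible with the induced partition $\Pfrak_\sigma$ of \cref{lem:projmorpart}\eqref{lem:projmorpart2}, because of \cref{lem:Ver2.3.1}. Indeed, for $X,Y$ presilting in $\calZ_P/[P]\simeq\calC/\thick(P)$, equality of the thick subcategories they generate is equivalent to $\thick_{\calC}(X\oplus P)=\thick_{\calC}(Y\oplus P)$, and $\boldC(X\oplus P)$ is exactly $\Pi_\sigma^{-1}$ of the cone of $X$. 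With these identifications, \cref{lem:projmoradj} gives the adjunction $L=\catof\pi_\sigma\dashv H_\sigma=R$. Full faithfulness of $R$ comes from \cref{lem:projmorpart}\eqref{lem:projmorpart4}.

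For \eqref{lem:redadj1}, we may assume $N'$ is basic and apply \eqref{lem:redadj2} with $P=N'$. Since $N'$ is projective-injective, $\calZ_{N'}=\calC$, as $\E(N',-)=0=\E(-,N')$, and $\calZ_{N'}/[N']=\calC/[N']$. So \eqref{lem:redadj1} is a special case.

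For \eqref{lem:redadj3}, we compose the fully faithful $R\colon\Wfrak(\calZ_P/[P])\to\Wfrak(\calZ_P)$ with the functor induced by the subfan inclusion $\overline{\starr}(\sigma)\hookrightarrow\partfan{\calC}$ (\cref{lem:subpartmorph}). By \cref{thm:partmorphgivesfunctor} this inclusion functor is faithful. The remaining work is to show it is full on the relevant objects. The objects in the image of $R$ are classes $\eqclass{\boldC(X\oplus P)}$. A morphism $\eqclass{\boldC(X\oplus P)}\to\eqclass{\boldC(Y\oplus P)}$ in $\Wfrak(\calC)$ is represented, by \cite[Cor. 3.6]{Kai23}, by $\binom{\kappa}{\boldC(X\oplus P)}$ with $\kappa\supseteq\boldC(X\oplus P)\supseteq\sigma$, so $\kappa$ lies in $\starr(\sigma)$. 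We must also check that $\kappa\sim\boldC(Y\oplus P)$ in $\Pfrak_{\thick}$ implies the same relation in $\Pfrak'$, which is automatic since $\Pfrak'$ is a restriction. Hence every morphism lies in the subfan category.

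I expect the main obstacle to be the compatibility of partitions in the first step, i.e.\ that thick closure computed in $\calZ_P$ versus in $\calC$ yields the same identifications. There I would try \cref{lem:Ver2.3.1} applied to $\calC\to\calC/\thick(P)$, combined with the fact that both sides are detected after adding $P$ as a summand.
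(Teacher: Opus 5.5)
Your ingredients are the paper's: \cref{lem:projmoradj} transported along \cref{lem:ZUfan}, \cref{lem:Ver2.3.1} for the induced partition on $\pi_\sigma(\starr(\sigma))$, and \cite[Cor.~3.6]{Kai23} for fullness. Your (1) and (3) go through. The gap is in (2).

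Because you take $\sigma=\boldC(P)$ inside $\partfan{\calC}$, \cref{lem:projmoradj} only gives an adjunction for $\catof(\overline{\starr}(\sigma),\Pfrak')$, where $\Pfrak'$ is the restriction of $\Pfrak_{\thick}$ of $\calC$. To turn this into a statement about $\Wfrak(\calZ_P)$ you need, for \emph{all} presilting $X,Y\in\calZ_P$ (including those with $P\notin\add(X)$), that $\thick_{\calZ_P}(X)=\thick_{\calZ_P}(Y)$ if and only if $\thick_{\calC}(X)=\thick_{\calC}(Y)$. You leave this open, and none of the tools you name reaches the cones not containing $\sigma$:
\begin{itemize}
\item \cref{prop:Monica} applied to $\calZ_P$ only classifies thick subcategories containing the projective-injectives of $\calZ_P$, in particular $P$.
\item \cref{lem:Ver2.3.1} for $\calC\to\calC/\thick(P)$ only controls thick subcategories containing $P$, i.e.\ cones in $\starr(\sigma)$.
\end{itemize}
The relation is genuinely not detected after adding $P$. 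In $\Ktwo(\proj\Pi(A_2))$ with $P=P_1$, both $P_1\oplus P_2$ and $P_1\oplus S_1$ are silting, so $\thick(P_1\oplus P_2)=\thick(P_1\oplus S_1)$. However, $\boldC(P_2)$ and $\boldC(S_1)$ have different spans, so $\thick(P_2)\neq\thick(S_1)$ by \cref{prop:STTWgeneralisation}\eqref{prop:STTWgeneralisation_1}.

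The paper avoids the issue by reversing your order.
\begin{itemize}
\item It proves (1) first. With $\sigma=\boldC(N')$ one has $\overline{\starr}(\sigma)=\partfan{\calC}$ and $\calZ_{N'}=\calC$, so no comparison of partitions arises. This is exactly the case of your argument where your obstacle is vacuous, so your (1) stands.
\item It then obtains (2) by applying (1) to $\calZ_P$ itself. This category satisfies \cref{setting} with $P$ projective-injective (\cref{prop:siltred}\eqref{prop:siltred_0Aus_proj}). The relevant partition on $\overline{\starr}(\boldC_{\calZ_P}(P))=\partfan{\calZ_P}$ is then the intrinsic thick partition of $\calZ_P$, and only \cref{lem:Ver2.3.1} for $\calZ_P\to\calZ_P/[P]$ is needed.
\item For (3), only the trivial inclusion $\thick_{\calZ_P}(X)\subseteq\thick_{\calC}(X)$ is needed. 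It makes the intrinsic partition finer than the restricted one, and your fullness argument does the rest.
\end{itemize}

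If you prefer to keep your route, you can close the gap by proving $\thick_{\calZ_P}(X)=\thick_{\calC}(X)\cap\calZ_P$ for presilting $X\in\calZ_P$. By \cref{lem:Ver2.3.1}, both sides consist of the objects of $\calZ_P$ killed by the respective localisations. Using \cref{prop:siltred}\eqref{prop:siltred_equiv} for $\calZ_P$ and for $\calC$, the comparison functor between these localisations becomes the inclusion-induced functor $\calZ_{X\oplus P}/[X]\to\calZ_X/[X]$. That functor is fully faithful, hence reflects zero objects.
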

\begin{proof}
We first show \eqref{lem:redadj1}. Since $N'$ is projective-injective in $\calC$, it is a direct summand of every silting object, see \cref{rem:silt}\eqref{rem:proj-inj-silt}. Since every presilting object in $\calC$ has a Bongartz completion, which is a silting object, this implies that $\overline{\starr}(\boldC(N')) = \partfan{\calC}$. 
	Similarly, because $N'\oplus U$ is prestilting for all presilting objects $U$ in $\calC$, we have an equivalence of categories $\calZ_{N'}/[N'] \simeq \calC/[N']$. Therefore, \cref{lem:ZUfan}\eqref{lem:ZUfan2} gives an isomorphism of fans 
	\[ \partfan{\calC/[N']} \simeq \partfan{\pi_{\boldC(N')}(\starr(\boldC(N')))}. \]
	By \cref{prop:FGPPP23} and \cref{lem:Ver2.3.1}, the thick subcategories of $\calC/[N']$ are induced by those of $\calC$. Using the notation of \cref{lem:projmorpart}\eqref{lem:projmorpart2}, the admissible partition $\Pfrak_{\thick}^{\calC/[N']}$ of $\partfan{\calC/[N']}$ is given by $(\Pfrak_{\thick}^{\calC})_{\boldC(N')}$. The claim in \eqref{lem:redadj1} follows directly from \cref{lem:projmoradj} and the definitions.
    
    The claim in \eqref{lem:redadj2} is an immediate consequence of \eqref{lem:redadj1} because $\calZ_P$ is a 0-Auslander extriangulated subcategory satisfying \Cref{setting} itself, and $P$ is projective-injective in $\calZ_P$, see \cref{prop:siltred}\eqref{prop:siltred_0Aus_proj}.

   To show \eqref{lem:redadj3}, we take the composite of the fully faithful functor $R$ in \eqref{lem:redadj2} and the faithful functor $\catof \mathrm{id}: \Wfrak(\calZ_P) \to \Wfrak(\calC)$ induced by the injective morphism of fans in \cref{lem:subpartmorph} via \cref{thm:partmorphgivesfunctor}\eqref{thm:partmorphgivesfunctor_welldef}\eqref{thm:partmorphgivesfunctor_inj}. It is not difficult to see, using \cite[Cor. 3.6]{Kai23}, that the composite functor is full even though the functor $\catof \mathrm{id}$ is not generally full.
\end{proof}

\begin{remark}\label{rem:generlises}
\Cref{cor:partmorphgivesfunctor} and \cref{lem:redadj} shed new light on previous work of the authors:
    \begin{enumerate}
        \item\label{eg:partmorphgivesfunctor_red}
    \Cref{lem:redadj}\eqref{lem:redadj3} recovers the behaviour of picture categories \cite[§6]{Bor24}, and indeed that of $\tau$-cluster morphism categories \cite[{Prop.} 6.14]{BH21} when $\calC = \Ktwo(\proj \Lambda)$.
        \item\label{rem:generlises_Kai23} Let $\Pfrak_1, \Pfrak_2$ be admissible partitions of a fan $\Sfan$ in $N_\R$. By \cref{lem:finerpartmorph} and \cref{thm:partmorphgivesfunctor}\eqref{thm:partmorphgivesfunctor_inj}, if $\Pfrak_1$ is a finer partition than $\Pfrak_2$, there exists a faithful functor $\catof(\Sfan, \Pfrak_1) \to \catof(\Sfan, \Pfrak_2)$, as shown previously by the second-named author \cite[Thm. 1.3(1)]{Kai23}.
        \item\label{rem:generlises_Bor24} 
        It is a consequence of \cref{lem:redadj}\eqref{lem:redadj1} that the classifying spaces of $\Wfrak(\calC)$ and $\Wfrak(\calC/[N'])$ are homotopy equivalent \cite[Corollary 1 in §1]{Qui73}. This assertion has been shown by the first-named author \cite[{Prop.} 6.6]{Bor24} under the assumption that $\calC$ admits an exact dg enhancement.
     
     \item\label{rem:generlises_Kai24} Let $\Lambda$ be a finite-dimensional $k$-algebra and $I$ an ideal of $\Lambda$.
     The derived tensor functor
     \begin{equation*}
         -\derotimes{\Lambda} \Lambda/I \colon \Ktwo(\proj \Lambda) \xrightarrow{} \Ktwo(\proj \Lambda/I)
     \end{equation*}
     is full, so it induces a map 
     \begin{equation*}
         -\derotimes{\Lambda} \Lambda/I \colon \presilt(\Lambda) \xrightarrow{} \presilt(\Lambda/I),
     \end{equation*}
     and in turn a morphism of fans 
    $
        \partfan{\Lambda} \xrightarrow{} \partfan{\Lambda/I}
    $
    {by \cref{lem:FanIso}\eqref{lem:FanIso1}}. Moreover, since
     \begin{equation*}
        \thick(P_1) = \thick(P_2) \Longrightarrow \thick(P_1\derotimes{\Lambda} \Lambda/I) = \thick(P_2\derotimes{\Lambda} \Lambda/I) 
    \end{equation*}
    for $P_1,P_2\in \presilt(\Lambda)$, the derived tensor functor induces a morphism of partitioned fans
    \begin{equation*}
        (\K_0(\add \Lambda)_\R, \partfan{\Lambda}, \Pfrak_{\thick}) \xrightarrow{} (\K_0(\add \Lambda/I)_\R, \partfan{\Lambda/I}, \overline{\Pfrak}_{\thick}).
    \end{equation*} \Cref{cor:partmorphgivesfunctor}\eqref{cor:partmorphgivesfunctor_welldef} then induces a functor $\Wfrak(\Lambda)\xrightarrow{}\Wfrak(\Lambda/I)$, which was previously constructed by the second-named author \cite[{Thm.} 1.3]{Kai24}. 
    \end{enumerate}
\end{remark}

We conclude this section by relating the existence of faithful group functors from picture categories of 0-Auslander extriangulated categories to the existence of faithful group functors from $\tau$-cluster morphism categories. The results below draw on previous work \cite{BorveKaipeltautilt}. In short, we generalise our results from finite-dimensional $k$-algebras to 0-Auslander extriangulated $k$-categories. First, the following lemma provides a fan-theoretic perspective on some results of \cite{BorveKaipeltautilt}.

\begin{proposition}\label{prop:BK}
    Let $K:k$ be a field extension and let and $\Lambda$ be a finite-dimensional $k$-algebra. We denote the scalar extension $\Lambda\otimes_{k}K$ by  $\Lambda^K$.
    \begin{enumerate}
        \item\label{prop:BK1} The functor $- \otimes_k K\colon \mods(\Lambda)\xrightarrow{} \mods(\Lambda^K)$ and its corresponding linear map ${\K}_0(\add(\Lambda)) \to \K_0({\add}(\Lambda^K))$ induce a{n injective} morphism of fans
        \begin{equation}
            \label{eq:BK1}
            \partfan{\Lambda} \to \partfan{\Lambda^K}.
        \end{equation}
    \item\label{prop:BK2} If $K:k$ is MacLane separable, the injective morphism in \eqref{eq:BK1} can be upgraded to an injective morphism of partitioned fans
    \begin{equation}
            \label{eq:BK2}
            (\partfan{\Lambda}, \Pfrak_{\thick}) \to (\partfan{\Lambda^K}, \Pfrak_{\thick}^K).
        \end{equation}
    \end{enumerate}
\end{proposition}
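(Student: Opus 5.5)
For \eqref{prop:BK1} I would work with the extriangulated functor $G = -\otimes_k K\colon \Ktwo(\proj\Lambda)\to\Ktwo(\proj\Lambda^K)$, applied termwise to two-term complexes of projectives. This functor is exact on the level of homotopy categories, so it is extriangulated. It sends $\Lambda$ to $\Lambda^K$, which is the projective silting object on the other side. The plan is then to invoke \cref{lem:FanIso}\eqref{lem:FanIso2}, which requires checking the hypotheses of \cref{lem:indexFanIso2}.

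The only hypothesis needing thought is that $G$ preserves presilting objects. I would check this by flat base change: for two-term complexes $P,Q$ there is an identification
\[\Hom_{\Kb(\proj\Lambda^K)}(P\otimes_kK,\Sigma Q\otimes_kK)\cong \Hom_{\Kb(\proj\Lambda)}(P,\Sigma Q)\otimes_kK,\]
so $\E(P,Q)=0$ implies $\E(GP,GQ)=0$. The induced linear map on Grothendieck groups sends $[P_i]$ to $[P_i\otimes_kK]$, and these images decompose into indecomposable projectives of $\Lambda^K$.

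Injectivity in the sense of \cref{def:morpartfan} needs a separate argument. I would first show that the linear map $g\colon\K_0(\add\Lambda)_\R\to\K_0(\add\Lambda^K)_\R$ is injective. The idea is that $\Hom_{\Lambda^K}(P_i\otimes K,S_j\otimes K)\cong \Hom_\Lambda(P_i,S_j)\otimes K$ gives a pairing with simple modules, nondegenerate up to the diagonal factors $\dim_k\End(S_j)$; alternatively one can pair with $\K_0(\mods\Lambda^K)$ through restriction of scalars.

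Given that $g$ is injective, it maps each simplicial cone $\boldC(U)$ onto a cone $g(\boldC(U))$ of the same dimension. The minimal cone $\boldC(U)^g$ from \cref{lem:morpartfan} contains $g(\boldC(U))$ in its relative interior. I would then argue that $\boldC(U)^g=\boldC(\mathrm{add}\text{-basic part of }U\otimes K)$. This holds because the summands of $U\otimes K$ span exactly the cone generated by the images of the $\bfg^{U_i}$, together with the decompositions of each $U_i\otimes K$.

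Injectivity on cones follows from the descent statement: if $U\otimes K$ and $V\otimes K$ have the same basic additive closure, then $U\cong V$. This is a Noether--Deuring type argument applied to presilting complexes, or equivalently it follows from injectivity of $g$ combined with \cref{lem:IsFan}\eqref{lem:IsFan_inj}. The chosen description of $\sigma^f$ is the delicate point, and I would lean on \cite{BorveKaipeltautilt} for it.

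For \eqref{prop:BK2}, what remains is compatibility with the thick partitions: $\thick(U)=\thick(V)$ should imply $\thick(U\otimes K)=\thick(V\otimes K)$. This direction is formal, by the argument in the proof of \cref{cor:partmorphgivesfunctor} applied to the extriangulated functor $G$. The minimal cones $\boldC(U)^g$ correspond to $\thick(U\otimes K)$ because they generate the same thick subcategory as $U\otimes K$ itself.

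The point where MacLane separability is genuinely used is to ensure that $-\otimes_kK$ behaves well on $\tau$-perpendicular categories. The results of \cite{BorveKaipeltautilt} show that $J(M,P)\otimes K$ generates $J(M\otimes K,P\otimes K)$ and that $\Lambda^K$ is a separable extension. Via \cref{prop:Monica}\eqref{prop:Monica_maps}, this lets us translate between thick subcategories and $\tau$-perpendicular categories.

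I expect the main obstacle to be correctly identifying $\sigma^g$ when $U\otimes K$ decomposes further. This is exactly where separability is needed to guarantee that the decomposition is semisimple-like and that the resulting assignment is injective.
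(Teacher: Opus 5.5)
Your treatment of (2) differs from the paper's, and it proves more. The paper moves to $\tau$-tilting theory. By \cref{prop:Monica}\eqref{prop:Monica_maps}, $\thick(U)=\thick(V)$ is equivalent to $J(\overline{U},\overline{\Omega I_U})=J(\overline{V},\overline{\Omega I_V})$. This equality is base-changed using \cite[Prop. 4.9]{BorveKaipeltautilt}, which is where MacLane separability enters. It is then rewritten via \cite[Lem. 2.7]{BorveKaipeltautilt} in terms of $U^K,V^K$ and translated back into $\thick(U^K)=\thick(V^K)$.

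You instead use that any extriangulated functor satisfies $G(\thick(U))\subseteq\thick(G(U))$. You combine this with the identification $\boldC(U)^g=\boldC(\text{basic part of }U\otimes_kK)$. Your relative-interior argument does establish that identification, because every indecomposable summand of $U\otimes_kK$ occurs with positive coefficient in the image of an interior point of $\boldC(U)$. This is exactly \cref{cor:partmorphgivesfunctor}\eqref{cor:partmorphgivesfunctor_welldef} applied to $-\otimes_kK\colon\Ktwo(\proj\Lambda)\to\Ktwo(\proj\Lambda^K)$. Injectivity of a morphism of partitioned fans only concerns $\sigma\mapsto\sigma^g$, which (1) already handles. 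So your argument gives \eqref{eq:BK2} for an arbitrary field extension. The paper's detour buys an explicit match with the $\tau$-perpendicular description behind the functor of \cite[Thm. 0.4]{BorveKaipeltautilt}, but it is not logically needed.

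It follows that your closing paragraphs misplace separability. Nothing in your own argument uses it, and the identification of $\sigma^g$ and injectivity on cones belong to (1), which is asserted for every $K$.

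On that injectivity, your ``equivalently'' justification does not work. Injectivity of $g$ together with \cref{lem:IsFan}\eqref{lem:IsFan_inj} only gives $U\otimes_kK\cong V\otimes_kK\Rightarrow U\cong V$, which is a Noether--Deuring statement. But $\boldC(U)^g=\boldC(V)^g$ only says that $U\otimes_kK$ and $V\otimes_kK$ have the same indecomposable summands, possibly with different multiplicities. Linear algebra alone does not prevent two distinct rays of $\partfan{\Lambda}$ from landing in the relative interior of one two-dimensional cone of $\partfan{\Lambda^K}$.

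The missing input is this: for non-isomorphic indecomposables $X,Y$ in $\Ktwo(\proj\Lambda)$, the objects $X\otimes_kK$ and $Y\otimes_kK$ share no indecomposable summand. This holds for every $K$. By your base-change formula for Hom-spaces, every composite $X\otimes_kK\to Y\otimes_kK\to X\otimes_kK$ lies in $\mathrm{rad}\End(X)\otimes_kK$, a nilpotent ideal of $\End(X\otimes_kK)$. A common summand $W$ would produce such a composite equal to the nonzero idempotent $X\otimes_kK\to W\to X\otimes_kK$, a contradiction. Alternatively, cite \cite[Thm. 2.18]{BorveKaipeltautilt}, as the paper does.
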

\begin{proof} \, 

    \eqref{prop:BK1} For any field extension $K:k$, the functor $- \otimes_k K$ induces an injective morphisms of $\bfg$-vector fans by \cite[Thm. 2.18]{BorveKaipeltautilt}. This can also be deduced from \cref{lem:FanIso}\eqref{lem:FanIso1} using \cite[Prop. 6.6, Cor. 6.7]{DIJ2019} and the injectivity of $- \otimes_k K$ on objects.
    
    \eqref{prop:BK2} Let $U$ and $V$ be two presilting objects in $\Ktwo(\proj \Lambda)$. {If $K:k$ is MacLane separable}, {then we have that
    \begin{align*}
        \boldC(U) \sim_{\Pfrak} \boldC(U) & \Leftrightarrow \thick(U) = \thick(V) \\
        & \Leftrightarrow J(\overline{U},\overline{\Omega I_U}) = J(\overline{V},\overline{\Omega I_V}) & \text{(by \cref{prop:Monica}\eqref{prop:Monica_maps})}\\
        & \Rightarrow J((\overline{U})^K,(\overline{\Omega I_U})^K) = J((\overline{V})^K,(\overline{\Omega I_V})^K) & \text{(by \cite[Prop. 4.9]{BorveKaipeltautilt})} \\
        & \Rightarrow J(\overline{U^K},\overline{\Omega I_{U^K}}) = J(\overline{V^K},\overline{\Omega I_{V^K}}) & \text{(by \cite[Lem. 2.7]{BorveKaipeltautilt})} \\
        & \Leftrightarrow \thick(U^K) = \thick(V^K) & \text{(by \cref{prop:Monica}\eqref{prop:Monica_maps})} \\
        & \Leftrightarrow \boldC(U^K) \sim_{\Pfrak^K} \boldC(V^K)
    \end{align*}
    See also the proof of \cref{prop:samePartition}, for the second and the penultimate implication. This completes the proof.}
\end{proof}

Let $\calC$ be a 0-Auslander extriangulated $k$-category satisfying \Cref{setting}. We say that the picture category $\Wfrak(\calC)$ \textit{admits a faithful group functor} if there exists a group $G$ and a faithful functor $\Wfrak(\calC)\xrightarrow{}G$, where we regard $G$ as a groupoid with a single object. In previous work \cite{BorMot,BorveKaipeltautilt}, we showed that picture categories of finite-dimensional $k$-algebras admit faithful group functors whenever $k$ is perfect. As a result, the classifying space of such a picture category is a $\mathrm{CAT}(0)$ space precisely when it satisfies \textit{compatibility of last factors} \cite{BH21,Igu14}, a combinatorial condition on the set of semibricks. By \Cref{thm:faithfulgroupfunctor} below, we may generalise these assertions to a broader context.

\begin{theorem}\label{thm:faithfulgroupfunctor}
    Let $\calC$ and $\Lambda$ be as in \Cref{setting}. Then $\Wfrak(\calC)$ admits a faithful group functor if and only if $\Wfrak(\Lambda)$ admits a faithful group functor. 
\end{theorem}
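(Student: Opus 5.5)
The plan is to reduce to the reduced case using the adjunction of \Cref{lem:redadj}\eqref{lem:redadj1}, and then transport faithful group functors along the equivalence $\Wfrak(\overline{\calC}) \simeq \Wfrak(\Lambda)$ of \Cref{prop:samePartition}\eqref{thm:modulecatreduction_piccat2}.

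\emph{The reduced quotient.} Set $\overline{\calC} = \calC/[N]$. By \Cref{prop:FGPPP23}, $\overline{\calC}$ is 0-Auslander, and it is reduced. It satisfies \Cref{setting} with projective silting object $T$ and $\End_{\overline{\calC}}(T) = \Lambda$. Hence \Cref{prop:samePartition}\eqref{thm:modulecatreduction_piccat2} gives an equivalence $\Wfrak(\overline{\calC}) \simeq \Wfrak(\Lambda)$. Faithful group functors are clearly transported along equivalences: precompose with a quasi-inverse, which is itself faithful. So it suffices to show that $\Wfrak(\calC)$ admits a faithful group functor if and only if $\Wfrak(\overline{\calC})$ does.

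\emph{From $\Wfrak(\calC)$ to $\Wfrak(\overline{\calC})$.} Apply \Cref{lem:redadj}\eqref{lem:redadj1} with $N' = N$ to get the adjunction $L \dashv R$ with $R\colon \Wfrak(\overline{\calC}) \to \Wfrak(\calC)$ fully faithful. If $\Phi\colon \Wfrak(\calC) \to G$ is faithful, then $\Phi \circ R$ is faithful, since both factors are.

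\emph{From $\Wfrak(\overline{\calC})$ to $\Wfrak(\calC)$.} Given a faithful $\Psi\colon \Wfrak(\overline{\calC}) \to G$, the composite $\Psi \circ L$ need not be faithful. Concretely, via \Cref{lem:projmorpart}, $L$ is induced by the projection $\pi_{\boldC(N)}$, which kills the $N$-summands. Morphisms $\sqbinom{V}{U}$ with $V$ differing from $U$ only by summands of $N$ therefore go to identities.

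I would fix this by enlarging the group. Write $N = N_1 \oplus \cdots \oplus N_m$ with the $N_i$ indecomposable, and define $\Phi\colon \Wfrak(\calC) \to G \times \Z^m$ by
\[
\Phi\left(\sqbinom{V}{U}\right) = \left(\Psi L \sqbinom{V}{U},\, (\mu_i)_{i}\right),
\]
where $\mu_i$ is the multiplicity of $N_i$ in $V$ minus its multiplicity in $U$, taking $U$ and $V$ basic. This is well defined: identified cones $\boldC(U)\sim\boldC(U')$ span the same subspace by \Cref{prop:STTWgeneralisation}\eqref{prop:STTWgeneralisation_1}, and since $\bfg^{N_i}$ is a basis vector of $\K_0(\add T)$ ($N_i$ being projective), membership of $N_i$ as a summand is detected by the span. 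The second component is additive under composition, so $\Phi$ is a functor.

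\emph{Main obstacle: faithfulness of $\Phi$.} Take two parallel morphisms $\sqbinom{V}{U}$ and $\sqbinom{V'}{U}$ with equal images. They then have the same $N$-content, and equal images under $L$ by faithfulness of $\Psi$. Because every cone lies in $\overline{\starr}(\boldC(N))$, I expect the representatives to be recoverable from their projections together with their $N$-content. To make this precise, I would use the representative description of Hom-sets from \cite[Cor. 3.6]{Kai23} and the poset bijection $\Pi_{\boldC(N)}$ of \Cref{lem:projmorpart}\eqref{lem:projmorpart1}: adjoin $N$ to both $V$ and $V'$, compare in the star, and then strip $N$ again. This step is where care is needed, since one must check that the identification of morphisms via $\pi_{\boldC(U)}$ is compatible with adding and removing $N$-summands.
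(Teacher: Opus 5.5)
There is a genuine gap, and it sits at the step you yourself call the main obstacle. The detour through $G\times\Z^m$ also rests on a misdiagnosis. Faithfulness only concerns parallel morphisms, so it is harmless that $L$ sends $\sqbinom{U\oplus N''}{U}$ (with $N''\in\add N$) to an identity. Any representative $V'\supseteq U$ of the target class has the same span as $U\oplus N''$, hence the same $N$-summands and the same number of summands. So this is the only morphism $\eqclass{U}\to\eqclass{U\oplus N''}$.

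In fact $L$ is faithful, and this is the idea you are missing. By \Cref{lem:monoepi}, every morphism of $\Wfrak(\calC)$ is a monomorphism. In particular every unit component $\eta_Y\colon Y\to RLY$ of the adjunction $L\dashv R$ from \Cref{lem:redadj}\eqref{lem:redadj1} is monic. Applying $L$ on $\Hom(X,Y)$ and then the adjunction bijection $\Hom(LX,LY)\cong\Hom(X,RLY)$ gives the map $f\mapsto\eta_Y\circ f$, so monic units force $L$ to be faithful. Hence $\Psi\circ L$ is already a faithful group functor. This is exactly how the paper concludes; your remaining steps (the equivalence with $\Wfrak(\Lambda)$, and the direction via $R$) agree with it.

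Your extra $\Z^m$-component cannot repair anything anyway. Parallel morphisms $\sqbinom{V}{U}$ and $\sqbinom{V'}{U}$ satisfy $\boldC(V)\sim\boldC(V')$. By \Cref{prop:STTWgeneralisation}\eqref{prop:STTWgeneralisation_1} they have equal spans and hence equal $N$-content, which is the argument you already use for well-definedness. So $(\mu_i)_i$ is constant on every Hom-set, and $\Phi$ is faithful if and only if $\Psi\circ L$ is. The step you leave at ``I expect'' is therefore precisely the faithfulness of $L$, and as written the proof is incomplete.

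Your hand-made sketch can be completed as follows.
\begin{enumerate}
\item By \cite[Cor. 3.6]{Kai23} and the star bijection, morphisms $\eqclass{U}\to\eqclass{V}$ correspond to basic presilting $V'$ having $U$ as a summand with $\thick(V')=\thick(V)$.
\item With the source fixed, $L\sqbinom{V'}{U}$ determines the cone $\boldC(V'\oplus N)$ via the poset bijection $\Pi_{\boldC(N)}$ of \Cref{lem:projmorpart}\eqref{lem:projmorpart1}. Hence it determines the non-$N$ part of $V'$.
\item The $N$-part of $V'$ is fixed by the span of the target class, so $V'$ is recovered.
\end{enumerate}
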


\begin{proof}
    The proof builds on \Cref{prop:samePartition}\eqref{thm:modulecatreduction_piccat2} and \eqref{rem:generlises_Bor24} \cref{lem:redadj}\eqref{lem:redadj1}. {Let $N$ be the maximal basic projective-injective object in $\calC$.} We consider the diagram of functors
    \begin{equation}\label{eq:faithfulgroupfunctor_toLambda}
    \begin{tikzcd}
        \Wfrak(\calC) \arrow[r,bend left] & \arrow[l,bend left] \Wfrak(\calC/[N])\arrow[r,"\simeq"] & \Wfrak(\Lambda),
    \end{tikzcd}
    \end{equation}
    where the former result provides the equivalence on the right, and the latter provides the adjunction on the left. It suffices to prove that both adjoints are faithful, but this is a consequence of \Cref{lem:monoepi}; since every morphism in $\Wfrak(\calC)$ is a monomorphism {and an epimorphism}, the unit (resp. counit) will induce a monomorphism (resp. an epimorphism) for all objects in $\Wfrak(\calC)$ and this is equivalent to the adjoint functors being faithful. We have proved the result.
\end{proof}

This allows for the extension of various results which have established such a faithful functor for the $\tau$-cluster morphism category of finite-dimensional algebras to the picture categories of 0-Auslander extriangulated categories. For an overview of the most general results in this direction, see \cite{BorveKaipeltautilt,Kai23,Treffinger26}.

\bibliographystyle{ourIEEEstyle.bst}
\bibliography{fieldext}

@article{Yur,
	year = 2018,
	volume = {23},
	pages = {35--47},
	author = {T. Yurikusa},
	title = {Wide subcategories are semistable},
	journal = {Doc. Math.}
}

@article{Kai24,
title = {$\tau$-cluster morphism categories of factor algebras},
journal = {J. Algebra},
volume = {701},
pages = {739-778},
year = {2026},
author = {Maximilian Kaipel},
}

@misc{Wan25,
      title={The {G}rothendieck group of an extriangulated category}, 
      author={Li Wang},
      year={2025},
      eprint={2508.12545},
      archivePrefix={arXiv},
      primaryClass={math.RT},
      note={arXiv:2508.12545}, 
}

@article{DIJ2019,
title = "$\tau$-tilting finite algebras, bricks, and $g$-vectors",
author = "L. Demonet and O. Iyama and G. Jasso",
year = "2019",
pages = "852--892",
journal = "Int. Math. Res. Not.",
volume = "2019",
issue = "3",
}

@article{AIR2014,
    title={$\tau$-tilting theory},
    volume={150},
    number={3},
    journal={Compos. Math.},
    author={T. Adachi and O. Iyama and I. Reiten},
    year={2014},
    pages={415–452}
}

@misc{Gar24,
  title={On $g$-finiteness in the category of projective presentations},
  author={Garcia, Monica},
  note={arXiv:2406.04134},
  year={2024}
}

@book{Mac67,
	title={Homology},
	author={MacLane, Saunders},
	year={2012},
	publisher={Springer Science \& Business Media}
}

@book{GZ12,
  title={Calculus of fractions and homotopy theory},
  author={Gabriel, Peter and Zisman, Michel},
  volume={35},
  year={2012},
  publisher={Springer Science \& Business Media}
}

@misc{BorMot,
    author       = {Erlend D. {B{\o}rve}},
    title        =  {Picture groups and green sequences from the perspective of {Ringel}--{Hall} algebras},
    note= {In preparation}
  }

@misc{Bor24,
  title={Silting reduction and picture categories of 0-{A}uslander extriangulated categories},
  author={B{\o}rve, Erlend D.},
  note={arXiv:2405.00593},
  year={2024}
}

@article{Kla22,
title = {n-extension-closed subcategories of n-exangulated categories},
journal = {Bull. Sci. Math.},
volume = {211},
pages = {103846},
year = {2026},
author = {Carlo Klapproth},
}

@article{Oga22,
  title={Abelian Categories from Triangulated Categories via {N}akaoka--{P}alu’s Localization},
  author={Ogawa, Yasuaki},
  journal={Appl. Categ. Struct.},
  volume={30},
  number={4},
  pages={611--639},
  year={2022},
  publisher={Springer}
}

@article{AT23,
 author = {Adachi, Takahide and Tsukamoto, Mayu},
 title = {An assortment of properties of silting subcategories of extriangulated categories},
 fjournal = {Bulletin des Sciences Math{\'e}matiques},
 journal = {Bull. Sci. Math.},
 issn = {0007-4497},
 volume = {203},
 pages = {},
 note = {Id/No 103647},
 year = {2025},
}

@article{Che26,
  title={Exact dg categories {I}: Foundations},
  author={Chen, Xiaofa},
  journal={Adv. Math.},
  volume={489},
  pages={110809},
  year={2026},
}

@article{PZ24,
 author = {Pan, Jixing and Zhu, Bin},
 title = {Silting interval reduction and 0-{Auslander} extriangulated categories},
 fjournal = {Journal of Pure and Applied Algebra},
 journal = {J. Pure Appl. Algebra},
 volume = {229},
 number = {6},
 pages = {32},
 note = {Id/No 107978},
 year = {2025},
}

@misc{Che24,
  title={Iyama--{S}olberg correspondence for exact dg categories},
  author={Chen, Xiaofa},
  note={arXiv:2401.02064},
  year={2024}
}

@article{ZZ21,
  title={Grothendieck groups in extriangulated categories},
  author={Zhu, Bin and Zhuang, Xiao},
  journal={J. Algebra},
  volume={574},
  pages={206--232},
  year={2021},
}

@article{OS23,
 author = {Ogawa, Yasuaki and Shah, Amit},
 title = {A resolution theorem for extriangulated categories with applications to the index},
 fjournal = {Journal of Algebra},
 journal = {J. Algebra},
 volume = {658},
 pages = {450--485},
 year = {2024},
}

@article{GNP21,
  title={Positive and negative extensions in extriangulated categories},
  author={Gorsky, Mikhail and Nakaoka, Hiroyuki and Palu, Yann},
  year={2026},
  volume = {313},
  journal ={Math. Z.},
  pages ={89},
}

@article{BTHSS23,
  title={The category of extensions and a characterisation of $n$-exangulated functors},
  author={Bennett-Tennenhaus, Raphael and Haugland, Johanne and Sand{\o}y, Mads Hustad and Shah, Amit},
  journal={Math. Z.},
  volume={305},
  number={3},
  pages={44},
  year={2023},
}

@article{HLN17,
  title={$n$-exangulated categories ({I}): Definitions and fundamental properties},
  author={Herschend, Martin and Liu, Yu and Nakaoka, Hiroyuki},
  journal={J. Algebra},
  volume={570},
  pages={531--586},
  year={2021},
}

@article{GHKK18,
  title={Canonical bases for cluster algebras},
  author={Gross, Mark and Hacking, Paul and Keel, Sean and Kontsevich, Maxim},
  journal={J. Am. Math. Soc.},
  volume={31},
  number={2},
  pages={497--608},
  year={2018}
}

@article{BY14,
  title={Ordered exchange graphs},
  author={Br{\"u}stle, Thomas and Yang, Dong},
  journal={Advances in Representation Theory of Algebras},
  pages={135--193},
  year={2014},
  publisher={European Mathematical Society Publishing House}
}

@article{NOS22,
  title={Localization of extriangulated categories},
  author={Nakaoka, Hiroyuki and Ogawa, Yasuaki and Sakai, Arashi},
  journal={J. Algebra},
  volume={611},
  pages={341--398},
  year={2022},
}

@article{AT22,
  title={Hereditary cotorsion pairs and silting subcategories in extriangulated categories},
  author={Adachi, Takahide and Tsukamoto, Mayu},
  journal={J. Algebra},
  volume={594},
  pages={109--137},
  year={2022},
}

@article{Igu14,
 author = {Igusa, Kiyoshi},
 title = {A category of noncrossing partitions},
 fjournal = {Applied Categorical Structures},
 journal = {Appl. Categ. Struct.},
 volume = {33},
 number = {6},
 pages = {41},
 note = {Id/No 40},
 year = {2025},
}

@article{BH21,
      title={$\tau$-perpendicular wide subcategories}, 
      author={A. B. Buan and E. J. Hanson},
      year={2023},
    journal={Nagoya Math. J.},
volume = {252},
    pages ={959--984},
}

@ARTICLE{B-TS20,
	AUTHOR = {Bennett-Tennenhaus, Raphael and Shah, Amit},
	TITLE = {Transport of structure in higher homological algebra},
	JOURNAL = {J. Algebra},
	FJOURNAL = {Journal of Algebra},
	VOLUME = {574},
	YEAR = {2021},
	PAGES = {514--549},
}

@article {NP19,
	AUTHOR = {Nakaoka, Hiroyuki and Palu, Yann},
	TITLE = {Extriangulated categories, {H}ovey twin cotorsion pairs and
		model structures},
	JOURNAL = {Cah. Topol. G\'{e}om. Diff\'{e}r. Cat\'{e}g.},
	FJOURNAL = {Cahiers de Topologie et G\'{e}om\'{e}trie Diff\'{e}rentielle Cat\'{e}goriques},
	VOLUME = {60},
	YEAR = {2019},
	NUMBER = {2},
	PAGES = {117--193},
}

@article{FG06,
 author = {Fock, Vladimir and Goncharov, Alexander},
 title = {Moduli spaces of local systems and higher {Teichm{\"u}ller} theory},
 fjournal = {Publications Math{\'e}matiques},
 journal = {Publ. Math., Inst. Hautes {\'E}tud. Sci.},
 volume = {103},
 pages = {1--211},
 year = {2006},
}

@article{GHK15,
  title={Birational geometry of cluster algebras},
  author={Gross, Mark and Hacking, Paul and Keel, Sean},
  journal={Algebr. Geom.},
  volume={2},
  number={2},
  pages={137--175},
  year={2015},
  publisher={Foundation Compositio Mathematica}
}

@book{Ful93,
  title={Introduction to toric varieties},
  author={Fulton, William},
  number={131},
  year={1993},
  publisher={Princeton university press}
}

@article{Ore44,
 author = {Ore, {\O}ystein},
 title = {Galois connexions},
 fjournal = {Transactions of the American Mathematical Society},
 journal = {Trans. Am. Math. Soc.},
 volume = {55},
 pages = {493--514},
 year = {1944},
}

@book{Zie95,
  author     = {Ziegler, G\"unter M.},
  title      = {Lectures on polytopes},
  series     = {Graduate Texts in Mathematics},
  volume     = {152},
  publisher  = {Springer-Verlag, New York},
  year       = {1995},
  pages      = {x+370},
  isbn       = {0-387-94365-X},
}

@article {Kra15,
	AUTHOR = {Krause, Henning},
	TITLE = {Krull--{S}chmidt categories and projective covers},
	JOURNAL = {Expo. Math.},
	FJOURNAL = {Expositiones Mathematicae},
	VOLUME = {33},
	YEAR = {2015},
	NUMBER = {4},
	PAGES = {535--549},
}

@article{DK08,
  title={On the combinatorics of rigid objects in 2--{C}alabi--{Y}au categories},
  author={Dehy, Raika and Keller, Bernhard},
  journal={	Int. Math. Res. Not.},
  volume={2008},
  pages={rnn029},
  year={2008},
}

@article{Asa21,
  title={The wall-chamber structures of the real {G}rothendieck groups},
  author={S. Asai},
  journal={Adv. Math.},
volume = {381},
pages = {Paper No. 107615},
year = {2021},
}

@article {AI12,
	AUTHOR = {Aihara, Takuma and Iyama, Osamu},
	TITLE = {Silting mutation in triangulated categories},
	JOURNAL = {{J. Lond. Math. Soc., II. Ser.}},
	FJOURNAL = {Journal of the London Mathematical Society. Second Series},
	VOLUME = {85},
	YEAR = {2012},
	NUMBER = {3},
	PAGES = {633--668},
}

@article {IY08,
		AUTHOR = {Iyama, Osamu and Yoshino, Yuji},
		TITLE = {Mutation in triangulated categories and rigid
			{C}ohen-{M}acaulay modules},
		JOURNAL = {Invent. Math.},
		FJOURNAL = {Inventiones Mathematicae},
		VOLUME = {172},
		YEAR = {2008},
		NUMBER = {1},
		PAGES = {117--168},
	}

@article {BM18t,
		AUTHOR = {Buan, Aslak B. and Marsh, Bethany Rose},
		TITLE = {{$\tau$}-exceptional sequences},
		JOURNAL = {J. Algebra},
		FJOURNAL = {Journal of Algebra},
		VOLUME = {585},
		YEAR = {2021},
		PAGES = {36--68},
	}

@article {BM18w,
		AUTHOR = {Buan, Aslak B. and Marsh, Bethany Rose},
		TITLE = {A category of wide subcategories},
		JOURNAL = {Int. Math. Res. Not.},
		FJOURNAL = {International Mathematics Research Notices. IMRN},
		YEAR = {2021},
		NUMBER = {13},
		volume={2021},
		PAGES = {10278--10338},
	}

@article {PPPP23,
    AUTHOR = {Padrol, Arnau and Palu, Yann and Pilaud, Vincent and
              Plamondon, Pierre-Guy},
     TITLE = {Associahedra for finite-type cluster algebras and minimal
              relations between {$g$}-vectors},
   JOURNAL = {Proc. Lond. Math. Soc. (3)},
  FJOURNAL = {Proceedings of the London Mathematical Society. Third Series},
    VOLUME = {127},
      YEAR = {2023},
    NUMBER = {3},
     PAGES = {513--588},
}

@article{FGPPP23,
  title={Extriangulated ideal quotients, with applications to cluster theory and gentle algebras},
  author={Fang, Xin and Gorsky, Mikhail and Palu, Yann and Plamondon, Pierre-Guy and Pressland, Matthew},
  journal={J. Pure Appl. Algebra},
  pages={108370},
  year={2026},
}

@article{Kai23,
author = {Kaipel, M.},
title = {The category of a partitioned fan},
journal = {J. Lond. Math. Soc., II. Ser.},
volume = {111},
number = {2},
pages = {Paper No. e70071},
year = {2025}
}

@article {Jas15,
    author = {G. Jasso},
    title = {Reduction of $\tau$-tilting modules and torsion pairs},
    journal = {Int. Math. Res. Not.},
    volume = {2015},
    number = {16},
    pages = {7190-7237},
    year = {2015}
}

@article {IY18,
	AUTHOR = {Iyama, Osamu and Yang, Dong},
	TITLE = {Silting reduction and {C}alabi--{Y}au reduction of
		triangulated categories},
	JOURNAL = {Trans. Amer. Math. Soc.},
	FJOURNAL = {Transactions of the American Mathematical Society},
	VOLUME = {370},
	YEAR = {2018},
	NUMBER = {11},
	PAGES = {7861--7898},
}

@incollection {KV88,
	AUTHOR = {Keller, B. and Vossieck, D.},
	TITLE = {Aisles in derived categories},
	NOTE = {Deuxi\`eme Contact Franco-Belge en Alg\`ebre (Faulx-les-Tombes,
		1987)},
	JOURNAL = {Bull. Soc. Math. Belg. S\'{e}r. A},
	FJOURNAL = {Bulletin de la Soci\'{e}t\'{e} Math\'{e}matique de Belgique. S\'{e}rie A},
	VOLUME = {40},
	YEAR = {1988},
	NUMBER = {2},
	PAGES = {239--253},
	ISSN = {0037-9476},
	MRCLASS = {16A46 (16A64)},
	MRNUMBER = {976638},
}

@article {IJY14,
	AUTHOR = {Iyama, Osamu and Jørgensen, Peter and Yang, Dong},
	TITLE = {Intermediate co-{$t$}-structures, two-term silting objects,
		{$\tau$}-tilting modules, and torsion classes},
	JOURNAL = {Algebra Number Theory},
	FJOURNAL = {Algebra \& Number Theory},
	VOLUME = {8},
	YEAR = {2014},
	NUMBER = {10},
	PAGES = {2413--2431},
}

@article {Ver96,
	AUTHOR = {Verdier, Jean-Louis},
	TITLE = {Des cat\'{e}gories d\'{e}riv\'{e}es des cat\'{e}gories ab\'{e}liennes},
	NOTE = {With a preface by Luc Illusie,
		Edited and with a note by Georges Maltsiniotis},
	JOURNAL = {Ast\'{e}risque},
	FJOURNAL = {Ast\'{e}risque},
	NUMBER = {239},
	YEAR = {1996},
	PAGES = {xii+253 pp. (1997)},
	ISSN = {0303-1179},
	MRCLASS = {18E30 (18-03 18E35)},
	MRNUMBER = {1453167},
	MRREVIEWER = {Amnon Neeman},
}

@misc{ITW16,
      title={Picture groups of finite type and cohomology in type $A_n$}, 
      author={Kiyoshi Igusa and Gordana Todorov and Jerzy Weyman},
      year={2016},
      eprint={1609.02636},
      archivePrefix={arXiv},
      primaryClass={math.RT},
      note={arXiv:1609.02636}, 
}

@misc{IT17,
      title={Signed exceptional sequences and the cluster morphism category}, 
      author={Kiyoshi Igusa and Gordana Todorov},
      year={2017},
      eprint={1706.02041},
      archivePrefix={arXiv},
      primaryClass={math.RT},
      note={arXiv:1706.02041}, 
}

@article{BMR07,
  title={Cluster-tilted algebras},
  author={Buan, Aslak B. and Marsh, Bethany R. and Reiten, Idun},
  journal={Trans. Am. Math. Soc.},
  volume={359},
  number={1},
  pages={323--332},
  year={2007}
}

@article {BMRRT06,
	AUTHOR = {Buan, Aslak B. and Marsh, Bethany and Reineke, Markus and
		Reiten, Idun and Todorov, Gordana},
	TITLE = {Tilting theory and cluster combinatorics},
	JOURNAL = {Adv. Math.},
	FJOURNAL = {Advances in Mathematics},
	VOLUME = {204},
	YEAR = {2006},
	NUMBER = {2},
	PAGES = {572--618},
}

@article {HI21,
	year = 2021,
	volume = {49},
	number = {10},
	pages = {4376--4415},
	author = {E. J. Hanson and K. Igusa},
	title = {$\tau$-cluster morphism categories and picture groups},
	journal = {Commun. Algebra}
}

@article {HI21p,
	year = 2021,
	publisher = {Elsevier {BV}},
	volume = {225},
	number = {6},
	pages = {Paper No. 106598},
	author = {E. J. Hanson and K. Igusa},
	title = {Pairwise compatibility for 2-simple minded collections},
	journal = {J. Pure Appl. Algebra}
}

@article{DIRRT17,
  author = {L. Demonet and O. Iyama and N. Reading and I. Reiten and H. Thomas},
  title = {Lattice theory of torsion classes: Beyond $\tau$-tilting theory},
    year = {2023},
    pages = {542-612},
    volume = {10},
    journal = {Trans. Am. Math. Soc., Ser. B},
  }

@incollection{Qui73,
	title={Higher algebraic {K}-theory: I},
	author={Quillen, Daniel},
	booktitle={Higher K-theories},
	pages={85--147},
	year={1973},
	publisher={Springer}
}

@phdthesis{Che23thesis,
	title={On exact dg categories},
	author={Chen, Xiaofa},
	school  = "Université Paris Cité",
	year={2023}
}

@misc{Che23,
  title={0-{A}uslander correspondence},
  author={Chen, Xiaofa},
  note={arXiv:2306.15958},
  year={2023}
}

@article{IY20,
  title={Quotients of triangulated categories and equivalences of {B}uchweitz, {O}rlov, and {A}miot--{G}uo--{K}eller},
  author={Iyama, Osamu and Yang, Dong},
  journal={Am. J. Math.},
  volume={142},
  number={5},
  pages={1641--1659},
  year={2020},
}

@misc{GNP23,
	title={Hereditary extriangulated categories: Silting objects, mutation, negative extensions},
	author={Gorsky, Mikhail and Nakaoka, Hiroyuki and Palu, Yann},
	eprint={2303.07134},
      archivePrefix={arXiv},
      primaryClass={math.RT},
      note={arXiv:2303.07134}, 
      year={2023},
}

@misc{Bor21,
	title={Two-term silting and $\tau$-cluster morphism categories},
	author={B{\o}rve, Erlend D.},
	note={arXiv:2110.03472},
	year={2021}
}

@article {STTW23,
	AUTHOR = {Schroll, Sibylle and Tattar, Aran and Treffinger, Hipolito and Williams, Nicholas J},
	TITLE = {A geometric perspective on the $\tau$-cluster morphism category},
	JOURNAL = {Math. Z.},
	FJOURNAL = {Math. Z.},
	VOLUME = {312},
	YEAR = {2026},
	PAGES = {77},
}

@article{FZ02i,
	title={Cluster algebras {I}: foundations},
	author={Fomin, Sergey and Zelevinsky, Andrei},
	journal={J. Am. Math. Soc.},
	volume={15},
	number={2},
	pages={497--529},
	year={2002}
}

@article{FZ02ii,
 author = {Fomin, Sergey and Zelevinsky, Andrei},
 title = {Cluster algebras. {II}: {Finite} type classification},
 fjournal = {Inventiones Mathematicae},
 journal = {Invent. Math.},
 volume = {154},
 number = {1},
 pages = {63--121},
 year = {2003},
}

@article{FZ07,
 Author = {Fomin, S. and Zelevinsky, A.},
 Title = {Cluster algebras. {IV}: {Coefficients}.},
 FJournal = {Compositio Mathematica},
 Journal = {Compos. Math.},
 Volume = {143},
 Number = {1},
 Pages = {112--164},
 Year = {2007},
}

@article{BFZ05,
	title={Cluster algebras {III}: {U}pper bounds and double {B}ruhat cells},
	author={Berenstein, Arkady and Fomin, Sergey and Zelevinsky, Andrei},
	journal={Duke Math. J.},
	volume={126},
	number={1},
	pages={1--52},
	year={2005},
}

@article{Gar23,
 author = {Garcia, Monica},
 title = {On thick subcategories of the category of projective presentations},
 fjournal = {Mathematische Zeitschrift},
 journal = {Math. Z.},
 volume = {312},
 number = {4},
 pages = {39},
 note = {Id/No 110},
 year = {2026},
}

@article{Tak13,
  title={Thick subcategories over {G}orenstein local rings that are locally hypersurfaces on the punctured spectra},
  author={Takahashi, Ryo},
  journal={J. Math. Soc. Japan},
  volume={65},
  number={2},
  pages={357--374},
  year={2013},
}

@book{CLS24,
  title={Toric varieties},
  author={Cox, David A and Little, John B and Schenck, Henry K},
  volume={124},
  year={2024},
  publisher={American Mathematical Society}
}

@incollection{BB80,
  author     = {Brenner, Sheila and Butler, M. C. R.},
  title      = {Generalizations of the {B}ernstein-{G}el'
                fand-{P}onomarev reflection functors},
  booktitle  = {Representation theory, {II} ({P}roc. {S}econd {I}nternat.
                {C}onf., {C}arleton {U}niv., {O}ttawa, {O}nt., 1979)},
  series     = {Lecture Notes in Math.},
  volume     = {832},
  pages      = {103--169},
  publisher  = {Springer, Berlin},
  year       = {1980},
  isbn       = {3-540-10264-7},
  mrclass    = {16A64 (16A46)},
  mrnumber   = {607151},
  mrreviewer = {Idun\ Reiten}
}

@article{HR82,
  author     = {Happel, Dieter and Ringel, Claus Michael},
  title      = {Tilted algebras},
  journal    = {Trans. Amer. Math. Soc.},
  fjournal   = {Transactions of the American Mathematical Society},
  volume     = {274},
  year       = {1982},
  number     = {2},
  pages      = {399--443},
}

@misc{BorveKaipeltautilt,
      title={Bricks and $\tau$-tilting theory under base field extensions}, 
      author={Erlend D. Børve and Eric J. Hanson and Maximilian Kaipel},
      year={2025},
      eprint={2508.01040},
      archivePrefix={arXiv},
      primaryClass={math.RT},
      note={arXiv:2508.01040}, 
}

@article {DerksenFei2015,
    AUTHOR = {Derksen, Harm and Fei, Jiarui},
     TITLE = {General presentations of algebras},
   JOURNAL = {Adv. Math.},
    VOLUME = {278},
      YEAR = {2015},
     PAGES = {210--237},
}

@article{Dickson66,
 author = {S. E. Dickson},
 journal = {Trans. Am. Math. Soc.},
 number = {1},
 pages = {223--235},
 title = {A Torsion Theory for Abelian Categories},
 volume = {121},
 year = {1966}
}

@Article{CFZ2002,
 Author = {Chapoton, Fr{\'e}d{\'e}ric and Fomin, Sergey and Zelevinsky, Andrei},
 Title = {Polytopal realizations of generalized associahedra},
 Journal = {Can. Math. Bull.},
 Volume = {45},
 Number = {4},
 Pages = {537--566},
 Year = {2002},
}

@article{FZ2003,
 author = {Fomin, Sergey and Zelevinsky, Andrei},
 title = {$Y$-systems and generalized associahedra},
 fjournal = {Annals of Mathematics. Second Series},
 journal = {Ann. Math. (2)},
 volume = {158},
 number = {3},
 pages = {977--1018},
 year = {2003},
}

@article{DWZ2010,
 author = {Derksen, Harm and Weyman, Jerzy and Zelevinsky, Andrei},
 title = {Quivers with potentials and their representations. {II}: {Applications} to cluster algebras.},
 fjournal = {Journal of the American Mathematical Society},
 journal = {J. Am. Math. Soc.},
 issn = {0894-0347},
 volume = {23},
 number = {3},
 pages = {749--790},
 year = {2010},
}

@article{HPS2018,
 author = {Hohlweg, Christophe and Pilaud, Vincent and Stella, Salvatore},
 title = {Polytopal realizations of finite type {{\(\mathbf{g}\)}}-vector fans},
 fjournal = {Advances in Mathematics},
 journal = {Adv. Math.},
 volume = {328},
 pages = {713--749},
 year = {2018},
}

@article{HLT2011,
 author = {Hohlweg, Christophe and Lange, Carsten E. M. C. and Thomas, Hugh},
 title = {Permutahedra and generalized associahedra.},
 fjournal = {Advances in Mathematics},
 journal = {Adv. Math.},
 volume = {226},
 number = {1},
 pages = {608--640},
 year = {2011},
}

@article{ReadingSpeyer2009,
 author = {Reading, Nathan and Speyer, David E.},
 title = {Cambrian fans.},
 fjournal = {Journal of the European Mathematical Society (JEMS)},
 journal = {J. Eur. Math. Soc. (JEMS)},
 volume = {11},
 number = {2},
 pages = {407--447},
 year = {2009},
}

@article{CK2006,
 author = {Caldero, Philippe and Keller, Bernhard},
 title = {From triangulated categories to cluster algebras. {II}.},
 fjournal = {Annales Scientifiques de l'{\'E}cole Normale Sup{\'e}rieure. Quatri{\`e}me S{\'e}rie},
 journal = {Ann. Sci. {\'E}c. Norm. Sup{\'e}r. (4)},
 volume = {39},
 number = {6},
 pages = {983--1009},
 year = {2006},
}

@article{BMR2008,
 author = {Buan, Aslak Bakke and Marsh, Bethany R. and Reiten, Idun},
 title = {Cluster mutation via quiver representations.},
 fjournal = {Commentarii Mathematici Helvetici},
 journal = {Comment. Math. Helv.},
 volume = {83},
 number = {1},
 pages = {143--177},
 year = {2008},
}

@misc{Nonis2026,
      title={Presilting sequences for 0-Auslander extriangulated categories}, 
      author={Iacopo Nonis},
      year={2026},
      eprint={2605.20957},
      archivePrefix={arXiv},
      primaryClass={math.RT},
      note={arXiv:2605.20957}, 
}

@misc{ZLZ2026,
      title={Homotopic morphisms and diagram theorems in extriangulated categories}, 
      author={Chencheng Zhang and Xue-Song Lu and Pu Zhang},
      year={2026},
      eprint={2604.22186},
      archivePrefix={arXiv},
      primaryClass={math.CT},
      note={arXiv:2604.22186}, 
}

@misc{Treffinger26,
      title={Scattering diagrams for Artin algebras}, 
      author={Hipolito Treffinger},
      year={2026},
      eprint={2608.04233},
      archivePrefix={arXiv},
      primaryClass={math.RT},
      note={arXiv:2608.04233}, 
}

\end{document}